\newcommand{\marginparstretch}{0.6}
\let\oldmarginpar\marginpar
\renewcommand\marginpar[1]{\-\oldmarginpar[\framebox{\setstretch{\marginparstretch}\begin{minipage}{\marginparwidth}{\raggedleft\tiny #1}\end{minipage}}]{\framebox{\setstretch{\marginparstretch}\begin{minipage}{\marginparwidth}{\raggedright\tiny #1}\end{minipage}}}}
\tikzstyle{decision} = [diamond, draw, fill=blue!20,
\tikzstyle{block} = [rectangle, draw, fill=blue!20,
\tikzstyle{line} = [draw, very thick, color=black!50, -latex']
\tikzstyle{cloud} = [draw, ellipse,fill=red!30, node distance=2.5cm,
\tikzstyle{cloud2} = [draw, ellipse,fill=red!30, text=white,text width=10em, node distance=2.5cm, text centered, minimum height=4em]
\tikzset{
        cvertex/.style={circle,draw=black,inner sep=1pt,outer sep=3pt},
        vertex/.style={circle,fill=black,inner sep=1pt,outer sep=3pt},
        DB/.style={circle,draw=black,circle,fill=black,inner sep=0pt, minimum size=4pt},
        DW/.style={circle,draw=black,inner sep=0pt, minimum size=4pt},
        star/.style={circle,fill=yellow,inner sep=0.75pt,outer sep=0.75pt},
        tvertex/.style={inner sep=1pt,font=\scriptsize},
        gap/.style={inner sep=0.5pt,fill=white}}
\tikzstyle{mybox} = [draw=black, fill=blue!10, very thick,
\tikzstyle{boxtitle} =[fill=blue!50, text=white,rectangle,rounded corners]
\newtheorem{thm}{Theorem}[section]
\newtheorem{prop}[thm]{Proposition}
\newtheorem{lemma}[thm]{Lemma}
\newtheorem{defin}[thm]{Definition}
\newtheorem{cor}[thm]{Corollary}
\newtheorem{conj}[thm]{Conjecture}
\theoremstyle{definition} 
\newtheorem{example}[thm]{Example}
\newtheorem{setup}[thm]{Setup}
\newtheorem{remark}[thm]{Remark}
\newtheorem{notation}[thm]{Notation}
\numberwithin{equation}{section}
\newcounter{tempenum}
\newcommand{\m}{\mathfrak{m}}
\newcommand{\n}{\mathfrak{n}}
\newcommand{\p}{\mathfrak{p}}
\renewcommand{\t}[1]{\textnormal{#1}}
\def\op{\mathop{\rm op}\nolimits}
\def\GL{\mathop{\rm GL}\nolimits}
\def\SL{\mathop{\rm SL}\nolimits}
\def\CM{\mathop{\rm CM}\nolimits}
\def\uCM{\mathop{\underline{\rm CM}}\nolimits}
\def\depth{\mathop{\rm depth}\nolimits}
\def\fl{\mathop{\mathrm{fdmod}}\nolimits}
\def\mod{\mathop{\rm mod}\nolimits}
\def\rep{\mathop{\rm Rep}\nolimits}
\def\coh{\mathop{\rm coh}\nolimits}
\def\Mod{\mathop{\rm Mod}\nolimits}
\def\refl{\mathop{\rm ref}\nolimits}
\def\proj{\mathop{\rm proj}\nolimits}
\newcommand{\pd}{\mathrm{pd}}
\def\id{\mathop{\rm inj.dim}\nolimits}
\def\uEnd{\mathop{\underline{\rm End}}\nolimits}
\def\Hom{\mathop{\rm Hom}\nolimits}
\def\RHom{\mathop{\rm {\bf R}Hom}\nolimits}
\def\End{\mathop{\rm End}\nolimits}
\def\Ext{\mathop{\rm Ext}\nolimits}
\def\Tor{\mathop{\rm Tor}\nolimits}
\def\add{\mathop{\rm add}\nolimits}
\def\Cok{\mathop{\rm Cok}\nolimits}
\def\Ker{\mathop{\rm Ker}\nolimits}
\def\rank{\mathop{\rm rank}\nolimits}
\def\rk{\mathop{\sf rk}\nolimits}
\def\Im{\mathop{\rm Im}\nolimits}
\def\Sing{\mathop{\rm Sing}\nolimits}
\def\Spec{\mathop{\rm Spec}\nolimits}
\def\ord{\mathop{\rm ord}\nolimits}
\def\gl{\mathop{\rm gl.dim}\nolimits}
\newcommand{\boldb}{\mathbf{b_I}}
\newcommand{\boldc}{\mathbf{c_I}}
\def\vdim{\mathop{\underline{\rm dim}}\nolimits}
\def\cx{\mathop{\mathrm{cx}}\nolimits}
\def\D{\mathop{\rm{D}^{}}\nolimits}
\def\Dsg{\mathop{\rm{D}_{\sf sg}}\nolimits}
\def\Db{\mathop{\rm{D}^b}\nolimits}
\def\Kb{\mathop{\rm{K}^b}\nolimits}
\def\flop{{\sf{Flop}}}
\def\twist{{\sf{Twist}}}
\def\Id{\mathop{\rm{Id}}\nolimits}
\newcommand{\K}{\mathop{{}_{}\mathbb{C}}\nolimits}
\newcommand{\con}{\mathrm{con}}
\newcommand{\CA}{\mathrm{A}_{\con}}
\def\redu{\mathop{\rm red}\nolimits}
\def\RHom{{\rm{\bf R}Hom}}
\def\sHom{\mathcal{H}om}
\newcommand\RDerived[1]{{\rm\bf R}{#1}}
\newcommand\art{\mathsf{art}}
\newcommand\cart{\mathsf{cart}}
\newcommand\alg{\mathsf{Alg}}
\newcommand\calg{\mathsf{CAlg}}
\newcommand\Sets{\mathsf{Sets}}
\newcommand\cDef{c\mathcal{D}ef}
\newcommand\Def{\mathcal{D}ef}
\newcommand{\cC}{\mathcal{C}}
\newcommand{\cE}{\mathcal{E}}
\newcommand{\cF}{\mathcal{F}}
\newcommand{\cG}{\mathcal{G}}
\newcommand{\cL}{\mathcal{L}}
\newcommand{\cM}{\mathcal{M}}
\newcommand{\cN}{\mathcal{N}}
\newcommand{\cO}{\mathcal{O}}
\newcommand{\cS}{\mathcal{S}}
\newcommand{\cU}{\mathcal{U}}
\newcommand{\cV}{\mathcal{V}}
\newcommand{\cW}{\mathcal{W}}
\newcommand{\Per}{{}^{0}\mathfrak{Per}}
\newcommand{\mPer}{{}^{-1}\mathfrak{Per}}
\begin{document}
\title{\textsc{Flops and Clusters in the Homological Minimal Model Programme}}
\author{Michael Wemyss}
\address{Michael Wemyss, The Mathematics and Statistics Building,
University of Glasgow, University Place, Glasgow, G12 8SQ, UK.}
\email{michael.wemyss@glasgow.ac.uk}
\begin{abstract}
Suppose that $f\colon X\to\Spec R$ is a minimal model of a complete local Gorenstein 3-fold, where the fibres of $f$ are at most one dimensional, so by \cite{VdB1d} there is a  noncommutative ring $\Lambda$ derived equivalent to $X$.  For any collection of curves above the origin, we show that this collection contracts to a point without contracting a divisor if and only if a certain factor of $\Lambda$ is finite dimensional, improving a result of \cite{DW2}.  We further show that the mutation functor of \cite[\S6]{IW4} is functorially isomorphic to the inverse of the Bridgeland--Chen flop functor in the case when the factor of $\Lambda$ is finite dimensional.   These results then allow us to jump between all the minimal models of $\Spec R$ in an algorithmic way, without having to compute the geometry at each stage.  We call this process the Homological MMP.  

This has several applications in GIT approaches to derived categories, and also to birational geometry.  First, using mutation we are able to compute the full GIT chamber structure by passing to surfaces.  We say precisely which chambers give the distinct minimal models, and also say which walls give flops and which do not, enabling us to prove the Craw--Ishii conjecture in this setting. Second, we are able to precisely count the number of minimal models, and  also give bounds for both the maximum and the minimum numbers of minimal models based only on the dual graph enriched with scheme theoretic multiplicity. Third, we prove a bijective correspondence between maximal modifying $R$-module generators and minimal models, and for each such pair in this correspondence give a further correspondence linking the endomorphism ring and the geometry.  This lifts the Auslander--McKay correspondence to dimension three. 
\end{abstract}
\thanks{The author was supported by EPSRC grant~EP/K021400/1.}
\maketitle
\parindent 20pt
\parskip 0pt

\tableofcontents

\section{Introduction}

\subsection{Setting}
One of the central problems in the birational geometry of 3-folds is to construct, given a suitable singular space $\Spec R$, all its minimal models $X_i\to\Spec R$ and to furthermore pass between them, via flops, in an effective manner.  

The classical geometric method of producing minimal models is to take Proj of an appropriate graded ring.  It is known that the graded ring is finitely generated, so this method produces a variety equipped with an ample line bundle.  However, for many purposes this ample bundle does not tell us much information, and one of the themes of this paper, and also other homological approaches in the literature, is that we should be aiming for a much larger (ideally tilting) bundle, one containing many summands, whose determinant bundle recovers the classically obtained ample bundle.  These larger bundles, and their noncommutative endomorphism rings, encode much more information about the variety than simply the ample bundle does.

At the same time, passing between minimal models in an effective way is also a rather hard problem in general.  There are various approaches to this; one is to hope for some form of GIT chamber decomposition in which wandering around, crashing through appropriate walls, eventually yields all the projective minimal models.  Another is just to find a curve, flop, compute all the geometry explicitly, and repeat.  Neither is ideal, since both usually require a tremendous amount of calculation.  For example the GIT method needs first a calculation of the chamber structure, then second a method to determine what happens when we pass through a wall.  Without additional information, and without just computing both sides, standing in any given chamber it is very hard to tell which wall to crash through next in order to obtain a new minimal model.  

The purpose of this paper is to demonstrate, in certain cases where we have this larger tilting bundle, that the extra information encoded in the endomorphism ring can be used to produce a very effective homological method to pass between the minimal models, both in detecting which curves are floppable, and also in producing the flop.  As a consequence, this then supplies us with the map to navigate the GIT chambers, and the much finer control that this map gives means that our results imply (but are not implied by) many results in derived category approaches to GIT, braiding of flops, and faithful group actions.  We outline only some in this paper, as there are a surprising number of other corollaries.

\subsection{Overview of the Algorithm}\label{algorithm intro} We work over $\K$. Throughout this introduction, for simplicity of the exposition, the initial geometric input is a crepant projective birational morphism $X\to\Spec R$, with one dimensional fibres, where $R$ is a three dimensional normal Gorenstein complete local ring and $X$ has only Gorenstein terminal singularities.  This need not be a flopping contraction, $X$ need not be a minimal model, and $R$ need not have isolated singularities. We remark that many of our arguments work much more generally than this, see \S\ref{generalities}.

Given this input, we associate a noncommutative ring $\Lambda:=\End_R(N)$ for some reflexive $R$-module $N$, and a derived equivalence
\begin{eqnarray}
\Uppsi_X^{\phantom -}\colon\Db(\coh X)\to\Db(\mod\Lambda)\label{VdBfunctor}
\end{eqnarray}
as described in \cite[\S3]{VdB1d}.  

It is not strictly necessary, but it is helpful to keep in mind, that a  presentation of $\Lambda$ as a quiver with relations can be obtained by replacing every curve above the origin by a dot (=vertex), and just as in the two-dimensional McKay correspondence we add an additional vertex corresponding to the whole scheme--theoretic fibre.  We draw arrows between the vertices if the curves intersect, and there are rules that establish how the additional vertex connects to the others.  The loops on vertices correspond to self-extension groups, and so in the case that $X$ is smooth, the loops encode the normal bundle of the curves.  This is illustrated in Figure~\ref{Fig1}, but for details see \S\ref{perverse and tilting}.

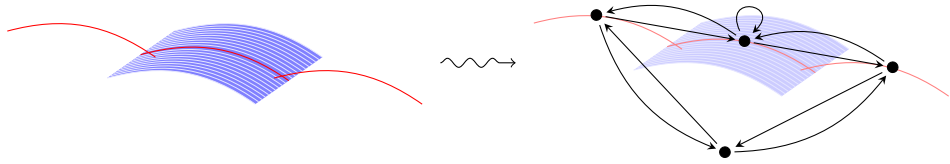
\begin{figure}[!h]
\begin{center}
\begin{tikzpicture}
\node at (0,0)
{\begin{tikzpicture} [bend angle=25, looseness=1,transform shape, rotate=-10]
\fill[fill=blue!50] (0,0,1) -- (0,0,-1) to [bend left=25] (2,0,-1) -- (2,0,1) to [bend right=25] (0,0,1);
\foreach \y in {0.1,0.2,...,1}{ 
\draw[very thin,blue!10] (0,0,\y) to [bend left=25] (2,0,\y);
\draw[very thin,blue!10] (0,0,-\y) to [bend left=25] (2,0,-\y);}
\draw[red] (0,0,0) to [bend left=25] (2,0,0);
\draw[red] (1.8,0,0) to [bend left=25] (3.8,0,0);
\draw[red] (-1.8,0,0) to [bend left=25] (0.2,0,0);
\node (0) at (1,-1.25,0) {};
\end{tikzpicture} };

\node at (7,0.2) {
\begin{tikzpicture} [bend angle=25, looseness=1,transform shape, rotate=-10,>=stealth]
\fill[fill=blue!20] (0,0,1) -- (0,0,-1) to [bend left=25] (2,0,-1) -- (2,0,1) to [bend right=25] (0,0,1);
\foreach \y in {0.1,0.2,...,1}{ 
\draw[very thin,blue!10] (0,0,\y) to [bend left=25] (2,0,\y);
\draw[very thin,blue!10] (0,0,-\y) to [bend left=25] (2,0,-\y);}
\draw[red!50] (0,0,0) to [bend left=25] (2,0,0);
\draw[red!50] (1.8,0,0) to [bend left=25] (3.8,0,0);
\draw[red!50] (-1.8,0,0) to [bend left=25] (0.2,0,0);
\filldraw [black] (1,0.25,0) circle (2pt);
\filldraw [black] (3,0.25,0) circle (2pt);
\filldraw [black] (-1,0.25,0) circle (2pt);
\filldraw [black] (1,-1.25,0) circle (2pt);
\node (1) at (-1,0.25,0) {};
\node (2) at (1,0.25,0) {};
\node (3) at (3,0.25,0) {};
\node (0) at (1,-1.25,0) {};
\draw[->,black] (1) -- (2);
\draw[->,black] (2) -- (3);
\draw[->,black]  (2) edge [in=55,out=125,loop,looseness=8] (2);
\draw[->,black,bend right] (2) to (1);
\draw[->,black, bend right] (3) to (2);
\draw[->,black] (0) -- (1);
\draw[->,black] (3) -- (0);
\draw[->,bend right,black] (1) to (0);
\draw[->, bend right,black] (0) to (3);
\end{tikzpicture}};
\draw [->,decorate, 
decoration={snake,amplitude=.6mm,segment length=3mm,post length=1mm}] 
(3,0.4) -- (4,0.4);
\end{tikzpicture}
\end{center}
\caption{From geometry to algebra.}\label{Fig1}
\end{figure}

At its heart, this paper contains two key new ideas.  The first is that certain factors of the algebra $\Lambda$ encode noncommutative deformations of the curves, and thus detects which curves are floppable.  The second is that when curves flop we should not view the flop as a variation of GIT, rather we should view the flop as a change in the algebra (via a universal property) whilst keeping the GIT stability constant.  See \ref{mut moduli gives flop}.  Specifically, we prove that the mutation functor of \cite[\S6]{IW4} is functorially isomorphic to the inverse of the Bridgeland--Chen flop functor \cite{Bridgeland, Chen} when the curves are floppable.  It is viewing the flop via this universal property that gives us the new extra control over the process; indeed it is the mutated algebra that contains exactly the information needed to iterate, without having to explicitly calculate the geometry at each step.  

This new viewpoint, and the control it gives, in fact implies many results in GIT, specifically chamber structures and wall crossing, and also many results in the theory of noncommutative minimal models, in particular producing an Auslander--McKay correspondence in dimension three.   We describe the GIT results in \S\ref{GIT intro}, and the other results in \S\ref{other applications intro}.  In the remainder of this subsection we sketch the algorithm that jumps between the minimal models of $\Spec R$.  The process, which we call the Homological MMP, is run as illustrated in Figure~\ref{Fig2} on page \pageref{Fig2}.

The initial input is the crepant morphism $X\to\Spec R$ above, where $X$ has only Gorenstein terminal singularities. 

\medskip
\textbf{Step 1: Contractions.}  The first task is to determine which subsets of the curves contract to points without contracting a divisor, and can thus be flopped.  Although this is usually obvious at the input stage (we generally understand the initial input), it becomes important after the flop if we are to continue running the programme.

Let $C$ be the scheme-theoretic fibre above the unique closed point of $\Spec R$, so that taking the reduced scheme structure we obtain $\bigcup_{i=1}^n C_i$ with each $C_i\cong \mathbb{P}^1$.  We pick a subset of the curves, say $I\subseteq \{1,\hdots,n\}$, and ask whether $\bigcup_{i\in I}C_i$ contracts to a point without contracting a divisor.  Corresponding to each curve $C_i$ is an idempotent $e_i$ in the algebra $\Lambda:=\End_R(N)$ from \eqref{VdBfunctor}, and we set $\Lambda_I:= \Lambda/\Lambda(1-\sum_{i\in I}e_i)\Lambda$.  Our first main result, a refinement of \cite{DW2}, is the following.
 
\begin{thm}[={\ref{contract on f}}]\label{contraction thm intro}
$\bigcup_{i\in I}C_i$ contracts to a point without contracting a divisor if and only if $\dim_\mathbb{C}\Lambda_I<\infty$.
\end{thm}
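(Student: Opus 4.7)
The plan is to bootstrap from the main theorem of \cite{DW2}, which establishes the equivalence in the special case $I=\{1,\hdots,n\}$ (the full fibre), by localising to a partial contraction that isolates exactly the curves indexed by $I$.

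For the $(\Rightarrow)$ direction, I would first produce an intermediate crepant partial contraction $g\colon X\to X'$ over $\Spec R$ whose exceptional locus is precisely $\bigcup_{i\in I}C_i$, collapsing these curves to a single point $p\in X'$. Such a $g$ exists by contracting the $K$-trivial face of the relative Mori cone spanned by the $[C_i]$, $i\in I$, which by hypothesis is contractible over $\Spec R$. Completing at $p$ places us in precisely the setting of \cite{DW2}: a crepant projective birational morphism with one-dimensional fibres whose full fibre over the closed point is $\bigcup_{i\in I}C_i$, contracting to a point without a divisor.

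Next I would identify the resulting contraction algebra with $\Lambda_I$. Van den Bergh's tilting bundle $\cV_X$ for $X\to\Spec R$ restricts to a tilting bundle for $g\colon X\to X'$, and its endomorphism algebra over the completion is essentially $e_J\Lambda e_J$ where $e_J=e_0+\sum_{i\in I}e_i$. The contraction algebra of $g$ in the sense of \cite{DW2} is obtained by killing its $e_0$ corner, and an elementary manipulation of two-sided ideals shows the result coincides with $\Lambda/\Lambda(1-\sum_{i\in I}e_i)\Lambda=\Lambda_I$. Invoking \cite{DW2} then yields $\dim_{\mathbb{C}}\Lambda_I<\infty$.

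For the $(\Leftarrow)$ direction, I would argue by contrapositive. If $\bigcup_{i\in I}C_i$ fails to contract to a point without a divisor, then one of three bad things happens: the collection does not span a contractible face of the relative Mori cone; it contracts to a higher-dimensional locus; or the contraction drags a divisor along. In each case one exhibits an infinite-dimensional family of sheaves on $X$ set-theoretically supported on $\bigcup_{i\in I}C_i$—thickenings, positive-dimensional moduli, or sections of a divisor's worth of line bundles—whose images under $\Uppsi_X$ lie in $\modCat\Lambda_I$, contradicting finite-dimensionality. Equivalently, one invokes the noncommutative deformation interpretation: $\Lambda_I$ prorepresents the multi-pointed noncommutative deformation functor of $\bigoplus_{i\in I}\mathcal{O}_{C_i}(-1)$, and finite-dimensionality of this functor forces the collection to be jointly rigid and contractible without divisor.

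The main obstacle is the bookkeeping in the forward direction, namely showing that Van den Bergh's tilting bundle transports correctly through the intermediate crepant contraction $g$, and that the resulting idempotent and ideal manipulations line up on the nose with $\Lambda_I$. Since $g$ is only produced after the hypothesis holds, and $X'$ is well-behaved only after completing at $p$, every identification must be performed locally and then transferred back to the global $\Lambda$; verifying this compatibility is where the bulk of the technical work should sit.
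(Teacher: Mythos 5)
There is a genuine gap, and it sits exactly where you located the ``bulk of the technical work'': the identification of the contraction algebra of $g$ with $\Lambda_I$ cannot be carried out by restricting the tilting bundle and manipulating idempotents. The contraction algebra $\CA^I$ is built from Van den Bergh's bundle $\cV_{\mathfrak{U}}$ constructed \emph{relative to $g$} on the formal fibre, and its summand over $C_i$ has rank equal to the scheme-theoretic multiplicity of $C_i$ with respect to $g$; the summand $\cN_i$ of $\cV_X$ has rank equal to the multiplicity with respect to $f$, and these genuinely differ (Example \ref{m not n}: multiplicity $2$ for $f$ versus $1$ for $g$). So $\End_{\mathfrak{U}}(\cV_{\mathfrak{U}})$ is not ``essentially $e_J\Lambda e_J$''. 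Moreover, even granting that, killing the $e_0$-corner of $e_J\Lambda e_J$ only factors out morphisms through $\add R$, whereas $\Lambda_I=\Lambda/\Lambda(1-\sum_{i\in I}e_i)\Lambda$ also kills morphisms factoring through the $N_j$ with $j\notin I$; these quotients do not agree in general. This is precisely the subtlety flagged in the paper: there is not even an obvious morphism between $\CA^I$ and $\Lambda_I$, and the isomorphism is established only by a universal property, not by ideal bookkeeping.

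The remark you make in passing for the converse direction is in fact the whole proof in the paper, and if you promote it to the main argument everything else becomes unnecessary. One shows directly (as in \cite[3.1]{DW1}, using that each $S_i=\mathbb{C}$) that $\Def_{\cS}\cong\Hom_{\alg_{|I|}}(\Lambda_I,-)$, i.e.\ $\Lambda_I$ prorepresents the noncommutative deformation functor of the collection $\{\cO_{C_i}(-1)\}$ transported across $\Uppsi_X$; since \cite{DW2} shows $\CA^I$ prorepresents the same functor and both live in the pro-category of $\art_{|I|}$ ($R$ being complete), uniqueness of the prorepresenting object gives $\Lambda_I\cong\CA^I$ with no geometric comparison of bundles needed. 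The Contraction Theorem of \cite{DW2} (contraction to a point without a divisor $\Leftrightarrow$ representability of $\Def_{\cS}$) then yields \emph{both} implications at once, so neither your Mori-cone construction of $g$ in the forward direction (which in any case exists for free over a complete local base) nor the three-case construction of infinite families of sheaves in the converse is required; the latter would amount to reproving the hard direction of \cite{DW2} by hand.
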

 
In fact \ref{contraction thm intro} is true regardless of the  singularities on $X$ and $\Spec R$, and needs no assumptions on crepancy.  Contracting the curves $\bigcup_{i\in I}C_i$, which we can do at will since $R$ is complete local, yields a diagram
\begin{eqnarray}
\begin{array}{c}
\begin{tikzpicture}
\node (X) at (-1.33,0) {$X$};
\node (Xcon) at (0,-0.8) {$X_{\con}$};
\node (R) at (0,-2) {$\Spec R$};
\draw[->] (X) -- node[above right] {$\scriptstyle g$} (Xcon);
\draw[->] (Xcon) -- node[right] {$\scriptstyle h$} (R);
\draw[->] (X) -- node[below left] {$\scriptstyle f$} (R);
\end{tikzpicture}
\end{array}\label{contraction f and g}
\end{eqnarray}
By \cite{DW2} there is a contraction algebra $\CA$, constructed with respect to the morphism $g$, that detects whether the curves in $I$ contract to a point without contracting a divisor.  The subtlety in the proof of \ref{contraction thm intro} is that $\Lambda$ and thus $\Lambda_I$ is constructed with respect to the morphism $f$, so to establish \ref{contraction thm intro} requires us to relate the algebras $\CA$ and $\Lambda_I$.  It turns out that they are isomorphic, but this can only be established by appealing to a universal property.  There is not even any obvious morphism between them.

\medskip
\textbf{Step 2: Mutation and Flops.}  We again pick a subset of curves $\{ C_i\mid i\in I\}$, but for simplicity in this introduction we assume that there is only one curve $C_i$ (i.e.\ $I=\{i\}$), although this paper does also cover the general situation, and all of the theorems stated here have multi--curve analogues.  The curve $C_i$ corresponds to an indecomposable summand $N_i$ of the $R$-module $N$.  By Step 1 the curve $C_i$ flops if and only if $\dim_{\mathbb{C}}\Lambda_i<\infty$.  Regardless of whether it flops or not, we can always categorically mutate the module $N$ with respect to the summand $N_i$, in the sense of \cite[\S 6]{IW4}, to produce another module $\upnu_i N$ (possibly equal to $N$), together with a derived equivalence
\[
\Upphi_i\colon \Db(\mod\Lambda)\to\Db(\mod \upnu_i\Lambda)
\]
where $\upnu_i\Lambda:=\End_R(\upnu_i N)$.  See \S\ref{mut prelim} for definitions and details.  This requires no assumptions on the singularities of $X$, but does require $R$ to be normal Gorenstein.  Note that the categorical mutation used here is inspired by, but in many ways is much different than, the mutation in cluster theory and elsewhere in the literature.  The main point is that the mutation here tackles the situation where there are loops, 2-cycles, and no superpotential, which is the level of generality needed to apply the results to possibly singular minimal models.   Consequently, this mutation is not just a simple combinatorial rule (unlike, say, Fomin--Zelevinsky mutation from cluster theory), however in practice $\upnu_i\Lambda$ can still be calculated easily.

The following is our next main result. When $C_i$ flops, we denote the flop by $X^+$.

\begin{thm}[={\ref{flop=mut general thm}}, {\ref{MMmodule under flop cor 2}}]\label{mut functor thm intro}
With the notation as above,
\begin{enumerate}
\item\label{mut thm 1 intro} The irreducible curve $C_i$ flops if and only if $\upnu_i N\neq N$.
\item\label{mut thm 2 intro}  If $\Gamma$ denotes the natural algebra associated to the flop $X^+$ \cite{VdB1d}, then $\Gamma\cong\upnu_i\Lambda$.
\item\label{mut thm 3 intro} If further $X\to\Spec R$ is a minimal model,  and $\dim_\mathbb{C}\Lambda_i<\infty$, then
\[
\Upphi_i\cong \Uppsi_{X^+}\circ\flop\circ\Uppsi_X^{-1} 
\]
where $\Uppsi$ are the functors in \eqref{VdBfunctor}, and $\flop$ is the inverse of the flop functor of Bridgeland--Chen \cite{Bridgeland,Chen}.
\end{enumerate}
\end{thm} 
\noindent
 The first part of the theorem allows us later to give a lower bound on the number of minimal models, and it turns out that the third part is one half of a dichotomy, namely
\[
\Upphi_i\cong\begin{cases} \Uppsi_{X^+}\circ\flop\circ\Uppsi_X^{-1} & \mbox{if }\dim_\mathbb{C}\Lambda_i<\infty\\
 \Uppsi_{X}\circ\twist\circ\Uppsi_X^{-1} & \mbox{if }\dim_\mathbb{C}\Lambda_i=\infty, \end{cases}
\]
where $\twist$ is a Fourier--Mukai twist-like functor over a noncommutative one-dimensional scheme.  We do not give the details here, as a more general treatment is given in \cite{DW4}.

We also remark that the proof of \ref{mut functor thm intro} does not need or refer to properties of the generic hyperplane section, so there is a good chance that in future we will be able to remove the assumption that $X$ has only Gorenstein terminal singularities, see \ref{B2}.

However, of the results in \ref{mut functor thm intro}, it is part two that is the key, since it allows us to iterate.  First, \ref{mut functor thm intro}\eqref{mut thm 2 intro} allows us to immediately read off the dual graph of the flop without explicitly calculating it in coordinates, since the dual graph can be read off from the mutated quiver.  Second, and most importantly, combining \ref{mut functor thm intro}\eqref{mut thm 2 intro} with \ref{contraction thm intro} (applied to $\upnu_i\Lambda$) allows us to detect which curves are contractible {\em after the flop} by inspecting factor algebras of the form $\upnu_i\Lambda/\upnu_i\Lambda(1-e)\upnu_i\Lambda$.  There is no way of seeing this information on the original algebra $\Lambda$, which is one of the main reasons why fixing $\Lambda$ and changing the stability there does not lend itself easily to iterations.   Hence we do not change GIT stability, we instead change the algebra by plugging $\upnu_i\Lambda$ back in as the new input, and continue the programme in an algorithmic way.  This is summarised in Figure~\ref{Fig2}.

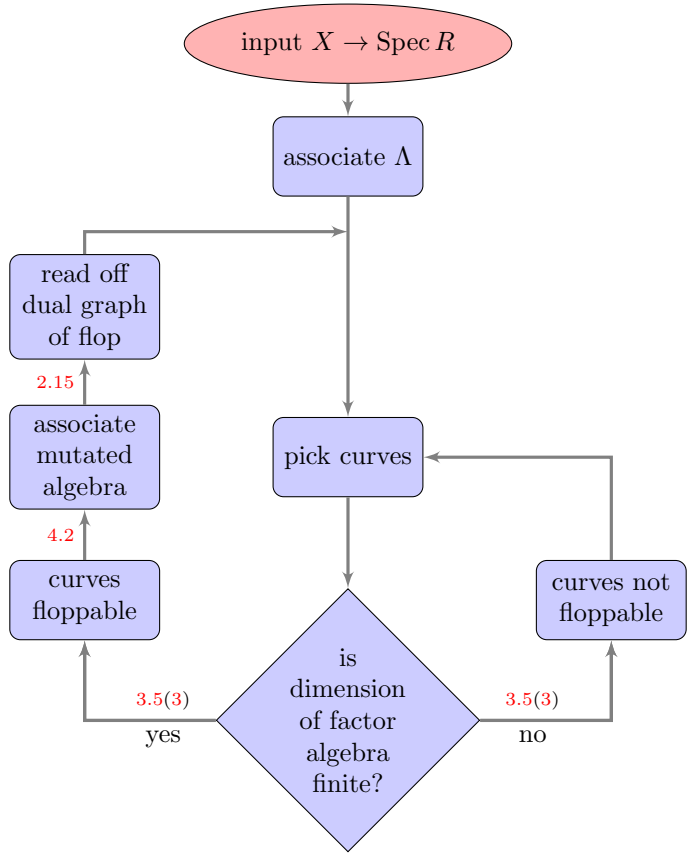
\begin{figure}[!h]{\vspace{2mm}}
\begin{center}
\begin{tikzpicture}[scale=1.75, node distance = 1.75cm, auto]
\node [cloud] (A) {input $X\to\Spec R$};
\node [block, below of=A,node distance=1.5cm] (B) {associate $\Lambda$};
\node [block, below of=B, node distance=4cm] (C) {pick curves};
\node [decision, below of=C, node distance=3.5cm] (Q) {is dimension of factor algebra finite?};
\coordinate [below of=B, node distance=1cm] (ch);
\coordinate [below of=B, node distance=2cm] (ch1);
\coordinate [below of=C, node distance=1.9cm] (ch2);
\node [block, left of=ch1, node distance=3.5cm] (Ba) {read off dual graph of flop};
\node [block, left of=C, node distance=3.5cm] (Ca) {associate mutated algebra};
\node [block, left of=ch2, node distance=3.5cm] (Da) {curves floppable};
\node [block, right of=ch2,node distance=3.5cm] (Db) {curves not floppable}; 
\path [line] (A) -- (B);
\path [line] (B) -- (C);
\path [line] (C) -- (Q);
\path [line] (Q) -| node [above,pos=0.2, color=black] {\scriptsize \ref{contract on f}\eqref{contract on f 3}} (Da);
\path [line] (Q) -| node [pos=0.2, color=black] {yes} (Da);
\path [line] (Da) -- node[pos=0.5,color=black] {\scriptsize \ref{flop=mut general thm}} (Ca);
\path [line] (Ca) -- node[pos=0.5,color=black] {\scriptsize \ref{reconstruction new}} (Ba);
\path [line] (Ba) |- (ch);
\path [line] (Q) -| node [above,pos=0.2, color=black] {\scriptsize \ref{contract on f}\eqref{contract on f 3}} (Db);
\path [line] (Q) -| node [below,pos=0.2, color=black] {no} (Db);
\path [line] (Db) |- (C);
\end{tikzpicture}
\end{center}
\caption{The Homological MMP, jumping between minimal models.}\label{Fig2}
\end{figure}

\subsection{Applications to GIT}\label{GIT intro}  There are various other outputs to the Homological MMP that for clarity have not been included in Figure~\ref{Fig2}.  One such output, when the curve does flop, is obtained by combining \ref{mut functor thm intro}\eqref{mut thm 2 intro} with \cite[5.2.5]{Joe}. This shows that it is possible to output the flop as a fixed, specified, GIT moduli space of the mutated algebra. 

As notation, for any algebra $\Gamma:=\End_R(N)$ with $\Gamma\in
\CM R$, we denote the dimension vector given by the ranks of the summands of $N$ by $\rk$.  If $N$ is a generator, that is $N$ contains $R$ as a summand, then the GIT chamber decomposition $\Uptheta(\Gamma)$ associated to $\Gamma$ (with dimension vector $\rk$) has co-ordinates $\upvartheta_i$ for $i\neq 0$, where by convention $\upvartheta_0$ corresponds to the summand $R$ of $N$.  We consider the region
\[
C_+(\Gamma):=\{ \upvartheta\in\Uptheta(\Gamma)\mid \upvartheta_i>0\mbox{ for all }i>0\}.
\]
As usual, we let $\cM_{\rk,\upphi}(\Lambda)$ denote the moduli space of $\upphi$-semistable $\Lambda$-modules of dimension vector $\rk$.
\begin{cor}[{=\ref{mut moduli gives flop text}}]\label{mut moduli gives flop}
With setup $X\to\Spec R$ as above, choose $C_i$ and suppose that $\dim_\mathbb{C}\Lambda_i<\infty$, so that $C_i$ flops. Then 
\begin{enumerate}
\item\label{mut moduli gives flop 1} $\cM_{\rk,\upphi}(\Lambda)\cong X$ for all $\upphi\in C_+(\Lambda)$.
\item\label{mut moduli gives flop 2} $\cM_{\rk,\upphi}(\upnu_i\Lambda)\cong X^+$ for all $\upphi\in C_+(\upnu_i\Lambda)$.
\end{enumerate}
\end{cor}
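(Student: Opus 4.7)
The plan is to reduce both parts to the following single moduli assertion: whenever an algebra arises from Van den Bergh's construction \cite{VdB1d} applied to a crepant projective birational morphism $Y\to\Spec R$ with one-dimensional fibres and terminal singularities, the chamber $C_+$ in the King GIT parameter space recovers $Y$ as the moduli of $\upphi$-semistable representations of dimension vector $\rk$. This is precisely the content of \cite[5.2.5]{Joe}, and both halves of the corollary amount to applying it twice: once to the input $X$, and once to the flop $X^+$ (with its own tilting bundle and algebra).

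For part (1), by \eqref{VdBfunctor} the algebra $\Lambda=\End_R(N)$ is manufactured from the Van den Bergh tilting bundle on $X$. The summand of $N$ indexed by $0$ is $R$, contributing the universal line bundle $\cO$ in any moduli description, while the remaining summands $N_i$ correspond bijectively to the curves $C_i$ and have ranks that assemble into the dimension vector $\rk$. The chamber $C_+(\Lambda)=\{\upvartheta\mid \upvartheta_i>0 \text{ for } i>0\}$ is exactly the cone of King stability conditions for which the universal quotient bundle on the moduli space is $f$-ample, so applying \cite[5.2.5]{Joe} identifies $\cM_{\rk,\upphi}(\Lambda)$ with $X$ for every $\upphi$ in this chamber.

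For part (2), I invoke Theorem \ref{mut functor thm intro}\eqref{mut thm 2 intro} to identify $\upnu_i\Lambda$ with the Van den Bergh algebra $\Gamma$ associated to the flop $X^+\to\Spec R$. Since the hypothesis $\dim_{\mathbb{C}}\Lambda_i<\infty$ guarantees (via Theorem \ref{contraction thm intro} and Step 2 of \S\ref{algorithm intro}) that $X^+$ exists and is again a crepant projective birational morphism with one-dimensional fibres and Gorenstein terminal singularities, the setup of part (1) applies verbatim to the pair $(X^+,\Gamma)$. The dimension vector $\rk$ for $\upnu_i\Lambda$ is read off from the ranks of the summands of the mutated reflexive $R$-module $\upnu_iN$, and part (1) applied to $X^+$ therefore yields $\cM_{\rk,\upphi}(\upnu_i\Lambda)\cong\cM_{\rk,\upphi}(\Gamma)\cong X^+$ for every $\upphi\in C_+(\upnu_i\Lambda)$.

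The main obstacle, as one would expect, is entirely contained in part (1), namely checking that the chamber $C_+(\Lambda)$ defined here by the sign pattern $\upvartheta_i>0$ for $i\neq 0$ (with $\upvartheta_0$ unconstrained) really does sit inside the range of stability conditions for which \cite[5.2.5]{Joe} produces $X$. This is a bookkeeping exercise matching the quiver and tilting-bundle conventions used here with those of \cite{Joe}, but it is largely formal once the universal line bundle corresponding to the vertex $0$ is identified with $\cO_X$. Given part (1), part (2) is then genuinely tautological: the algebra isomorphism supplied by Theorem \ref{mut functor thm intro}\eqref{mut thm 2 intro} transports the whole moduli picture from $X$ to $X^+$ without any further calculation.
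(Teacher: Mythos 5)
Your proposal is correct and follows essentially the same route as the paper: part (1) is exactly \cite[5.2.5]{Joe} applied to $X$, and part (2) uses the identification $\upnu_i\Lambda\cong\End_{X^+}(\cV_{X^+})$ coming from \ref{flop=mut general thm} (the main-text form of the mutation--flop theorem you cite) and then applies \cite[5.2.5]{Joe} to $X^+$. The only difference is expository: you spell out the convention-matching with \cite{Joe} that the paper leaves implicit.
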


This allows us to view the flop as changing the algebra but keeping the GIT chamber structure fixed, and so since mutation is easier to control than GIT wall crossing, \ref{GIT intro} implies, but is not implied by, results in GIT.   Mutation always induces a derived equivalence, and it turns out that it is possible to track the moduli space in \ref{mut moduli gives flop}\eqref{mut moduli gives flop 2} back across the equivalence to obtain the flop as a moduli space on the original algebra. Again, as in Step 1 in \S\ref{algorithm intro}, the subtlety is that the flop of Bridgeland--Chen is constructed as a moduli with respect to the morphism $g$ in \eqref{contraction f and g}, whereas here we want to establish the flop as a moduli with respect to global information associated to the morphism $f$.

The following moduli--tracking theorem allows us to do this.  Later, we prove it in much greater generality, and with multiple summands.

\begin{prop}[={\ref{stab change b and c}\eqref{stab change b and c 1}}]\label{SY NS formula intro}
Let $S$ be a $d$-dimensional complete local normal Gorenstein ring,  $M\in\refl R$ with $\Lambda:=\End_S(M)\in\CM R$, and suppose that $\upnu_iM$ satisfies the technical assumptions in \ref{main stab track}.   Consider the minimal left $\add (\oplus_{j\neq i}M_j)$-approximation of $M_i$, namely
\[
0\to M_i\to\bigoplus_{j\neq i}M_j^{\oplus b_j}
\]
Suppose that $\upbeta$ is a dimension vector, and $\upvartheta$ is a stability condition on $\Lambda$ with $\upvartheta_i>0$.  Then as schemes $\cM_{\upbeta,\upvartheta}(\Lambda)\cong \cM_{\upnu_{i}\upbeta,\upnu_{i}\upvartheta}(\upnu_i\Lambda)$, where the vectors $\upnu_i\upbeta$ and $\upnu_i\upvartheta$ are given by
\[
(\upnu_i\upbeta)_t=\left\{\begin{array}{cl} \upbeta_t&\mbox{if }t\neq i \\ \left(\sum_{j\neq i}b_j\upbeta_j\right)-\upbeta_i&\mbox{if }t=i \end{array}\right.
\qquad
(\upnu_i\upvartheta)_t=\left\{\begin{array}{cl} \upvartheta_t+b_t\upvartheta_i&\mbox{if }t\neq i \\ -\upvartheta_i&\mbox{if }t=i \end{array}\right.
\]
\end{prop}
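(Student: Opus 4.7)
The strategy is to exhibit the required isomorphism as the restriction of the mutation derived equivalence $\Upphi_i\colon\Db(\mod\Lambda)\to\Db(\mod\upnu_i\Lambda)$ to the semistable loci on each side. By the construction of $\Upphi_i$, the approximation sequence
\[
0\to M_i\to\bigoplus_{j\neq i}M_j^{\oplus b_j}\to M_i^*\to 0
\]
controls everything: it pins down the underlying tilting object, and hence the induced linear maps on $K_0$ and its dual are forced to be exactly the formulas given for $\upnu_i\upbeta$ and $\upnu_i\upvartheta$. The only freedom is a choice of cohomological shift at vertex $i$, and the hypothesis $\upvartheta_i>0$ is what selects the correct one.

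The core of the proof will be to show that for any $\upvartheta$-semistable $X\in\mod\Lambda$ of dimension vector $\upbeta$ with $\upvartheta_i>0$, the complex $\Upphi_i(X)\in\Db(\mod\upnu_i\Lambda)$ is concentrated in degree zero, and the resulting $\upnu_i\Lambda$-module is $\upnu_i\upvartheta$-semistable of dimension vector $\upnu_i\upbeta$. Cohomology concentration is forced by King's condition: since $\upvartheta(S_i)=\upvartheta_i>0$, the simple $S_i$ cannot appear as a quotient of $X$, and feeding this through the exchange sequence above kills the potentially obstructing $\Tor$ or $\Ext$ against the tilting. Once concentration is in hand, a direct computation using the approximation yields $\dim e_t\Upphi_i(X)=(\upnu_i\upbeta)_t$ on each vertex, and the linear transformation rule for $\upvartheta$ follows by tracking submodules across the exact equivalence between the two hearts.

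To upgrade this pointwise bijection to an isomorphism of schemes, I would work directly with the moduli functor of points. Given a family $\mathcal{X}$ of $\upvartheta$-semistable $\Lambda$-modules over a test scheme $B$, apply $\mathcal{X}\Ltimes_\Lambda T$ relatively, where $T$ is the tilting bimodule underlying $\Upphi_i$; by pointwise cohomology concentration and semicontinuity, and using the hypotheses on $\upnu_i M$ from \ref{main stab track}, this complex is quasi-isomorphic to a $B$-flat family of $\upnu_i\Lambda$-modules with dimension vector $\upnu_i\upbeta$ and $\upnu_i\upvartheta$-semistability. The inverse mutation, applied with stability $-\upnu_i\upvartheta$ and using that $(-\upnu_i\upvartheta)_i=\upvartheta_i>0$, then supplies the inverse transformation of functors, which forces an isomorphism of the corepresenting GIT schemes.

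The main obstacle will be the cohomology concentration statement in families, rather than pointwise. At a closed point it is essentially a slope argument driven by $\upvartheta_i>0$ together with the exchange sequence, but for the scheme-theoretic statement one needs uniform control of higher $\Tor$ across the whole family, which is precisely what the technical hypotheses on $\upnu_i M$ from \ref{main stab track} are designed to provide.
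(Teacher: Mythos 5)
Your overall strategy---restricting the mutation equivalence given by the tilting module $T_I$ (here $I=\{i\}$) to the semistable subcategories, proving cohomology concentration from $\upvartheta_i>0$, tracking dimension vectors and stability across the equivalence, and upgrading to schemes by working with flat families---is the same route the paper takes in \ref{main stab track} and \ref{stab change b and c}, where the base-change step is delegated to \cite[4.20]{SY}. The genuine gap is in your justification of the concentration step at a closed point. The condition $\upvartheta_i>0$ gives only $\Hom_\Lambda(X,S_i)=0$ (\ref{very easy}), and this, ``fed through the exchange sequence'', does not by itself kill $\Ext^1_\Lambda(T_I,X)$: the paper's proof of this vanishing (\ref{new stab}) is where the technical hypotheses on $\upnu_iM$ enter \emph{pointwise}, not merely for uniformity in families as you suggest. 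One needs the syzygy identifications $\Omega_\Lambda\Lambda_I\cong T_I$ or $\Omega^2_\Lambda\Lambda_I\cong T_I$ (\ref{Ext 2 thm}, \ref{Ext 3 thm}) to rewrite $\Ext^1_\Lambda(T_I,X)$ as $\Ext^{2}_\Lambda(\Lambda_I,X)$ or $\Ext^{3}_\Lambda(\Lambda_I,X)$, finiteness of $\pd_\Lambda\Lambda_I$ to invoke the $d$-sCY duality, and then a depth/regular-sequence induction (\ref{technical comm Ext vanishing}, \ref{stab Ext van key}), with a separate argument (\ref{surfaces hack}) when $d=2$, precisely because $\Lambda_I$ need not have finite length when $R$ is not isolated. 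Without assumptions (a)/(b) of \ref{main stab track} the vanishing genuinely fails (cf.\ \ref{not true multiple}), so a ``slope argument plus the exchange sequence'' cannot suffice; also note the cokernel of the minimal left approximation is the new summand $J_i^*$ of $\upnu_iM$, not $M_i^*$.

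Two further points. For the inverse direction you propose applying the forward statement with stability $-\upnu_i\upvartheta$, ``using that $(-\upnu_i\upvartheta)_i=\upvartheta_i>0$''; this is not legitimate, since $(-\upnu_i\upvartheta)$-semistability is a different condition from $\upnu_i\upvartheta$-semistability and would track the wrong moduli problem. What is needed, and what the paper proves separately in \ref{main stab track}\eqref{main stab track 2} via \ref{pd 1 for TI both sides} and \ref{new stab}\eqref{new stab 2}, is that $T_I\otimes_\Gamma-$ is exact on $\upnu_i\upvartheta$-semistables because $(\upnu_i\upvartheta)_i<0$ forces $\Hom_\Gamma(S_i',y)=0$ and hence $\Tor_1^\Gamma(T_I,y)=0$. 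Finally, ``tracking submodules across the exact equivalence'' conceals the key computation: a submodule $y'$ of $\Hom_\Lambda(T_I,X)$ does not correspond to a submodule of $X$; one must control the error terms $\Tor_1^\Gamma(T_I,-)$ and $\Ext^1_\Lambda(T_I,-)$, observe that they are supported only at the mutated vertex, and use the sign of the stability parameter at that vertex, exactly as in the proof of \ref{main stab track}.
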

The technical assumptions in \ref{SY NS formula intro} hold for flopping contractions, and they also hold automatically for any noncommutative crepant resolution (=NCCR) or maximal modification algebra (=MMA) in dimension three.  Thus \ref{SY NS formula intro} can be applied to situations where the fibre is two--dimensional, and we expect to be able to extend some of the techniques in this paper to cover general minimal models of general Gorenstein 3-folds.  We also remark that \ref{SY NS formula intro} is known in special situations; it generalises \ \cite[3.6, 4.20]{SY}, which dealt with Kleinian singularities, and \cite[6.12]{NS}, which dealt with specific examples of smooth 3-folds with mutations of NCCRs given by quivers with potentials at vertices with no loops.

It is also possible to track moduli from $\upnu_i\Lambda$ to moduli on $\Lambda$, see \ref{stab change b and c}\eqref{stab change b and c 2}.  This leads to the following corollary.

\begin{cor}[{=\ref{see the flop}}]\label{flop of original intro}
With the running hypothesis $f\colon X\to \Spec R$ as above, assume that either $f$ is a flopping contraction, or a minimal model. Let $\Lambda:=\End_R(N)$ from \eqref{VdBfunctor}, where $N$ automatically has $R$ as a summand, and consider the GIT chamber decomposition $\Uptheta$ associated to $\Lambda$, with co-ordinates $\upvartheta_i$ for $i\neq 0$ (where $\upvartheta_0$ corresponds to the summand $R$ of $N$).  Pick an indecomposable non-free summand $N_i$, and consider the $b_j$ defined in \ref{SY NS formula intro} (for the case $M:=N$). Then the region
\[
\upvartheta_i<0,\quad\upvartheta_j+b_j\upvartheta_i>0\mbox{ for all }j\neq i
\]
defines a chamber in $\Uptheta(\Lambda)$, and for any parameter $\upvartheta$ inside this chamber,
\[
\cM_{\rk, \upvartheta}(\Lambda)\cong\left\{  \begin{array}{ll}
X^+&\mbox{if $C_i$ flops}\\
X&\mbox{else,}
\end{array}\right.
\]
where $X^+$ denotes the flop of $X$ at $C_i$.  Thus the flop, if it exists, is obtained by crashing through the single wall $\upvartheta_i=0$ in $\Uptheta(\Lambda)$.
\end{cor}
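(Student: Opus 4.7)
The idea is to transport the moduli description of the flop $X^+$ living on the mutated algebra $\upnu_i\Lambda$ back to $\Lambda$ via the moduli-tracking formula of Proposition~\ref{SY NS formula intro}, and to recognise the specified region as the preimage of the positive chamber $C_+(\upnu_i\Lambda)$ under the resulting affine change of stability parameters.

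By Theorem~\ref{mut functor thm intro}\eqref{mut thm 2 intro}, the mutated algebra $\upnu_i\Lambda$ is identified with the natural algebra associated to the flop $X^+$ whenever $C_i$ flops, so Corollary~\ref{mut moduli gives flop}\eqref{mut moduli gives flop 2} gives $\cM_{\upnu_i\rk,\upphi}(\upnu_i\Lambda)\cong X^+$ for every $\upphi\in C_+(\upnu_i\Lambda)$; when $C_i$ does not flop, Theorem~\ref{mut functor thm intro}\eqref{mut thm 1 intro} gives $\upnu_iN\cong N$ and so the same statement with $X$ in place of $X^+$ follows from Corollary~\ref{mut moduli gives flop}\eqref{mut moduli gives flop 1}. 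Now apply Proposition~\ref{SY NS formula intro} with $(\upbeta,\upvartheta)=(\rk,\upvartheta)$. The tracking formulas give $(\upnu_i\upvartheta)_i=-\upvartheta_i$ and $(\upnu_i\upvartheta)_j=\upvartheta_j+b_j\upvartheta_i$ for $j\ne i$, i.e.\ an invertible affine transformation of the stability space, while $\upnu_i\rk$ is exactly the rank vector of $\upnu_iN$ (by a rank count in the approximation sequence). Thus $\upnu_i\upvartheta\in C_+(\upnu_i\Lambda)$ if and only if $\upvartheta_i<0$ and $\upvartheta_j+b_j\upvartheta_i>0$ for all $j\ne i$, which is exactly the region stated. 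Combining with the previous sentence yields
\[
\cM_{\rk,\upvartheta}(\Lambda)\cong\cM_{\upnu_i\rk,\upnu_i\upvartheta}(\upnu_i\Lambda)\cong\begin{cases}X^+&\text{if }C_i\text{ flops,}\\ X&\text{else.}\end{cases}
\]

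Because the affine isomorphism $\upvartheta\mapsto\upnu_i\upvartheta$ carries walls to walls, it sends the chamber $C_+(\upnu_i\Lambda)$ to a chamber of $\Uptheta(\Lambda)$, namely the region in the statement. The final sentence of the corollary then follows by walking along a straight segment from any $\upvartheta\in C_+(\Lambda)$ (with all coordinates large and positive) into the new chamber by decreasing only $\upvartheta_i$: the inequalities $\upvartheta_j+b_j\upvartheta_i>0$ for $j\ne i$ are preserved along the segment, so the unique wall traversed is $\{\upvartheta_i=0\}$. The principal technical obstacle is verifying the Cohen--Macaulay and reflexivity hypotheses on $\upnu_iN$ required to invoke Proposition~\ref{SY NS formula intro}; these hold in both the flopping-contraction and minimal-model settings by the general mutation theory underlying Theorem~\ref{mut functor thm intro}, after which the argument is the routine bookkeeping indicated above.
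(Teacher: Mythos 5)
Your overall strategy (put the flop moduli on $\upnu_i\Lambda$ in its chamber $C_+(\upnu_i\Lambda)$ and transport back to $\Lambda$) is the same as the paper's, but there is a genuine gap at the central step: you invoke Proposition \ref{SY NS formula intro} (= \ref{stab change b and c}\eqref{stab change b and c 1}) at parameters $\upvartheta$ with $\upvartheta_i<0$, whereas that result is only stated, and only proved, under the hypothesis $\upvartheta_i>0$ (its proof needs $\Ext^1_\Lambda(T_I,x)=0$ for $\upvartheta$-semistable $x$, which comes from \ref{very easy} and fails to be available when $\upvartheta_i<0$). The tracking map is not a globally valid ``invertible affine transformation of the stability space'' that you may simply invert. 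To reach the region $\upvartheta_i<0$ of $\Uptheta(\Lambda)$ one must instead track in the opposite direction, from $C_+(\upnu_i\Lambda)$ back to $\Lambda$, which is \ref{stab change b and c}\eqref{stab change b and c 2}; but there the coefficients are the $\boldc$ (from the minimal right $\add N_{I^c}$-approximation $V_i\to N_i$), not the $\boldb$ of the statement (from the left approximation $N_i\to U_i$), and in general $\boldb\neq\boldc$. So as written your argument either uses the formula outside its range of validity or produces the region with the wrong coefficients. The paper closes exactly this gap with \ref{Ui=Vi for cDV}: in the cDV setting $U_i\cong V_i$, proved by cutting with a generic hyperplane (\ref{chamber 3fold=chamber surface prep}) and using the symmetry of knitting on the AR/McKay quiver, which gives $\boldb=\boldc$ and hence the two-sided tracking statement \ref{stab change for b=c} valid for all $\upvartheta$ with $\upvartheta_i\neq 0$. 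This ingredient is entirely missing from your proposal and cannot be waved away as bookkeeping.

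Two smaller points. First, the ``technical assumptions'' you defer to at the end are verified in the paper by concrete citations: in the minimal model case by \ref{basic2} (so that either $\upnu_iN\cong N$ or $\upnu_i\upnu_iN\cong N$ with $\dim_{\mathbb{C}}\Lambda_i<\infty$), and in the flopping contraction case by \ref{mutmutI general} together with \ref{contract on f}; you should say which results give these, since they are exactly what makes \ref{main stab track} applicable. Second, your straight-segment argument for the last sentence needs care: decreasing $\upvartheta_i$ also decreases $\upvartheta_j+b_j\upvartheta_i$, so the inequalities are not automatically preserved; the cleaner statement (as in the paper, using \ref{Cplus is chamber} plus the fact that the tracked region is a chamber) is simply that the two chambers share the codimension-one wall $\upvartheta_i=0$.
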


Of course, our viewpoint is that \ref{flop of original intro} should be viewed as a \emph{consequence} of the Homological MMP, since without the extra data the Homological MMP offers, it is hard to say which should be the next wall to crash through, and then which wall to crash through after that.  The information in the next chamber needed to iterate is contained in $\upnu_i\Lambda$, not the original $\Lambda$.  Mutation allows us to successfully track this data, and as a consequence we obtain the following corollary.

\begin{cor}[={\ref{Craw Ishii true}\eqref{Craw Ishii true 1}}]\label{path in GIT intro}
There exists a connected path in the GIT chamber decomposition of $\Lambda$ where every minimal model of $\Spec R$ can be found, and each wall crossing in this path corresponds to the flop of a single curve.
\end{cor}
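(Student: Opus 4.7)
The plan is to build the path inductively by iterating the Homological MMP, and to use the moduli-tracking formula of \ref{SY NS formula intro} to realize each step as a single-wall crossing inside the fixed chamber decomposition $\Uptheta(\Lambda)$.

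For the starting chamber I would take $C_+(\Lambda)\subset\Uptheta(\Lambda)$, which has moduli $X$ by \ref{mut moduli gives flop}\eqref{mut moduli gives flop 1}. For each curve $C_i$ in $X$ with $\dim_{\mathbb{C}}\Lambda_i<\infty$, \ref{flop of original intro} produces an adjacent chamber, reached by crossing the single wall $\upvartheta_i=0$, whose moduli is the flop $X^+$; by \ref{mut functor thm intro}\eqref{mut thm 2 intro} the algebra associated to $X^+$ is $\upnu_i\Lambda$. Inductively, suppose we have reached a minimal model $X_k$ sitting in some chamber of $\Uptheta(\Lambda)$ with associated algebra $\Lambda_k=\upnu_{i_k}\cdots\upnu_{i_1}\Lambda$. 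Applying \ref{flop of original intro} to $\Lambda_k$ identifies each single-curve flop of $X_k$ with a single-wall crossing out of $C_+(\Lambda_k)$ inside $\Uptheta(\Lambda_k)$. Since the mutation of stability parameters $\upvartheta\mapsto\upnu_j\upvartheta$ appearing in \ref{SY NS formula intro} is an affine isomorphism that carries walls to walls, iterating these maps gives a consistent identification of the relevant chambers of $\Uptheta(\Lambda_k)$ with chambers of $\Uptheta(\Lambda)$ under which single-wall crossings remain single-wall crossings.

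To finish, I would invoke the standard fact that any two $\mathbb{Q}$-factorial terminal minimal models of $\Spec R$ are connected by a finite sequence of flops at single curves (Kawamata--Koll\'ar). Starting at $X$ and executing any such sequence then produces at each stage an adjacent chamber in $\Uptheta(\Lambda)$, and combining \ref{mut moduli gives flop}\eqref{mut moduli gives flop 2} with the tracking identification shows that the moduli on that chamber is precisely the corresponding minimal model; in particular distinct minimal models yield distinct chambers, so the path does not revisit any chamber. The union of chambers so visited is therefore a connected path in $\Uptheta(\Lambda)$ meeting every minimal model, with consecutive chambers separated by a single wall corresponding to a single-curve flop.

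The main obstacle will be the bookkeeping underlying the iterated tracking: verifying that the compositions of affine maps $\upvartheta\mapsto\upnu_{i_j}\upvartheta$ carry the relevant chambers of each $\Uptheta(\Lambda_k)$ injectively into $\Uptheta(\Lambda)$, and that the transformed dimension vectors $\upnu_{i_j}\cdots\upnu_{i_1}\rk$ remain the correct rank vectors for $\Lambda_k$ at each stage so that one stays in the geometric region of $\Uptheta(\Lambda)$. This is exactly what \ref{SY NS formula intro} is designed to control, since the scheme-level moduli isomorphism it supplies forces distinct minimal models to land in distinct chambers of $\Uptheta(\Lambda)$; nevertheless, pinning down that the recipe really terminates at genuinely new chambers at each step (rather than folding back on itself) is where the real care will be required.
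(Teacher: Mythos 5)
Your outline is in essence the paper's own strategy: iterate single-curve flops via mutation, realise each minimal model as the $C_+$ moduli of $\upnu_{i_t}\cdots\upnu_{i_1}\Lambda$ (\ref{mut moduli gives flop text}), transport everything into $\Uptheta(\Lambda)$ by moduli tracking, and identify each crossing with a flop as in \ref{see the flop}. The genuine gap is in the step you yourself flag as ``bookkeeping'' and then attribute to \ref{SY NS formula intro}: that proposition, and the results behind it (\ref{main stab track}, \ref{stab change b and c}, \ref{stab change for b=c}), are only valid on the region $\upvartheta_i>0$ (or $\upvartheta_i\neq 0$ once one knows $U_i\cong V_i$, i.e.\ $\boldb=\boldc$). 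So the linear map $\upnu_{\boldb}$ is not known a priori to ``carry walls to walls'' globally, and it is not automatic that a whole chamber of $\Uptheta(\Lambda_k)$, tracked back one mutation at a time, stays at every intermediate stage inside the half-space where tracking applies, that its image is a genuine chamber of $\Uptheta(\Lambda)$ rather than some region, or that the single wall separating $C_+(\Lambda_k)$ from its neighbour maps to a single codimension-one wall of $\Uptheta(\Lambda)$. Nothing in \ref{SY NS formula intro} alone rules out the tracked region straddling a coordinate hyperplane at an intermediate stage, nor does it even show that $\Uptheta(\Lambda)$ is a finite hyperplane arrangement.

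This missing control is precisely what the paper establishes in \S\ref{chamber red to surface subsection}--\S\ref{calculate chamber subsection}: approximations and mutation are preserved under a generic hyperplane section (\ref{chamber 3fold=chamber surface prep}), $U_i\cong V_i$ so that one may track across either side of each wall (\ref{Ui=Vi for cDV}, \ref{stab change for b=c}), and then the inductive argument of \ref{surface intersection root} --- which leans on the known finite hyperplane-arrangement structure of the chamber decomposition for Kleinian singularities --- shows that iterated tracking of $C_+$ really does tile out chambers and produces a connected path; it is \ref{chamber 3fold=chamber surface}\eqref{chamber 3fold=chamber surface 3} together with the proof of \ref{surface intersection root} that is quoted in the paper's proof of \ref{Craw Ishii true}. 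Two smaller remarks: the Kawamata--Koll\'ar connectedness of minimal models by single-curve flops that you invoke is also what the paper uses (inside the proof of \ref{NCvsC min model}), so that part is fine; but your closing claim that the path never revisits a chamber neither follows (a walk visiting every vertex of the flops graph may have to backtrack) nor is needed for the statement.
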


We remark that \ref{path in GIT intro} was verified in specific quotient singularities in \cite[1.5]{NS}, and is also implicit in the setting of $cA_n$ singularities in \cite[\S6]{IW5}, but both these papers relied on direct calculations.  The Homological MMP removes the need to calculate.

The following conjecture is an extension to singular minimal models of a conjecture posed by Craw--Ishii \cite{CI}, originally for quotient singularities and their NCCRs.

\begin{conj}[Craw--Ishii]\label{CI conj}
Suppose that $S$ is an arbitrary complete local normal Gorenstein 3-fold with rational singularities, and $\End_R(N)$ is an MMA where $R\in\add N$.  Then every projective minimal model of $\Spec R$ can be obtained as a quiver GIT moduli space of $\End_R(N)$.
\end{conj}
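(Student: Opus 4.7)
The plan is to prove the conjecture by iterating the Homological MMP. First, apply the bijective correspondence between maximal modifying $R$-module generators and minimal models to identify each projective minimal model $X'$ of $\Spec R$ with a module $N'$ for which $\End_R(N')$ is an MMA and $R\in\add N'$. Tautologically, $X'$ appears as a GIT moduli space of $\End_R(N')$ in the chamber $C_+(\End_R(N'))$, so the task reduces to transporting this moduli back to a moduli space on the fixed algebra $\End_R(N)$.

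The main tool is the combination of mutation of MMAs with the moduli-tracking result \ref{SY NS formula intro}. Starting from $\End_R(N)$, one mutates at successive indecomposable non-free summands to produce a sequence $\End_R(N)=\End_R(M_0),\End_R(M_1),\ldots,\End_R(M_k)=\End_R(N')$ of MMAs. Each step induces a derived equivalence, and \ref{SY NS formula intro} carries a region in the chamber structure of one algebra isomorphically onto a region in the chamber structure of the adjacent algebra, preserving moduli spaces under the prescribed change of stability. Iterating these identifications backwards from $\End_R(N')$ to $\End_R(N)$ expresses $X'$ as a GIT moduli of $\End_R(N)$ at an explicitly computable stability vector. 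In the one-dimensional fibre setting this is precisely the argument that produces \ref{path in GIT intro}, so what is required in general is to run the same procedure under the weaker hypotheses of the conjecture.

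The main obstacle is to show that the graph of MMAs containing $R$ as a summand, with edges given by single-summand mutation, is connected in the general, possibly two-dimensional fibre setting. In the one-dimensional fibre case this is settled by \ref{mut functor thm intro}, since mutation at a floppable curve realises the inverse Bridgeland--Chen flop functor and mutation at a non-floppable curve realises a noncommutative twist; connectivity then follows from the classical fact that minimal models with one-dimensional fibres are linked by a sequence of flops. For the general case one must accommodate divisorial contractions: a natural strategy is to cut by a sufficiently general hyperplane section, so that the connectivity question reduces to a surface problem governed by the combinatorics of the dual graph, and then lift back via the moduli-tracking result, which is already formulated to cover arbitrary MMAs in dimension three. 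The technical core of the proof would then be to verify that the hypotheses of \ref{SY NS formula intro} propagate under mutation in this higher-fibre-dimensional regime, so that every successive chamber identification remains valid and the overall path through the GIT chambers of $\End_R(N)$ indeed sweeps out every minimal model of $\Spec R$.
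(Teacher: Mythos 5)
The statement you are proving is stated in the paper as a \emph{conjecture}, and the paper itself only establishes it in the cDV case, i.e.\ when the minimal models of $\Spec R$ have one-dimensional fibres (\ref{Craw Ishii true}\eqref{Craw Ishii true 2}).  In that restricted setting your outline reproduces the paper's argument essentially verbatim: identify each minimal model with an MM generator via \ref{NCvsC min model}, realise it as the $C_+$ moduli of its own MMA, connect the MMAs by single-summand mutations corresponding to flops, and transport the moduli back to the fixed algebra $\End_R(N)$ with the tracking result \ref{SY NS formula intro}/\ref{stab change b and c}.  So for cDV singularities your proposal is correct and is the same route as the paper.

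However, as a proof of \ref{CI conj} in the stated generality it has a genuine gap, and it is exactly the gap the paper leaves open.  Every ingredient you invoke is proved only under the crepant one-dimensional-fibre setup \ref{crepant setup}: the identification of MM generators with minimal models (\ref{NCvsC min model}), the flop--mutation comparison (\ref{mut functor thm intro}), and the chamber-structure and connectivity statements (\ref{chamber 3fold=chamber surface}, \ref{Craw Ishii true}\eqref{Craw Ishii true 1}) all rest on the tilting bundle of \cite{VdB1d} for morphisms with fibre dimension at most one and, crucially, on Reid's general elephant: the reduction "cut by a sufficiently general hyperplane section and run knitting on the surface" works because for cDV singularities the generic section is an ADE singularity.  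For an arbitrary complete local Gorenstein rational 3-fold the generic hyperplane section need not be ADE, the AR-quiver combinatorics and the approximation data $b_j$ are not available, and there is no analogue of \ref{basic2} guaranteeing the hypotheses (a)/(b) of \ref{main stab track} at each mutation step; moreover connectivity (indeed finiteness) of the mutation graph of MM generators is itself conjectural in this generality (cf.\ \ref{1d finite conjecture}).  Your closing sentence, that one "would then have to verify" these propagation and connectivity statements, is precisely the missing mathematical content rather than a step of the proof, so the general conjecture remains unproved by your argument.
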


There are versions of the conjecture for rings $R$ that are not complete local, but in the absence of a grading, which for example exists for quotient singularities, there are subtleties due to the failure of Krull--Schmidt.  Nevertheless, a direct application of \ref{path in GIT intro} gives the following result.

\begin{cor}[={\ref{Craw Ishii true}\eqref{Craw Ishii true 2}}]\label{CI true intro}
The Craw--Ishii conjecture is true for all compound du Val (=cDV) singularities.
\end{cor}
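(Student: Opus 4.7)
The plan is to reduce the conjecture, for a cDV singularity $R$, directly to Corollary~\ref{path in GIT intro} by using the bijective correspondence between maximal modifying $R$-module generators and minimal models advertised in the abstract and developed elsewhere in the paper. Let $\Lambda=\End_R(N)$ be an MMA with $R\in\add N$. Since $R$ is cDV, it is a three-dimensional complete local normal Gorenstein ring with rational (in fact terminal) singularities, and minimal models of $\Spec R$ automatically have one-dimensional fibres and Gorenstein terminal singularities. Invoking the Auslander--McKay correspondence, the generator $N$ is identified with the reflexive module associated to some minimal model $f\colon X\to\Spec R$, and $\Lambda=\End_R(N)$ is naturally identified with the algebra in \eqref{VdBfunctor} for this $f$. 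In particular, $f$ satisfies all the running hypotheses of \S\ref{algorithm intro}.

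Having made this identification, one simply applies Corollary~\ref{path in GIT intro} to the pair $(f,\Lambda)$. This yields a connected path in the GIT chamber decomposition $\Uptheta(\Lambda)$ along which every projective minimal model of $\Spec R$ appears as a moduli space $\cM_{\rk,\upvartheta}(\Lambda)$ for a parameter $\upvartheta$ in an appropriate chamber, and along which every wall crossing corresponds to a single flop (by \ref{flop of original intro}). In particular, every projective minimal model of $\Spec R$ arises as a quiver GIT moduli space of the MMA $\Lambda$, which is precisely the statement of Conjecture~\ref{CI conj} for cDV singularities.

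The one nontrivial input, and the main potential obstacle, is the appeal to the MMA-to-minimal-model bijection: without it, the Homological MMP runs only from the particular $\Lambda$ associated to the initial geometric input, whereas the conjecture asks for the result to hold for \emph{every} MMA generator $N$ with $R\in\add N$. In the cDV setting this correspondence is an output of the paper itself, so the reduction is clean; outside the cDV case (e.g.\ for non-isolated or non-terminal singularities) the absence of such a bijection is what blocks the same argument, which accounts for the restrictive hypothesis in the statement.
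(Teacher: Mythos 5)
Your proposal is correct and takes essentially the same route as the paper: in Theorem \ref{Craw Ishii true}\eqref{Craw Ishii true 2} the arbitrary MMA generator is likewise identified, via the Auslander--McKay correspondence \ref{NCvsC min model}, with $H^0(\cV_Y)$ for some minimal model $Y\to\Spec R$, and then the connected-path statement \ref{Craw Ishii true}\eqref{Craw Ishii true 1} is applied to $Y\to\Spec R$. (One minor imprecision: a cDV singularity is canonical rather than terminal in general, but this is harmless since all your argument needs is that its minimal models have one-dimensional fibres and Gorenstein terminal singularities.)
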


In fact we go further than \ref{path in GIT intro} and \ref{CI true intro}, and describe the whole GIT chamber structure.  In principle this is hard, since obtaining the numbers $b_j$ needed in \ref{SY NS formula intro} directly on the 3-fold is difficult without explicit knowledge of $\Lambda$ or indeed without knowing the explicit equation defining $R$. However, the next result asserts that mutation is preserved under generic hyperplane sections, and this allows us to obtain the numbers $b_j$ by reducing to the case of Kleinian surface singularities, about which all is known. 

\begin{lemma}[={\ref{chamber 3fold=chamber surface prep}}]\label{mut generic hyper intro}
With the setup $X\to\Spec R$ as above, if $g$ is a sufficiently generic hyperplane section, then $\Lambda/g\Lambda\cong\End_{R/gR}(N/gN)$, and minimal approximations are preserved under tensoring by $R/gR$.
\end{lemma}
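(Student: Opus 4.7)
The plan is to exploit the multiplication-by-$g$ short exact sequence on each reflexive summand. Writing $N=\bigoplus_j N_j$, for every pair of indices $(j,k)$ the sequence $0\to N_k\xrightarrow{g}N_k\to N_k/gN_k\to 0$ is exact because $R$ is a normal Gorenstein domain, $g\in\m$ is a regular parameter, and each $N_k$ is reflexive, hence torsion-free. Applying $\Hom_R(N_j,-)$, and using the adjunction $\Hom_R(N_j,N_k/gN_k)\cong\Hom_{R/gR}(N_j/gN_j,N_k/gN_k)$, the long exact sequence extracts
\begin{equation*}
0\to \Hom_R(N_j,N_k)/g\Hom_R(N_j,N_k)\to \Hom_{R/gR}(N_j/gN_j,N_k/gN_k)\to \Ext^1_R(N_j,N_k)[g]\to 0,
\end{equation*}
where $[g]$ denotes the $g$-torsion submodule.

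For the first assertion, each $\Ext^1_R(N_j,N_k)$ is a finitely generated $R$-module with only finitely many associated primes. Under the tilting hypothesis of \cite{VdB1d} these Ext modules are controlled (and have support of positive codimension in $\Spec R$), so, since $\dim R=3$ and there are only finitely many pairs $(j,k)$, a sufficiently generic parameter $g\in\m$ avoids every non-maximal associated prime of every $\Ext^1_R(N_j,N_k)$ and thus kills all the $g$-torsion appearing above. Summing over $(j,k)$ and compiling matrix units assembles into the ring isomorphism $\Lambda/g\Lambda\cong\End_{R/gR}(N/gN)$, compatibly with the idempotent decomposition coming from the summands of $N$.

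For the approximation statement, let $0\to M_i\xrightarrow{\alpha}N'\to C\to 0$ be the minimal left $\add\bigl(\bigoplus_{j\neq i}M_j\bigr)$-approximation of $M_i$. Strengthening the genericity of $g$ so that it is additionally a non-zerodivisor on the finitely generated module $C$, tensoring with $R/gR$ yields an exact sequence $0\to M_i/gM_i\to N'/gN'\to C/gC\to 0$. Given any $\varphi\colon M_i/gM_i\to L$ with $L\in\add\bigl(\bigoplus_{j\neq i}M_j/gM_j\bigr)$, the Hom-isomorphism above (applied to a Krull--Schmidt lift $L'$ of $L$ over the complete local $R$) produces a lift $\tilde\varphi\colon M_i\to L'$, which by the original approximation property factors through $\alpha$; reducing modulo $g$ supplies the desired factorization through $N'/gN'$. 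Minimality descends because the first part of the lemma upgrades the natural surjection $\End_R(N')/g\End_R(N')\twoheadrightarrow\End_{R/gR}(N'/gN')$ to an isomorphism of complete local semiperfect rings, so idempotents lift, indecomposable summands correspond, and no proper direct summand of $N'/gN'$ could be split off the reduced morphism without already being split off $\alpha$ itself.

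The main obstacle is the uniform control of $\Ext^1_R(N_j,N_k)$ together with the cokernels $C$ appearing in the finitely many approximation sequences of interest; the precise meaning of \emph{sufficiently generic} is that $g\in\m$ must lie outside the finite union of non-maximal associated primes of all these modules simultaneously. The Cohen--Macaulay/tilting structure of $\Lambda$ from \cite{VdB1d} is precisely what guarantees that none of these associated primes fill up $\m$, so that such a common choice of $g$ is available in a 3-dimensional complete local normal Gorenstein domain.
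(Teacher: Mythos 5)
Your proposal is correct, and it runs on the same engine as the paper's proof of \ref{chamber 3fold=chamber surface prep}: choose $g$ outside the finitely many associated primes of the relevant $\Ext^1$-modules, deduce the reduction isomorphism on Hom's, and then transfer exactness, the approximation property and minimality using idempotent lifting over the complete local base. The packaging differs in two ways worth noting. The paper quotes the isomorphism $\End_R(N)/g\cong\End_{R/gR}(N/gN)$ from \cite[5.24]{IW6} and proves preservation of the \emph{right} approximation in \ref{mut preserved by generic} (exactness by the snake lemma, the approximation property by surjectivity of the reduced Hom maps, minimality via $\End_R(K_i)/g\cong\End_{FR}(FK_i)$), obtaining the left-hand statement by duality \ref{dualityofapprox}; accordingly its genericity is imposed on $\Ext^1_R(N,N)\oplus\Ext^1_R(K_I,K_I)\oplus\Ext^1_R(J_I^*,J_I^*)$. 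You re-derive the reduction isomorphism summand-by-summand from the long exact sequence of multiplication by $g$, treat the left approximation directly by lifting test morphisms, and descend minimality by lifting the offending idempotent of $\End_{R/gR}(N'/gN')$ to a summand $W'$ of $N'$; this avoids any hypothesis on $\Ext^1$ of the kernels $K_I,J_I$, a small but genuine economy. Two places should be tightened. First, ``support of positive codimension'' is not the right condition on the $\Ext^1_R(N_j,N_k)$: what is needed (and what the paper extracts from $\End_R(N)\in\CM R$) is $\depth>0$, i.e.\ $\m\notin\Ass$, so that all associated primes are non-maximal and a generic $g$ kills the torsion term; your final paragraph states this correctly, so the codimension remark should simply be dropped. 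Second, no extra genericity is needed for your cokernel $C$: since $M_i$ and $N'$ are CM, the depth lemma gives $\depth_R C\geq 2$ (indeed $C\cong J_i^*$ is reflexive by \ref{dualityofapprox}), so the same generic $g$ is automatically $C$-regular; and the minimality step should be finished by observing that the lifted component of $\alpha$ into $W'$ lies in $g\Hom_R(M_i,W')$, hence factors through $\alpha$ by the approximation property, so composing $\alpha$ with an automorphism of $N'$ of the form ``identity minus $g$ times a map through $W'$'' splits $W'$ off $\alpha$ and contradicts its minimality --- the standard radical argument your sketch gestures at but does not spell out.
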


For a more precise wording, see \ref{chamber 3fold=chamber surface prep}.  Now by Reid's general elephant conjecture, true in the setting here by \cite[1.1, 1.14]{Pagoda}, cutting by a generic hyperplane section yields
\[
\begin{array}{c}
\begin{tikzpicture}[yscale=1.25]
\node (Xp) at (-1,0) {$X_2$}; 
\node (X) at (1,0) {$X$};
\node (Rp) at (-1,-1) {$\Spec (R/g)$}; 
\node (R) at (1,-1) {$\Spec R$};
\draw[->] (Xp) to node[above] {$\scriptstyle $} (X);
\draw[->] (Rp) to node[above] {$\scriptstyle $} (R);
\draw[->] (Xp) --  node[left] {$\scriptstyle \upvarphi$}  (Rp);
\draw[->] (X) --  node[right] {$\scriptstyle f$}  (R);
\end{tikzpicture}
\end{array}
\]
where $R/g$ is an ADE singularity and $\upvarphi$ is a partial crepant resolution.  Since $N\in\CM R$ and $g$ is not a zero-divisor on $N$, necessarily $N/gN\in\CM R/g$, and so any indecomposable summand $N_i$ of $N$ cuts to $N_i/gN_i$, which must correspond to a vertex in an ADE Dynkin diagram via the Auslander--McKay correspondence. This then allows us to obtain the numbers $b_j$ using Auslander--Reiten (=AR) theory, using the knitting--type constructions on the known AR quivers, as in \cite{IW}.  We refer the reader to \S\ref{calculate chamber subsection} for details, in particular the example \ref{knitting example}.

Once we have obtained the $b_j$ for all exchange sequences, which in particular depends only on the curves which  appear in the partial resolution $X_2$, we are able to use this data to do two things.   First, we are able to compute the full GIT chamber structure.

\begin{cor}[={\ref{Cplus is chamber}, \ref{surface intersection root}, \ref{chamber 3fold=chamber surface}}]\label{chamber intro}
In the setup $X\to\Spec R$ above, suppose that $f$ is a minimal model, or a flopping contraction.  Set $\Lambda:=\End_R(N)$ from \eqref{VdBfunctor}.  Then
\begin{enumerate}
\item $C_+(\Lambda)$ is a chamber in $\Uptheta$.
\item For sufficiently generic $g\in R$, the chamber structure of $\Uptheta$ for $\Lambda$ is the same as the chamber structure for $\End_{R/gR}(N/gN)$. There are a finite number of chambers, and the walls are given by a finite collection of hyperplanes containing the origin.  The co-ordinate hyperplanes $\upvartheta_i=0$ are included in this collection.
\item Tracking all the chambers $C_+(\upnu_{i_t}\hdots\upnu_{i_1}\Lambda)$ through mutation, via knitting combinatorics, gives the full chamber structure of $\Uptheta$.
\end{enumerate}
\end{cor}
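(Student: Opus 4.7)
The plan is to prove the three parts in order, using \ref{mut moduli gives flop}, \ref{mut generic hyper intro}, and \ref{SY NS formula intro} as the main inputs.

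For (1), I would apply \ref{mut moduli gives flop}\eqref{mut moduli gives flop 1}, which gives $\cM_{\rk,\upphi}(\Lambda)\cong X$ as a fine moduli for every $\upphi\in C_+(\Lambda)$. Since every $\upphi$ in this open cone makes the same set of $\Lambda$-modules stable, namely the geometric points of $X$, and no strictly semistable module of dimension $\rk$ appears, the open region $C_+(\Lambda)$ meets no GIT wall. The bounding coordinate hyperplanes $\upvartheta_i=0$ are themselves walls (the vertex simple $S_i$ becomes strictly semistable on them), so $C_+(\Lambda)$ is precisely a single chamber.

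For (2), the plan is to reduce to the surface case via \ref{mut generic hyper intro}. Picking $g\in R$ sufficiently generic gives $\Lambda/g\Lambda\cong\End_{R/gR}(N/gN)$, and by Reid's general elephant $R/gR$ is Kleinian (ADE), with $\Spec(R/gR)$ admitting a partial crepant resolution $X_2$. GIT semistability on the quiver of $\Lambda$ is determined combinatorially by dimension vectors of sub- and quotient representations, and \ref{mut generic hyper intro} (in particular the preservation of minimal approximations) identifies all relevant such data on $\Lambda$ with those on $\Lambda/g\Lambda$; hence the two chamber decompositions coincide. On the ADE side, the chamber decomposition is the classical finite root-hyperplane arrangement through the origin, with the vertex coordinate hyperplanes appearing as simple-root walls. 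Finiteness and the coordinate-hyperplane statement for $\Lambda$ transfer back.

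For (3), I would iterate the mutation-tracking of \ref{SY NS formula intro}. Starting from $C_+(\Lambda)$ as in (1) and crossing the wall $\upvartheta_i=0$, the adjacent chamber is, by \ref{SY NS formula intro} together with \ref{mut moduli gives flop}\eqref{mut moduli gives flop 2}, precisely the preimage under the affine transformation $\upvartheta\mapsto\upnu_i\upvartheta$ of $C_+(\upnu_i\Lambda)$. Iterating, every chamber met in this walk has the form $C_+(\upnu_{i_t}\cdots\upnu_{i_1}\Lambda)$. By \ref{mut generic hyper intro} the 3-fold mutation restricts to AR-knitting mutation on $X_2$, and on the Kleinian side these mutations are known to act transitively on chambers (the corresponding reflections generate the relevant Weyl subgroup). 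The main obstacle is this step: one must verify that the exchange numbers $b_j$ computed on the 3-fold algebra agree, at every stage of the iteration, with those computed via knitting on the ADE AR quiver, and that the sequence of mutations exhausts the decomposition. The key is to choose a single generic $g$ that works simultaneously for $\Lambda$ and for all of the finitely many algebras $\upnu_{i_t}\cdots\upnu_{i_1}\Lambda$ in the orbit; the finiteness of that orbit follows from (2), and once a uniform $g$ is chosen, the faithful translation of mutation into knitting reflections, together with transitivity of the Weyl action, yields exhaustion.
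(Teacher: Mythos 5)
Your proposal has a genuine gap, concentrated in parts (2) and (3). The surface algebra $\End_{R/gR}(N/gN)$ is the algebra of a \emph{partial} crepant resolution $X_2$ of the Kleinian singularity, not of its minimal resolution, and for partial resolutions the chamber decomposition is emphatically \emph{not} the classical root--hyperplane arrangement, nor are the wall crossings given by simple reflections generating a Weyl group acting transitively on chambers (see the walls $2\upvartheta_1+3\upvartheta_2=0$, $\upvartheta_1+3\upvartheta_2=0$ in \ref{knitting example}, and the remark in the introduction that these arrangements are not root systems of Weyl groups). So your step ``on the ADE side, the chamber decomposition is the classical finite root-hyperplane arrangement \dots the corresponding reflections generate the relevant Weyl subgroup'' assumes exactly the statement that needs proof; it is the content of \ref{surface intersection root}, whose proof contracts curves one at a time, uses the known Kronheimer/Cassens--Slodowy arrangement only for the \emph{full} minimal resolution, and shows by induction that the chambers obtained by tracking $C_+$ through iterated mutation (via \ref{stab change for b=c}) tile the intersection of the relevant subspace with the walls not containing it. Only under the restrictive hypothesis \ref{Toda assumption} does one recover a Weyl arrangement (\ref{Toda for AM}). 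Relatedly, in (2) the assertion that preservation of minimal approximations ``identifies all relevant semistability data'' and hence that the two chamber decompositions coincide is not justified: dimension-$\rk$ $\Lambda$-modules are not $\Lambda/g\Lambda$-modules, and in the paper the coincidence is deduced only because the mutation-tracking combinatorics (the exchange numbers, which moreover change from step to step) are preserved under cutting by $g$ (\ref{chamber 3fold=chamber surface prep}) and the tracked chambers are shown to exhaust the decomposition on both sides.

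There is also a smaller gap in (1). Constancy of $\cM_{\rk,\upphi}(\Lambda)$ on $C_+$ and the absence of strictly semistables there only show that $C_+$ is \emph{contained in} a chamber. To conclude equality you must exhibit, for each $i$, a module of dimension vector $\rk$ that is $C_+$-stable but strictly semistable on $\upvartheta_i=0$; strict semistability of the vertex simple $S_i$, which has dimension vector $e_i$ rather than $\rk$, says nothing about the wall structure for $\rk$. The paper produces such witnesses geometrically: from $0\to\cO_{C_i}(-1)\to\cO_{C_i}\to\cO_y\to 0$ with $y\in C_i$, twisting by $\cL_i^{*}$ and applying $\Uppsi_X$ gives a $C_+$-stable point module $x_i$ with $S_i\hookrightarrow x_i$, which is strictly semistable on $\upvartheta_i=0$. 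Without such objects the chamber could a priori extend across the coordinate hyperplane.
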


We list and draw some examples in \ref{knitting example} and \S\ref{examples section}.  In the course of the proof of \ref{chamber intro}, if $\Pi$ denotes the preprojective algebra of an extended Dynkin diagram and $e$ is an idempotent containing the extending vertex, then in \ref{surface intersection root} we describe the chamber structure of $\Uptheta(e\Pi e)$ by intersecting hyperplanes with a certain subspace in a root system, a result which may be of independent interest.  It may come as a surprise that the resulting chamber structures are \emph{not} in general the root system of a Weyl group, even up to an appropriate change of parameters, and this has implications to the braiding of flops \cite{DW3} and faithful group actions \cite{HW}.  It also means, for example, that any naive extension of \cite{TodaResPub} or \cite{BIKR, DH} is not possible, since root systems and Weyl groups do not necessarily appear.  However, this phenomenon will come as no surprise to Pinkham \cite[p366]{Pinkham}.
 
Second, we are able to give minimal as well as maximal bounds on the number of minimal models, based only on the curves which appear in the partial resolution $X_2$.  The Homological MMP enriches the GIT chamber structure not only with the mutated quiver (allowing us to iterate), but by \ref{mut generic hyper intro} it also enriches it with the information of the curves appearing after cutting by a generic hyperplane.  Certainly if two minimal models $X$ and $Y$ cut under generic hyperplane section to two different curve configurations, then $X$ and $Y$ must be different minimal models.  The surface curve configurations obtained via mutation can be calculated very easily using knitting combinatorics, so keeping track of this extra information (see e.g.\ \ref{enriched E7 example}) allows us to enhance the chamber structure, and to improve upon the results of \cite{Pinkham} as follows.

\begin{cor}[{=\ref{minimal bound}}]\label{minimal bound intro}
Suppose that $R$ is a $cDV$ singularity, with a minimal model $X\to\Spec R$.  Set $\Lambda:=\End_R(N)$ as in \eqref{VdBfunctor}.  By passing to a general hyperplane section $g$ as in \ref{chamber intro}, the number of minimal models of $\Spec R$ is bounded below by the number of different curve configurations obtained in the enhanced chamber structure of $\Uptheta(\Lambda/g\Lambda)$.
\end{cor}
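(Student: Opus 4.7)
The plan is to leverage the Homological MMP so that each minimal model of $\Spec R$ carries a well-defined label in the enhanced chamber structure of $\Uptheta(\Lambda/g\Lambda)$, and to argue that distinct labels force distinct minimal models. Since the enhancement is exactly the record of which curves persist after slicing by $g$, this will give the claimed lower bound.

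I will proceed as follows. First, by \ref{path in GIT intro}, every minimal model $X_j\to\Spec R$ arises as $\cM_{\rk,\upvartheta_j}(\Lambda)$ for some $\upvartheta_j$ in a chamber $C_j$ of $\Uptheta(\Lambda)$, and by \ref{chamber intro} the genericity of $g$ identifies $\Uptheta(\Lambda)$ with $\Uptheta(\Lambda/g\Lambda)$. Second, by \ref{chamber intro}(3) each chamber has the form $C_+(\upnu_{i_t}\cdots\upnu_{i_1}\Lambda)$, and by \ref{mut generic hyper intro} mutation on $\Lambda$ descends under $\otimes_R R/g$ to mutation on $\Lambda/g\Lambda$; so the label attached to $C_j$ is the dual graph of the partial crepant resolution $Y_j\to\Spec(R/g)$ produced by the corresponding mutation sequence on the Kleinian side, enriched by the scheme-theoretic multiplicities (equivalently the ranks of the summands of the mutated surface module).

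To finish, I would identify $Y_j$ with the generic hyperplane slice $X_j\times_R\Spec(R/g)$. By \ref{mut moduli gives flop} the minimal model $X_j$ is $\cM_{\rk,\upvartheta}(\Lambda')$ for $\Lambda'=\upnu_{i_t}\cdots\upnu_{i_1}\Lambda$ and any $\upvartheta\in C_+(\Lambda')$; the moduli-tracking of \ref{SY NS formula intro} combined with \ref{mut generic hyper intro} then yields $X_j\times_R\Spec(R/g)\cong\cM_{\rk,\upvartheta}(\Lambda'/g\Lambda')=Y_j$. Thus the label on $C_j$ coincides with the dual graph of the generic hyperplane section of $X_j$, which is an \emph{intrinsic} invariant of $X_j$ viewed as a scheme over $\Spec R$. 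Any two minimal models whose labels differ are therefore non-isomorphic, and since every minimal model occupies at least one labelled chamber, the number of distinct labels appearing in the enhanced chamber structure is a lower bound on the number of minimal models.

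\textbf{Main obstacle.} The chamber-level matching $\Uptheta(\Lambda)\leftrightarrow\Uptheta(\Lambda/g\Lambda)$ is already delivered by \ref{chamber intro}, but promoting this to the level of moduli spaces (i.e.\ $X_j\times_R\Spec(R/g)\cong\cM_{\rk,\upvartheta}(\Lambda'/g\Lambda')$) requires that the slice by $g$ genuinely commutes with the GIT moduli construction along every step of the mutation sequence. This is the point where \ref{mut generic hyper intro} and the tracking formulae of \ref{SY NS formula intro} must be combined carefully, and where genericity of $g$ is used in an essential way. Once this compatibility is in place, the lower bound follows because the dual graph of the hyperplane section is a well-defined isomorphism invariant of $X_j/\Spec R$.
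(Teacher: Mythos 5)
There is a genuine gap, and it sits in the final counting step. The lower bound requires that \emph{every} curve configuration appearing in \emph{every} chamber of the enhanced structure is actually realised as the generic hyperplane cut of some minimal model; only then do distinct configurations force distinct minimal models in sufficient number. Your argument runs in the opposite direction: you start from the minimal models, place each one in a chamber via \ref{path in GIT intro}, and show its label is its cut. That yields only that the number of minimal models is at least the number of distinct labels occurring \emph{on the chambers of that connected path}, and the inference ``every minimal model occupies a labelled chamber, hence $\#\{\text{labels}\}\leq\#\{\text{minimal models}\}$'' is backwards as stated: chambers off the path carry labels too, and you have not shown those labels are cuts of minimal models. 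The paper's proof of \ref{minimal bound} makes exactly this surjectivity the main point: by \ref{surface intersection root}\eqref{surface intersection root 3} and \ref{chamber 3fold=chamber surface}\eqref{chamber 3fold=chamber surface 3} every chamber is the tracked $C_+$ of some iterated mutation, and by \ref{MMmodule under flop cor} each mutation step either flops a curve or leaves the module unchanged, so \emph{every} chamber (not just those on the path) is the moduli of a minimal model, whose cut is the chamber's label.

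Separately, your central identification of the label with the cut is routed through the base-change claim $X_j\times_R\Spec(R/g)\cong\cM_{\rk,\upvartheta}(\Lambda'/g\Lambda')$, i.e.\ that slicing by $g$ commutes with the GIT moduli construction. You correctly flag this as the main obstacle, but the results you cite (\ref{SY NS formula intro} tracks moduli across mutation of the \emph{same} base ring, and \ref{mut generic hyper intro} is a statement about the algebra and its approximations) do not deliver it, so the step remains unproven. It is also unnecessary: the paper avoids any moduli base change by working at the module level. Since $H^0(\cV_{X_j})\cong\upnu_{i_t}\cdots\upnu_{i_1}N$, cutting by $g$ and applying \ref{chamber 3fold=chamber surface prep} (= \ref{mut generic hyper intro}) shows this module cuts to the surface module produced by the same mutation sequence on the Kleinian side; Reid's general elephant together with the derived equivalence between $\End_{R/g}(N'/gN')$ and the hyperplane section then identifies the indecomposable summands with the curves of $X_j\times_R\Spec(R/g)$, so the chamber's label is the cut without ever comparing moduli spaces over $R/g$. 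Repairing your argument therefore amounts to replacing the base-change step by this module-level identification and extending the labelling argument from the path chambers to all chambers via \ref{MMmodule under flop cor}.
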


A closer analysis (see e.g.\ \ref{knitting example 2}) reveals that it is possible to obtain better lower bounds, also by tracking mutation, but we do not detail this here. See \S\ref{calculate chamber subsection} and \S\ref{GIT examples section}.

\subsection{Auslander--McKay Correspondence}\label{other applications intro}  There are also purely algebraic outputs of the Homological MMP.   One such output is that we are able to lift the Auslander--McKay correspondence from dimension two \cite{Aus3} to $3$-fold compound du Val singularities.  One feature is that for 3-folds, unlike for surfaces, there are two correspondences.  First, there is a correspondence \eqref{basic correspondence intro} between maximal modifying (=MM) $R$-module generators and minimal models, and then for each such pair there is a further correspondence (in parts \eqref{NCvsC min model intro 1} and \eqref{NCvsC min model intro 2} below) along the lines of the classical Auslander--McKay Correspondence.  Parts \eqref{NCvsC min model intro 3} and \eqref{NCvsC min model intro 4}, the relationship between flops and mutation, describe how these two correspondences relate.

\begin{cor}[={\ref{NCvsC min model}, \ref{new thm in email}}]\label{NCvsC min model intro}
Let $R$ be a complete local cDV singularity.  Then there exists a one-to-one correspondence
\begin{eqnarray}
\begin{array}{c}
\begin{tikzpicture}
\node (A) at (-1,0) {$\{ \mbox{basic MM $R$-module generators}\}$};
\node (B) at (6,0) {$
\{\mbox{minimal models $f_i\colon X_i\to\Spec R$} \}
$};
\draw[<->] (1.85,0) -- node [above] {$\scriptstyle $} (3,0);
\node at (0,0.175) {$\phantom -$};
\end{tikzpicture}
\end{array}\label{basic correspondence intro}
\end{eqnarray}
where the left-hand side is taken up to isomorphism, and the right-hand side is taken up to isomorphism of the $X_i$ compatible with the morphisms $f_i$.  Under this correspondence
\begin{enumerate}
\item\label{NCvsC min model intro 1} For any fixed MM generator, its non-free indecomposable summands are in one-to-one correspondence with the exceptional curves in the corresponding minimal model.
\item\label{NCvsC min model intro 2} For any fixed MM generator $N$, the quiver of $\uEnd_R(N)$ (for definition see \ref{stable end defin}) encodes the dual graph of the corresponding minimal model.
\item\label{NCvsC min model intro 3} The full mutation graph of the MM generators coincides with the full flops graph of the minimal models.
\item\label{NCvsC min model intro 4} The derived mutation groupoid of the MM generators is functorially isomorphic to the derived flops groupoid of the minimal models.   
\end{enumerate}
\end{cor}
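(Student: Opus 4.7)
The plan is to build the correspondence \eqref{basic correspondence intro} using Van den Bergh's construction \cite{VdB1d}, and then to verify all four enumerated compatibilities using the Homological MMP of \S\ref{algorithm intro}. First I would define the map from right to left: given a minimal model $f_i\colon X_i\to\Spec R$, the construction of \cite{VdB1d} outputs a reflexive generator $N_i$ with $\Lambda_i:=\End_R(N_i)$ derived equivalent to $X_i$ via the functor $\Uppsi_{X_i}$ in \eqref{VdBfunctor}. That $N_i$ is a basic MM generator in this cDV setting is a theorem of Iyama--Wemyss, and by construction $R$ is a summand of $N_i$. Injectivity then follows because $X_i$ can be recovered from $N_i$ as a quiver GIT moduli space inside the chamber $C_+(\Lambda_i)$, by \ref{mut moduli gives flop}\eqref{mut moduli gives flop 1}, so distinct minimal models produce non-isomorphic $N_i$.

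For surjectivity I would fix a starting minimal model $X_0\to\Spec R$ with associated basic MM generator $N_0$, and run the algorithm of \S\ref{algorithm intro}. By \ref{mut functor thm intro}\eqref{mut thm 1 intro} the mutation $\upnu_i N$ differs from $N$ precisely when the corresponding curve $C_i$ flops, and by \ref{mut functor thm intro}\eqref{mut thm 2 intro} this mutated generator is exactly the one attached by Van den Bergh to the flopped minimal model. Iterating therefore simultaneously traces out all minimal models through the flops graph and an orbit of basic MM generators through the mutation graph. I would close the surjectivity argument by invoking the fact (from \cite{IW4}) that every basic MM generator of a complete local cDV singularity is connected to $N_0$ by a chain of mutations: non-floppable mutations do not change the isomorphism class, while floppable ones produce further minimal models, so every basic MM generator appears in the image.

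With the bijection in hand, properties \eqref{NCvsC min model intro 1} and \eqref{NCvsC min model intro 2} are then immediate from the Van den Bergh presentation of $\Lambda$---one non-zero vertex per non-free indecomposable summand of $N$, with arrows reflecting the intersection pattern of the exceptional curves---as in Figure~\ref{Fig1}. Property \eqref{NCvsC min model intro 3} is a direct restatement of \ref{mut functor thm intro}\eqref{mut thm 1 intro}. Property \eqref{NCvsC min model intro 4} is the functorial upgrade provided by \ref{mut functor thm intro}\eqref{mut thm 3 intro}: conjugation by the Van den Bergh equivalences $\Uppsi_X$ identifies the mutation functor $\Upphi_i$ with the inverse Bridgeland--Chen flop functor along any floppable edge, so composing along any path in the mutation graph matches, after conjugation by $\Uppsi$, the composition of the corresponding flop functors in the flops groupoid.

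The hardest part will be coherence between two a priori unrelated structures: the mutation graph on MM generators, defined purely algebraically via approximations inside $\refl R$, and the flops graph on minimal models, defined purely geometrically. Theorems \ref{contraction thm intro} and \ref{mut functor thm intro} are precisely the bridge that identifies them---the contraction criterion converts the question of whether a summand is floppable into a finite-dimensionality check on a factor of $\Lambda$, and the mutation theorem converts the act of flopping into the act of mutating at that summand---and once this bridge is in place the proof of \ref{NCvsC min model intro} reduces to bookkeeping about what mutation does on non-floppable summands.
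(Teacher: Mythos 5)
Your construction of the map and your injectivity argument are fine (injectivity via recovering $X_i$ as the moduli space $\cM_{\rk,\upvartheta}(\End_R(N_i))$ for $\upvartheta\in C_+$ is essentially the paper's use of \cite[5.2.5]{Joe} and \ref{Per0 global general}), but the surjectivity step contains a genuine gap. You close it by ``invoking the fact (from \cite{IW4}) that every basic MM generator of a complete local cDV singularity is connected to $N_0$ by a chain of mutations.'' No such fact is available in \cite{IW4}: connectedness of MM generators under mutation is not an input here, it is one of the \emph{conclusions} of this correspondence (it appears as \ref{finite number intro}(1), deduced from \ref{NCvsC min model}), so quoting it makes the argument circular. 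The paper's surjectivity argument runs in the opposite direction: starting from $N_0=H^0(\cV_{X_0})$, the single-curve, $\mathds{Q}$-factorial statement \ref{MMmodule under flop cor} shows that iterated mutation at non-free indecomposable summands only ever produces modules of the form $H^0(\cV_Y)$ for minimal models $Y$; since by \cite[Main Theorem]{KMa} there are only finitely many minimal models, connected by simple flops, the mutation orbit of $N_0$ is \emph{finite}; and then the nontrivial result \cite[4.3]{IW6} is what converts finiteness of this mutation-closed family into the statement that it exhausts all basic MM generators. Without \cite[4.3]{IW6} (or some replacement for your unjustified connectivity claim), you cannot rule out a basic MM generator lying outside the mutation orbit of $N_0$, and surjectivity fails to follow.

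Two smaller points of logical hygiene. First, in establishing the bijection you should cite only the single-curve, $\mathds{Q}$-factorial flop--mutation statements (\ref{mut=flop single curve min model}, \ref{MMmodule under flop cor}), not the general Theorem \ref{flop=mut general thm}: the proof of the latter uses Bongartz completion together with the Auslander--McKay correspondence itself, so quoting \ref{mut functor thm intro} in its full generality at this stage would again be circular. Second, properties \eqref{NCvsC min model intro 3} and \eqref{NCvsC min model intro 4} concern the \emph{full} mutation and flops graphs and the derived groupoids, i.e.\ mutations and flops at arbitrary subsets of curves; these are not restatements of the single-curve \ref{mut functor thm intro}, but require the multi-curve Theorem \ref{flop=mut general thm}, which the paper proves only after the bijection is in place (and this is why they are deferred to \ref{NCvsC min model full}). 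Your ordering should reflect this: bijection first via the single-curve results and \cite[4.3]{IW6}, then the general flop--mutation theorem, then (3) and (4).
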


For all undefined terminology, and the detailed description of the bijection maps in \eqref{basic correspondence intro}, we refer the reader to \S\ref{general prelim}, \S\ref{AM subsection 1} and \S\ref{AM section 2}.  We remark that the graphs in \eqref{NCvsC min model intro 3} are simply the framework to express the relationship between flops and mutation on a combinatorial level, and the derived groupoids in \eqref{NCvsC min model intro 4} are the language to express the relationship on the level of functors.

In addition to \ref{NCvsC min model intro}, we also establish the following.  For unexplained terminology,  we again refer the reader to \S\ref{AM section 2}.

\begin{cor}[{=\ref{NCvsC min model}, \ref{NCvsC min model full}, \ref{isolated AM}}]\label{finite number intro}
Let $R$ be a complete local cDV singularity. Then
\begin{enumerate}
\item $R$ admits only finitely many MM generators, and any two such modules are connected by a finite sequence of mutations.
\item The mutation graph of MM generators can be viewed as a subgraph of the skeleton of the GIT chamber decomposition of $\Uptheta(\Lambda)$. 
\setcounter{tempenum}{\theenumi}
\end{enumerate}
If further $R$ is isolated, then
\begin{enumerate}
\setcounter{enumi}{\thetempenum}
\item\label{finite no intro 3} The mutation graph of MM generators coincides with the skeleton of the GIT chamber decomposition.  In particular, the number of basic MM generators equals the number of chambers. 
\end{enumerate}
\end{cor}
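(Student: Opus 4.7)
The plan is to extract the three statements primarily from the Auslander--McKay correspondence (Corollary \ref{NCvsC min model intro}) together with the GIT-tracking machinery developed in \S\ref{GIT intro}. For part (1), the bijection \eqref{basic correspondence intro} reduces finiteness of basic MM generators to finiteness of minimal models of $\Spec R$, which is a classical result in the 3-fold MMP for terminal singularities. Mutation-connectivity then follows immediately from part \ref{NCvsC min model intro 3} of Corollary \ref{NCvsC min model intro}, since any two minimal models of a fixed $\Spec R$ are connected by a finite sequence of flops.

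For part (2), each basic MM generator $M$ determines a minimal model $X_M\to\Spec R$, which by Corollary \ref{flop of original intro} (iterated via the moduli-tracking of Proposition \ref{SY NS formula intro}) is realised as $\cM_{\rk,\upvartheta}(\Lambda)$ for $\upvartheta$ in a specific chamber of $\Uptheta(\Lambda)$. An edge in the mutation graph joins MM generators differing by a single-summand mutation; by Theorem \ref{mut functor thm intro} this corresponds geometrically to flopping a single curve, and by Corollary \ref{mut moduli gives flop} combined with Proposition \ref{SY NS formula intro} that flop is realised by crossing a single wall in $\Uptheta(\Lambda)$. Adjacent MM generators therefore correspond to chambers meeting along a single wall, which embeds the mutation graph as a subgraph of the skeleton.

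For part (3), when $R$ is isolated every basic MM generator is an NCCR of $R$, so moduli of $\Lambda$ at any generic stability produce a smooth crepant resolution, i.e.\ a minimal model; thus every chamber corresponds via \eqref{basic correspondence intro} to some MM generator, giving surjectivity of the chamber-to-MM-generator map. The main obstacle is showing distinct chambers give distinct MM generators, equivalently that every wall in $\Uptheta(\Lambda)$ is an honest flop wall and not a non-flopping twist wall coming from the second case of Theorem \ref{mut functor thm intro}\eqref{mut thm 3 intro}. This is precisely where the isolated hypothesis is essential: it forces finite-dimensionality of all contraction algebras $\Lambda_i$, so Theorem \ref{mut functor thm intro}\eqref{mut thm 3 intro} always outputs the flop alternative, and every wall crossing produces a new minimal model. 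The wall structure itself is identified with single-summand mutations by reducing to the surface case via Lemma \ref{mut generic hyper intro}, where classical ADE/Kleinian AR theory applies. Combining these inputs yields equality of the mutation graph with the chamber skeleton, and hence the count of basic MM generators equals the count of chambers.
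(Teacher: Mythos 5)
Your parts (1) and (2) essentially retrace the paper's own route: (1) is the finiteness and mutation--connectivity built into \ref{NCvsC min model} (whose proof uses exactly the finiteness of minimal models and their connectivity under flops that you quote), and (2) is \ref{NCvsC min model full}(5), which the paper deduces from \ref{Craw Ishii true}\eqref{Craw Ishii true 1} together with \ref{NCvsC min model}\eqref{NCvsC min model 3} --- the same moduli--tracking and single-wall-crossing machinery you invoke via \ref{mut moduli gives flop}, \ref{SY NS formula intro} and \ref{flop of original intro}. The only soft spot there is that you should say which chamber realises a given MM generator (several chambers can give the same minimal model when a wall is not a flop wall); the paper fixes this by assigning the chambers obtained by tracking $C_+$ along the mutation path, as in \ref{Craw Ishii true}\eqref{Craw Ishii true 1} and \ref{chamber 3fold=chamber surface}\eqref{chamber 3fold=chamber surface 3}.

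The genuine error is the opening step of your part (3). It is false that for isolated $R$ every basic MM generator is an NCCR, and false that the moduli spaces are smooth: an isolated cDV singularity need not have smooth minimal models, so $\End_R(N)$ is an MMA of typically infinite global dimension, and maximal rigid objects need not be cluster tilting. This is exactly why \ref{isolated AM} states two separate bijections, one for maximal rigid objects in the general isolated case and one for cluster tilting objects only when the minimal models are smooth (equivalently $\CM R$ admits a CT module). Your intended conclusion --- every chamber corresponds to some MM generator --- is correct but must be obtained differently: by \ref{surface intersection root}\eqref{surface intersection root 3} and \ref{chamber 3fold=chamber surface}\eqref{chamber 3fold=chamber surface 3} every chamber of $\Uptheta(\Lambda)$ is the tracked $C_+$ of some iterated mutation $\upnu_{i_t}\hdots\upnu_{i_1}\Lambda$, and by \ref{mut moduli gives flop text} its moduli space is the corresponding minimal model, hence corresponds under \eqref{basic correspondence intro} to an MM generator; no smoothness is needed or available. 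With that repair the remainder of your (3) is the paper's argument: isolatedness forces $\dim_{\mathbb{C}}\Lambda_I<\infty$ for all $I$, so every summand mutates non-trivially and every wall crossing is a flop. Note also that ``every wall is a flop wall'' by itself only shows \emph{adjacent} chambers give distinct generators; the identification of the whole mutation graph with the skeleton (and hence the count of generators by chambers) is obtained, as in \ref{isolated AM}, by combining this with \ref{NCvsC min model}\eqref{NCvsC min model 3} and the tracking argument of \ref{Craw Ishii true}\eqref{Craw Ishii true 1}.
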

Although \eqref{finite no intro 3} is simply a special case, the setting when $R$ has only isolated singularities is particularly interesting since it relates maximal rigid and cluster tilting objects in certain Krull--Schmidt Hom-finite $2$-CY triangulated categories to birational geometry.  

We also remark that the above greatly generalises and simplifies \cite{BIKR, DH}, which considered isolated $cA_n$ singularities with smooth minimal models and observed the connection to the Weyl group $S_n$, \cite{NS} which considered specific quotient singularities, again with smooth minimal models, and \cite{IW5} which considered general $cA_n$ singularities.  All these previous works relied heavily on direct calculation, manipulating explicit forms.

Based on the above results, we offer the following conjecture.
\begin{conj}\label{1d finite conjecture}
Let $R$ be a Gorenstein $3$-fold with only rational singularities.  Then $R$ admits only a finite number of basic MM generators if and only if the minimal models of $\Spec R$ have one-dimensional fibres (equivalently, $R$ is cDV).
\end{conj}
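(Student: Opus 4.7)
The conjecture has two directions, of which the reverse implication $(\Leftarrow)$ is close to being contained in the body of the paper: for complete local cDV rings Corollary~\ref{finite number intro} gives finiteness of basic MM generators, and globalisation should follow by a Krull--Schmidt argument in $\refl R$ applied at each closed singular point of $\Spec R$. The substantive content is the forward implication $(\Rightarrow)$, which I would attack by contrapositive: assume $R$ is Gorenstein rational but not cDV, and construct infinitely many pairwise non-isomorphic basic MM generators.

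Under that hypothesis, some minimal model $f\colon X \to \Spec R$ has a fibre of dimension at least two over a closed point $\mathfrak{m}$, and thus $X$ contains an $f$-exceptional prime divisor $E \subseteq f^{-1}(\mathfrak{m})$. The plan is to twist a Van den Bergh tilting bundle $\cT$ on $X$ by integer powers of $\cO_X(E)$. Setting $N_n := f_*(\cT \otimes \cO_X(nE))$ for $n \in \mathbb{Z}$, each $N_n \in \refl R$ has endomorphism ring derived equivalent to $X$, hence is an MMA via the dictionary of \cite{VdB1d, IW4}, and the basic parts of these $N_n$ furnish candidate MM generators. The remaining task is to distinguish infinitely many of them, which I would do via the local divisor class group $\Cl(\widehat R_\mathfrak{m})$: the rank-one reflexive summands of $N_n$ realise classes shifted by $n\cdot[E]$, and since $E$ is divisorially contracted its class has infinite order in $\Cl(\widehat R_\mathfrak{m})$, which should force infinitely many isomorphism classes among the $N_n$.

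The main obstacle will be to rule out that the $n\cdot[E]$ shift is absorbed by permutations or recombinations of the finitely many summands of $\cT$. Precisely such absorption happens in the cDV case --- this is what the Homological MMP encodes, forcing finite exchange orbits despite $\mathrm{Pic}(X)$ having positive rank --- so one needs a genuine obstruction to absorption in the non-cDV setting. A natural route is to analyse how the categorical mutation functors $\upnu_i$ of \S\ref{mut prelim} interact with twists by divisorially-exceptional line bundles, and to show that no finite mutation sequence realises $\otimes\cO_X(E)$ when $E$ is contracted to a point, whereas in the flopping cDV case every such twist is realised by a finite mutation composition. Establishing this dichotomy --- finite versus infinite orbits of the $\mathrm{Pic}$-action on MM generators, according to whether the contraction is flopping or divisorial --- is in my view the essential new ingredient the conjecture requires, and is plausibly the reason the author states it as a conjecture rather than a theorem.
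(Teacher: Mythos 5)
You should first note that this statement is a conjecture precisely because the paper does not prove it: only the direction $(\Leftarrow)$ is established (it follows from \ref{finite number intro}), and for $(\Rightarrow)$ the paper offers only the partial evidence of \ref{CT PB cor}, which under the extra hypothesis that $R$ admits an NCCR concludes merely that $R$ is a hypersurface. Your proposal does not prove it either, and you concede as much: the ``absorption'' dichotomy you isolate at the end is exactly the open content of the conjecture. More seriously, the strategy you sketch has a concrete flaw that is independent of that admitted gap.

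The distinguishing mechanism cannot work as stated. Since $E$ is contained in the fibre over the closed point $\m$, the twist $\cO_X(nE)$ is trivial on $X\setminus f^{-1}(\m)$, so $f_*(\cT\otimes\cO_X(nE))$ and $f_*\cT$ agree on $\Spec R\setminus\{\m\}$, whose complement has codimension three. A reflexive module over a normal domain is determined by its restriction to the complement of any closed subset of codimension at least two, so the reflexive hulls of all your $N_n$ are isomorphic; in particular the proposed ``shift by $n\cdot[E]$'' in $\Cl(\widehat{R}_{\m})$ is zero, since the class group is a codimension-one invariant and sees nothing of a divisor contracted to a point. This is exactly why the construction in \S\ref{perverse and tilting} uses divisors $D_i$ dual to the exceptional curves, which push forward to genuine Weil divisor classes on $\Spec R$, rather than exceptional divisors. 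The family is degenerate even before pushing down, because $\End_X(\cT\otimes\cL)\cong\End_X(\cT)$ for any line bundle $\cL$, so no new algebras arise on $X$ either. Finally, in the non-cDV case the minimal models have two-dimensional fibres, so the tilting bundle of \cite{VdB1d} that you invoke is not available (that construction requires fibre dimension at most one), and the existence of any tilting bundle there is itself not known in general. Thus both the input $\cT$ and the intended output (infinitely many pairwise non-isomorphic basic MM generators) of your construction fail, and the forward implication remains open, as the paper states.
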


The direction $(\Leftarrow)$ is true by \ref{finite number intro}.  Although we cannot yet prove $(\Rightarrow)$, by strengthening some results in \cite{Bergh} to cover non-isolated singularities, we do show the following as a corollary of a more general $d$-dimensional result.
\begin{prop}[=\ref{CT PB cor}]
Suppose that $R$ is a complete local $3$-dimensional normal Gorenstein ring, and suppose that $R$ admits an NCCR (which by \cite{VdBNCCR} implies that the minimal models of $\Spec R$ are smooth). If $R$ admits only finitely many basic MM generators up to isomorphism, then $R$ is a hypersurface singularity.
\end{prop}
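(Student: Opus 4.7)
The plan is to prove a more general $d$-dimensional statement and specialise to $d=3$: if $R$ is a complete local normal Gorenstein ring of dimension $d$ admitting an NCCR and having only finitely many basic MM generators up to isomorphism, then $R$ is a hypersurface.  The strategy is to extract periodicity of CM modules from finiteness of the mutation graph, and then combine the resulting periodic syzygies with a localisation argument that extends \cite{Bergh} beyond the isolated case.

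First I would fix an NCCR $\Lambda=\End_R(N)$ with $N$ a basic MM generator.  The mutation theory summarised in \S\ref{algorithm intro} (following \cite{IW4}) shows that every basic MM generator is reached from $N$ by iteratively replacing non-free summands $N_i$ via minimal $\add(N/N_i)$-approximation exchange sequences, producing $\upnu_iN$.  The assumed finiteness of the set of basic MM generators up to isomorphism forces the orbit of $N$ under each single $\upnu_i$ to be finite, so there is a non-free indecomposable summand $N_i$ and an integer $k\geq 1$ with $\upnu_i^kN\cong N$.  Splicing the $k$ exchange sequences defining $\upnu_i^k$ produces a minimal acyclic complex of CM $R$-modules whose only non-$R$ summand in each term lies in $\add N_i$.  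From this complex one extracts a non-free module $X\in\uCM R$ with $\Omega^mX\cong X$ for some $m\geq 1$, i.e.\ a periodic syzygy.

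Next I would invoke the isolated case: when $R$ is isolated, the existence of a non-free CM module with periodic syzygies forces $R$ to be a hypersurface, by Eisenbud's periodicity theorem together with the refinement in \cite{Bergh} that uses the Calabi--Yau structure of $\uCM R$.  To remove isolatedness I would run a Noetherian induction on the dimension of the singular locus.  For any non-maximal prime $\p$ in the singular locus, $X_\p$ remains a non-free CM $R_\p$-module with $\Omega^mX_\p\cong X_\p$, the NCCR localises to an NCCR of $R_\p$ (after completion), and the set of basic MM generators of the completion of $R_\p$ is finite since mutations localise.  Since $R_\p$ has strictly smaller singular locus, the induction hypothesis applies and $R_\p$ is a hypersurface.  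Thus $R$ is a hypersurface on the punctured spectrum, and a standard embedding-codimension comparison, using that $R$ itself admits an NCCR together with the Auslander--Buchsbaum formula, then forces $R$ to be a hypersurface.

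The main obstacle is the inductive step at non-isolated primes.  Bergh's proof uses the $(d-1)$-Calabi--Yau structure on $\uCM R$, which is only available when $R$ is isolated; in the non-isolated case this structure degenerates along the singular locus, and there is no AR theory available on all of $\uCM R$.  The technical heart of the argument is therefore to show that the periodicity $\Omega^m X\cong X$ survives localisation at primes of the singular locus, and that the NCCR of $R$ descends to an NCCR on each local completion $\widehat{R_\p}$ carrying finitely many basic MM generators, so that the isolated-case input from \cite{Bergh} may be applied to $\widehat{R_\p}$.  Once this descent is carried out, the remainder of the argument is formal; specialising the resulting $d$-dimensional statement to $d=3$ gives the proposition.
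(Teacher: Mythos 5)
There is a genuine gap --- in fact several --- in the route you propose. First, the mechanism for extracting periodicity does not work: the exchange sequences defining $\upnu_i$ are minimal $\add(N/N_i)$-approximations, not projective resolutions over $R$, so splicing them yields a complex of modules in $\add N$ and gives no statement of the form $\Omega^mX\cong X$ for $R$-syzygies. Second, the ``isolated-case input'' you want to quote is not a true statement: the existence of a non-free CM module with periodic syzygies does not force $R$ to be a hypersurface (Eisenbud's theorem is the converse), e.g.\ over the isolated, codimension-two complete intersection $k[[x,y]]/(x^2,y^2)$ the module $R/(x)$ satisfies $\Omega(R/(x))\cong R/(x)$. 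What \cite{Bergh}, and the paper, actually use is periodicity of the CT module $M$ itself --- available because the NCCR hypothesis identifies MM generators with CT modules \cite[5.9, 5.11(2)]{IW4} and syzygies of CT modules are again CT \cite[6.11]{IW4}, so finiteness gives $\Omega^tM\cong M$ and hence $\cx_RM\leq 1$ --- combined with the fact that $\gl\End_R(M)=d$ forces $\pd_{\End_R(M)}\Hom_R(M,\Omega^dk)\leq d-2$, hence a finite $\add M$-resolution of $\Omega^dk$; splicing transfers the complexity bound to $\Omega^dk$ and so to $k$, and Gulliksen \cite{Gulliksen} then gives the hypersurface conclusion. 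This transfer of the complexity bound to the residue field is the essential step your outline omits, and mere periodicity of some module cannot replace it.

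The localisation/Noetherian induction also cannot close the argument. Knowing that $\widehat{R_\p}$ is a hypersurface for every non-maximal prime $\p$ says nothing about the embedding codimension at the closed point --- any isolated Gorenstein non-hypersurface is ``a hypersurface on the punctured spectrum'' --- and there is no ``standard embedding-codimension comparison'' bridging this; your induction never treats the closed point of a non-isolated $R$, which is exactly where the conclusion is needed. Moreover it is not clear (and not established anywhere in the paper) that finiteness of basic MM generators descends from $R$ to $\widehat{R_\p}$, nor that mutation is compatible with this localisation in the way you need. The paper's proof of \ref{CT PB thm} avoids all of this: the complexity argument is run directly at the maximal ideal, with no isolatedness assumption and no localisation, and that is precisely how it extends \cite{Bergh} to the non-isolated case.
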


\subsection{Generalities}\label{generalities}
In this paper we work over an affine base, restrict to complete local rings, work over one-dimensional fibres and sometimes restrict to  minimal models.  Often these assumptions are not necessary, and are mainly made just for technical simplification of the notation and exposition.  In Appendix \ref{gen appendix} we outline questions and conjectures for when $R$ is not Gorenstein, including flips and other aspects of the MMP.

\subsection{Notation and Conventions}\label{conventions}
Everything in this paper takes place over the complex numbers $\mathbb{C}$, or any algebraically closed field of characteristic zero.  All complete local rings appearing are the completions of finitely generated $\mathbb{C}$-algebras at some maximal ideal.   Throughout modules will be left modules, and  for a ring $A$, $\mod A$ denotes the category of finitely generated left $A$-modules, and $\fl A$ denotes the category of finite length left $A$-modules.  For $M\in\mod A$ we denote by $\add M$ the full subcategory consisting of summands of finite direct sums of copies of $M$.  We say that $M$ is a generator if $R\in\add M$,  and we denote by $\proj A:=\add A$ the category of finitely generated projective $A$-modules.  Throughout we use the letters $R$ and $S$ to denote commutative noetherian rings,  whereas Greek letters $\Lambda$ and $\Gamma$ will denote noncommutative noetherian rings.

We use the convention that when composing maps $fg$, or $f\cdot g$, will mean $f$ then $g$, and similarly for quivers $ab$ will mean $a$ then $b$.   Note that with this convention $\Hom_R(M,X)$ is a $\End_R(M)$-module and $\Hom_R(X,M)$ is a $\End_R(M)^{\rm op}$-module.  Functors will use the opposite convention, but this will  always  be notated by the composition symbol $\circ$, so throughout $F\circ G$ will mean $G$ then $F$.

\subsection{Acknowledgements}  
It is a pleasure to thank Alastair Craw and Alastair King for many discussions and comments over the eight years since this work began, and to thank Will Donovan and Osamu Iyama for many  discussions and ideas, some of which are included here, and for collaborating on the foundational aspects \cite{IW4, IW5, IW6, DW1, DW2, DW3}.  It is also a pleasure to thank the referee for many helpful comments and suggestions.  Thanks are also due to Gavin Brown, Alvaro Nolla de Celis, Hailong Dao, Eleonore Faber, Martin Kalck, Joe Karmazyn, Miles Reid and Yuhi Sekiya.

\section{General Preliminaries}

We begin by outlining the necessary preliminaries on aspects of the MMP, MM modules, MMAs, perverse sheaves, and mutation.  With the exception of \ref{reconstruction new},  \ref{Ext 2 thm}, \ref{Ext 3 thm}, \ref{mutmutI general} and \ref{surfaces hack} nothing in this section is original to this paper, and so the confident reader can skip to \S\ref{contractions and deformations section}.

\subsection{General Background}\label{general prelim}
If $(R,\m)$ is a commutative noetherian local ring and $M\in\mod R$, recall that the \emph{depth} of $M$ is defined to be
\[
\depth_R M:=\inf \{ i\geq 0\mid \Ext^i_R(R/\m,M)\neq 0 \}.
\]
We say that $M\in\mod R$ is \emph{maximal Cohen-Macaulay} (=CM) if $\depth_R M=\dim R$.   In the non-local setting, if $R$ is an arbitrary commutative noetherian ring we say that $M\in\mod R$ is \emph{CM} if $M_\p$ is CM for all prime ideals $\p$ in $R$, and we denote the category of CM $R$-modules by $\CM R$.  We say that $R$ is a \emph{CM ring} if $R\in\CM R$, and if further $\id_{R}R<\infty$, we say that $R$ is \emph{Gorenstein}.  Throughout, we denote $(-)^*:=\Hom_R(-,R)$ and let $\refl R$ denote the category of \emph{reflexive} $R$-modules, that is those $M\in\mod R$ for which the natural morphism $M\to M^{**}$ is an isomorphism.

Singular $d$-CY algebras are a convenient language that unify the commutative Gorenstein algebras and the mildly noncommutative algebras under consideration.
\begin{defin}
Let $\Lambda$ be a module finite $R$-algebra, then for $d\in\mathbb{Z}$ we call $\Lambda$ $d$-Calabi--Yau (=$d$-CY) if there is a functorial isomorphism 
\[
\Hom_{\D(\Mod \Lambda)}(x,y[d])\cong D\Hom_{\D(\Mod \Lambda)}(y,x)
\]
for all $x\in\Db(\fl \Lambda)$, $y\in\Db(\mod\Lambda)$, where $D=\Hom_{\mathbb{C}}(-,\mathbb{C})$.  Similarly we call $\Lambda$ singular $d$-Calabi--Yau (=$d$-sCY) if the above functorial isomorphism holds for all $x\in\Db(\fl\Lambda)$ and $y\in\Kb(\proj \Lambda)$, where $\Kb(\proj \Lambda)$ denotes the subcategory of $\Db(\mod\Lambda)$ consisting of perfect complexes.
\end{defin}
When $\Lambda=R$, it is known \cite[3.10]{IR} that $R$ is $d$-sCY if and only if $R$ is Gorenstein and equi-codimensional with $\dim R=d$.   One noncommutative source of $d$-sCY algebras are maximal modification algebras, introduced in \cite{IW4} as the notion of a noncommutative minimal model.  
\begin{defin}\label{MMdefin}
Suppose that $R$ is a normal $d$-sCY algebra.  We call $N\in\refl R$ a \emph{modifying module} if $\End_R(N)\in\CM R$, and we say that $N\in\refl R$ is a \emph{maximal modifying (MM) module} if it is modifying and it is maximal with respect to this property.  Equivalently, $N\in\refl R$ is an MM module if and only if
\[
\add N=\{ X\in\refl R\mid \End_{R}(N \oplus X)\in\CM R  \}.
\]
If $N$ is an MM module, we call $\End_R(N)$ a \emph{maximal modification algebra (=MMA).}  
\end{defin}
The notion of a smooth noncommutative minimal model, called a noncommutative crepant resolution, is due to Van den Bergh \cite{VdBNCCR}.
\begin{defin}
Suppose that $R$ is a normal $d$-sCY algebra.  By a \emph{noncommutative crepant resolution} (NCCR) of $R$ we mean $\Lambda:=\End_R(N)$ where $N\in\refl R$ is such that $\Lambda\in\CM R$ and $\gl\Lambda=d$. 
\end{defin}
In the setting of the definition, provided that $N$ is nonzero, it is equivalent to ask for $\Lambda\in\CM R$ and $\gl\Lambda<\infty$ \cite[4.2]{VdBNCCR}.  Note that any modifying module $N$ gives rise to a $d$-sCY algebra $\End_R(N)$ by \cite[2.22(2)]{IW4}, and $\End_R(N)$ is $d$-CY if and only if $\End_R(N)$ is an NCCR \cite[2.23]{IW4}.  Further, an NCCR is precisely an MMA with finite global dimension, that is, a smooth noncommutative minimal model.  On the base $R$, those NCCRs where $N\in \CM R$ can be characterised in terms of CT modules \cite[5.9(1)]{IW4}.
\begin{defin}\label{CT defin}
Suppose that $R$ is a normal $d$-sCY algebra.  We say that $N\in\CM R$ is a CT module if
\[
\add N=\{ X\in\CM R\mid \Hom_R(N,X)\in\CM R\}.
\]
\end{defin}

Throughout this paper we will freely use the language of terminal, canonical and compound Du Val (=cDV) singularities in the MMP, for which we refer the reader to \cite{CKM, Pagoda, KollarMori} for a general overview.  Recall that a normal scheme $X$ is defined to be {\em $\mathds{Q}$-factorial} if for every Weil divisor $D$, there exists $n\in\mathbb{N}$ for which $nD$ is Cartier.  Also, if $X$ and $X_{\con}$ are normal, then recall that a projective birational morphism $f\colon X\to X_{\con}$ is called {\em crepant} if $f^*\omega_{X_{\con}}=\omega_X$. A {\em $\mathds{Q}$-factorial terminalisation}, or \emph{minimal model}, of $X_{\con}$ is a crepant projective birational morphism $f\colon X\to X_{\con}$ such that $X$ has only $\mathds{Q}$-factorial terminal singularities. When $X$ is furthermore smooth, we call $f$ a {\em crepant resolution}.  

The following theorem, linking commutative and noncommutative minimal models, will be used implicitly throughout.
\begin{thm}{\cite[4.16, 4.17]{IW5}}\label{Min model - MMA}
Let $f\colon X\to\Spec R$ be a projective birational morphism, where $X$ and $R$ are both Gorenstein normal varieties of dimension three, and $X$ has at worst terminal singularities. If $X$ is derived equivalent to some ring $\Lambda$, then the following are equivalent.
\begin{enumerate}
\item $X\to\Spec R$ is a minimal model.
\item $\Lambda$ is an MMA of $R$.
\end{enumerate}
\end{thm}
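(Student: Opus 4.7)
The plan is to pass between geometry and algebra via the tilting bundle underlying the derived equivalence, and then to translate each of the three conditions defining a minimal model (crepant, terminal, $\mathds{Q}$-factorial) into the corresponding algebraic condition on $\Lambda$ (being $\CM$, being reflexive, being maximal).  First I would note that the derived equivalence $\Db(\coh X)\simeq\Db(\mod\Lambda)$ arises from a tilting bundle $\mathcal T$ on $X$ with $\End_X(\mathcal T)=\Lambda$.  Since $f$ is projective birational with $\dim X=\dim R=3$ and both $X, R$ are Gorenstein normal, the module $N:=f_*\mathcal T$ is reflexive, higher direct images vanish (at least after a standard modification of $\mathcal T$ in the spirit of Van den Bergh's construction), and via the projection formula $\Lambda\cong\End_R(N)$ as $R$-algebras.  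This is the object whose properties will be compared against the geometry of $f$.

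For $(1)\Rightarrow(2)$: assuming $f\colon X\to\Spec R$ is a minimal model, crepancy plus the Gorenstein hypothesis yields $\omega_X\cong f^*\omega_R$, and combining this with Grothendieck duality along $f$ applied to $\mathcal T$ shows that $\Hom_X(\mathcal T,\mathcal T)$ is a CM $R$-module, i.e.\ $\Lambda\in\CM R$, so that $N$ is modifying.  For maximality, take any reflexive $N'$ with $\End_R(N\oplus N')\in\CM R$; using the general theory that relates modifying modules to crepant partial resolutions of $\Spec R$ (via the tilting/perverse sheaf package of Van den Bergh), the enlarged modifying module produces a crepant partial resolution $f'\colon X'\to\Spec R$ with a morphism $X'\to X$ over $\Spec R$.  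Minimality of $X$, specifically $\mathds{Q}$-factoriality plus terminality, rules out the existence of any strictly finer crepant partial resolution dominating it, forcing $X'\to X$ to be an isomorphism and hence $N'\in\add N$.

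For $(2)\Rightarrow(1)$: if $\Lambda$ is an MMA then $\Lambda$ is $d$-sCY as an $R$-algebra by \cite[2.22(2)]{IW4}, and this $d$-sCY property transports across the derived equivalence to say that the Serre functor on $\Db(\coh X)$ is shift-by-$d$, which is precisely the crepancy condition $f^*\omega_R\cong\omega_X$.  Terminality of $X$ is assumed.  To establish $\mathds{Q}$-factoriality, I would argue by contradiction: if $X$ fails to be $\mathds{Q}$-factorial then there is a small crepant (non-isomorphism) partial resolution $Y\to X$ with $Y$ still terminal, and applying the same tilting package to $Y$ produces a modifying module strictly larger than $N$, contradicting the maximality of $N$.

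The main obstacle is the passage between crepant partial resolutions of $\Spec R$ and modifying $R$-modules in both directions, specifically showing that (a) an enlargement of the modifying module really does produce a crepant partial resolution dominating $X$, and (b) a failure of $\mathds{Q}$-factoriality really does produce a strictly larger modifying module.  Both rely on having a sufficiently functorial tilting construction that behaves well under birational refinements; this is precisely what \cite[\S4]{IW5} sets up, and I would invoke that machinery rather than rebuild it here.
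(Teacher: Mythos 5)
There is nothing in the paper to compare against here: \ref{Min model - MMA} is quoted verbatim from \cite[4.16, 4.17]{IW5} and is not proved in this paper at all, so the only question is whether your sketch would stand on its own. It does not, because the two steps you flag as ``the main obstacle'' are exactly the content of the cited result, and the machinery you propose to invoke for them is not available at this level of generality. The hypothesis is only that $X$ is derived equivalent to \emph{some} ring $\Lambda$; there is no given tilting \emph{bundle}, no assumption that the fibres of $f$ are at most one-dimensional, and hence no access to Van den Bergh's perverse-sheaf/tilting package that you lean on to pass between enlargements of the modifying module and crepant partial resolutions dominating $X$ (indeed, the whole reason this paper imports \cite{IW5} rather than arguing via $\cV_X$ is that \ref{Min model - MMA} is needed for arbitrary projective birational $f$ between Gorenstein $3$-folds, where fibres can be two-dimensional and $\Lambda$ need not a priori be of the form $\End_R(N)$ with $N=f_*$ of a bundle). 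Your step (a) --- that any reflexive $N'$ with $\End_R(N\oplus N')\in\CM R$ yields a crepant partial resolution $X'\to X$ over $\Spec R$ --- is not a formal consequence of anything you state; modifying modules do not automatically give geometric partial resolutions, let alone ones dominating the given $X$. Your step (b) similarly needs two nontrivial inputs you do not supply: the existence of a small $\mathds{Q}$-factorialization $Y\to X$ (an MMP input), and that $Y$ is derived equivalent to an algebra of the form $\End_R(N\oplus N')$ with $N$ a proper summand, which again requires a tilting construction you do not have.

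The parts of your outline that do work are the soft ones, and they match the known dictionary: crepancy of $f$ corresponds to $\Lambda$ being a modifying ($\CM$) $R$-algebra via Serre duality/the singular-CY property transported across the equivalence, and $\mathds{Q}$-factoriality should correspond to maximality. But the actual proof in \cite{IW5} establishes the $\mathds{Q}$-factorial/maximality equivalence by algebraic means (comparison of class groups and reflexive modules, and the singularity category $\Dsg(X)\simeq\Dsg(\Lambda)$ together with Cohen--Macaulay module theory over the completions), precisely to avoid the geometric constructions your sketch presupposes. As written, your argument is circular relative to the task: it reduces the two genuinely hard implications to ``invoke \cite[\S4]{IW5}'', which is no more than the citation already in the statement.
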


The result is also true when $R$ is complete local, see \cite[4.19]{IW5}.     

Throughout this paper, we require the ability to contract curves.  Suppose that $f\colon X\to \Spec R$ is a projective birational morphism where $R$ is complete local, such that $\RDerived f_*\cO_X=\cO_R$, with at most one-dimensional fibres.  Choose a subset of curves $\bigcup_{i\in I}C_i$ in $X$ above the unique closed point of $\Spec R$, then since $R$ is complete local we may factorise $f$ into
\[
X\xrightarrow{g} X_{\con}\xrightarrow{h} \Spec R
\]
where $g$ contracts $C_j$ to a closed point if and only if $j\in I$, and further $g_*\cO_X=\cO_{X_{\con}}$, see e.g.\ \cite[p25]{KollarFlops} or \cite[\S2]{SS}.  Further, by the vanishing theorem \cite[1-2-5]{KMM} $\RDerived g_*\cO_X=\cO_{X_{\con}}$, which since $\RDerived f_*\cO_X=\cO_{R}$ in turn implies that $\RDerived h_*\cO_{X_{\con}}=\cO_{R}$.

Recall that a $\mathds{Q}$-Cartier divisor $D$ is called $g$-nef if $D\cdot C\geq 0$ for all curves contracted by $g$, and $D$ is called $g$-ample if $D\cdot C> 0$ for all curves contracted by $g$.  There are many (equivalent) definitions of flops in the literature, see e.g.\ \cite{KollarSurvey}.  We will use the following.

\begin{defin}\label{flopsdefin}
Suppose that $f\colon X\to \Spec R$ is a crepant projective birational morphism, where $R$ is complete local, with at most one-dimensional fibres.  Choose $\bigcup_{i\in I}C_i$ in $X$, contract them to give $g\colon X\to X_{\con}$, and suppose that $g$ is an isomorphism away from $\bigcup_{i\in I}C_i$.  Then we say that $g^+\colon X^+\to X_{\con}$ is the flop of $g$ if for every line bundle $\cL=\cO_X(D)$ on $X$ such that $-D$ is $g$-nef, then the proper transform of $D$ is $\mathds{Q}$-Cartier, and $g^+$-nef. 
\end{defin}

The following is obvious, and will be used later.

\begin{lemma}\label{check flop}
With the setup in \ref{flopsdefin}, suppose that $D_i$ is a Cartier divisor on $X$ such that $D_i\cdot C_j=\delta_{ij}$ for all $i,j\in I$ (such a $D_i$ exists since $R$ is complete local), let $D'_i$ denote the proper transform of $-D_i$ to $X^+$.  Then if $D_i'$ is Cartier and there is an ordering of the exceptional curves $C_i^+$ of $g^+$ such that $D'_i \cdot C^+_j  =\delta_{ij}$, then $g^+\colon X^+\to X_{\con}$ is the flop of $g$.
\end{lemma}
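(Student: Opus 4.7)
The plan is to verify that $g^+$ satisfies the defining condition of \ref{flopsdefin}, using the $D_i$ and $D_i'$ as ``dual bases'' on either side of the contraction. Fix a Cartier divisor $D$ on $X$ with $-D$ being $g$-nef, and set $n_i:=-D\cdot C_i\geq 0$ for $i\in I$.

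The first step is to correct $D$ using the $\{D_i\}$ to produce something $g$-numerically trivial. Let $D_0:=D+\sum_{i\in I} n_i D_i$, so that $D_0\cdot C_j=-n_j+n_j=0$ for every $j\in I$. The key technical input is to show that $\cO_X(D_0)$ descends to a line bundle on $X_{\con}$. Since $g$ is projective birational with at most one-dimensional fibres, $R$ is complete local, and $\RDerived g_*\cO_X=\cO_{X_{\con}}$, a standard descent argument produces a Cartier divisor $E$ on $X_{\con}$ with $\cO_X(D_0)\cong g^*\cO_{X_{\con}}(E)$.

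Next, the proper transform is linear on Weil divisors, and since $D_0=g^*E$ has no components contained in the exceptional locus of $g$, its proper transform on $X^+$ coincides with $(g^+)^*E$, which is automatically Cartier. Combining with the hypothesis that the proper transform of $-D_i$ equals the Cartier divisor $D_i'$, the proper transform of $D$ on $X^+$ equals
\[
(g^+)^*E+\sum_{i\in I}n_i D_i',
\]
which is Cartier. Intersecting with $C_j^+$ and using the projection formula together with the hypothesis $D_i'\cdot C_j^+=\delta_{ij}$ gives $0+\sum_i n_i\delta_{ij}=n_j\geq 0$, so this proper transform is $g^+$-nef. This establishes the defining property from \ref{flopsdefin}, showing $g^+$ is the flop of $g$.

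The main obstacle is justifying the descent of the $g$-numerically trivial line bundle $\cO_X(D_0)$ to $X_{\con}$; this is standard in the setting of contractions with one-dimensional fibres over a complete local base (where the obstructions to descent are controlled by $R^1g_*$ applied to the relevant sheaf and vanish on account of the fibre structure), and everything else is routine manipulation of proper transforms and intersection numbers.
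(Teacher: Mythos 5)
Your argument is correct, and it is essentially the intended one: the paper records this lemma without proof (``the following is obvious''), and the expected justification is exactly your reduction of an arbitrary $g$-nef divisor $-D$ to the dual basis $\{-D_i\}$ modulo a $g$-numerically-trivial part, which descends to $X_{\con}$ and therefore transforms to something Cartier and $g^+$-trivial, while the birational map is an isomorphism in codimension one so that proper transform is additive and respects linear equivalence.

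The one point to tighten is your justification of the descent step. One-dimensionality of the fibres alone does not force the relevant $R^1g_*$ to vanish (it fails, for instance, for line bundles sufficiently negative on the contracted curves), so the parenthetical reason you give is not itself a proof. What makes the step legitimate here is either the relative base point free theorem \cite[3-1-1]{KMM} --- since $g$ is birational every divisor is $g$-big, so a $g$-numerically-trivial Cartier divisor on a variety with canonical singularities is pulled back from $X_{\con}$, exactly as in the contraction theorem --- or, in this one-dimensional-fibre situation over a complete local base with $\RDerived g_*\cO_X=\cO_{X_{\con}}$, the description of the Picard group by degrees on the exceptional curves in \cite[\S 3.4]{VdB1d}, which shows directly that a line bundle of degree zero on every $g$-contracted curve is the pullback of a line bundle on $X_{\con}$. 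With either reference inserted at that point, your proof is complete and matches the argument the paper takes for granted.
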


\subsection{Perverse Sheaves and Tilting}\label{perverse and tilting}  Some of the arguments in this paper are not specific to dimension three, and are not specific to crepant morphisms.  Consequently, at times we will refer to the following setup.

\begin{setup}\label{general setup}
(General Setup).  Suppose that $f\colon X\to \Spec R$ is a projective birational morphism, where $R$ is complete local, $X$ and $R$ are noetherian and normal, such that $\RDerived f_*\cO_X=\cO_R$ and the fibres of $f$ have dimension at most  one. 
\end{setup}

However, some parts will require the following restriction.

\begin{setup}\label{crepant setup}
(Crepant Setup).  Suppose that $f\colon X\to \Spec R$ is a crepant projective birational morphism between $d\leq 3$ dimensional schemes, where $R$ is complete local normal Gorenstein, and the fibres of $f$ have dimension at most  one.   Further
\begin{enumerate}
\item If $d=2$ we allow $X$ to have canonical Gorenstein singularities, so $X\to\Spec R$ is a partial crepant resolution of a Kleinian singularity.
\item If $d=3$ we further assume that $X$ has only Gorenstein terminal singularities.
\end{enumerate}  
By Kawamata vanishing, it is automatic that $\RDerived f_*\cO_X=\cO_R$.  We will not assume that $X$ is $\mathds{Q}$-factorial unless explicitly stated. 
\end{setup}

Now if $g\colon X\to X_{\con}$ is a projective birational morphism satisfying $\RDerived g_*\cO_X=\cO_{X_{\con}}$, the category of perverse sheaves relative to $g$, denoted $\Per(X,X_{\con})$, is defined to be 
\[
\Per (X,X_{\con}):=\left\{ a\in\Db(\coh X)\left| 
\begin{array}{c}
H^i(a)=0\mbox{ if }i\neq 0,-1\\
g_*H^{-1}(a)=0\mbox{, }\RDerived g_* H^0(a)=0\\ 
\Hom(c,H^{-1}(a))=0\mbox{ for all }c\in\cC_g 
\end{array}\right. \right\}
\]
where $\cC_g:=\{ c\in\coh X\mid \RDerived g_*c=0\}$.  In the setup of \ref{general setup},  it is well-known \cite[3.2.8]{VdB1d} that there is a vector bundle $ \cV_X$, described below,  inducing a derived equivalence
\begin{eqnarray}
\begin{array}{c}
\begin{tikzpicture}
\node (a1) at (0,0) {$\Db(\coh X)$};
\node (a2) at (6,0) {$\Db(\mod \End_X(\cV_X))$};
\node (b1) at (0,-1.25) {$\Per (X,R)$};
\node (b2) at (6,-1.25) {$\mod\End_X(\cV_X) $};
\draw[->] (a1) -- node[above] {$\scriptstyle\Uppsi_X:=\RHom_X(\cV_X,-)$} node [below] {$\scriptstyle\sim$} (a2);
\draw[->] (b1) --  node [below] {$\scriptstyle\sim$} (b2);
\draw[right hook->] (b1) to (a1);
\draw[right hook->] (b2) to (a2);
\end{tikzpicture}
\end{array}\label{derived equivalence}
\end{eqnarray}
The bundle $\cV_X$ is constructed as follows.  Consider $C=\pi^{-1}(\m)$ where $\m$ is the unique closed point of $\Spec R$, then giving $C$ the reduced scheme structure, write $C^{\redu}=\bigcup _{i=1}^nC_i$ where each $C_i\cong\mathbb{P}^1$.  Since $R$ is complete local, we can find Cartier divisors $D_i$ with the property that $D_i\cdot C_j=\delta_{ij}$, and set $\cL_i:=\cO_X(D_i)$.  If the multiplicity of $C_i$ is equal to one, set $\cM_i:=\cL_i$, else define $\cM_i$ to be given by the maximal extension
\begin{eqnarray}
0\to\cO_{X}^{\oplus(r-1)}\to\cM_i\to\cL_i\to 0\label{max extension}
\end{eqnarray}
associated to a minimal set of $r-1$ generators of $H^1(X,\cL_i^{*})$ \cite[3.5.4]{VdB1d}.  

\begin{notation}
With notation as above, in the general setting of \ref{general setup}, 
\begin{enumerate}
\item Set $\cN_i:=\cM_i^*$, and $\cV_X:=\cO_{X}\oplus \bigoplus_{i=1}^n\cN_i$.
\item Set $N_i:=H^0(\cN_i)$ and $N:=H^0(\cV_X)$.
\end{enumerate}
\end{notation}
By \cite[3.5.5]{VdB1d}, $\cV_X$ is a basic progenerator of $\Per(X,R)$, and furthermore is a tilting bundle on $X$.   Note that   $\rank_{R}^{\phantom 1}N_i$ is equal to the scheme-theoretic multiplicity of the curve $C_i$ \cite[3.5.4]{VdB1d}. 

\begin{remark}\label{simple across Db}
Under the derived equivalence $\Uppsi_X$ in \eqref{derived equivalence}, the coherent sheaves $\cO_{C_i}(-1)$ belong to $\Per (X,R)$ and correspond to simple left $\End_X(\cV_X)$-modules $S_i$.
\end{remark}

Unfortunately, at this level of generality $\End_X(\cV_X)\ncong\End_R(N)$ (see e.g.\ \cite[\S2]{DW2}). However, in the crepant setup of \ref{crepant setup}, this does hold, which later will allow us to reduce many problems to the base $\Spec R$.  

\begin{lemma}[{\cite[3.2.10]{VdB1d}}]\label{flop up=down}
In the setup of \ref{crepant setup}, $\End_X(\cV_X)\cong\End_R(N)$.
\end{lemma}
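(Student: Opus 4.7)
The plan is to exhibit the natural ring homomorphism
\[
\alpha\colon \End_X(\cV_X)\longrightarrow \End_R(N),\qquad \phi\mapsto f_*\phi,
\]
induced by $\Rf_*$, and then to prove it is an isomorphism by a reflexivity argument at height-one primes of $R$. First I would verify that $\Rf_*\cV_X = f_*\cV_X = N$: vanishing of higher direct images for $\cO_X$ is part of \ref{crepant setup}, while for each $\cN_i=\cM_i^*$ the construction \eqref{max extension} is precisely designed so that $R^1 f_*\cN_i=0$. Functoriality of $f_*$ applied to morphisms then yields $\alpha$, and compatibility with composition makes it a ring homomorphism.

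Second, I would establish that $\End_X(\cV_X)\in\CM R$. Since $\cV_X$ is tilting, $\Ext^i_X(\cV_X,\cV_X)=0$ for all $i>0$, so the local-to-global spectral sequence collapses and identifies $\End_X(\cV_X)$ with $\Rf_*\RsHom_X(\cV_X,\cV_X)$ as an $R$-module. In the crepant setup, $X$ is Gorenstein with only Gorenstein terminal (or canonical, in the surface case) singularities -- in particular rational -- and $\omega_X\cong f^*\omega_R$. Grothendieck duality, combined with the standard depth estimate for pushforwards under a crepant contraction of a variety with Gorenstein rational singularities, then yields $\depth_R\End_X(\cV_X)\geq d$. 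Similarly $\End_R(N)$ is reflexive, hence satisfies $S_2$ over the normal Gorenstein ring $R$.

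Third, I would show that $\alpha$ is an isomorphism at every height-one prime of $R$. Since $R$ is normal and $f$ is projective birational with at most one-dimensional fibres, the locus in $\Spec R$ over which $f$ fails to be an isomorphism has codimension at least two; in particular $f$ is an isomorphism over every height-one prime $\p$. Localising at any such $\p$, $\cV_X$ restricts to the free $R_\p$-module naturally identified with $N_\p$, and $\alpha_\p$ becomes the identity map, hence an isomorphism.

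Combining these, $\alpha$ is a homomorphism between reflexive (indeed CM, of depth $\geq 2$) $R$-modules which becomes an isomorphism at every height-one prime, so the kernel and cokernel are supported in codimension at least two while inheriting enough depth from the reflexive source and target to force them to vanish; hence $\alpha$ is an isomorphism. The main obstacle is the CM property $\End_X(\cV_X)\in\CM R$ in the second step, which genuinely uses the crepant and terminal/canonical hypotheses -- without them the pushforward could easily have insufficient depth -- whereas the remaining steps are essentially formal once that is in place.
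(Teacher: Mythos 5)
Your proof is correct and is essentially the argument behind the result the paper simply cites: the lemma is quoted from \cite[3.2.10]{VdB1d} without an in-text proof, and that proof runs exactly along your lines (tilting vanishing so that $\Rf_*\sHom_X(\cV_X,\cV_X)$ is concentrated in degree zero, crepancy plus Grothendieck duality to obtain the Cohen--Macaulay property, then agreement with $\End_R(N)$ in codimension one and a depth/reflexivity argument). The only step you leave as a black box, the ``standard depth estimate'', is precisely the duality computation $\Rf_*\sHom_X(\cV_X,\cV_X)\cong\RHom_R(\Rf_*\sHom_X(\cV_X,\cV_X),R)$, which uses the self-duality of the bundle $\sHom_X(\cV_X,\cV_X)$ together with $f^!\cO_R\cong\cO_X$, and over the Gorenstein ring $R$ this forces the pushforward to be maximal Cohen--Macaulay.
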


\begin{notation}\label{N notation}
In the setup of \ref{general setup}, pick a subset  $\bigcup_{i\in I}C_i$ of curves above the origin, indexed by a (finite) set $I$.  We set
\begin{enumerate}
\item $\cN_I:=\bigoplus_{i\in I}\cN_i$ and $\cN_{I^c}:=\cO_X\oplus\bigoplus_{j\notin I}\cN_j$, so that $\cV_X=\cN_I\oplus \cN_{I^c}$.
\item $N_I:=\bigoplus_{i\in I}N_i$ and $N_{I^c}:=R\oplus\bigoplus_{j\notin I}N_j$, so that $N=N_I\oplus N_{I^c}$.
\end{enumerate}  
\end{notation}

 The following result is implicit in the literature.  

\begin{prop}\label{KIWY}
Under the general setup of \ref{general setup}, choose a subset of curves $\bigcup_{i\in I}C_i$ and contract them to obtain $X\to X_{\con}\to\Spec R$.  Let $e_{I^c}$ be the idempotent in $\End_X(\cV_X)$ corresponding to the summand $\cN_{I^c}$, and let $e$ be the idempotent in $\End_{X_{\con}}(\cV_{X_{\con}})$ corresponding to the summand $\cO_{X_{\con}}$.  Then
\begin{enumerate}
\item\label{KIWY 1} $\cV_{X_{\con}}\cong g_*\cN_{I^c}\cong \RDerived g_*\cN_{I^c}$, and $\End_X(\cN_{I^c})\cong\End_{X_{\con}}(\cV_{X_{\con}})$.  
\item\label{KIWY 2} The following diagram commutes  
\[
\begin{array}{c}
\begin{tikzpicture}
\node (a1) at (0,0) {$\Db(\coh X)$};
\node (a2) at (4.5,0) {$\Db(\mod \End_X(\cV_X))$};
\node (b1) at (0,-1.5) {$\Db(\coh X_{\con})$};
\node (b2) at (4.5,-1.5) {$\Db(\mod \End_{X_{\con}}(\cV_{X_{\con}}))$};
\node (c1) at (0,-3) {$\Db(\coh \Spec R)$};
\node (c2) at (4.5,-3) {$\Db(\mod R)$};
\draw[->] (a1) -- node[above] {$\scriptstyle\Uppsi_X$} node [below] {$\scriptstyle\sim$} (a2);
\draw[->] (b1) -- node[above] {$\scriptstyle\Uppsi_{X_{\con}}$} node [below] {$\scriptstyle\sim$} (b2);
\draw[->] (c1) --  node [above] {$\scriptstyle\sim$} (c2);
\draw[->] (a1) to node[left] {$\scriptstyle \RDerived g_*$} (b1);
\draw[->] (b1) to node[left] {$\scriptstyle \RDerived h_*$} (c1);
\draw[->] (a2) to node[right] {$\scriptstyle e_{I^c}(-)$} (b2);
\draw[->] (b2) to node[right] {$\scriptstyle e(-)$} (c2);
\end{tikzpicture}
\end{array}
\]
\end{enumerate}
Further, under the crepant setup of \ref{crepant setup}, $\End_X(\cV_X)\cong\End_R(N)$ and $\End_{X_{\con}}(\cV_{X_{\con}})\cong \End_R(N_{I^c})$, so $\End_R(N_{I^c})$ is derived equivalent to $X_{\con}$ via the tilting bundle $\cV_{X_{\con}}$. 
\end{prop}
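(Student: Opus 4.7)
The plan is to identify $\cN_{I^c}\cong g^*\cV_{X_{\con}}$, from which parts (1) and (2) will follow cleanly from the projection formula and adjunction, and the crepant consequence then falls out of \ref{flop up=down}.

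To establish this identification I would work summand by summand. Certainly $g^*\cO_{X_{\con}}\cong \cO_X$ handles the trivial summand. For each $j\notin I$, the line bundle $\cL_j$ on $X$ was constructed so that $\cL_j\cdot C_k=\delta_{jk}$; in particular, $\cL_j$ is numerically trivial on every curve $C_i$ with $i\in I$, i.e.\ on every curve contracted by $g$. Since $R$ is complete local, such line bundles descend through $g$: there is a line bundle $\cL_j^{\con}$ on $X_{\con}$ with $g^*\cL_j^{\con}\cong\cL_j$ and $\cL_j^{\con}\cdot C_k^{\con}=\delta_{jk}$ for the non-contracted curves $C_k^{\con}$, which is exactly the line bundle used to build $\cV_{X_{\con}}$. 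If the scheme-theoretic multiplicity of $C_j$ is one, this identifies the corresponding summand. Otherwise, the projection formula combined with $\RDerived g_*\cO_X=\cO_{X_{\con}}$ gives $H^1(X,\cL_j^{-1})\cong H^1(X_{\con},(\cL_j^{\con})^{-1})$, so the choices of generators used in the two maximal extensions \eqref{max extension} can be taken to agree; since $g^*$ is exact on locally free sheaves, pulling back the extension defining $\cM_j^{\con}$ produces the extension defining $\cM_j$, and dualizing yields $\cN_j\cong g^*\cN_j^{\con}$.

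With $\cN_{I^c}\cong g^*\cV_{X_{\con}}$ in hand, the projection formula forces
\[
\RDerived g_*\cN_{I^c}\cong \cV_{X_{\con}}\Ltimes \RDerived g_*\cO_X\cong \cV_{X_{\con}},
\]
so the higher direct images vanish and $g_*\cN_{I^c}\cong\cV_{X_{\con}}$. Adjunction then gives $\End_X(\cN_{I^c})\cong\Hom_{X_{\con}}(\cV_{X_{\con}},\RDerived g_*g^*\cV_{X_{\con}})\cong\End_{X_{\con}}(\cV_{X_{\con}})$, proving (1). For (2), the bottom square is the usual affine global sections equivalence, and the middle square commutes since for any $A\in\Db(\coh X)$,
\[
\Uppsi_{X_{\con}}(\RDerived g_* A)=\RHom_{X_{\con}}(\cV_{X_{\con}},\RDerived g_* A)\cong\RHom_X(g^*\cV_{X_{\con}},A)\cong\RHom_X(\cN_{I^c},A)=e_{I^c}\Uppsi_X(A).
\]

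Finally, assuming the crepant setup, \ref{flop up=down} applied to both $f$ and $h$ gives $\End_X(\cV_X)\cong\End_R(N)$ and $\End_{X_{\con}}(\cV_{X_{\con}})\cong\End_R(H^0(\cV_{X_{\con}}))$; since $H^0(X_{\con},\cV_{X_{\con}})\cong H^0(X_{\con},g_*\cN_{I^c})\cong H^0(X,\cN_{I^c})=N_{I^c}$ this gives $\End_{X_{\con}}(\cV_{X_{\con}})\cong\End_R(N_{I^c})$, and the asserted derived equivalence between $X_{\con}$ and $\End_R(N_{I^c})$ is then a restatement of \eqref{derived equivalence} for $h\colon X_{\con}\to\Spec R$. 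The main technical obstacle is the summand-by-summand descent of the line bundles and compatibility of the maximal extension construction with $g^*$; once the descent of $\cL_j$ for $j\notin I$ is in place, everything else is formal from the projection formula and adjunction.
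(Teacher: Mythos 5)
Your overall route coincides with the paper's: both arguments hinge on the single identification $g^*\cV_{X_{\con}}\cong\cN_{I^c}$, after which the projection formula gives $\RDerived g_*\cN_{I^c}\cong\cV_{X_{\con}}$, adjunction gives the ring isomorphism in (1), and the functorial isomorphisms $\RHom_{X_{\con}}(\cV_{X_{\con}},\RDerived g_*(-))\cong\RHom_X(g^*\cV_{X_{\con}},-)\cong e_{I^c}\RHom_X(\cV_X,-)$ give (2). The only genuine difference is how the identification is obtained: the paper quotes \cite[4.6]{KIWY}, whereas you prove it summand by summand, descending each $\cL_j$ ($j\notin I$) along $g$ and checking that the maximal extension \eqref{max extension} is compatible with $g^*$ via $H^1(X,\cL_j^*)\cong H^1(X_{\con},(\cL_j^{\con})^*)$. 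That is a legitimate self-contained alternative, but note that the descent step is not a consequence of completeness of $R$ alone: what makes a line bundle of $g$-degree zero pull back from $X_{\con}$ is the vanishing $\RDerived g_*\cO_X=\cO_{X_{\con}}$ (so $R^1g_*\cO_X=0$) combined with the theorem on formal functions, which is precisely the content of the result the paper cites; completeness only provides algebraization and the uniqueness of the line bundles with prescribed degrees.

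One step at the end needs repair. You invoke \ref{flop up=down} for $h\colon X_{\con}\to\Spec R$, but when $d=3$ the crepant setup \ref{crepant setup} requires the source to have Gorenstein \emph{terminal} singularities, and $X_{\con}$ need not be terminal (for instance when $g$ contracts a divisor to a curve, which is allowed here since no floppability is assumed). The derived equivalence \eqref{derived equivalence} for $h$ is unproblematic, as it only needs the general setup \ref{general setup}; but to obtain $\End_{X_{\con}}(\cV_{X_{\con}})\cong\End_R(N_{I^c})$ you should instead combine part (1) with \ref{flop up=down} applied to $f$ and compress by the idempotent:
\[
\End_{X_{\con}}(\cV_{X_{\con}})\cong\End_X(\cN_{I^c})=e_{I^c}\End_X(\cV_X)e_{I^c}\cong e_{I^c}\End_R(N)e_{I^c}=\End_R(N_{I^c}),
\]
which is the route the paper's proof intends. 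With that substitution your argument is complete.
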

\begin{proof}
(1) As in \S\ref{general prelim}, by the vanishing theorem $\RDerived g_*\cO_X\cong\cO_{X_{\con}}$.  Given this,  the proof of \cite[4.6]{KIWY} (which considered surfaces and $\mPer$ instead) shows that $g^*(\cV_{X_{\con}}^*)\cong \cO_X\oplus_{j\notin I}\cM_j$. Thus
\[
\sHom_X(g^*\cV_{X_{\con}},\cO_X)\cong g^*\sHom_X(\cV_{X_{\con}},\cO_{X_{\con}})\cong 
\cO_X\oplus_{j\notin I}\cM_j
\]
and so dualizing gives $g^*\cV_{X_{\con}}\cong \cO_X\oplus_{j\notin I}\cN_j:=\cN_{I^c}$, where the right-hand side is a summand of $\cV_X$.  Applying $\RDerived g_*$ and using the projection formula
\[
\cV_{X_{\con}}\cong \RDerived g_*g^*\cV_{X_{\con}}\cong  \RDerived g_* \cN_{I^c}
\]
and so inspecting cohomology shows that $g_*\cN_{I^c}\cong \RDerived g_*\cN_{I^c}\cong \cV_{X_{\con}}$.  It follows that
\[
\End_{X_{\con}}(\cV_{X_{\con}})=\End_{X_{\con}}(g_*\cN_{I^c})\cong \Hom_X(g^*g_*\cN_{I^c},\cN_{I^c})\cong \End_X(\cN_{I^c}),
\]
and chasing through shows this isomorphism is a ring isomorphism.\\
(2) The commutativity of the top diagram follows from the functorial isomorphisms
\begin{align*}
\RHom_{X_{\con}}(\cV_{X_{\con}},\RDerived g_* (-))
&\cong 
\RHom_{X}(g^*\cV_{X_{\con}},-)\\
&\cong 
\RHom_{X}(\cN_{I^c},-)\\
&\cong 
e_{I^c}\RHom_{X}(\cV_X,-).
\end{align*}
with the bottom diagram being similar.  The last statements then follow from \ref{flop up=down}.
\end{proof}

The following is an easy extension of  \cite[3.2]{WemGL2}, and will be needed later to read off the dual graph after the flop.
\begin{thm}\label{reconstruction new}
In the general setup of \ref{general setup}, set $\Lambda:=\End_X(\cV_X)$. Then $\Lambda^{\op}$ can be written as a quiver with relations, where the quiver is given as follows: for every exceptional curve $C_i$ associate a vertex labelled $i$, and also associate a vertex $\star$ corresponding to $\cO_{X}$.  Then the number of arrows between the vertices is precisely
\begin{center}
\begin{tabular}{*3l}
\toprule
&Number of arrows&If setup \ref{crepant setup}, $d=3$ and $X$ is smooth \\
\midrule
$\star\rightarrow\star$ & $\dim_\mathbb{C}\Ext_X^1(\omega_C,\omega_C)$.\\
$i\rightarrow\star$ & $\dim_\mathbb{C}\Hom_X(\cO_{C_i}(-1),\omega_C)$\\
$\star\rightarrow i$& $\dim_\mathbb{C}\Ext_X^2(\omega_C,\cO_{C_i}(-1))$\\
$i\to i$&$\dim_\mathbb{C}\Ext_X^1(\cO_{C_i},\cO_{C_i})$& $=\left\{ \begin{array}{rl} 0& \mbox{if }(-1,-1)\mbox{-curve}\\ 1 &\mbox{if }(-2,0)\mbox{-curve}\\ 2&\mbox{if }(-3,1)\mbox{-curve} \end{array}\right.$\\
$i\rightarrow j$ & $\left\{ \begin{array}{rl} 1& \mbox{if }C_i\cap C_j=\{\mathrm{pt}\}\\ 0 &\mbox{else} \end{array}\right.$\\
\bottomrule\\
\end{tabular}
\end{center}
where in the bottom row $i\neq j$.
\end{thm}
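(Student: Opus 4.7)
The plan is to invoke the standard Gabriel-type principle that, for a basic complete semilocal $\mathbb{C}$-algebra $\Gamma$, the number of arrows from vertex $i$ to vertex $j$ in its quiver equals $\dim_{\mathbb{C}}\Ext^1_\Gamma(S_i,S_j)$. Applied to $\Gamma=\Lambda^{\op}$, together with the canonical duality $\mod\Lambda^{\op}\cong(\mod\Lambda)^{\op}$, arrows $i\to j$ in the quiver of $\Lambda^{\op}$ are counted by $\dim_{\mathbb{C}}\Ext^1_\Lambda(S_j,S_i)$, where $S_i,S_j$ are now simple left $\Lambda$-modules indexed by the vertices of $Q$. The derived equivalence $\Uppsi_X$ of \eqref{derived equivalence} then transports each such Ext into a computation inside $\Db(\coh X)$ via $\Ext^1_\Lambda(S_j,S_i)\cong\Ext^1_X(\Uppsi_X^{-1}S_j,\Uppsi_X^{-1}S_i)$.

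The first step is to identify the simples through $\Uppsi_X^{-1}$. By \ref{simple across Db}, $\Uppsi_X^{-1}S_i=\cO_{C_i}(-1)$ for each curve vertex $i$. The remaining simple $S_\star$ is identified, following the argument in \cite[3.2]{WemGL2}, by unwinding the maximal extensions \eqref{max extension} that define the bundles $\cM_i$: this identifies $\Uppsi_X^{-1}S_\star$ with a suitable cohomological shift of $\omega_C$, where $C$ is the scheme-theoretic fibre. Since the extra vertex $\star$ corresponds to the summand $\cO_X$ of $\cV_X$, this identification is forced by applying $\RHom_X(\cV_X,-)$ termwise to \eqref{max extension} and tracing which cohomology of the resulting complex produces $S_\star$.

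With this dictionary installed, each row of the table becomes a direct Ext-computation on $X$. The $\star\to\star$ row is $\Ext^1_X(\omega_C,\omega_C)$, since the shifts cancel on both arguments, while the $i\to i$ row is $\Ext^1_X(\cO_{C_i}(-1),\cO_{C_i}(-1))\cong\Ext^1_X(\cO_{C_i},\cO_{C_i})$ by twisting. The mixed rows $i\to\star$ and $\star\to i$ are obtained by applying $\Hom_X(\omega_C,-)$ or $\Hom_X(-,\omega_C)$ to the short exact sequence $0\to\cO_{C_i}(-1)\to\cO_{C_i}\to\cO_p\to 0$ and then absorbing the cohomological shift present in the identification of $S_\star$; the asymmetry between the $\Hom$ in one row and the $\Ext^2$ in the other is precisely the difference between the two sides of this triangle under the opposite shifts. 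For $i\neq j$, the local-to-global spectral sequence reduces $\Ext^1_X(\cO_{C_j}(-1),\cO_{C_i}(-1))$ to global sections of the sheaf $\mathcal{E}xt^1_X(\cO_{C_j},\cO_{C_i})$, which is concentrated at $C_i\cap C_j$ and is one-dimensional exactly when the intersection is a single reduced point. Finally, in the crepant smooth 3-fold case one has $\Ext^1_X(\cO_{C_i},\cO_{C_i})\cong H^1(C_i,N_{C_i/X})$, and the three listed values follow from the classification of rank-two bundles $\cO_{\mathbb{P}^1}(a)\oplus\cO_{\mathbb{P}^1}(b)$ with $a+b=-2$.

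The main obstacle is the identification of the simple $S_\star$ and the bookkeeping of the cohomological shifts responsible for the asymmetry between the $i\to\star$ row (featuring $\Hom$) and the $\star\to i$ row (featuring $\Ext^2$); this is the point at which a genuine appeal to \eqref{max extension} is required and where the 3-fold case differs from the surface case of \cite[3.2]{WemGL2}. Once $S_\star$ is pinned down, every remaining step reduces to standard Ext-sequence chases and the classical normal-bundle analysis on $\mathbb{P}^1$.
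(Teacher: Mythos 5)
Your overall strategy is the same as the paper's: the paper gives no independent argument for \ref{reconstruction new}, asserting it is an easy extension of \cite[3.2]{WemGL2}, and that argument is exactly the route you describe --- count arrows by $\Ext^1$ between the simples of $\Per(X,R)$ transported through $\Uppsi_X$, with the curve simples $\cO_{C_i}(-1)$ as in \ref{simple across Db} and the remaining simple identified (via \eqref{max extension}, or directly from \cite[3.5.8]{VdB1d}) as a shift of $\omega_C$, namely $\omega_C[1]$. So on the identification of $S_\star$ and on the $\star\to\star$, $i\to i$ and $i\to j$ rows your plan is sound and matches the intended proof.

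The genuine gap is in your handling of the mixed rows. What the simple-to-simple computation actually produces is $\Ext^1_{\Db(\coh X)}(\cO_{C_i}(-1),\omega_C[1])\cong\Ext^2_X(\cO_{C_i}(-1),\omega_C)$ in one direction and $\Ext^1_{\Db(\coh X)}(\omega_C[1],\cO_{C_i}(-1))\cong\Hom_X(\omega_C,\cO_{C_i}(-1))$ in the other, and here --- unlike the $i\to i$ row --- the twist cannot be cancelled, because the second argument $\omega_C$ is not a sheaf on $C_i$ being twisted compatibly. Your proposed mechanism, applying $\Hom_X(\omega_C,-)$ or $\Hom_X(-,\omega_C)$ to $0\to\cO_{C_i}(-1)\to\cO_{C_i}\to\cO_p\to 0$ and ``absorbing the shift'', does not bridge this: the long exact sequences carry nonvanishing $\Ext^\bullet_X(\omega_C,\cO_p)$ and $\Ext^\bullet_X(\cO_p,\omega_C)$ terms, so they do not identify the twisted groups with untwisted ones. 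Indeed they genuinely differ in dimension: for the Atiyah flop the fibre $C$ is the reduced $(-1,-1)$-curve, $\omega_C\cong\cO_{\mathbb{P}^1}(-2)$, the quiver has two arrows in each direction between $\star$ and the curve vertex, and one checks $\dim_{\mathbb{C}}\Ext^2_X(\cO_{C}(-1),\omega_C)=\dim_{\mathbb{C}}\Hom_X(\omega_C,\cO_{C}(-1))=2$, whereas for example $\Hom_X(\cO_{C},\omega_C)=0$ and $\Ext^2_X(\omega_C,\cO_C)=1$. So the step where you pass from the twisted $\Ext$-groups you actually compute to entries written in terms of $\cO_{C_i}$ is not a matter of bookkeeping; it either requires a genuine further argument (e.g.\ duality relative to $\Spec R$, which is not available in the generality of \ref{general setup}) or the entries must be kept in the twisted form. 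This is precisely the content concealed in the appeal to \cite[3.2]{WemGL2}, and your sketch does not supply it.
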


\subsection{Mutation}\label{mut prelim}
Throughout this subsection $R$ denotes a normal $d$-sCY complete local commutative algebra, with $d\geq 2$, and $M\in\refl R$ denotes a basic modifying module $M$.   We summarise and extend the theory of mutation from \cite[\S6]{IW4} and \cite[\S 5]{DW1}.  

\begin{setup}\label{setup}
With assumptions as above, given the basic modifying $R$-module $M$, set $\Lambda:=\End_R(M)$ and pick a summand $M_I$ of $M$.
\begin{enumerate}
\item Denote $M_{I^c}$ to be the complement of $M_I$, so that
\[
M=M_I\oplus M_{I^c}.
\]
\item  We define $[M_{I^c}]$ to be the two-sided ideal of $\Lambda$ consisting of morphisms $M\to M$ which factor through a member of $\add M_{I^c}$.  We define $\Lambda_I:=\Lambda/[M_{I^c}]$.  Equivalently, if $e_{I}$ denotes the idempotent of $\Lambda=\End_R(M)$ corresponding to the summand $M_{I}$ of $M$, then $\Lambda_I=\Lambda/\Lambda (1-e_{I})\Lambda$.
\end{enumerate}
\end{setup}

Given our choice of summand $M_I$, we then mutate.  In the theory of mutation, the complement submodule $M_{I^c}$ is fixed, and the summand $M_I$ changes in a universal way.  Recall from \S\ref{general prelim} that $(-)^*:=\Hom_R(-,R)$. 

\begin{setup}\label{setup2} With the setup as in \ref{setup}, write $M_I=\bigoplus_{i\in I}M_i$ as a direct sum of indecomposables.  For each $i\in I$, consider a \emph{minimal right $(\add M_{I^c})$-approximation}
\[
V_i\xrightarrow{a_i}M_i
\]
of $M_i$, which by definition means that
\begin{enumerate}
\item $V_i\in\add M_{I^c}$ and $(\cdot a_i)\colon\Hom_R(M_{I^c},V_i)\to\Hom_R(M_{I^c},M_i)$ is surjective,
\item If $g\in\End_R(V_i)$ satisfies $a_i=ga_i$, then $g$ is an automorphism.
\end{enumerate}
Since $R$ is complete, such an $a_i$ exists and is unique up to isomorphism.  Denote $K_i:=\Ker a_i$, so there is an exact sequence
\begin{eqnarray}
0\to K_i\xrightarrow{c_i} V_i\xrightarrow{a_i}M_i\label{K0prime}
\end{eqnarray}
such that  
\begin{eqnarray}
0\to \Hom_R(M_{I^c},K_i)\xrightarrow{\cdot c_i} \Hom_R(M_{I^c},V_i)\xrightarrow{\cdot a_i}\Hom_R(M_{I^c},M_i)\to 0\label{K0aprime}
\end{eqnarray}
is exact.  Summing the sequences \eqref{K0prime} over all $i\in I$ gives an exact sequence
\begin{eqnarray}
0\to K_I\xrightarrow{c} V_I\xrightarrow{a}M_I\label{K0}
\end{eqnarray}
such that
\begin{eqnarray}
0\to \Hom_R(M_{I^c},K_I)\xrightarrow{\cdot c} \Hom_R(M_{I^c},V_I)\xrightarrow{\cdot a}\Hom_R(M_{I^c},M_I)\to 0\label{K0a}
\end{eqnarray}
is exact. 

Dually, for each $i\in I$, consider a minimal right $(\add M_{I^c}^*)$-approximation
\[
U_i^*\xrightarrow{b_i}M_i^*
\]
of $M_i^*$, and denote $J_i:=\Ker b_i$.  Thus 
\begin{gather}
0\to J_i\xrightarrow{d_i} U_i^*\xrightarrow{b_i}M_i^*\label{K1prime}\\
0\to \Hom_R(M_{I^c}^*,J_i)\xrightarrow{\cdot d_i}
\Hom_R(M_{I^c}^*,U_i^*)\xrightarrow{\cdot b_i}
\Hom_R(M_{I^c}^*,M_i^*) 
\to 0
\label{K1aprime}
\end{gather}
are exact.  Summing over all $i\in I$ gives exact sequences
\begin{gather}
0\to J_I\xrightarrow{d} U_I^*\xrightarrow{b}M_I^*\label{K1}\\
0\to \Hom_R(M_{I^c}^*,J_I)\xrightarrow{\cdot d}
\Hom_R(M_{I^c}^*,U_I^*)\xrightarrow{\cdot b}
\Hom_R(M_{I^c}^*,M_I^*) 
\to 0.
\label{K1a}
\end{gather}
\end{setup}

\begin{defin}\label{mut defin main}
With notation as above, 
\begin{enumerate}
\item We define the \emph{right mutation} of $M$ at $M_I$  as
\[
\upmu_{I}M:=M_{I^c}\oplus K_I,
\]
that is we remove the summand $M_I$ and replace it with $K_I$.
\item We define the \emph{left mutation} of $M$ at $M_I$ as
\[
\upnu_{I}M:=M_{I^c}\oplus (J_I)^*.
\]
\end{enumerate}
\end{defin}

In this level of generality, $\upnu_IM$ is not necessarily isomorphic to $\upmu_IM$.

\begin{remark}
Even if $M_I=M_i$ is indecomposable, when we view $\End_R(M)$ as a quiver with relations, with arrows $a$, and left projective $\End_R(M)$-modules $P_j$ corresponding to the indecomposable summands $M_j$, it is a common misconception that mutation can be defined using simply the arrows into (respectively, out of) the vertex $i$.   Indeed, we could consider the combinatorially defined morphisms
\[
\bigoplus_{\substack{\mathrm{head}(a)=i\\ \mathrm{tail}(a)\neq i}}P_{\mathrm{tail}(a)}\to P_i\quad\mbox{and}\quad
P_i\to\bigoplus_{\substack{\mathrm{tail}(a)=i\\ \mathrm{head}(a)\neq i}}P_{\mathrm{head}(a)}
\] 
which by reflexive equivalence arise from morphisms 
\[
\bigoplus_{\substack{\mathrm{head}(a)=i\\ \mathrm{tail}(a)\neq i}}M_{\mathrm{tail}(a)}\to M_i\quad\mbox{and}\quad
M_i\to\bigoplus_{\substack{\mathrm{tail}(a)=i\\ \mathrm{head}(a)\neq i}}M_{\mathrm{head}(a)}.
\]
However these morphisms are not approximations in general.  In other words, the mutation defined in \ref{mut defin main} above is \emph{not} in general a vertex tilt in the sense of Bridgeland--Stern \cite{BS}, and in full generality there is no simple combinatorial description of the decomposition of $U_I$ or $V_I$.  In the case of cDV singularities, we do give a combinatorial description later in \S\ref{chamber red to surface subsection} and \S\ref{calculate chamber subsection} by relating the problem to partial crepant resolutions of ADE singularities.  
\end{remark}

One of the key properties of mutation is that it always gives rise to a derived equivalence. With the setup as above, for the case of left mutation $\upnu_{I}M$, the derived equivalence between $\End_R(M)$ and $\End_R(\upnu_{I}M)$ is given by a tilting $\End_R(M)$-module $T_I$ constructed as follows.  By \eqref{K1D} there is an exact sequence
\[
0\to M_I\xrightarrow{b^*} U_I\to J_I^*
\]
obtained by dualizing \eqref{K1}.  Applying $\Hom_R(M,-)$ induces $(\cdot b^* )\colon\Hom_R(M,M_I)\to\Hom_R(M,U_I)$, so denoting the cokernel by $C_I$ we obtain an exact sequence
\begin{eqnarray}
0\to \Hom_R(M,M_I)\xrightarrow{\cdot b^*} \Hom_R(M,U_I)\to C_I\to 0.
\label{defin of CI}
\end{eqnarray}
The tilting $\End_R(M)$-module $T_I$ is defined to be $
T_I:=\Hom_R(M,M_{I^c})\oplus C_I$.  It turns out that $\End_\Lambda(T_I)\cong \End_R(\upnu_IM)$ \cite[6.7, 6.8]{IW4}, and there is always an equivalence 
\begin{equation}
\Upphi_I:=\RHom(T_I,-)\colon\Db(\mod\End_{R}(M))\to \Db(\mod\End_{R}(\upnu_{I}M)),\label{post referee}
\end{equation}
which is called the {\em mutation functor} \cite[6.8]{IW4}.  It is never the identity functor. On the other hand $\upnu_{I}M=M$ can happen (see e.g.\ \ref{proj res thm 1}).  Note that, by construction, $T_I$ has the structure of a $\Lambda$-$\Gamma$ bimodule, where $\Gamma:=\upnu_I\Lambda:=\End_\Lambda(T_I)\cong\End_R(\upnu_IM)$.  The following is elementary.
\begin{lemma}\label{pd 1 for TI both sides}
With notation as above, the following statements hold.
\begin{enumerate}
\item\label{pd 1 for TI both sides 1} $T_I$ is a tilting $\Lambda$-module with $\pd_\Lambda T_I=1$.
\item\label{pd 1 for TI both sides 2} $T_I$ is a tilting $\Gamma^{\op}\cong\End_R((\upnu_IM)^*)$-module, with $T_I\cong\Hom_R((\upnu_IM)^*,M_{I^c}^*)\oplus D_I$ where $D_I$ arises from the exact sequence
\[
0\to\Hom_R((\upnu_IM)^*,J_I)\xrightarrow{\cdot d}\Hom_R((\upnu_IM)^*,U_I^*)\to D_I\to 0
\]
of $\Gamma^{\op}$-modules.  Thus $\pd_{\Gamma^{\op}}T_I=1$ and $\End_{\Gamma^{\op}}(T_I)\cong \Lambda^{\op}$.
\end{enumerate}
\end{lemma}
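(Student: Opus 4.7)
For part (1), the plan is straightforward: since $M_I$ and $U_I$ both lie in $\add M$, the left $\Lambda$-modules $\Hom_R(M,M_I)$ and $\Hom_R(M,U_I)$ are projective. The short exact sequence \eqref{defin of CI} therefore furnishes a length-one projective resolution of $C_I$, giving $\pd_\Lambda C_I \leq 1$ and hence $\pd_\Lambda T_I \leq 1$. The remaining tilting hypotheses -- namely $\Ext^{>0}_\Lambda(T_I,T_I)=0$ together with a finite $\add T_I$-resolution of $\Lambda$ -- and the endomorphism identification $\End_\Lambda(T_I)\cong \End_R(\upnu_I M)=\Gamma$ are already proved in \cite[6.7, 6.8]{IW4}, and I would simply cite them.

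For part (2), the formal statements (that $T_I$ is a tilting $\Gamma^{\op}$-module of projective dimension at most one with $\End_{\Gamma^{\op}}(T_I)\cong \Lambda^{\op}$) are immediate from part (1) by Miyashita's classical tilting theorem: any tilting module over $\Lambda$ of finite projective dimension is automatically a tilting module over its endomorphism ring of the same projective dimension, with endomorphism ring the original $\Lambda$ (with sides swapped). The substantive content of (2) is therefore the explicit decomposition of $T_I$ as a $\Gamma^{\op}$-module.

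My strategy is to exploit reflexive duality $(-)^*\colon \refl R \to (\refl R)^{\op}$, valid because $R$ is a normal $d$-sCY ring with $d\geq 2$, which delivers natural isomorphisms $\Hom_R(X,Y) \cong \Hom_R(Y^*, X^*)$ for reflexive $X,Y$. Applying this rewrites $\Hom_R(M,M_{I^c})$ as $\Hom_R(M_{I^c}^*, M^*)$ and converts \eqref{defin of CI} into a short exact sequence of left $\Gamma^{\op}$-modules whose first arrow is precomposition by $b\colon U_I^*\to M_I^*$. Decomposing $M^* = M_{I^c}^*\oplus M_I^*$ splits $T_I$ into four components; two of them -- $\Hom_R(M_{I^c}^*,M_{I^c}^*)$ and $\Hom_R(M_{I^c}^*,M_I^*)$ -- match $M_{I^c}^*$-components of the target $\Hom_R((\upnu_I M)^*,M_{I^c}^*)\oplus D_I$ directly, the second using \eqref{K1a}. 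The other two are cokernels obtained by applying $\Hom_R(-,M_{I^c}^*)$ and $\Hom_R(-,M_I^*)$ to the short exact sequence $0\to J_I\to U_I^*\to M_I^*\to 0$ (short exact because surjectivity of $b$ follows from \eqref{K1a} applied to a free summand of $M_{I^c}$). Comparing these cokernels with $\Hom_R(J_I,M_{I^c}^*)$ and $\operatorname{coker}(\cdot d\colon \Hom_R(J_I,J_I)\to \Hom_R(J_I,U_I^*))$ via the resulting long exact $\Ext$-sequences reduces both matches to the Ext-vanishings $\Ext^1_R(M_I^*,M_{I^c}^*) = \Ext^1_R(M_I^*,M_I^*) = \Ext^1_R(J_I,J_I)=0$.

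The main obstacle lies in these Ext-vanishings, and this is where the modifying hypothesis is indispensable: the CM-ness of $\Lambda$ and $\upnu_I\Lambda$ (equivalently, the modifying property of $M$ and $\upnu_I M$), together with a standard depth count over a normal $d$-sCY ring with $d\geq 2$, forces $\Ext^1_R$ to vanish between any two reflexive summands of $M$ or $\upnu_I M$. Once these are in hand, the only remaining routine verification is the $\Gamma^{\op}$-linearity of the isomorphism constructed, which follows from the naturality of reflexive duality.
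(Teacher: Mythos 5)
Your part (1), and your appeal to Miyashita-type tilting theory for the formal half of part (2), are exactly what the paper does (it cites \cite[6.7, 6.8]{IW4} and \cite[2.2]{SY}, \cite[4.1]{KellerTilting}, \cite[2.6]{BBtilting}). The genuine gap is in your explicit identification of $T_I$ as a $\Gamma^{\op}$-module: the three vanishings $\Ext^1_R(M_I^*,M_{I^c}^*)=\Ext^1_R(M_I^*,M_I^*)=\Ext^1_R(J_I,J_I)=0$ on which your four-component comparison rests are false in the generality of the lemma. The modifying hypothesis only says the relevant $\Hom_R$'s are CM; it does not give rigidity. For $d=2$ every reflexive module over a Kleinian singularity is modifying, yet $\Ext^1_R(M_i,M_i)\neq 0$ for every non-free indecomposable summand (it contains the class of the AR sequence, which here has the form $0\to M_i\to E\to M_i\to 0$); for $d=3$ with non-isolated singularities $\Ext^1_R(M,M)\neq 0$ as well --- this is exactly why \ref{mut preserved by generic} must choose a hyperplane that is $\Ext^1_R(M,M)$-regular, and why the proof of \ref{Toda for AM} flags isolatedness as the reason for vanishing. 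So ``CM-ness plus a depth count'' cannot deliver these vanishings, and your matching of $\Cok\bigl(\Hom_R(M_I^*,M_I^*)\to\Hom_R(U_I^*,M_I^*)\bigr)$ with $\Cok\bigl(\Hom_R(J_I,J_I)\to\Hom_R(J_I,U_I^*)\bigr)$ via $\Hom_R(J_I,M_I^*)$ is left unjustified. Two of your other matchings do hold, but for a different reason: the surjectivities you need are precisely \eqref{K1a} and \ref{dualityofapprox}\eqref{K1b}, which are proved in \cite[6.4]{IW4} without any rigidity. A smaller point: in the setup of the lemma $M_{I^c}$ is not assumed to contain $R$, so surjectivity of $b$ ``from a free summand of $M_{I^c}$'' is also unavailable; this is why \eqref{K1prime} and \eqref{K1D} are written without a trailing $\to 0$.

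The repair --- and the paper's actual route --- is to place the Ext-vanishing over $\Lambda$ rather than over $R$. From \eqref{defin of CI} one has $0\to\Lambda\to\Hom_R(M,U_I\oplus M_{I^c})\to C_I\to 0$, and applying $\Hom_\Lambda(-,T_I)$ stays exact because $C_I\in\add T_I$ and $T_I$ is tilting by part (1), so $\Ext^1_\Lambda(C_I,T_I)=0$ comes for free. Identifying the terms via $\End_\Lambda(T_I)\cong\End_R(\upnu_IM)$ and reflexive duality, namely $\Hom_\Lambda(C_I,T_I)\cong\Hom_R(J_I^*,\upnu_IM)\cong\Hom_R((\upnu_IM)^*,J_I)$ and $\Hom_\Lambda(\Hom_R(M,U_I\oplus M_{I^c}),T_I)\cong\Hom_R((\upnu_IM)^*,U_I^*\oplus M_{I^c}^*)$, produces exactly the presentation $0\to\Hom_R((\upnu_IM)^*,J_I)\to\Hom_R((\upnu_IM)^*,U_I^*\oplus M_{I^c}^*)\to T_I\to 0$ of $\Gamma^{\op}$-modules, from which the stated decomposition, $\pd_{\Gamma^{\op}}T_I=1$ and $\End_{\Gamma^{\op}}(T_I)\cong\Lambda^{\op}$ follow by running your part (1) argument for $\End_R((\upnu_IM)^*)$. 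This is the leverage your all-over-$R$ strategy discards: rigidity is true over $\Lambda$ because $T_I$ tilts, and false over $R$.
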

\begin{proof}
(1) is \cite[6.8]{IW4},  and (2) follows from (1), see for example  \cite[2.2]{SY} or \cite[4.1]{KellerTilting}, \cite[2.6]{BBtilting}.  As a sketch proof, by \eqref{defin of CI} 
\[
0\to \Hom_R(M,M)\to \Hom_R(M,U_I\oplus M_{I^c})\to C_I\to 0.
\]
is exact, and applying $\Hom_\Lambda(-,T_I)$ gives an exact sequence
\begin{eqnarray}
0\to \Hom_\Lambda(C_I,T_I)\to\Hom_\Lambda(\Hom_R(M,U_I\oplus M_{I^c}),T_I)\to T_I\to 0\label{left gamma op for T}
\end{eqnarray}
of $\Gamma^{\op}$-modules.  Under the isomorphism $\End_\Lambda(T_I)\cong\End_R(\upnu_IM)$, 
\begin{align*}
\Hom_\Lambda(C_I,T_I) &\cong \Hom_R(J_I^*,\upnu_IM)\cong\Hom_R((\upnu_IM)^*,J_I),\\
\Hom_\Lambda(\Hom_R(M,U_I\oplus M_{I^c}),T_I) &\cong \Hom_R(U_I\oplus M_{I^c},\upnu_IM)\cong \Hom_R((\upnu_IM)^*,U_I^*\oplus M_{I^c}^*),
\end{align*}
so \eqref{left gamma op for T} is isomorphic to 
\[
0\to\Hom_R((\upnu_IM)^*,J_I)\xrightarrow{{\cdot d\choose 0}}\Hom_R((\upnu_IM)^*,U_I^*\oplus M_{I^c}^*)\to T_I\to 0,
\]
proving the statements by applying the analysis in (1) to $\End_R((\upnu_IM)^*)$.
\end{proof}

For our purposes later we will require the finer information encoded in the following two key technical results.  They are both an extension of \cite[\S6]{IW4} and \cite[\S4]{DW1}, and are proved using similar techniques, so we postpone the proofs until Appendix~\ref{appendix mut}.

\begin{thm}[{=\ref{Ext 2 thm mut section}}]\label{Ext 2 thm}
Suppose that $\upnu_I M\cong M$.  Then
\begin{enumerate}
\item\label{Ext 2 thm 1} $T_I=\Lambda(1-e_I)\Lambda$ and $\Gamma:=\End_\Lambda(T_I)\cong \Lambda$.
\item\label{Ext 2 thm 2} $\Omega_\Lambda\Lambda_I=T_I$, thus $\pd_\Lambda \Lambda_I=2$ and $\Ext^1_\Lambda(T_I,-)\cong\Ext^2_\Lambda(\Lambda_I,-)$.
\end{enumerate}
\end{thm}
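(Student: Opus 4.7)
The plan is to use the hypothesis $\upnu_I M \cong M$ to identify $T_I$ with the two-sided ideal $\Lambda(1-e_I)\Lambda$ as a $\Lambda$-bimodule, after which part (2) drops out from the tautological presentation of $\Lambda_I$ together with $\pd_\Lambda T_I=1$.

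Since $R$ is complete local, Krull--Schmidt applies to $\refl R$, so from the isomorphism $M_{I^c}\oplus J_I^* = \upnu_I M \cong M = M_{I^c}\oplus M_I$ combined with the assumption that $M$ is basic, I deduce $J_I^*\cong M_I$, equivalently $J_I\cong M_I^*$.

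For part (1), observe first that as a left $\Lambda$-module the two-sided ideal splits as
\[
\Lambda(1-e_I)\Lambda = \Lambda e_{I^c}\oplus \bigl(\Lambda(1-e_I)\Lambda\cap \Lambda e_I\bigr),
\]
where the second summand is precisely the submodule of morphisms $M\to M_I$ factoring through $\add M_{I^c}$. Since $T_I=\Lambda e_{I^c}\oplus C_I$ by construction, it suffices to identify $C_I$ with this submodule. Plugging $J_I\cong M_I^*$ into \eqref{K1} and dualizing, using $\Ext^1_R(M_I^*,R)=0$ (which holds because $R$ is Gorenstein of dimension at least two and $M_I^*$ is reflexive), produces an exact sequence
\[
0\to M_I \xrightarrow{b^*} U_I \to M_I \to 0
\]
with $U_I\in\add M_{I^c}$. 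Applying $\Hom_R(M,-)$ to this sequence and comparing with \eqref{defin of CI}, together with the fact that the dualized approximation property ensures every map $M\to M_I$ factoring through $\add M_{I^c}$ lifts through $U_I$, gives $C_I\cong \Lambda(1-e_I)\Lambda\cap \Lambda e_I$. The bimodule identification $T_I\cong \Lambda(1-e_I)\Lambda$ then follows, and $\Gamma=\End_R(\upnu_I M)\cong \End_R(M)=\Lambda$ is immediate from the hypothesis.

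For part (2), the short exact sequence $0\to \Lambda(1-e_I)\Lambda\to \Lambda\to \Lambda_I\to 0$ combined with (1) gives $T_I=\Omega_\Lambda \Lambda_I$, where $\Omega_\Lambda$ is taken with respect to the (non-minimal) projective cover by $\Lambda$. Splicing with a length-one projective resolution of $T_I$ furnished by \ref{pd 1 for TI both sides}\eqref{pd 1 for TI both sides 1} shows $\pd_\Lambda \Lambda_I\leq 2$, and the bound is sharp since $\pd_\Lambda T_I=1>0$ (so $T_I$ is not projective). The Ext isomorphism then follows by dimension shifting: applying $\Hom_\Lambda(-,X)$ to $0\to T_I\to \Lambda\to \Lambda_I\to 0$ and using $\Ext^i_\Lambda(\Lambda,-)=0$ for $i\geq 1$ gives $\Ext^1_\Lambda(T_I,X)\cong \Ext^2_\Lambda(\Lambda_I,X)$ naturally in $X$.

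The principal obstacle lies in the identification step in part (1): establishing that dualizing the approximation of $M_I^*$ both produces a genuine short exact sequence and has a cokernel map through which every $\add M_{I^c}$-factoring of a morphism $M\to M_I$ factors. This requires careful tracking of both the approximation property and the relevant $\Ext^1_R(-,R)$-vanishing under duality, and is precisely where the techniques of \cite[\S6]{IW4} and \cite[\S4]{DW1} need to be extended.
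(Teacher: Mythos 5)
Your overall architecture is close to the paper's, and part (2) (the dimension-shifting from $0\to T_I\to\Lambda\to\Lambda_I\to 0$, with sharpness of $\pd_\Lambda\Lambda_I=2$ forced by $\pd_\Lambda T_I=1$ via Schanuel) is fine once part (1) is secured. The genuine gap is in the step producing the short exact sequence $0\to M_I\xrightarrow{b^*}U_I\to M_I\to 0$. First, the vanishing $\Ext^1_R(M_I^*,R)=0$ does not follow from $M_I^*$ being reflexive over a Gorenstein ring of dimension at least two: that implication is exactly ``reflexive implies CM'', which holds only when $d=2$. In the setup \ref{stab setup} with $d=3$, a modifying module need not be CM (if $\depth_R M_I^*=2<d$ then $\Ext^1_R(M_I^*,R)\neq 0$ by local duality), so the invoked lemma is false in the stated generality. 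Second, even granting that vanishing, it is not the relevant obstruction: the approximation $b\colon U_I^*\to M_I^*$ need not be surjective (nothing in \ref{stab setup} puts $R$ in $\add M_{I^c}$), so what controls surjectivity of $d^*\colon U_I\to J_I^*$ is $\Ext^1_R(\Im b,R)$, not $\Ext^1_R(M_I^*,R)$. The paper is careful never to use surjectivity of these approximations at this level of generality (compare \ref{cor to pd thm 1} and \ref{proj res thm 1}, where only left-exact sequences appear), and your proof as written hinges on a sequence that need not exist.

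Fortunately, you do not need surjectivity at all. The left-exact sequence $0\to M_I\xrightarrow{b^*}U_I\xrightarrow{d^*}J_I^*$ of \eqref{K1D} is already supplied by \ref{dualityofapprox} with no Ext-vanishing input; applying the left-exact functor $\Hom_R(M,-)$ to it gives $\ker(\cdot d^*)=\Im(\cdot b^*)$, hence $C_I\cong\Im(\cdot d^*)\subseteq\Hom_R(M,J_I^*)\cong\Hom_R(M,M_I)$, using your Krull--Schmidt identification $J_I^*\cong M_I$. The first sequence of \eqref{K1Da} says precisely that $d^*$ is a right $\add M_{I^c}$-approximation of $J_I^*$, so this image is exactly the set of maps $M\to M_I$ factoring through $\add M_{I^c}$, which is the identification you wanted; the rest of your argument then goes through. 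For comparison, the paper's own route is slightly different: it first uses uniqueness of minimal right approximations to get $U_I\cong V_I$ and $K_I\cong M_I$ (so $\upmu_IM\cong M$, \ref{cor to pd thm 1}), then writes down the four-term minimal projective resolution of $\Lambda_I$ in \ref{proj res thm 1}, from which $T_I=\Lambda(1-e_I)\Lambda$ and $\pd_\Lambda\Lambda_I=2$ are read off simultaneously.
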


\begin{thm}[{=\ref{Ext 3 thm mut section}}]\label{Ext 3 thm}
Suppose that $d= 3$, $\upnu_I \upnu_IM\cong M$ and $\dim_{\mathbb{C}}\Lambda_I<\infty$.  As above, set $\Gamma:=\End_\Lambda(T_I)\cong\End_R(\upnu_IM)$.  Then 
\begin{enumerate}
\item\label{Ext 3 thm 1} $T_I\cong \Hom_R(M,\upnu_IM)$.
\item\label{Ext 3 thm 2} $\Omega_\Lambda^2\Lambda_I=T_I$, thus $\pd_\Lambda \Lambda_I=3$ and $\Ext^1_\Lambda(T_I,-)\cong\Ext^3_\Lambda(\Lambda_I,-)$.
\end{enumerate}
\end{thm}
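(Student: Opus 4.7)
The plan is to establish (1) first, then leverage it together with the double-mutation hypothesis to produce a length-$3$ projective resolution of $\Lambda_I$ that simultaneously delivers (2).

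For (1), the target is $C_I\cong\Hom_R(M,J_I^*)$, which would give $T_I=\Hom_R(M,M_{I^c})\oplus C_I\cong\Hom_R(M,\upnu_IM)$. My first step is to promote \eqref{K1} to a short exact sequence $0\to J_I\to U_I^*\to M_I^*\to 0$. Since in the settings of interest $R$ is a summand of $M_{I^c}$ (equivalently $R\in\add M_{I^c}^*$), testing the minimal right-approximation $U_I^*\to M_I^*$ against $R$ and using $\Hom_R(R,-)=\mathrm{id}$ yields its surjectivity. Dualising, together with $M,M^*\in\CM R$ so that $\Ext_R^{>0}(M_I^*,R)=0$ over the Gorenstein base, then produces
\[
0\to M_I\xrightarrow{b^*}U_I\to J_I^*\to 0.
\]
Applying $\Hom_R(M,-)$ and using $\Ext_R^1(M,M_I)=0$ (because $M$ is modifying with $d=3$, forcing $\Ext_R^i(M,M)=0$ for $1\leq i\leq d-2=1$) delivers $C_I\cong\Hom_R(M,J_I^*)$, completing (1).

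For (2), my next step is to run the same analysis on the second mutation of $\upnu_IM$ at the summand $J_I^*$. The hypothesis $\upnu_I\upnu_IM\cong M$ ensures that the analogue of the dual approximation has kernel $M_I^*$, and dualising produces
\[
0\to J_I^*\to W_I\to M_I\to 0
\]
with $W_I\in\add M_{I^c}$. The crucial identification is that $W_I\to M_I$ is itself a right $\add M_{I^c}$-approximation of $M_I$: for $X\in\add M_{I^c}$ both $X$ and $J_I^*$ are summands of the modifying module $\upnu_IM$, so $\Ext_R^1(X,J_I^*)=0$ and $\Hom_R(X,W_I)\twoheadrightarrow\Hom_R(X,M_I)$. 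By uniqueness of minimal right approximations $W_I\cong V_I$, and hence $K_I\cong J_I^*$.

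With $K_I\cong J_I^*$ in hand, the projective resolution of $\Lambda_I$ assembles by splicing. The minimal projective cover $\Hom_R(M,M_I)\twoheadrightarrow\Lambda_I$ has kernel equal to the image of $\cdot a\colon\Hom_R(M,V_I)\to\Hom_R(M,M_I)$ by the approximation property; applying $\Hom_R(M,-)$ to the now-short-exact $0\to K_I\to V_I\to M_I\to 0$ and using $\Ext_R^1(M,K_I)=\Ext_R^1(M,J_I^*)=0$ (from modifying of $M\oplus\upnu_IM$) identifies the next syzygy as $\Hom_R(M,J_I^*)$. Splicing with part (1)'s presentation of $\Hom_R(M,J_I^*)$ produces
\[
0\to\Hom_R(M,M_I)\to\Hom_R(M,U_I)\to\Hom_R(M,V_I)\to\Hom_R(M,M_I)\to\Lambda_I\to 0,
\]
so $\pd_\Lambda\Lambda_I\leq 3$ and $\Omega_\Lambda^2\Lambda_I\cong\Hom_R(M,J_I^*)$. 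Since $T_I\cong\Hom_R(M,M_{I^c})\oplus\Hom_R(M,J_I^*)$ differs from $\Omega_\Lambda^2\Lambda_I$ only by the projective summand $\Hom_R(M,M_{I^c})$, we get $\Omega_\Lambda^2\Lambda_I=T_I$ in the stable module category and hence $\Ext^1_\Lambda(T_I,-)\cong\Ext^3_\Lambda(\Lambda_I,-)$. Equality $\pd_\Lambda\Lambda_I=3$ then follows because $\dim_\mathbb{C}\Lambda_I<\infty$ places $\Lambda_I\in\fl\Lambda$, and the $3$-sCY property of $\Lambda$ forces $\Ext_\Lambda^3(\Lambda_I,\Lambda)\cong D\Hom_\Lambda(\Lambda,\Lambda_I)=D\Lambda_I\neq 0$.

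The main obstacle I expect is the identification $K_I\cong J_I^*$ in the second paragraph: it is precisely here that the content of $\upnu_I\upnu_IM\cong M$ enters, via the universality of minimal approximations, and this is what distinguishes the $\pd=3$ phenomenon of \ref{Ext 3 thm} from the $\pd=2$ behaviour of \ref{Ext 2 thm}. The various short-exactness and $\Ext$-vanishing claims, while standard for modifying modules with $R\in\add M_{I^c}$, must be tracked consistently across every sequence that is dualised or hit with $\Hom_R(M,-)$.
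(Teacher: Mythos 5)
Your overall architecture is the same as the paper's (identify $K_I\cong J_I^*$, splice two exchange sequences after applying $\Hom_R(M,-)$, read off the length-three resolution, then use the $3$-sCY duality and $\dim_{\mathbb C}\Lambda_I<\infty$ to force $\pd_\Lambda\Lambda_I=3$), but the justifications of the two key exactness steps rest on a principle that is false in the generality of the theorem. You repeatedly infer Ext-vanishing from modifiability: ``$\Ext^1_R(M,M_I)=0$ because $M$ is modifying with $d=3$'', ``$\Ext^1_R(X,J_I^*)=0$ since $X$ and $J_I^*$ are summands of the modifying module $\upnu_IM$'', and ``$\Ext^1_R(M,K_I)=0$ from modifying of $M\oplus\upnu_IM$''. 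For $M,N\in\CM R$ over a $3$-dimensional base, $\Hom_R(M,N)\in\CM R$ does \emph{not} imply $\Ext^1_R(M,N)=0$; it only forces $\Ext^1_R(M,N)$ to have no finite-length submodules. This distinction is exactly the point of the non-isolated setting the paper works in: see the proof of \ref{chamber 3fold=chamber surface prep}, where only $\depth_R\Ext^1_R(N,N)>0$ is available, and \ref{Toda for AM}, where $\Ext^1_R(N,N)=0$ is deduced \emph{only because} $R$ is isolated. (Also, $M\oplus\upnu_IM$ being modifying is itself unjustified.) A symptom of the problem is that your argument for part (1) never uses the hypothesis $\dim_{\mathbb C}\Lambda_I<\infty$, whereas the paper's proof of \ref{proj res thm}\eqref{proj res thm part 1} needs it in an essential way: the potential cokernel $C$ of $\Hom_R(M,U_I)\to\Hom_R(M,J_I^*)$ is killed by $e_{I^c}$, hence is a finite-length $\Lambda_I$-module, yet it embeds (via the comparison with \eqref{K1Da}) into an Ext-module of positive depth, so $C=0$. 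That finite-length-versus-positive-depth argument is the correct replacement for your blanket Ext-vanishing.

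Two further points. First, your surjectivity of $b\colon U_I^*\to M_I^*$ (and hence the short exactness of $0\to M_I\to U_I\to J_I^*\to 0$) uses $R\in\add M_{I^c}$; this holds in the geometric applications but is not part of the hypotheses of the theorem, which is stated for an arbitrary summand $M_I$ of a basic modifying module, and the paper deliberately avoids it — indeed \ref{proj res thm}\eqref{proj res thm part 1} produces the needed exact sequence of $\Lambda$-modules without ever claiming $U_I\to J_I^*$ is surjective as a map of $R$-modules. Second, your identification $K_I\cong J_I^*$ is correct but should be obtained as in \ref{iterate and dual}: from $\upnu_I\upnu_IM\cong M$ and Krull--Schmidt the minimal right $\add M_{I^c}^*$-approximation of $J_I$ has kernel with dual $M_I$, and dualising via \ref{dualityofapprox} gives \eqref{8O}, an exact sequence $0\to J_I^*\to W_I\to M_I$ whose last map is a \emph{minimal right $\add M_{I^c}$-approximation by duality}, so $W_I\cong V_I$ by uniqueness of minimal approximations — no Ext-vanishing or surjectivity is required. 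With those two repairs (duality of approximations in place of Ext-vanishing, and the finite-length argument in place of $\Ext^1_R(M,M_I)=0$), your splice and the concluding duality computation $\Ext^3_\Lambda(\Lambda_I,\Lambda)\cong D\Lambda_I\neq 0$ go through as in the paper.
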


The following, one of the main results in \cite{IW4}, will allow us to establish properties non-explicitly when we restrict to minimal models and mutate at single curves.

\begin{thm}\label{basic2}
Suppose that $d=3$, and $M$ is a maximal modifying $R$-module with indecomposable summand $M_j$.  Set $\Lambda:=\End_R(M)$.   Then
\begin{enumerate}
\item\label{basic2 1} We have $\upmu_{j}(M)\cong\upnu_{j}(M)$.
\item\label{basic2 2} Always $\upnu_j\upnu_j(M)\cong M$.
\item\label{basic2 3} $\upnu_j(M)\ncong M$ if and only if $\dim_\mathbb{C}\Lambda_j<\infty$.
\item\label{basic2 4} $\upnu_j(M)\cong M$ if and only if $\dim_\mathbb{C}\Lambda_j=\infty$.
\end{enumerate}
\end{thm}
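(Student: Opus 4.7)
My plan rests on three structural inputs: (i) $\Lambda:=\End_R(M)$ is $3$-sCY, which holds by \cite[2.22]{IW4} since $R$ is $3$-sCY and $M$ is modifying; (ii) $R$-duality $(-)^\ast=\Hom_R(-,R)$ is an exact duality on $\refl R$ interchanging $\add M_{I^c}$ with $\add M_{I^c}^\ast$; and (iii) the technical inputs \ref{Ext 2 thm} and \ref{Ext 3 thm}, which translate ``$\upnu_IM\cong M$'' and ``$\upnu_I\upnu_IM\cong M$ with $\dim_{\mathbb{C}}\Lambda_I<\infty$'' into projective resolutions of $\Lambda_I$ of lengths $2$ and $3$ respectively. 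For \eqref{basic2 1}, I would dualize the right $\add M_{I^c}$-approximation sequence \eqref{K0prime} defining $K_j$ and match the resulting sequence against the right $\add M_{I^c}^\ast$-approximation sequence \eqref{K1prime} defining $J_j$. Reflexivity of $K_j$ and $J_j$ follows from \eqref{K0a} and \eqref{K1a}, and uniqueness of minimal approximations up to isomorphism combined with indecomposability of $M_j$ then yields $K_j\cong J_j^\ast$, so $\upmu_jM\cong\upnu_jM$.

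For parts \eqref{basic2 3} and \eqref{basic2 4} (which are equivalent), the cleaner direction is $\upnu_jM\cong M\Longrightarrow \dim_{\mathbb{C}}\Lambda_j=\infty$. Indeed, by \ref{Ext 2 thm}\eqref{Ext 2 thm 2} we have $\pd_\Lambda\Lambda_j=2$, so $\Ext^3_\Lambda(\Lambda_j,\Lambda_j)=0$. Were $\Lambda_j$ finite-dimensional, the $3$-sCY duality of $\Lambda$ would give $\Ext^3_\Lambda(\Lambda_j,\Lambda_j)\cong D\Hom_\Lambda(\Lambda_j,\Lambda_j)$, which is nonzero via the identity endomorphism of $\Lambda_j$; contradiction.

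The converse together with \eqref{basic2 2} is the main difficulty. Assuming $\upnu_jM\ncong M$, I would first show that $\upnu_jM$ is again a basic MM module and that its new summand $J_j^\ast$ is indecomposable, using that the mutation functor is a derived equivalence preserving CM-ness of the endomorphism ring, and that minimality of approximations together with indecomposability of $M_j$ propagates to $J_j^\ast$. Rerunning the construction starting from $\upnu_jM$, reflexive duality identifies the minimal approximation of $J_j^\ast$ with the $R$-dual of the original approximation of $M_j$, forcing $\upnu_j\upnu_jM\cong M$, which is \eqref{basic2 2}. Applying \ref{Ext 3 thm} then yields $\pd_\Lambda\Lambda_j=3$ and $T_j\cong\Hom_R(M,\upnu_jM)$; a direct computation via the tilting bimodule $T_j$ exhibits $\Lambda_j$ as a $\Lambda$-module of finite length, whence $\dim_{\mathbb{C}}\Lambda_j<\infty$. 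The main obstacle throughout is bookkeeping: establishing indecomposability of $J_j^\ast$ and the equality $\upnu_j\upnu_jM\cong M$ on the nose, which are properties of minimal approximations and reflexive duality rather than new homological input.
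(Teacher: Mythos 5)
The paper's own ``proof'' of this theorem is a citation: all four parts are quoted from \cite[6.25]{IW4}, so a blind proof must reconstruct that argument, and your reconstruction has a genuine gap exactly where the real content lies. For \eqref{basic2 1} you propose to dualize \eqref{K0prime} and match it against \eqref{K1prime} ``by uniqueness of minimal approximations''. But by \ref{dualityofapprox}, applying $(-)^*$ to the minimal \emph{right} $\add M_{I^c}$-approximation $V_j\to M_j$ produces a minimal \emph{left} $\add M_{I^c}^*$-approximation $M_j^*\to V_j^*$, whereas $J_j$ is defined from the minimal \emph{right} $\add M_{I^c}^*$-approximation $U_j^*\to M_j^*$. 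Uniqueness of minimal approximations cannot identify a left approximation with a right one, so $K_j\cong J_j^*$ does not follow; if this formal argument worked it would give $\upmu_IM\cong\upnu_IM$ for every modifying module in every dimension, contradicting the paper's explicit warning after \ref{mut defin main}, and indeed your argument nowhere uses that $M$ is \emph{maximal} modifying or that $d=3$. The same left/right confusion undermines your route to \eqref{basic2 2}: to ``rerun the construction from $\upnu_jM$'' you need a minimal right $\add M_{I^c}^*$-approximation of $J_j$, but the data inherited from the first mutation, namely $d\colon J_j\to U_j^*$, is a \emph{left} approximation by \ref{dualityofapprox}\eqref{K1b}. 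The statement $\upnu_j\upnu_jM\cong M$ is precisely the nontrivial closure property: the paper gets it from \cite[6.25]{IW4} (where maximality and $d=3$ enter essentially), and in the related setting of \ref{mutmutI general} it requires $[2]=\Id$ on $\uCM$ of hypersurfaces via Orlov's theorem, not reflexive duality bookkeeping.

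There is also a circularity at the end: \ref{Ext 3 thm} carries $\dim_{\mathbb{C}}\Lambda_I<\infty$ as a \emph{hypothesis}, so it cannot be invoked to deduce $\dim_{\mathbb{C}}\Lambda_j<\infty$ from $\upnu_jM\ncong M$; thus the implication \eqref{basic2 3}($\Rightarrow$) is left unproven even if one grants you \eqref{basic2 2}. What is sound in your sketch is the single implication $\upnu_jM\cong M\Rightarrow\dim_{\mathbb{C}}\Lambda_j=\infty$: there $\pd_\Lambda\Lambda_j=2$ from \ref{Ext 2 thm}\eqref{Ext 2 thm 2} kills $\Ext^3_\Lambda(\Lambda_j,\Lambda_j)$, while $3$-sCY duality would force it to be nonzero if $\Lambda_j$ had finite length; that argument is correct since $\Lambda_j$ is then perfect. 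But parts \eqref{basic2 1}, \eqref{basic2 2} and the remaining direction of \eqref{basic2 3}--\eqref{basic2 4} need a genuine use of maximality in dimension three, as in \cite[6.25]{IW4}, and the proposal does not supply it.
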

\begin{proof}
(1) and (2) are special cases of \cite[6.25]{IW4}.\\
(3)($\Rightarrow$) is \cite[6.25(2)]{IW4}, and (4)($\Rightarrow$) is \cite[6.25(1)]{IW4}. (3)($\Leftarrow$) is the contrapositive of (4)($\Rightarrow$), and (4)($\Leftarrow$) is the contrapositive of (3)($\Rightarrow$).
\end{proof}

\begin{remark}\label{dichotomy}
Theorem~\ref{basic2}\eqref{basic2 3}\eqref{basic2 4} shows that there is a dichotomy in the theory of mutation depending on whether the dimension of $\Lambda_j$ is finite or not.  In the flops setting, this dichotomy will correspond to the fact that in a 3-fold, an irreducible curve may or may not be floppable.  In either case we will obtain a derived equivalence from mutation, and the results \ref{Ext 2 thm} and \ref{Ext 3 thm} will allow us to control it.
\end{remark}

The above \ref{basic2} will allow us to easily relate flops and mutations in the case when $d=3$ and the singularities of $X$ are $\mathds{Q}$-factorial.  When we want to drop the $\mathds{Q}$-factorial assumption, or consider $d=2$ with canonical singularities, we will need the following.

\begin{prop}\label{mutmutI general}
With the crepant setup of \ref{crepant setup}, and notation from \ref{N notation}, choose a subset $\bigcup_{i\in I}C_i$ of curves above the origin and contract them to obtain $X\to X_{\con}\to \Spec R$.  If $X_{\con}$ has only isolated hypersurface singularities, then $\upnu_I\upnu_IN\cong N$ in such a way that $N_i$ mutates to $J_i^*$ mutates to $N_i$.
\end{prop}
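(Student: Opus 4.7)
The approach is to reduce the global claim to a local statement at each singular point of $X_{\con}$, where the single-summand involution \ref{basic2}\eqref{basic2 2} can be invoked. Since $R$ is complete local and $X_{\con} \to \Spec R$ is projective, $X_{\con}$ has only finitely many singular points $p_1, \ldots, p_n$ above the closed point of $\Spec R$, and by hypothesis each completion $S_k := \widehat{\cO}_{X_{\con}, p_k}$ is an isolated hypersurface singularity. Because $g \colon X \to X_{\con}$ is crepant with $X$ Gorenstein terminal and contracts only curves, base-changing yields crepant projective birational morphisms $X_k \to \Spec S_k$ with $X_k$ Gorenstein terminal and fibres of dimension $\leq 1$, exhibiting each $S_k$ as an isolated $cDV$ singularity and $X_k$ as its local minimal model ($\mathds{Q}$-factoriality being automatic for isolated $cDV$ in the complete local analytic category). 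The non-free summands $N_i$ for $i \in I$ are partitioned by which $p_k$ they lie above, giving $I = \bigsqcup_k I_k$; upon completing at $p_k$, $N$ decomposes as $N_k = S_k \oplus \bigoplus_{i \in I_k}(N_i)_{p_k}$, while every $N_j$ with $j \notin I$ becomes free over $S_k$ because the corresponding curve avoids $p_k$.

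\textbf{Local mutation.} By \ref{Min model - MMA} in its complete local form \cite[4.19]{IW5}, each $N_k$ is a basic MM $S_k$-module. The minimal right $(\add N_{I^c}^*)$-approximations $U_i^* \to N_i^*$ from \eqref{K1prime} then restrict cleanly at $p_k$: since all non-$R$ summands of $N_{I^c}$ become free over $S_k$, the completion identifies $(U_i^*)_{p_k} \to (N_i^*)_{p_k}$ with the minimal right $(\add S_k^*)$-approximation of the corresponding local summand. Hence the global $\upnu_I$ refines at $p_k$ to the local mutation $\upnu_{I_k}$ of $N_k$ with complement the single summand $S_k$. Crucially, because the complement is $S_k$ alone, the approximation sequences for distinct $i \in I_k$ are independent: each $U_i^*$ is merely a free $S_k$-module, so the sequence for index $i$ lives in $\add S_k$ and involves no interaction with the summands indexed by other $i' \in I_k$. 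Consequently $\upnu_{I_k}$ coincides with the iterated single-summand mutation $\upnu_{i_1} \cdots \upnu_{i_r}$ (in any order) on $N_k$.

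\textbf{Involution and gluing.} Since $N_k$ is MM, \ref{basic2}\eqref{basic2 2} gives $\upnu_i \upnu_i N_k \cong N_k$ for each $i \in I_k$, with $N_{k,i}$ mapping to $J_{k,i}^*$ and back. The mutual commutativity of these independent single-summand mutations yields $\upnu_{I_k} \upnu_{I_k} N_k \cong N_k$ with the indexed correspondence $N_{k,i} \leftrightarrow J_{k,i}^*$. Assembling these local involutions over the finitely many $p_k$, and using that a reflexive $R$-module is determined by its completions at all closed points of $X_{\con}$ together with its trivial behaviour elsewhere, we deduce $\upnu_I \upnu_I N \cong N$ globally, with $N_i$ mutating to $J_i^*$ and back to $N_i$. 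The principal technical point is the decoupling step above: the isolated hypersurface hypothesis is precisely what forces the local complement of $N_{I^c}$ to collapse to the single summand $S_k$, turning each individual approximation into a free cover and thereby reducing multi-summand mutation to independent applications of \ref{basic2}.
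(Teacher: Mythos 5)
Your reduction rests on \ref{basic2}, and that is where it breaks. \ref{basic2} is a $d=3$ statement about maximal modifying modules, so it requires each local model $X_k\to\Spec S_k$ to be a \emph{minimal model}. But in the crepant setup \ref{crepant setup} the space $X$ is not assumed $\mathds{Q}$-factorial (and $d=2$ with canonical singularities is allowed); \ref{mutmutI general} exists precisely to cover these cases. Your parenthetical claim that $\mathds{Q}$-factoriality of $X_k$ is automatic is false: take a minimal model with two curves over an isolated cDV point and contract one of them; the resulting $X$ has Gorenstein terminal but non-$\mathds{Q}$-factorial points, is a legitimate input to the proposition (with $X_{\con}=\Spec R$ an isolated hypersurface), yet $N=H^0(\cV_X)$ is not an MM module, so \ref{basic2} is unavailable — and in $d=2$ it never applies. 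A second, independent error is the decoupling step: mutation $\upnu_{I_k}$ at the whole set $I_k$ keeps the complement $S_k$ fixed, whereas each single-summand mutation $\upnu_i$ approximates by the full complement $S_k\oplus\bigoplus_{i'\neq i}N_{i'}$ and changes the module (hence the complements used in later steps) as you iterate; these are different operations, so even if $N_k$ were MM, \ref{basic2}\eqref{basic2 2} only gives the single-summand involution, and the asserted "mutual commutativity" yielding $\upnu_{I_k}\upnu_{I_k}N_k\cong N_k$ is unsubstantiated. Finally, "completing the $R$-module $N$ at $p_k\in X_{\con}$" is not a defined operation — $N$ lives over $R$, while $p_k$ lies on the non-affine scheme $X_{\con}$ — so even the reduction step needs real content (one must pass to the sheaves $g_*\cN_i$ and prove approximations are computed formally locally on $X_{\con}$; compare the paper's warning in \S\ref{section 3.2} that relating data defined via $f$ to data defined via $g$ is subtle).

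The paper's proof never leaves the base: with $\Delta:=\End_R(N_{I^c}^*)$ and $\mathbb{F}:=\Hom_R(N_{I^c}^*,-)$, the exchange sequences give $\mathbb{F}J_i\cong\Omega_\Delta\mathbb{F}N_i^*$, and the double mutation sends $N_i$ to $L_i^*$ with $\mathbb{F}L_i\cong\Omega^2_\Delta\mathbb{F}N_i^*$, so by reflexive equivalence everything reduces to $[2]=\Id$ on $\uCM\Delta$. That is exactly where the hypothesis enters: by \ref{KIWY}, $\Delta$ is derived equivalent to $X_{\con}$, so Orlov's theorem gives $\uCM\Delta\simeq\Dsg(X_{\con})\simeq\bigoplus_{x\in\Sing X_{\con}}\uCM\widehat{\cO}_{X_{\con},x}$, and $[2]=\Id$ on each factor because the complete local rings are hypersurfaces. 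If you want a localization-flavoured argument, this singularity-category step is the clean way to package it; the route through minimal models and \ref{basic2} cannot work in the stated generality.
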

\begin{proof}
Denote $\mathbb{F}:=\Hom_R(N_{I^c}^*,-)$.  The choice of curves gives us a summand $N_I$ such that $N_{I^c}$ is a generator.  This being the case, the right-hand morphisms in all the exchange sequences are all surjective.  By \eqref{K1aprime}
\[
0\to \mathbb{F}J_i\to \mathbb{F}U^*_i\to \mathbb{F}N_i^*\to 0
\] 
is exact.  Denoting $\Delta:=\End_R(N^*_{I^c})$, since $\mathbb{F}U^*_i$ is a projective $\Delta$-module, this shows that $\Omega_\Delta (\mathbb{F}N_i^*)\cong \mathbb{F}J_i$.  If we denote the minimal $\add N_{I^c}^*$-approximation of $J_i$ by 
\[
0\to L_i\to W_i\to J_i\to 0,
\]
then $\upnu_I\upnu_I$ takes $N_i$ to $J_i^*$ to $L_i^*$.  We claim that $L_i^*\cong N_i$, so by reflexive equivalence it suffices to prove that $\mathbb{F}L_i\cong \mathbb{F}N_i^*$.  Since 
\[
0\to \mathbb{F}L_i\to \mathbb{F}W_i\to \mathbb{F}J_i\to 0
\]
is exact and $\mathbb{F}W_i$ is a projective $\Delta$-module, $\Omega_\Delta (\mathbb{F}J_i)\cong \mathbb{F}L_i$, so the result follows if we can establish that $\Omega_\Delta^2(\mathbb{F}N_i^*)\cong \mathbb{F}N_i^*$.  Since $\mathbb{F}N_i^*\in\CM\Delta$ has no $\Delta$-projective summands, and $\Omega_\Delta=[-1]$ on the category $\uCM\Delta$, it suffices to show that $[2]=\Id$ on $\uCM\Delta$.

But by \ref{KIWY} $\Delta$ is derived equivalent to $X_{\con}$ and so
\[
\uCM\Delta\simeq\Dsg(\Delta)\simeq\Dsg(X_{\con})\simeq\bigoplus_{x\in\Sing X_{\con}}\uCM\widehat{\cO}_{X_{\con},x}
\]
by Orlov \cite{Orlovcompletion}, since all categories under consideration are idempotent complete.  Since each of $\widehat{\cO}_{X_{\con},x}$ are hypersurfaces, $[2]=\Id$ for each of the categories on the right-hand side, so since the above are triangle equivalences, $[2]=\Id$ for the left-hand category.
\end{proof}

In the study of terminal (and even smooth) $3$-folds, canonical surfaces appear naturally via hyperplane sections, and in this setting $\pd_\Lambda\Lambda_i$ can be infinite, which is very different to \ref{Ext 2 thm} and \ref{Ext 3 thm}.  The next result will allow us to bypass this problem.

\begin{prop}\label{surfaces hack}
Suppose that $R$ is a normal complete local $2$-sCY commutative algebra, and $M\in\CM R$ is basic.  Choose a summand $M_I$, set $\Lambda:=\End_R(M)$ and denote the simple $\Lambda$-modules by $S_j$. Assume that $\upnu_I\upnu_IM\cong M$.  If $x\in\fl\Lambda$ with $\Hom_\Lambda(x,S_i)=0$ for all $i\in I$, then $\Ext^1_\Lambda(T_I,x)=0$.
\end{prop}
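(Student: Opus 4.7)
The plan is to deduce $\Ext^1_\Lambda(T_I,x)=0$ by showing $x$ lies in the torsion class of the classical Brenner--Butler torsion pair attached to $T_I$. First, I would invoke Lemma~\ref{pd 1 for TI both sides} to get that $T_I$ is a classical tilting $\Lambda$-module with $\pd_\Lambda T_I=1$, so the standard theory of tilting modules of projective dimension one furnishes a torsion pair $(\mathcal{T},\mathcal{F})$ on $\mod\Lambda$ with
\[
\mathcal{T}=\{x\in\mod\Lambda:\Ext^1_\Lambda(T_I,x)=0\},\qquad\mathcal{F}=\{x\in\mod\Lambda:\Hom_\Lambda(T_I,x)=0\},
\]
and in particular $\mathcal{T}={}^\perp\mathcal{F}:=\{x:\Hom_\Lambda(x,\mathcal{F})=0\}$. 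The problem then reduces to describing $\mathcal{F}\cap\fl\Lambda$ explicitly.

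For this, the idea is to use the shape $T_I=\Hom_R(M,M_{I^c})\oplus C_I$ with $\Hom_R(M,M_{I^c})\cong\Lambda e_{I^c}$, so that $\Hom_\Lambda(T_I,x)=0$ iff both $e_{I^c}x=0$ and $\Hom_\Lambda(C_I,x)=0$. Applying $\Hom_\Lambda(-,x)$ to the projective resolution $0\to\Hom_R(M,M_I)\to\Hom_R(M,U_I)\to C_I\to 0$ embeds $\Hom_\Lambda(C_I,x)$ into $\Hom_\Lambda(\Hom_R(M,U_I),x)$, which is a direct sum of copies of the modules $e_jx$ for $j\notin I$ because $U_I\in\add M_{I^c}$; this automatically vanishes as soon as $e_{I^c}x=0$. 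Hence $x\in\mathcal{F}\cap\fl\Lambda$ if and only if $e_{I^c}x=0$, equivalently every composition factor of $x$ lies in $\{S_i:i\in I\}$. This is the main technical step, and the one where the approximation property $U_I\in\add M_{I^c}$ does the work.

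The remaining argument is formal: for $x\in\fl\Lambda$ with $\Hom_\Lambda(x,S_i)=0$ for every $i\in I$, I would verify $x\in{}^\perp\mathcal{F}$ by taking an arbitrary $\phi\colon x\to y$ with $y\in\mathcal{F}$. The image $\mathrm{Im}(\phi)$ is a finite length submodule of $y$, so it lies in $\mathcal{F}\cap\fl\Lambda$ by closure of torsion-free classes under subobjects, and by the previous step its composition factors all lie in $\{S_i:i\in I\}$. If $\mathrm{Im}(\phi)\neq 0$, its head is a nonzero semisimple quotient of $x$ with summands among $\{S_i:i\in I\}$, yielding a nonzero map $x\to S_i$ for some $i\in I$ and contradicting the hypothesis. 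So $\phi=0$, which gives $x\in\mathcal{T}$, i.e.\ $\Ext^1_\Lambda(T_I,x)=0$, as required.
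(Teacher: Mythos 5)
Your argument is correct, but it takes a genuinely different route from the paper. The paper's proof is a duality argument: using that $\Lambda$ is $2$-sCY, that $x$ has finite length and that $\pd_\Lambda C_I\leq 1$, it rewrites $\Ext^1_\Lambda(T_I,x)\cong\Ext^1_\Lambda(C_I,x)\cong D\Ext^1_\Lambda(x,C_I)$, and then kills the right-hand side by exploiting the hypothesis $\upnu_I\upnu_IM\cong M$ (which gives $K_I\cong J_I^*$ and produces the sequences \eqref{CI splice 1}--\eqref{CI splice 2}), together with $d=2$ and normality to know that $\Lambda_I$, and hence the cokernel $F_I$, has finite length with composition factors among the $S_i$, $i\in I$. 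You avoid the Calabi--Yau duality entirely: you only use that $T_I$ is a classical tilting module of projective dimension one (\ref{pd 1 for TI both sides}) and that $\Lambda e_{I^c}=\Hom_R(M,M_{I^c})$ is a summand of $T_I$, so that a finite length torsion-free module is supported on the vertices in $I$ and the head argument contradicts $\Hom_\Lambda(x,S_i)=0$. What your approach buys is generality: nowhere do you use $d=2$, the sCY property beyond the standing setup, or the hypothesis $\upnu_I\upnu_IM\cong M$, so your argument in fact proves the Ext-vanishing of \ref{new stab}\eqref{new stab 1} without assumptions (a) or (b); what the paper's approach buys is that it stays within the machinery (the sequences of \S\ref{mut prelim} and sCY duality) already set up and reused elsewhere. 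Two small points of care in your write-up: the torsion pair $(\mathrm{Gen}\,T_I,\ker\Hom_\Lambda(T_I,-))$ with $\mathrm{Gen}\,T_I=\{x:\Ext^1_\Lambda(T_I,x)=0\}$ is usually cited for Artin algebras, whereas $\Lambda$ here is only module-finite over the complete local ring $R$; you should either invoke tilting theory over general rings or note that the standard proofs go through verbatim, using $\pd_\Lambda T_I=1$, $\Ext^1_\Lambda(T_I,T_I)=0$ and the coresolution of $\Lambda$ by $\add T_I$, which in this situation is even explicit from \eqref{defin of CI}. Likewise the statement that a nonzero finite length module has a nonzero semisimple top with factors among its composition factors uses that $\Lambda$ is semiperfect, which holds since $R$ is complete local.
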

\begin{proof}
Since $\Lambda$ is $2$-sCY, $x$ has finite length and $C_I$ has finite projective dimension, 
\[
\Ext^1_\Lambda(T_I,x)\cong\Ext^1_\Lambda(C_I,x)\cong D\Ext^1_\Lambda(x,C_I),
\]
so it suffices to show that $\Ext^1_\Lambda(x,C_I)=0$. By the assumption $\upnu_I\upnu_IM\cong M$,  it follows that $J_I^*\cong K_I$.  
Since $0\to\Hom_R(M,M_I)\to \Hom_R(M,U_I)\to\Hom_R(M,K_I)$ is exact, splicing we obtain exact sequences
\begin{gather}
0\to\Hom_R(M,M_I)\to \Hom_R(M,U_I)\to C_I\to 0\label{CI splice 1}\\
0\to C_I\to \Hom_R(M,K_I)\to F_I\to 0\label{CI splice 2}
\end{gather}
But by \eqref{K1Da}
\[
0\to\Hom_R(M_{I^c},M_I)\to \Hom_R(M_{I^c},U_I)\to\Hom_R(M_{I^c},K_I)\to 0
\]
is exact, so $F_I$ is a finitely generated $\Lambda_I$-module.  But since $d=2$ and $R$ is normal, necessarily $\Lambda_I$ has finite length, hence so too has $F_I$.  Thus the assumptions then imply that $\Hom_\Lambda(x,F_I)=0$, so $\Ext^1_\Lambda(x,C_I)\hookrightarrow\Ext^1_\Lambda(x,\Hom_R(M,K_I))$.

Hence it suffices to show that $\Ext^1_\Lambda(x,\Hom_R(M,K_I))=0$.  But
\[
0\to\Hom_R(M,K_I)\xrightarrow{\cdot c} \Hom_R(M,V_I)\to \Hom_R(M,M_I)\to\Lambda_I\to 0
\]
is exact, so denoting $E_I:=\Cok(\cdot c)$, then $\Hom_\Lambda(x,E_I)$ embeds inside 
\[
\Hom_\Lambda(x,\Hom_R(M,M_I))\cong D\Ext^2_\Lambda(\Hom_R(M,M_I),x)=0,
\]
so $\Hom_\Lambda(x,E_I)=0$.  This in turns implies that $\Ext^1_\Lambda(x,\Hom_R(M,K_I))$ embeds inside $\Ext^1_\Lambda(x,\Hom_R(M,V_I))$, which is zero. Thus $\Ext^1_\Lambda(x,\Hom_R(M,K_I))=0$, as required.
\end{proof}

\section{Contractions and Deformation Theory}\label{contractions and deformations section}

The purpose of this section is use noncommutative deformations to detect whether a divisor has been contracted to a curve, in such a manner that is useful for iterations,  improving \cite{DW2}.  This part of the Homological MMP does not need any restriction on singularities, so throughout this section we adopt the general setup of \ref{general setup}.

\subsection{Background on Noncommutative Deformations}
With the setup $f\colon X\to \Spec R$ of \ref{general setup}, set $\Lambda:=\End_X(\cV_X)$.  Given any $E\in\coh X$, there is an associated classical commutative deformation functor
\[
\cDef_E\colon\cart_1\to\Sets
\]
where $\cart_1$ denotes the category of local commutative artinian $\mathbb{C}$-algebras.  The definition of this functor, which we do not state here, involves a flatness condition over the test object $R\in\cart_1$.  

Noncommutative deformations add two new features to this classical picture.  First, the test objects are enlarged from commutative artinian rings to allow certain (basic) noncommutative artinian $\mathbb{C}$-algebras.  This thickens the universal sheaf.  Second, they allow us to deform a finite collection $\{E_i\mid i\in I\}$ of objects whilst remembering Ext information between them. 

For the purposes of this paper, we will not deform coherent sheaves, but rather their images under the derived equivalence in \S\ref{perverse and tilting}.  Deforming on either side of the derived equivalence turns out to give the same answer \cite{DW2}, but the noncommutative side is slightly easier to formulate. Thus we input a finite collection $\{S_i\mid i\in I\}$ of simple $\Lambda$-modules, and define the associated noncommutative deformation functor as follows.

As preparation, recall that an \emph{$n$-pointed $\mathbb{C}$-algebra} $\Gamma$ is an associative $\mathbb{C}$-algebra, together with $\mathbb{C}$-algebra morphisms $p\colon \Gamma \to \mathbb{C}^n$ and $i\colon \mathbb{C}^n \to \Gamma$ such that $i p = \Id$.  A morphism of $n$-pointed $\mathbb{C}$-algebras $\uppsi\colon (\Gamma,p,i)\to(\Gamma',p',i')$ is a ring homomorphism $\uppsi\colon\Gamma\to\Gamma'$ such that
\[
\begin{tikzpicture}
\node (A) at (0,0) {$\mathbb{C}^n$};
\node (B1) at (1.5,0.75) {$\Gamma$};
\node (B2) at (1.5,-0.75) {$\Gamma'$};
\node (C) at (3,0) {$\mathbb{C}^n$};
\draw[->] (A) -- node[above] {$\scriptstyle i$} (B1);
\draw[->] (A) -- node[below] {$\scriptstyle i'$} (B2);
\draw[->] (B1) -- node[above] {$\scriptstyle p$} (C);
\draw[->] (B2) -- node[below] {$\scriptstyle p'$} (C);
\draw[->] (B1) -- node[right] {$\scriptstyle \uppsi$}(B2);
\end{tikzpicture}
\]
commutes.  We denote the category of $n$-pointed $\mathbb{C}$-algebras by $\alg_n$, and denote the full subcategory consisting of those objects that are commutative rings by $\calg_n$.  Furthermore, denote by $\art_n$ the full subcategory of $\alg_n$ consisting of objects $(\Gamma,p,i)$ for which $\dim_{\mathbb{C}}\Gamma<\infty$ and the augmentation ideal $\n:=\Ker(p)$ is nilpotent.  The full subcategory of $\art_n$ consisting of those objects that are commutative rings is denoted $\cart_n$.

Given $\Gamma\in\art_n$, the morphism $i$ produces $n$ idempotents $e_1,\hdots,e_n\in\Gamma$, and we denote $\Gamma_{ij}:=e_i\Gamma e_j$.
\begin{defin}{\cite{Laudal}}
Fix a finite collection $\cS:=\{S_i\mid i\in I\}$ of left $\Lambda$-modules.  Then 
\begin{enumerate}
\item For $\Gamma\in\art_{|I|}$, we say that $M\in\Mod \Lambda\otimes_{\mathbb{C}}\Gamma^{\op}$ (i.e.\ a $\Lambda$-$\Gamma$ bimodule) is \emph{$\Gamma$-matric-free} if 
\[
M \cong (S_i\otimes_{\mathbb{C}} \Gamma_{ij})
\]
as $\Gamma^{\op}$-modules, where the right-hand side is the matrix built by varying $i,j\in\{1,\hdots,n\}$, which has an obvious $\Gamma^{\op}$-module structure.
\item The noncommutative deformation functor
\[
\Def_{\cS}\colon\art_n\to\Sets
\]
is defined by sending
\[
(\Gamma,\n)\mapsto \left. \left \{ (M,\delta)
\left|\begin{array}{l}M\in\Mod\Lambda\otimes_{\mathbb{C}}\Gamma^{\op}\\ M \t{ is $\Gamma$-matric-free}\\ \delta=(\delta_i)\mbox{ with }\delta_i\colon M\otimes_\Gamma (\Gamma/\n)e_i\xrightarrow{\sim} S_i\end{array}\right. \right\} \middle/ \sim \right.
\]
where $(M,\delta)\sim (M',\delta')$ if there exists an isomorphism $\tau\colon M\to M'$ of bimodules such that 
\[
\begin{tikzpicture}
\node (a1) at (0,0) {$M\otimes_\Gamma (\Gamma/\n)e_i$};
\node (a2) at (3,0) {$M'\otimes_\Gamma (\Gamma/\n)e_i$};
\node (b) at (1.5,-1) {$S_i$};
\draw[->] (a1) -- node[above] {$\scriptstyle \tau \otimes 1 $} (a2);
\draw[->] (a1) -- node[gap] {$\scriptstyle \delta_i$} (b);
\draw[->] (a2) -- node[gap] {$\scriptstyle \delta'_i$} (b);
\end{tikzpicture}
\]
commutes, for all $i$.
\item The commutative deformation functor is defined to be the restriction of $\Def_{\cS}$ to $\cart_{|I|}$, and is denoted $\cDef_{\cS}$.
\end{enumerate}
\end{defin}

Given the general setup $f\colon X\to \Spec R$ of \ref{general setup}, choose a subset of curves $\bigcup_{i\in I}C_i$ above the unique closed point and contract them as in \S\ref{general prelim} to factorise $f$ as 
\[
X\xrightarrow{g}X_{\con}\xrightarrow{h}\Spec R
\]
with $\RDerived g_*\cO_X=\cO_{X_{\con}}$ and $\RDerived h_*\cO_{X_{\con}}=\cO_{R}$.  By \ref{simple across Db}, across the derived equivalence the coherent sheaves $\cO_{C_i}(-1)\in\coh X$ ($i\in I$) correspond to simple left $\Lambda$-modules $S_i$. The following is the $d=3$ special case of the main result of \cite{DW2}.

\begin{thm}[Contraction Theorem]\label{contraction theorem}
With the general setup in \ref{general setup}, if $d=3$ then $f$ contracts $\bigcup_{i\in I}C_i$ to a point without contracting a divisor if and only if $\Def_{\cS}$ is representable.
\end{thm}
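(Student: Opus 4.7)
The plan is to combine pro-representability of $\Def_{\cS}$ with a universal-property identification of the pro-representing algebra, so that the geometric condition translates directly into a finite-dimensionality statement. First I would verify the noncommutative analogues of Schlessinger's conditions due to Laudal and Eriksen: that pullbacks along surjections in $\art_{|I|}$ lift (using matric-freeness in place of flatness) and that the tangent space, computed in terms of the finite-dimensional spaces $\Ext^1_\Lambda(S_i,S_j)$, is finite-dimensional. This produces a pro-representing complete basic $|I|$-pointed $\mathbb{C}$-algebra $\Pi$, after which representability of $\Def_{\cS}$ becomes equivalent to $\dim_{\mathbb{C}}\Pi<\infty$.

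The heart of the proof is the identification of $\Pi$ with an algebra attached directly to the contraction $g\colon X\to X_{\con}$. Via the derived equivalence $\Uppsi_X$ the simples $S_i$ correspond to the perverse sheaves $\cO_{C_i}(-1)$, which lie in $\cC_g$, so deformations of $\cS$ match deformations of $\{\cO_{C_i}(-1)\}_{i\in I}$ in $\Per(X,X_{\con})$. Repeating the Schlessinger-style construction on the contraction side, using the tilting bundle $\cV_{X_{\con}}$ from \ref{KIWY}, produces the contraction algebra $\CA$ of \cite{DW2}, which pro-represents the same functor; uniqueness of pro-representing hulls then forces $\Pi\cong\widehat{\CA}$.

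With $\Pi\cong\widehat{\CA}$ in place, both implications follow by standard dimension arguments. If $\bigcup_{i\in I}C_i$ contracts to a single point $p$ of $X_{\con}$ with no divisor contracted, then the exceptional locus of $g$ is set-theoretically zero-dimensional, so $\CA$ is bounded above in a natural filtration by a finite-length module over $\widehat{\cO}_{X_{\con},p}$, yielding $\dim_{\mathbb{C}}\CA<\infty$. Conversely, if a divisor is contracted, then deformations along that divisor produce an infinite-dimensional commutative quotient of $\Pi$; and if the curves fail to contract to a single point, infinitesimal deformations separating the components produce an analogous infinite quotient. Either way $\Pi$ fails to be finite-dimensional.

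The step I expect to be the main obstacle is the identification $\Pi\cong\widehat{\CA}$ in the second paragraph: there is no obvious morphism in either direction between $\CA$ (built from $g$) and any factor of $\Lambda$ (built from $f$), so the isomorphism has to be forced through uniqueness of pro-representing hulls after carefully setting up the contraction-side deformation problem so that $\CA$ is manifestly its pro-representing hull. This is precisely the same subtlety flagged later in the comparison of $\CA$ with $\Lambda_I$ in \ref{contraction thm intro}, and is the technical core one would need to redevelop in order to attack this theorem from scratch.
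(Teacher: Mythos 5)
A preliminary remark: the paper does not prove this statement at all — it is imported verbatim as the $d=3$ case of the main theorem of \cite{DW2}, and the new content of \S\ref{section 3.2} is the comparison $\Lambda_I\cong\CA^I$ in \ref{contract on f}. So your proposal is in effect an attempt to reprove the main result of \cite{DW2}. Its architecture — prorepresentability of $\Def_{\cS}$, identification of the prorepresenting object with the contraction algebra attached to $g$ via uniqueness of prorepresenting objects, then a support/dimension argument over $X_{\con}$ — is the right one and is exactly the mechanism used in \cite{DW1,DW2} and in \ref{contract on f}; your first two paragraphs are fine in outline (modulo the small point that $\CA$ is built from the tilting bundle on the formal fibre $\mathfrak{U}$ of $X$ over $X_{\con}$, not from $\cV_{X_{\con}}$ itself).

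The genuine gap is your third paragraph, which is where the entire content of the theorem lives and where you assert rather than argue. For the forward direction, "bounded above in a natural filtration by a finite-length module" is not a proof: what is needed is that $\CA$ is a finitely generated $\mathfrak{R}'$-module whose support lies in the non-isomorphism locus of $\widehat{g}$, so that when no divisor is contracted it is supported at the single point $p$ and hence has finite length. That requires the localisation argument that $\CA$ vanishes at any point where $\widehat{g}$ is an isomorphism (after completing there $\cV$ becomes free, so the identity factors through $\add\cO$), together with finite generation coming from projectivity of the morphism. The converse is the hard direction, and "deformations along that divisor produce an infinite-dimensional commutative quotient" presupposes precisely what must be shown: the fibres of $g$ over nearby points of $g(D)$ degenerate to the whole scheme-theoretic fibre over $p$, not to the individual reduced curves $C_i$, so it is not automatic that they induce deformations of the collection $\cS$, and turning this intuition into a surjection from the prorepresenting object onto an infinite-dimensional ring is the technical core. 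In \cite{DW2} this is achieved by proving the support identification $\Supp_{\mathfrak{R}'}\CA^I=\{\text{non-isomorphism locus of }\widehat{g}\}$ (quoted in this paper in \S\ref{NC twists section} as [DW2, 4.3]), whose proof uses the tilting bundle and the derived equivalence on the formal fibre; your sketch contains no substitute for this. Finally, the clause that failure of the curves to contract to a \emph{single} point forces non-representability cannot be established by "infinitesimal deformations separating the components": if the chosen curves are disjoint there is nothing to separate, and two disjoint floppable curves have finite-dimensional (hence representable) prorepresenting object, so this part of your argument should be removed rather than relied upon.
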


\subsection{Global and Local Contraction Algebras}\label{section 3.2}
We maintain the notation from the general setup of the previous subsection. In this subsection we detect whether $g$ contracts a curve without contracting a divisor by using the algebra $\Lambda=\End_X(\cV_X)$, constructed in \S\ref{perverse and tilting}   using the morphism $f$.  This will allow us to iterate.

\begin{defin}\label{LambdaI global}
For $\Lambda=\End_X(\cV_X)$, with notation from \ref{N notation}  define $[\cN_{I^c}]$ to be the 2-sided ideal of $\Lambda$ consisting of morphisms that factor through $\add\cN_{I^c}$, and set $\Lambda_I:=\Lambda/[\cN_{I^c}]$.
\end{defin}

In \cite{DW2} the prorepresenting object of $\Def_{\cS}$ was constructed locally with respect to the morphism $g$, using the following method.  Let $x\in X_{\con}$ be the closed point above which sits $\bigcup_{i\in I}C_i$.  Choose an affine neighbourhood $U_{\con}:=\Spec R'$ containing $x$,  set $U:=g^{-1}(U_{\con})$, let $\mathfrak{R}'$ be the completion of $R'$ at $x$, and consider the formal fibre $\mathfrak{U}\to \Spec \mathfrak{R}'$.  This morphism satisfies all the assumptions of the general setup of \ref{general setup}, and we define the \emph{contraction algebra} to be $\CA^I:=\End_{\mathfrak{U}}(\cV_{\mathfrak{U}})/[\cO_{\mathfrak{U}}]$.  By \cite{DW1, DW2} the contraction algebra is independent  of choice of $U$, and 
\begin{eqnarray}
\Def_{\cS}(-)\cong \Hom_{\alg_{|I|}}(\CA^{I},-)\label{iso of def to alg}.
\end{eqnarray}

Even in the case $|I|=1$, comparing $\Lambda_I$ and $\CA^I$  directly is a subtle problem.  If $|I|=1$ and the scheme-theoretic multiplicity of $C_i$ with respect to $g$ is $n$, then we can view $\CA^{\{i\}}$ as factor of an endomorphism ring of an indecomposable rank $n$ bundle on $\mathfrak{U}$.  On the other hand, $\Lambda_i$ can be viewed as a factor of an endomorphism ring of an indecomposable rank $m$ bundle on $X$, where $m$ is the scheme theoretic multiplicity of $C_i$ with respect to the morphism $f$.  The next example demonstrates that $m\neq n$ in general.

\begin{example}\label{m not n}
Consider the $cD_4$ singularity $R:=\mathbb{C}[[u,x,y,z]]/(u^2-xyz)$, which is isomorphic to the toric quotient singularity $\mathbb{C}^3/G$ where $G=\mathbb{Z}_2\times\mathbb{Z}_2\leq \SL(3,\mathbb{C})$ .   We consider $X^+=G\mathrm{-Hilb}(\mathbb{C}^3)$, and one of its flops
\[
\begin{tikzpicture}[>=stealth]
\node (A1) at (3,0) {$\begin{tikzpicture}
\node[name=s,regular polygon,regular polygon sides=3, minimum size=1cm] at (0,0) {};
\coordinate (C1) at (s.corner 1)  {};
\coordinate (C2) at (s.corner 2)  {};
\coordinate (C3) at (s.corner 3)  {};
\coordinate (E1) at (s.side 1) {};
\coordinate (E2) at (s.side 2) {};
\coordinate (E3) at (s.side 3) {};
\draw[red] (E2) -- (E3); 
\node at (C1) [vertex] {};
\node at (C2) [vertex] {};
\node at (C3) [vertex] {};
\node at (E1) [vertex] {};
\node at (E2) [vertex] {};
\node at (E3) [vertex] {};
\draw (C1) -- (C2);
\draw (C2) -- (C3);
\draw (C3) -- (C1);
\draw (E1) -- (E2);
\draw (E1) -- (E3);
\filldraw[circle=2pt] (C1);
\end{tikzpicture}$};
\node (A2) at (0,0) {$\begin{tikzpicture}
\node[name=s,regular polygon,regular polygon sides=3, minimum size=1cm] at (0,0) {}; 
\coordinate (C1) at (s.corner 1)  {};
\coordinate (C2) at (s.corner 2)  {};
\coordinate (C3) at (s.corner 3)  {};
\coordinate (E1) at (s.side 1) {};
\coordinate (E2) at (s.side 2) {};
\coordinate (E3) at (s.side 3) {};
\draw[red] (C3) -- (E1);
\node at (C1) [vertex] {};
\node at (C2) [vertex] {};
\node at (C3) [vertex] {};
\node at (E1) [vertex] {};
\node at (E2) [vertex] {};
\node at (E3) [vertex] {};
\draw (C1) -- (C2);
\draw (C2) -- (C3);
\draw (C3) -- (C1);
\draw (E1) -- (E2);
\draw (E1) -- (E3);
\filldraw[circle=2pt] (C1);
\end{tikzpicture}$};
\node (B) at (1.5,-1) {$\begin{tikzpicture}
\node[name=s,regular polygon,regular polygon sides=3, minimum size=1cm] at (0,0) {}; 
\coordinate (C1) at (s.corner 1)  {};
\coordinate (C2) at (s.corner 2)  {};
\coordinate (C3) at (s.corner 3)  {};
\coordinate (E1) at (s.side 1) {};
\coordinate (E2) at (s.side 2) {};
\coordinate (E3) at (s.side 3) {};
\node at (C1) [vertex] {};
\node at (C2) [vertex] {};
\node at (C3) [vertex] {};
\node at (E1) [vertex] {};
\node at (E2) [vertex] {};
\node at (E3) [vertex] {};
\draw (C1) -- (C2);
\draw (C2) -- (C3);
\draw (C3) -- (C1);
\draw (E1) -- (E2);
\draw (E1) -- (E3);
\filldraw[circle=2pt] (C1);
\end{tikzpicture}$};
\node (C) at (1.5,-2.5) {$\begin{tikzpicture}
\node[name=s,regular polygon,regular polygon sides=3, minimum size=1cm] at (0,0) {}; 
\coordinate (C1) at (s.corner 1)  {};
\coordinate (C2) at (s.corner 2)  {};
\coordinate (C3) at (s.corner 3)  {};
\coordinate (E1) at (s.side 1) {};
\coordinate (E2) at (s.side 2) {};
\coordinate (E3) at (s.side 3) {};
\node at (C1) [vertex] {};
\node at (C2) [vertex] {};
\node at (C3) [vertex] {};
\node at (E1) [vertex] {};
\node at (E2) [vertex] {};
\node at (E3) [vertex] {};
\draw (C1) -- (C2);
\draw (C2) -- (C3);
\draw (C3) -- (C1);
\filldraw[circle=2pt] (C1);
\end{tikzpicture}$};
\draw[->] (A1)--(B);
\draw[->] (A2)--(B);
\draw[->] (A1)--(C);
\draw[->] (A2)--(C);
\draw[->] (B)--(C);
\node (X1) at (5,0) {$X$};
\node (X2) at (8,0) {$X^+$};
\node (Xcon) at (6.5,-1) {$X_{\con}$};
\node (R) at (6.5,-2.5) {$\Spec R$};
\draw[->] (X1) -- node[above right] {$\scriptstyle g$} (Xcon);
\draw[->] (X2) -- node[above left, pos=0.4] {$\scriptstyle g'$} (Xcon);
\draw[->] (Xcon) -- node[right] {$\scriptstyle $} (R);
\draw[->] (X1) -- node[below left] {$\scriptstyle f$} (R);
\draw[->] (X2) -- node[below right] {$\scriptstyle f'$} (R);
\end{tikzpicture}
\]
Locally, being the Atiyah flop, the scheme theoretic multiplicity of the flopping curve with respect to $g$ is one, but with respect to $f$ the scheme theoretic multiplicity is two.  This can be calculated directly, but it also follows from the example \ref{Z2Z2example} later, once we have proved \ref{MMmodule under flop cor}. 
\end{example}

The following is the main result of this section.

\begin{thm}\label{contract on f}
With the general setup in \ref{general setup}, suppose that $d=3$ and  pick a subset $\bigcup_{i\in I}C_i$ of curves above the origin. Then 
\begin{enumerate}
\item\label{contract on f 1} $\Def_{\cS}\cong \Hom_{\alg_{|I|}}(\Lambda_I,-)$.
\item\label{contract on f 2} $\Lambda_I\cong\CA^I$.
\item\label{contract on f 3} $\bigcup_{i\in I}C_i$ contracts to point without contracting a divisor $\Leftrightarrow\dim_{\mathbb{C}}\Lambda_I<\infty$. 
\end{enumerate}
\end{thm}
\begin{proof}
(1)  Arguing exactly as in \cite[3.1]{DW1}, since $S_i=\mathbb{C}$ as $\Lambda$-modules, if we denote the natural homomorphisms $\Lambda\to S_i$ by $q_i$, then for $(\Gamma,\n)\in\art_{|I|}$,
\begin{align*}
\Def_{\cS}(\Gamma)
&\cong\left. \left \{ \begin{array}{cl} \bullet& \mbox{A left $\Lambda$-module structure on $(S_i\otimes_{\mathbb{C}}\Gamma_{ij})$ such that}\\
&\mbox{$(S_i\otimes_{\mathbb{C}}\Gamma_{ij})$ becomes a $\Lambda$-$\Gamma$ bimodule.}\\
\bullet & \delta=(\delta_i)\mbox{ such that } \delta_i\colon  (S_i\otimes_{\mathbb{C}}\Gamma_{ij})\otimes_\Gamma (\Gamma/\n)e_i \xrightarrow{\sim}S_i\\ &\mbox{ as $\Lambda$-modules}
\end{array} \right\} \middle/ \sim \right. \\
&\cong\left. \left \{ \begin{array}{l}  \mbox{A $\mathbb{C}$-algebra homomorphism $\Lambda\to\Gamma$ such that the}\\
\mbox{composition $\Lambda\to\Gamma\to (\Gamma/\n)e_i=S_i$ is $q_i$ for all $i\in I$}\end{array} \right\} \middle/  \sim  \right.\\
&\cong \Hom_{\alg_{|I|}}(\Lambda_I,\Gamma).
\end{align*}
(2) Since $R$ is complete, both $\CA^I$ and $\Lambda_I$ belong to the pro-category of $\art_{|I|}$, so $\Def_{\cS}$ is prorepresented by both $\Lambda_I$ and $\CA^I$.  Hence by uniqueness of prorepresenting object,  $\CA^I\cong \Lambda_I$.\\
(3) This follows by combining \eqref{contract on f 1} and \ref{contraction theorem}.
\end{proof}

\section{Mutation, Flops and Twists}\label{flops and twists section}

\subsection{Flops and Mutation}  We now consider the crepant setup of \ref{crepant setup} with $d=3$, namely $f\colon X\to \Spec R$ is a crepant projective birational morphism, with one dimensional fibres, where $R$ is a complete local Gorenstein algebra, and  $X$ has at worst Gorenstein terminal singularities.  As in \S\ref{perverse and tilting}, we consider $\cV_X$, the basic progenerator of $\Per(X,R)$, set $N:=H^0(\cV_X)$ and by \ref{flop up=down} denote $\Lambda:=\End_X(\cV_X)\cong\End_R(N)$.

\begin{remark}
It also follows from \ref{flop up=down} that $\End_X(\cV_X)/[\cN_{I^c}]\cong\End_R(N)/[N_{I^c}]$ and so the $\Lambda_I$ defined in \ref{LambdaI global} and \ref{setup} coincide.  This allows us to link the previous contraction section to mutation.  
\end{remark}

We aim to prove the following theorem.
\begin{thm}\label{flop=mut general thm}
With the crepant setup as above, with $d=3$, pick a subset of curves $\bigcup_{i\in I}C_i$ above the origin, and suppose that $\dim_{\mathbb{C}}\Lambda_I<\infty$ (equivalently, by \ref{contract on f}, the curves flop).  Denote the flop by $X^+$, then
\begin{enumerate}
\item\label{flop=mut general thm 1} $\upnu_IN\cong H^0(\cV_{X^+})$, where $\cV_{X^+}$ is the basic progenerator of $\Per(X^+,R)$.
\item\label{flop=mut general thm 2} The following diagram of equivalences is naturally commutative
\[
\begin{array}{c}
\begin{tikzpicture}[scale=1.2]
\node (a1) at (0,0) {$\D(\coh X)$};
\node (a2) at (2.5,0) {$\D(\coh X^+)$};
\node (b1) at (0,-1) {$\D(\mod\Lambda)$};
\node (b2) at (2.5,-1) {$\D(\mod\upnu_I\Lambda)$};
\draw[->] (a1) to node[above] {$\scriptstyle \flop$} (a2);
\draw[->] (b1) to node[above] {$\scriptstyle \Upphi_I$} (b2);
\draw[->] (a1) to node[left] {$\scriptstyle \Uppsi_X$} (b1);
\draw[->] (a2) to node[right] {$\scriptstyle \Uppsi_{X^+}$} (b2);
\end{tikzpicture}
\end{array}
\]
where $\flop$ is the inverse of the Bridgeland--Chen flop functor, $\Uppsi_X$ and $\Uppsi_{X^+}$ are the tilting equivalences in \eqref{VdBfunctor}, and $\Upphi_I$ is the mutation functor in \eqref{post referee}.
\end{enumerate}
\end{thm}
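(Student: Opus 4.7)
My plan for part (1) begins by noting that $X^+$ also satisfies the crepant setup of \ref{crepant setup}, so by \ref{flop up=down} it comes with its own basic tilting bundle $\cV_{X^+}$ and an algebra isomorphism $\End_{X^+}(\cV_{X^+})\cong\End_R(N^+)$ for $N^+:=H^0(\cV_{X^+})$; by \ref{Min model - MMA}, $N^+$ is a basic MM $R$-module generator, as is $N$.  Applying \ref{KIWY} to both contractions $g\colon X\to X_{\con}$ and $g^+\colon X^+\to X_{\con}$ shows that the pushforward of the $I^c$-summand of each tilting bundle recovers the same object $\cV_{X_{\con}}$; on passing to global sections this identifies $N_{I^c}$ with the analogous summand of $N^+$ as $R$-modules.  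Since $\upnu_I N$ agrees with $N$ on its $I^c$-part by the construction of \ref{mut defin main}, the problem reduces to showing that $N^+_i\cong J_i^*$ for each $i\in I$, where $J_i$ is the kernel in the exchange sequence \eqref{K1prime}.

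To identify the remaining summands, the plan is to exploit Van den Bergh's perverse description of the flop.  Specifically, $\cV_{X^+}$ is the basic progenerator of $\Per(X^+,R)$, and viewed through the common roof of the flop it corresponds to a tilting object on $X$ for the opposite perverse heart $\mPer(X,R)$.  The maximal extension \eqref{max extension} that defines $\cN_i$ as a summand of $\cV_X\in\Per(X,R)$ uses $H^1(X,\cL_i^*)$; on the flop side the dual extension (for the opposite heart) yields the corresponding summand of $\cV_{X^+}$, and pushing this down to $R$ is recognised, via the dualised $\add N_{I^c}^*$-approximation in \eqref{K1prime}, as $J_i^*$.  Since $R$ is complete local, basic MM generators are determined up to isomorphism by their indecomposable summands, and so assembling yields $N^+\cong N_{I^c}\oplus J_I^*=\upnu_I N$.

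For part (2), the module identification in part (1) produces an algebra isomorphism $\upnu_I\Lambda\cong\End_{X^+}(\cV_{X^+})$, making both $\Upphi_I$ and $\Uppsi_{X^+}\circ\flop\circ\Uppsi_X^{-1}$ into standard derived equivalences $\D(\mod\Lambda)\to\D(\mod\upnu_I\Lambda)$.  Each is realised by a tilting $\Lambda$-$\upnu_I\Lambda$-bimodule: the mutation functor by $T_I$ from \eqref{defin of CI}, and the geometric composition by $\Uppsi_X(\flop^{-1}(\cV_{X^+}))$ equipped with its natural right $\upnu_I\Lambda$-action.  A natural isomorphism of the two functors is equivalent to a bimodule isomorphism between these two tilting complexes, and the plan is to obtain this by matching the defining sequence \eqref{defin of CI} of $T_I$ with the image under $\Uppsi_X$ of the maximal extension sequence for $\cV_{X^+}$.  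The main obstacle is precisely this bimodule matching: reconciling the categorical construction of $T_I$ via exchange sequences with the Fourier--Mukai kernel supported on $X\times_{X_{\con}}X^+$ requires carefully tracking the duality between the two perverse hearts of Van den Bergh, and this compatibility is the heart of the proof.
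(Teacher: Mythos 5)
There is a genuine gap, and it sits exactly where you flag it. In part (1), the step ``the dual extension (for the opposite heart) yields the corresponding summand of $\cV_{X^+}$, and pushing this down to $R$ is recognised \ldots as $J_i^*$'' is not an argument: it is the statement to be proved. The perverse hearts exchanged by the flop functor, and any $\Per$/$\mPer$ duality, live relative to the contraction $g\colon X\to X_{\con}$, whereas $\cV_X$, $\cV_{X^+}$, $N$ and the mutation $\upnu_I$ (minimal $\add N_{I^c}$-approximations) are all constructed relative to $f\colon X\to\Spec R$; transferring information between $f$ and $g$ is precisely the subtlety the paper labours over (see \ref{m not n}, where even the multiplicities differ), so ``is recognised via the dualised approximation'' cannot be waved through, and no reason is given why the relevant sequences are (minimal) approximations at all. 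Your opening appeal to \ref{Min model - MMA} to make $N$ and $N^+$ MM generators is also invalid: the theorem does not assume $X$ is $\mathds{Q}$-factorial, and it is exactly to remove that hypothesis that the paper first proves the single-curve minimal-model case by a Riedtmann--Schofield argument (\ref{mut=flop single curve min model}), builds the Auslander--McKay correspondence, and then in the general case uses Bongartz completion to land on a minimal model, contracts back down via \ref{KIWY} to produce some $X^+$ with $H^0(\cV_{X^+})\cong\upnu_IN$, and finally verifies in \ref{key flops lemma} that this $X^+$ really is the flop by an explicit check of \ref{check flop} (Cartier-ness of $\det\cF_i$ and the intersection numbers), which in turn needs $\upnu_I\upnu_IN\cong N$ from \ref{mutmutI general}.

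For part (2) you state that the bimodule matching ``is the heart of the proof'' and do not carry it out, so nothing is established. Note also that the paper deliberately avoids a direct comparison of Fourier--Mukai kernels/bimodules: instead it shows that $\Uptheta:=\Uppsi_{X^+}^{-1}\circ\Upphi_I\circ\Uppsi_X$ and $\flop$ both commute with pushforward to $X_{\con}$ (via \ref{KIWY}), both send $\cO_X\mapsto\cO_{X^+}$ and $\cO_{C_i}(-1)\mapsto\cO_{C_i^+}(-1)[-1]$ (via \ref{track mutation}), and then invokes the uniqueness criterion \ref{Toda argument} applied to $\Uptheta^{-1}\circ\flop$ to conclude $\Uptheta\cong\flop$. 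If you want to salvage your route, that criterion is the tool to replace your unexecuted kernel comparison; as written, both halves of your proposal assume the key identifications rather than proving them.
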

The proof of \ref{flop=mut general thm} will be split into two stages.  Stage one, proved in this subsection, is to establish \ref{flop=mut general thm}\eqref{flop=mut general thm 1} in the case where $X$ is a minimal model and $I=\{i \}$.  Stage two is then to prove \ref{flop=mut general thm} in full generality, lifting the $\mathds{Q}$-factorial and $|I|=1$ assumption.  The second stage uses the Auslander--McKay correspondence in \S\ref{AM subsection 1}, together with the Bongartz completion to pass to the minimal model, before we then contract back down.  Thus, the full proof of \ref{flop=mut general thm} will not finally appear until \S\ref{flops and mu section 2}.

To establish functoriality, the following results will be useful later.

\begin{prop}\label{Toda argument}
Suppose that $f\colon X\to\Spec R$ and $f'\colon X'\to\Spec R$ both satisfy the crepant setup \ref{crepant setup}, and admit a common contraction
\[
\begin{tikzpicture}
\node (X) at (-1.33,0) {$X$};
\node (Xprime) at (1.33,0) {$X'$};
\node (Xcon) at (0,-0.8) {$X_{\con}$};
\node (R) at (0,-2) {$\Spec R$};
\draw[->] (X) -- node[above right] {$\scriptstyle g$} (Xcon);
\draw[->] (Xprime) -- node[above left, pos=0.4] {$\scriptstyle g'$} (Xcon);
\draw[->] (Xcon) -- node[right] {$\scriptstyle $} (R);
\draw[->] (X) -- node[below left] {$\scriptstyle f$} (R);
\draw[->] (Xprime) -- node[below right] {$\scriptstyle f'$} (R);
\end{tikzpicture}
\]
Suppose that $g$ and $g'$ contract the same number of curves, and denote the contracted curves by $\{C_i\mid i\in I\}$ and  $\{C'_i\mid i\in I\}$ respectively.
If $\Uptheta\colon\Db(\coh X)\to\Db(\coh X')$ is a Fourier--Mukai equivalence that satisfies
\begin{enumerate}
\item $\RDerived g'_*\circ \Uptheta\cong \RDerived g_*$
\item $\Uptheta(\cO_X)\cong \cO_{X'}$
\item $\Uptheta(\cO_{C_i}(-1))\cong \cO_{C'_i}(-1)$ for all $i\in I$, 
\end{enumerate}
then $\Uptheta\cong\upphi_*$ where $\upphi\colon X\to X'$ is an isomorphism such that $g'\circ\upphi=g$,
\end{prop}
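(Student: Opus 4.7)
The plan is to show that the Fourier--Mukai kernel of $\Uptheta$ is supported on the graph of an isomorphism $\upphi\colon X\to X'$ over $X_{\con}$, by combining a localization argument away from the exceptional loci with a Bondal--Orlov-style reconstruction at the fibres, using the tilting equivalences of \eqref{derived equivalence}.

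First, I would restrict $\Uptheta$ to the open complement of the exceptional loci. Let $U_{\con}\subseteq X_{\con}$ be the open locus over which both $g$ and $g'$ are isomorphisms, and set $U:=g^{-1}(U_{\con})$ and $U':=g'^{-1}(U_{\con})$, so that $g$ and $g'$ induce a canonical isomorphism $\uppsi\colon U\xrightarrow{\sim}U'$. The hypothesis $\RDerived g'_*\circ\Uptheta\cong\RDerived g_*$ makes $\Uptheta$ linear over $X_{\con}$; combined with $\Uptheta(\cO_X)\cong\cO_{X'}$, this forces the restriction of $\Uptheta$ to objects set-theoretically supported on $U$ to coincide with $\uppsi_*$. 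In particular $\Uptheta(\cO_x)\cong\cO_{\uppsi(x)}$ for every closed point $x\in U$.

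Second, I would analyze $\Uptheta$ on the exceptional fibres via the tilting equivalences. Set $\Lambda:=\End_X(\cV_X)$ and $\Lambda':=\End_{X'}(\cV_{X'})$. Under the composite $\Uppsi_{X'}\circ\Uptheta\circ\Uppsi_X^{-1}\colon\Db(\mod\Lambda)\xrightarrow{\sim}\Db(\mod\Lambda')$, the hypotheses translate into: the projective $P_\star$ corresponding to $\cO_X$ maps to $P'_\star$, and the simples $S_i$ ($i\in I$) map to $S'_i$; meanwhile compatibility with $\RDerived g_*$ identifies the two as $R$-linear. A closed point $y$ lying on an exceptional curve $C_j$ corresponds under $\Uppsi_X$ to a finite-length $\Lambda$-module whose composition factors lie among $S_\star,\{S_i\}_{i\in I}$, with multiplicities determined by the scheme-theoretic structure of the fibre. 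Tracking this across the equivalence, $\Uptheta(\cO_y)$ is forced to be a finite-length perverse sheaf on $X'$ with the same numerics and support at $g(y)$; an analysis inside $\Per(X',X_{\con})$ using the faithfulness of $\Uptheta$ together with preservation of $\cO_X$ shows that such an object must be the skyscraper $\cO_{\upphi(y)}$ at a uniquely determined point $\upphi(y)\in g'^{-1}(g(y))$.

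Third, I would conclude by a standard Bondal--Orlov/Bridgeland--Maciocia reconstruction. Steps one and two together give a bijection $\upphi\colon X\to X'$ on closed points, commuting with $g$ and $g'$, together with natural isomorphisms $\Uptheta(\cO_x)\cong\cO_{\upphi(x)}$ for all $x\in X$. Since point sheaves form a spanning class of $\Db(\coh X)$, and $\Uptheta$ is a fully faithful FM equivalence sending point sheaves to point sheaves and $\cO_X$ to $\cO_{X'}$, one upgrades $\upphi$ to an isomorphism of schemes and produces a natural isomorphism $\Uptheta\cong\upphi_*$; the identity $g'\circ\upphi=g$ is then automatic from the agreement on $U$ together with the $X_{\con}$-linearity. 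The main obstacle is Step Two: ruling out that $\Uptheta(\cO_y)$ is a more complicated perverse sheaf spread along the exceptional fibre with the same class in $K_0$. The hypotheses only fix $\Uptheta$ on the simples $\cO_{C_i}(-1)$, so rigidifying the extension data requires combining preservation of the projective-like object $\cO_X$ with the compatibility $\RDerived g'_*\Uptheta\cong\RDerived g_*$, which together pin down the actual perverse sheaf rather than only its class.
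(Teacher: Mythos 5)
Your Step Two is not a proof but a restatement of the problem, and it is exactly the point where the proposition has content. For a closed point $y$ on an exceptional curve, knowing that $\Uptheta(\cO_y)$ lies in $\Per(X',X_{\con})$, is a quotient of $\cO_{X'}$ in that heart, pushes forward to $\cO_{g(y)}$, and has the right class in $K$-theory does \emph{not} by itself force it to be a skyscraper sheaf: the heart contains other quotients of $\cO_{X'}$ with the same numerics and pushforward, built from extensions involving the objects $\cO_{C'_i}(-1)$ along the fibre $g'^{-1}(g(y))$, and "faithfulness of $\Uptheta$ together with preservation of $\cO_X$" is not an argument that excludes them --- you acknowledge this yourself as the main obstacle and then leave it unresolved. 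The paper does not argue this way at all: its proof is that hypotheses (1) and (3) force $\Uptheta$ to send $\Per(X,X_{\con})$ into $\Per(X',X_{\con})$, after which it invokes, word for word, the argument of Donovan--Wemyss (7.17, 7.18), itself based on Toda. The mechanism there is moduli-theoretic: one works with perverse point sheaves (quotients of the structure sheaf in the perverse heart whose derived pushforward is a point sheaf on $X_{\con}$), uses that $X$ and $X'$ are the fine moduli spaces of such objects over $X_{\con}$, and observes that $\Uptheta$, preserving hearts, structure sheaves and compatibility with pushforward, induces an isomorphism of these moduli problems; this is what produces $\upphi$ as a morphism of schemes over $X_{\con}$, not merely a bijection on closed points, and identifies $\Uptheta$ with $\upphi_*$. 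Your plan replaces this with "skyscrapers go to skyscrapers, then Bondal--Orlov", but the skyscraper statement is precisely what the moduli argument is needed to establish, so as written there is a genuine gap.

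A secondary issue: in Step One the claim that hypothesis (1) together with $\Uptheta(\cO_X)\cong\cO_{X'}$ already forces $\Uptheta$ to agree with $\uppsi_*$ on objects supported over $U_{\con}$ also needs justification, since $\RDerived g'_*$ can kill support information and condition (1) alone does not control the support of $\Uptheta(\cO_x)$; this becomes manageable once heart preservation is in place, but in your write-up it is asserted before that point. If you want a self-contained proof rather than the paper's citation, the honest route is to prove heart preservation from (1) and (3) and then reproduce the perverse-point-sheaf moduli argument; without that, Steps Two and Three do not go through.
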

\begin{proof}
This is identical to \cite[\S7.6]{DW1}, which itself is based on \cite{TodaResPub}.  As in \cite[7.17]{DW1}, from properties (1) and (3) it follows that $\Uptheta$ takes $\Per(X,X_{\con})$ to $\Per(X',X_{\con})$.  The argument is then word-for-word identical to the proof of \cite[7.17, 7.18]{DW1}, since although there it was assumed that $X$ was projective, this is not needed in the proof.
\end{proof}

\begin{cor}\label{Per0 global general}
Suppose that $f\colon X\to\Spec R$ and $f'\colon X'\to\Spec R$ both satisfy the crepant setup \ref{crepant setup}.  If $H^0(\cV_X)\cong H^0(\cV_{X'})$, then there is an isomorphism $X\cong X'$ compatible with $f$ and $f'$.  
\end{cor}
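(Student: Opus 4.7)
The plan is to reduce to \ref{Toda argument} with the common contraction $X_{\con}:=\Spec R$, $g:=f$, $g':=f'$. Hence I need to produce a Fourier--Mukai equivalence $\Uptheta\colon\Db(\coh X)\to\Db(\coh X')$ satisfying the three compatibilities there. First, I will promote the given isomorphism $N:=H^0(\cV_X)\cong H^0(\cV_{X'})=:N'$ to a ring isomorphism $\Lambda\cong\Lambda'$, using \ref{flop up=down} to identify each endomorphism ring with $\End_R(N)$, respectively $\End_R(N')$. The key bookkeeping point is that the $R$-summand of $N$ is the unique free rank-one indecomposable summand (by construction in \S\ref{perverse and tilting}, combined with $\rank N_i$ being the scheme-theoretic multiplicity of $C_i$), so any module isomorphism $N\cong N'$ must respect the decompositions and in particular identify the idempotent $e\in\Lambda$ corresponding to $\cO_X$ with $e'\in\Lambda'$ corresponding to $\cO_{X'}$. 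The same isomorphism gives a bijection between the non-free summands of $N$ and $N'$, hence (after relabelling) a bijection $C_i\leftrightarrow C'_i$ between the exceptional curves of $f$ and $f'$.

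Next, I set $\Uptheta:=\Uppsi_{X'}^{-1}\circ\Uppsi_X$. Both $\Uppsi_X$ and $\Uppsi_{X'}$ are the tilting equivalences of \eqref{derived equivalence}, so the composition is Fourier--Mukai by the standard representability arguments for tilting bundles on 3-folds.

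The core of the argument is then verifying the three hypotheses of \ref{Toda argument}. For (1), I apply \ref{KIWY}\eqref{KIWY 2} with $I$ taken to be the set of \emph{all} exceptional curves, so that $X_{\con}=\Spec R$ and the lower commuting square identifies $\RDerived f_*$ with the idempotent-projection functor $e(\placeholder)$ on $\Db(\mod\Lambda)$, and similarly $\RDerived f'_*$ with $e'(\placeholder)$; because the ring isomorphism sends $e\mapsto e'$, this yields $\RDerived f'_*\circ\Uptheta\cong\RDerived f_*$. For (2), $\Uppsi_X(\cO_X)\cong\Lambda e$, which under $\Lambda\cong\Lambda'$ becomes $\Lambda'e'\cong\Uppsi_{X'}(\cO_{X'})$, giving $\Uptheta(\cO_X)\cong\cO_{X'}$. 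For (3), \ref{simple across Db} identifies $\Uppsi_X(\cO_{C_i}(-1))$ with the simple $S_i$ of $\Lambda$ at the vertex indexed by $N_i$; the relabelling from the first paragraph matches $S_i$ with $S'_i$, and applying $\Uppsi_{X'}^{-1}$ to the latter returns $\cO_{C'_i}(-1)$.

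With all three conditions verified, \ref{Toda argument} produces an isomorphism $\upphi\colon X\to X'$ with $f'\circ\upphi=f$, which is the desired conclusion. The main obstacle I anticipate is not the geometry but the bookkeeping in Step~1: it is essential that the distinguished summand $R$ on each side is preserved by any abstract module isomorphism, since this is what forces the abstract equivalence $\Uptheta$ to intertwine the pushforwards to $\Spec R$ and makes the hypotheses of \ref{Toda argument} applicable.
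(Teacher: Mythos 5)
Your argument is correct and is essentially the paper's own proof: both construct $\Uptheta:=\Uppsi_{X'}^{-1}\circ F\circ \Uppsi_X$ with $F$ induced by the ring isomorphism $\End_X(\cV_X)\cong\End_R(N)\cong\End_R(M)\cong\End_{X'}(\cV_{X'})$ from \ref{flop up=down}, verify the three hypotheses using \ref{KIWY} and \ref{simple across Db}, and conclude via \ref{Toda argument} with $X_{\con}=\Spec R$. The only (harmless) difference is cosmetic: the paper checks $\Uptheta(\cO_X)\cong\cO_{X'}$ directly through $P_0\cong\Hom_R(N,R)\mapsto P_0'\cong\Hom_R(M,R)$, rather than through your ``unique free indecomposable summand'' bookkeeping (whose correct justification is basicness of $\cV_X$ plus the tilting equivalence, not the rank-equals-multiplicity remark).
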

\begin{proof}
Temporarily denote $N:=H^0(\cV_X)$ and $M:=H^0(\cV_{X'})$.  Since $N\cong M$ by assumption, certainly they have the same number of indecomposable summands, so since $\cV_X$ and $\cV_{X'}$ are basic, it follows that the numbers of curves contracted by $f$ and $f'$ are the same.  Further, we have a diagram of equivalences
\[
\begin{array}{c}
\begin{tikzpicture}[scale=1.2]
\node (a1) at (0,0) {$\D(\coh X)$};
\node (a2) at (3.5,0) {$\D(\coh X')$};
\node (b1) at (0,-1) {$\D(\mod\End_X(\cV_X))$};
\node (b2) at (3.5,-1) {$\D(\mod\End_{X'}(\cV_{X'}))$};
\draw[->] (b1) to node[above] {$\scriptstyle \cong$} node [below] {$\scriptstyle F$} (b2);
\draw[->] (a1) to node[left] {$\scriptstyle \Uppsi_X$} (b1);
\draw[<-] (a2) to node[right] {$\scriptstyle \Uppsi^{-1}_{X'}$} (b2);
\end{tikzpicture}
\end{array}
\]
since $\End_X(\cV_X)\cong\End_R(N)\cong \End_R(M)\cong\End_{X'}(\cV_{X'})$ by \ref{flop up=down}.  Denote the composition of equivalences by $\Uptheta$, then the composition is a Fourier--Mukai functor \cite[6.16]{DW1}, which is thus also an equivalence. Now
\[
\RDerived f'_*\circ\Uptheta\cong\RDerived f'_*\circ\Uppsi_{X'}^{-1}\circ F\circ\Uppsi_X\cong e(-)\circ \Uppsi_X\cong \RDerived f_*
\]
where the second and third isomorphisms are \ref{KIWY}. Further by \ref{simple across Db} $\Uptheta$ takes
\[
\cO_{C_i}(-1)\mapsto S_i\mapsto S_i'\mapsto\cO_{C'_i}(-1)
\]
for all exceptional curves $C_i$, where $S_i$ are simple $\End_R(N)$-modules and $S'_i$ are simple $\End_R(M)$-modules.  Lastly, $\Uptheta$ sends
\[
\cO_X\mapsto P_0\mapsto P_0'\mapsto \cO_{X'}
\]  
where $P_0\cong\Hom_R(N,R)$ and $P_0'=\Hom_R(M,R)$.  Hence applying \ref{Toda argument} with $X_{\con}=\Spec R$ gives the result.
\end{proof}

The following lemma, an easy consequence of Riedtmann--Schofield, proves \ref{flop=mut general thm}\eqref{flop=mut general thm 1} with restricted hypotheses.

\begin{lemma}\label{mut=flop single curve min model}
With the crepant setup of \ref{crepant setup}, suppose further that $d=3$ and $X$ is $\mathds{Q}$-factorial, that is $f\colon X\to \Spec R$ is a minimal model.  Choose a single curve $C_i$ above the origin, suppose that $\dim_{\mathbb{C}}\Lambda_i<\infty$ (equivalently, by \ref{contract on f}, $C_i$ flops), and let $X^+$ denote the flop of $C_i$.  Then $\upnu_iN\cong H^0(\cV_{X^+})$.
\end{lemma}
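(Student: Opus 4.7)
My plan is to identify $M:=H^0(\cV_{X^+})$ and $\upnu_iN$ as two basic MM $R$-module generators that agree with $N$ on all but one indecomposable summand, and then to pin down $M$ by Riedtmann--Schofield uniqueness. Since $X^+\to\Spec R$ is again a minimal model (being the flop of the minimal model $X$ at a single curve), \ref{Min model - MMA} combined with the tilting description in \S\ref{perverse and tilting} gives that $M$ is a basic MM $R$-module generator, so the whole question reduces to classifying such generators.

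First I would identify a common summand of $N$ and $M$. Contracting $C_i$ produces $g\colon X\to X_{\con}$, and the flop furnishes $g^+\colon X^+\to X_{\con}$. By uniqueness of the basic progenerator of $\Per(X_{\con},R)$, the identifications of \ref{KIWY}\eqref{KIWY 1} applied to both morphisms give
\[
\cV_{X_{\con}}\cong g_*\cN_{I^c}\cong g^+_*\cN^+_{I^c},
\]
so taking global sections yields $N_{I^c}\cong M_{I^c}$, where $M_{I^c}$ denotes the corresponding direct summand of $M$. Krull--Schmidt then lets me write $M\cong N_{I^c}\oplus M_i^+$ for some indecomposable reflexive $M_i^+$. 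By construction $\upnu_iN=N_{I^c}\oplus J_i^*$, and the hypothesis $\dim_{\mathbb{C}}\Lambda_i<\infty$ together with \ref{basic2}\eqref{basic2 3} gives $J_i^*\ncong N_i$, so $N$ and $\upnu_iN$ are genuinely distinct basic MM generators sharing the summand $N_{I^c}$.

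The next step is the Riedtmann--Schofield input: any basic MM $R$-module generator of the form $N_{I^c}\oplus Y$ with $Y$ indecomposable reflexive is isomorphic to $N$ or $\upnu_iN$. This follows from the uniqueness (up to isomorphism) of minimal approximations, which forces the only possible exchange partner of $N_i$ relative to $\add N_{I^c}$ to be $J_i^*$, and is implicit in the exchange-sequence machinery of \cite[\S6]{IW4}. Applied to $M$, I obtain $M\cong N$ or $M\cong\upnu_iN$.

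To exclude the first alternative, I would use that $M\cong N$ would, via \ref{Per0 global general}, force an isomorphism $X\cong X^+$ compatible with $f$ and $f^+$, contradicting the standard MMP fact that the flop of a $\mathds{Q}$-factorial terminal minimal model at a floppable curve is a distinct minimal model over $\Spec R$. Hence $M\cong\upnu_iN$. The main obstacle in this plan is extracting the precise Riedtmann--Schofield statement invoked in the third paragraph — namely that, for a fixed $N_{I^c}$, there are at most the two basic MM generators $N$ and $\upnu_iN$ completing it; the common-contraction step and the non-triviality of the flop are both essentially routine given the earlier setup.
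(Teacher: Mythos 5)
Your overall strategy is the same as the paper's: identify $M:=H^0(\cV_{X^+})$ and $\upnu_iN$ as basic MM generators agreeing with $N$ on all summands except one, rule out either being isomorphic to $N$, and conclude by a Riedtmann--Schofield type uniqueness. The reduction of the common summand via \ref{KIWY} applied to $g$ and $g^+$, and your exclusion of $M\cong N$ via \ref{Per0 global general} (the paper simply asserts $M\ncong N$), are fine. However, the pivotal third step is a genuine gap, and the justification you offer for it is not correct. The claim that ``any basic MM generator of the form $N_{I^c}\oplus Y$ is isomorphic to $N$ or $\upnu_iN$'' does \emph{not} follow from uniqueness of minimal approximations: uniqueness of approximations only shows that the mutation $\upnu_iN$ is well defined, i.e.\ that the exchange sequence produces a canonical partner $J_i^*$ for $N_i$. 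It says nothing about an arbitrary indecomposable $Y$ completing $N_{I^c}$ to an MM generator, since a priori such a $Y$ need not arise from any approximation sequence at all; ``implicit in the exchange-sequence machinery'' is exactly the point that has to be proved, and you acknowledge you cannot extract it.

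The missing idea is to transport the problem to tilting theory over $\Lambda=\End_R(N)$, which is how the paper closes this gap. Since mutation is a derived equivalence, $\End_R(\upnu_iN)$ is an MMA by \cite[4.16]{IW4}, and $\End_R(M)$ is an MMA by \ref{Min model - MMA} because $X^+\to\Spec R$ is again a minimal model; then \cite[4.17(1)]{IW4} shows that $\Hom_R(N,\upnu_iN)$ and $\Hom_R(N,M)$ are tilting $\Lambda$-modules of projective dimension at most one. Together with $\Lambda$ itself, these are three tilting modules containing the common ``almost complete'' summand $\Hom_R(N,N_{I^c})$, and the Riedtmann--Schofield theorem in the form \cite[6.22]{IW4} bounds the number of complements of an almost complete tilting module by two. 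Since neither $\Hom_R(N,J_i^*)$ nor $\Hom_R(N,M_i^+)$ is isomorphic to $\Hom_R(N,N_i)$ (because $\upnu_iN\ncong N$ by \ref{basic2}\eqref{basic2 3} and $M\ncong N$ by your flop argument), the two remaining complements coincide, and reflexive equivalence then gives $J_i^*\cong M_i^+$, i.e.\ $\upnu_iN\cong M$. Without this passage through tilting modules (or some equivalent counting of complements), your third paragraph is an assertion of the lemma's hardest content rather than a proof of it.
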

\begin{proof}
Denote the base of the contraction of $C_i$ by $X_{\con}$, set $M:=H^0(\cV_{X^+})$ and let $M_i$ denote the indecomposable summand of $M$ corresponding to $C_i^+$.  It is clear that $M\ncong N$. Applying \ref{KIWY} to both sides of the contraction, $\frac{M}{M_i}\cong H^0(\cV_{X_{\con}})\cong\frac{N}{N_i}$, so the module $M$ differs from $N$ only at the summand $N_i$.  Similarly, by \ref{basic2}\eqref{basic2 3} $\upnu_iN\ncong N$, and by definition of mutation, $\upnu_iN$ differs from $N$ only at the summand $N_i$.  Consequently, as $R$-modules, $\upnu_iN$ and $M$ share all summands except one, and neither is isomorphic to $N$.

But by \ref{Min model - MMA} both $\End_R(M)$ and $\End_R(N)$ are MMAs, and since $\End_R(\upnu_iN)$ is also derived equivalent to these, it too is an MMA \cite[4.16]{IW4}.  Further, $\Hom_R(N,\upnu_iN)$ and $\Hom_R(N,M)$ are tilting $\End_R(N)$-modules by \cite[4.17(1)]{IW4}, and by above as $\End_R(N)$-modules they share all summands except one. Hence as in \cite[6.22]{IW4}, a Riedtmann--Schofield type theorem implies that $\upnu_iN\cong M$.
\end{proof}

\begin{cor}\label{MMmodule under flop cor}
With the crepant setup of \ref{crepant setup}, suppose further that $d=3$ and $X$ is $\mathds{Q}$-factorial.  Then
\[
\upnu_iN\cong \left\{\begin{array}{ll} H^0(\cV_{X^+})&\mbox{if $C_i$ flops}\\H^0(\cV_{X})&\mbox{else.}  \end{array}\right.
\]
\end{cor}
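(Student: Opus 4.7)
The plan is to split into the two cases dictated by the statement: either $C_i$ flops (equivalently, by the Contraction Theorem \ref{contract on f}, $\dim_{\mathbb{C}}\Lambda_i < \infty$) or it does not (equivalently, $\dim_{\mathbb{C}}\Lambda_i = \infty$). The crepant, $\mathds{Q}$-factorial setup with $d=3$ means $f\colon X\to\Spec R$ is a minimal model, so by \ref{Min model - MMA} applied to the derived equivalence $\Uppsi_X$ from \eqref{VdBfunctor} together with \ref{flop up=down}, the algebra $\Lambda \cong \End_R(N)$ is an MMA. Since $\cO_X$ is a summand of $\cV_X$, we have $R = H^0(\cO_X) \in \add N$, so $N$ is a basic MM generator and thus \ref{basic2} applies to $N$ with respect to any indecomposable summand $N_i$.

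In the flopping case, Lemma \ref{mut=flop single curve min model}, which was just established, gives $\upnu_i N \cong H^0(\cV_{X^+})$ directly; there is nothing further to do.

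In the non-flopping case we have $\dim_{\mathbb{C}}\Lambda_i = \infty$ by \ref{contract on f}, and then \ref{basic2}\eqref{basic2 4} immediately yields $\upnu_i N \cong N = H^0(\cV_X)$.

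There is no real obstacle here: the corollary is purely a packaging of Lemma \ref{mut=flop single curve min model} with the dichotomy of \ref{basic2}\eqref{basic2 3}\eqref{basic2 4}, translated via the Contraction Theorem \ref{contract on f}. The only thing worth checking carefully is that the MM hypothesis needed to invoke \ref{basic2} is indeed available, which is what \ref{Min model - MMA} provides from the minimal model hypothesis plus the derived equivalence $\Uppsi_X$.
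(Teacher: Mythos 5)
Your proof is correct and is essentially the paper's own argument: the paper simply says the corollary "follows by combining \ref{basic2} and \ref{mut=flop single curve min model}", which is exactly your case split, with the non-flopping case handled by \ref{basic2}\eqref{basic2 4} and the flopping case by \ref{mut=flop single curve min model}. Your extra check that $N$ is an MM generator via \ref{Min model - MMA} (so that \ref{basic2} applies) is the same implicit step the paper relies on, spelled out.
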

\begin{proof}
This now follows by combining \ref{basic2} and \ref{mut=flop single curve min model}.
\end{proof}

The above allows us to verify Figure~\ref{Fig2} under restricted hypotheses.

\begin{cor}
We can run the Homological MMP in Figure~\ref{Fig2} when $d=3$ and $X$ has only $\mathds{Q}$-factorial Gorenstein terminal singularities, and we choose only irreducible curves.
\end{cor}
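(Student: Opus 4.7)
The plan is to verify that each arrow of Figure~\ref{Fig2} is justified by the theorems already in place, and to confirm that the output of one pass through the algorithm is a genuine input for the next. Since $X\to\Spec R$ is crepant projective birational with one-dimensional fibres and $X$ has only $\mathds{Q}$-factorial Gorenstein terminal singularities, we are in the crepant setup \ref{crepant setup} with $d=3$ and $f$ is a minimal model. By \ref{flop up=down} the associated algebra is $\Lambda:=\End_X(\cV_X)\cong\End_R(N)$ with $N=H^0(\cV_X)$, and by \ref{Min model - MMA} it is an MMA of $R$; this justifies the ``associate $\Lambda$'' box.

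Next, given any choice of irreducible curve $C_i$ above the origin, with corresponding indecomposable summand $N_i$ and idempotent $e_i\in\Lambda$, the algebra $\Lambda_i=\Lambda/\Lambda(1-e_i)\Lambda$ is computable directly from a presentation of $\Lambda$. By the contraction criterion \ref{contract on f}, $C_i$ is floppable (contracts to a point without contracting a divisor) if and only if $\dim_{\mathbb{C}}\Lambda_i<\infty$; this justifies the decision diamond. In the ``no'' branch the algorithm simply loops back to pick a different curve, which is consistent with \ref{MMmodule under flop cor}, since in that case $\upnu_iN\cong N$ and no change to the geometry occurs.

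In the ``yes'' branch, let $X^+\to\Spec R$ denote the flop of $C_i$. Corollary~\ref{MMmodule under flop cor} gives $\upnu_iN\cong H^0(\cV_{X^+})$ and so
\[
\upnu_i\Lambda\;=\;\End_R(\upnu_iN)\;\cong\;\End_{X^+}(\cV_{X^+}),
\]
with $\upnu_i\Lambda$ computed combinatorially from the exchange sequences of \S\ref{mut prelim}. Thus the mutated algebra is precisely the natural algebra attached to the flopped geometry in the sense of \S\ref{perverse and tilting}, and by \ref{reconstruction new} the dual graph of $X^+$ can be read off directly from the quiver of $\upnu_i\Lambda$, with no further geometric computation needed. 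This handles the ``associate mutated algebra'' and ``read off dual graph of flop'' boxes.

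Finally, to close the loop we must check that $(X^+,\upnu_i\Lambda)$ satisfies the same standing hypotheses as $(X,\Lambda)$. The flop of a $\mathds{Q}$-factorial Gorenstein terminal minimal model at an irreducible floppable curve is again a $\mathds{Q}$-factorial Gorenstein terminal minimal model with one-dimensional fibres over $\Spec R$ (standard in the three-dimensional MMP), and as just noted $\upnu_i\Lambda$ is its associated algebra; moreover $\upnu_i\Lambda$ is again an MMA of $R$ by the Iyama--Wemyss mutation theory (as used in the proof of \ref{mut=flop single curve min model}). Hence we may feed $(X^+,\upnu_i\Lambda)$ back in at the top of Figure~\ref{Fig2} and continue. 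The only conceptual obstacle is guaranteeing that the ``associate mutated algebra'' step really produces the natural algebra of the flopped geometry rather than a merely derived-equivalent noncommutative surrogate, and this is precisely the content of \ref{MMmodule under flop cor}; it is exactly this identification that makes the iteration meaningful and free of geometric calculation at each stage.
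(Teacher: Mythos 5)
Your argument is correct and is essentially the paper's own proof, which simply cites \ref{contract on f}, \ref{mut=flop single curve min model} (packaged as \ref{MMmodule under flop cor}) and \ref{reconstruction new}. The only addition you make is to spell out explicitly that the flop is again a $\mathds{Q}$-factorial Gorenstein terminal minimal model so the iteration is well-founded, a point the paper leaves implicit.
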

\begin{proof}
This now follows from \ref{contract on f}, \ref{mut=flop single curve min model} and \ref{reconstruction new}.
\end{proof}

Later in \S\ref{flops and mu section 2} we will drop the $\mathds{Q}$-factorial assumption, and also drop the restriction to single curves.

\subsection{Auslander--McKay Correspondence}\label{AM subsection 1}

Throughout this subsection we keep the crepant setup of \ref{crepant setup}, and as in the previous subsection assume further that $d=3$ and $X$ is $\mathds{Q}$-factorial.  The $R$ admitting such a setup are of course well-known to be precisely the cDV singularities \cite{Pagoda}.

\begin{defin}\label{graphs definition}
With $R$ as above, 
\begin{enumerate}
\item We define the \emph{full mutation graph} of the MM generators to have as vertices the basic MM generators (up to isomorphism of $R$-modules), where each vertex $N$ has an edge to $\upnu_IN$ provided that $\dim_{\mathbb{C}}\Lambda_I<\infty$, for $I$ running through all possible summands $N_I$ of $N$ that are not generators. 
The \emph{simple mutation graph} is defined in a similar way, but we only allow mutation at indecomposable summands.
\item We define the \emph{full flops graph} of the minimal models of $\Spec R$ to have as vertices the minimal models of $\Spec R$ (up to isomorphism of $R$-schemes), and we connect two vertices if  the corresponding minimal models are connected by a flop at some curve. The \emph{simple flops graph} is defined in a similar way, but we only connect two vertices if the corresponding minimal models differ by a flop at an irreducible curve.
\end{enumerate}
\end{defin}

The following is standard.

\begin{defin}\label{stable end defin}
For $N\in\mod R$, the \emph{stable endomorphism ring} $\uEnd_R(N)$ is defined to be the quotient of $\End_R(N)$ by the two sided ideal consisting of those morphisms $N\to N$ which factor through $\add R$.
\end{defin}

Recall from the introduction \S\ref{GIT intro} that there is a specified region $C_+$ of the GIT chamber decomposition of $\Uptheta(\End_R(N))$, and $\cM_{\rk,\upphi}(\Lambda)$ denotes the moduli space of $\upphi$-semistable $\Lambda$-modules of dimension vector $\rk$ (see \S\ref{GIT background} for more details). 

\begin{thm}\label{NCvsC min model}
With the $d=3$ crepant setup of \ref{crepant setup}, assume further that $X$ is $\mathds{Q}$-factorial. Then there exists a one-to-one correspondence
\[
\begin{array}{c}
\begin{tikzpicture}
\node (A) at (-1,0) {$\{ \mbox{basic MM $R$-module generators}\}$};
\node (B) at (6,0) {$
\{\mbox{minimal models $f_i\colon X_i\to\Spec R$} \}
$};
\draw[<->] (1.85,0) -- node [above] {$\scriptstyle $} (3,0);
\draw[|->] (1.85,-0.6) -- node [above] {$\scriptstyle F$} (3,-0.6);
\draw [<-|] (1.85,-1.1) -- node [below] {$\scriptstyle G$} (3,-1.1);
\node at (1.25,-0.6) {$N$};
\node at (4.6,-0.6) {$\cM_{\rk,\upvartheta}(\End_R(N))$} ;
\node at (0.85,-1.1) {$H^0(\cV_{X_i})$};
\node at (3.5,-1.1) {$X_i$} ;
\end{tikzpicture}
\end{array}
\]
where $\cV_{X_i}$ is the basic progenerator of $\Per(X_i,R)$, and $\upvartheta$ is any element of $C_+$.  
Elements in the set on the left-hand side are taken up to isomorphism of $R$-modules, and elements of the set on the right-hand side are taken up to isomorphism of $R$-schemes.  

Under this correspondence
\begin{enumerate}
\item\label{NCvsC min model 1} For any fixed MM generator, its non-free indecomposable summands are in one-to-one correspondence with the exceptional curves in the corresponding minimal model.
\item\label{NCvsC min model 2} For any fixed MM generator $N$, the quiver of $\uEnd_R(N)$ encodes the dual graph of the corresponding minimal model.
\item\label{NCvsC min model 3} The simple mutation graph of the MM generators coincides with the simple flops graph of the minimal models.
\end{enumerate}
In particular the number of basic MM generators is finite. 
\end{thm}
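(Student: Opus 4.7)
The plan is to construct the two maps $F$ and $G$ explicitly, verify they land in the stated targets, prove they are mutually inverse, and then derive the numbered claims (1)--(3) from the construction of $\cV_X$ together with results already established in the paper.

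First I would check well-definedness of $G$. Given a minimal model $f_i \colon X_i \to \Spec R$, the sheaf $\cV_{X_i}$ is a basic tilting bundle with $\cO_{X_i}$ as a summand, so by \ref{flop up=down} the module $N_i := H^0(\cV_{X_i})$ satisfies $\End_R(N_i) \cong \End_{X_i}(\cV_{X_i})$ and contains $R$ as a summand. By \ref{Min model - MMA} this endomorphism algebra is an MMA, so $N_i$ is a basic MM generator. For $F$, I would invoke the moduli-theoretic result \cite[5.2.5]{Joe}: for any basic MM generator $N$ and any $\upvartheta \in C_+$, the moduli space $\cM_{\rk,\upvartheta}(\End_R(N))$ is a minimal model of $\Spec R$ with its natural structure morphism to $\Spec R$, and the tautological bundle on it induces a derived equivalence with $\End_R(N)$.

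Next I would verify $F$ and $G$ are mutually inverse. For $F \circ G = \Id$, given a minimal model $X_i$ with $N_i = H^0(\cV_{X_i})$, the moduli $\cM_{\rk,\upvartheta}(\End_R(N_i))$ reproduces $X_i$ with its contraction by the universal property of the GIT construction applied to the tautological summands of $\cV_{X_i}$. For $G \circ F = \Id$, given an MM generator $N$ and setting $X := \cM_{\rk,\upvartheta}(\End_R(N))$, the tautological bundle $\cT$ on the minimal model $X$ is a basic progenerator of $\Per(X,R)$ realizing the same derived equivalence as $\cV_X$, whence uniqueness of such a progenerator yields $\cT \cong \cV_X$ and hence $H^0(\cV_X) \cong N$. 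The injectivity of $G$ modulo isomorphisms compatible with the contractions is then supplied by \ref{Per0 global general}. The main obstacle is the identification of the tautological bundle on the moduli with Van den Bergh's $\cV_X$; this is precisely the content the cited result of Karmazyn furnishes.

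Finally, the numbered claims follow quickly. Claim (1) is immediate from the decomposition $\cV_X = \cO_X \oplus \bigoplus_{i=1}^n \cN_i$ from \S\ref{perverse and tilting}, since the non-$\cO_X$ summands $\cN_i$ are in one-to-one correspondence with the exceptional curves $C_i$. Claim (2) is a direct application of \ref{reconstruction new} to the isomorphism $\End_X(\cV_X) \cong \End_R(N)$, as the arrow counts in that theorem encode the intersection data of the dual graph. Claim (3) combines \ref{MMmodule under flop cor}, which identifies $\upnu_i N$ with $H^0(\cV_{X^+})$ when $C_i$ flops and with $N$ otherwise, with \ref{basic2}\eqref{basic2 3}\eqref{basic2 4}, which matches the condition $\upnu_i N \ncong N$ that defines an edge in the simple mutation graph to the condition $\dim_\mathbb{C}\Lambda_i < \infty$ that (via \ref{contract on f}) characterizes floppability. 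The finiteness statement at the end of the theorem is then immediate from the classical finiteness of the set of minimal models of a cDV singularity.
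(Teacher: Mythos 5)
There is a genuine gap at the heart of your argument: the well-definedness of $F$ on an \emph{arbitrary} basic MM generator. The result \cite[5.2.5]{Joe} takes as input a variety with a tilting bundle (here $X$ with $\cV_X$, so $\Lambda\cong\End_X(\cV_X)$) and recovers $X$ as $\cM_{\rk,\upvartheta}(\Lambda)$ for $\upvartheta\in C_+$; it does not assert that for an abstract MM generator $N$, not yet known to come from geometry, the moduli space $\cM_{\rk,\upvartheta}(\End_R(N))$ is a minimal model, nor that its tautological bundle is Van den Bergh's progenerator. In other words, the statement you invoke for $F$ (and again in your verification of $G\circ F=\Id$, where you identify the tautological bundle with $\cV_X$) presupposes exactly the nontrivial content of the theorem, namely the surjectivity of $G$: that \emph{every} basic MM generator is of the form $H^0(\cV_Y)$ for some minimal model $Y$. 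Even granting that all MMAs are derived equivalent, a derived equivalence with some minimal model does not by itself identify $\End_R(N)$ with $\End_Y(\cV_Y)$ in the form needed to apply Karmazyn's theorem.

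The paper closes this gap by a mutation argument that your proposal omits entirely: starting from one minimal model, $N:=H^0(\cV_X)$ is an MM generator by \ref{Min model - MMA}, and by \ref{MMmodule under flop cor} mutating at any non-free indecomposable summand either returns $N$ or produces $H^0(\cV_{X^+})$ for the flop $X^+$. Iterating, and using that there are only finitely many minimal models \cite{KMa}, all connected by simple flops \cite{KollarFlops}, one obtains only finitely many MM generators by iterated mutation; the key input \cite[4.3]{IW6} then forces these to be \emph{all} the basic MM generators, each isomorphic to $H^0(\cV_Y)$ for some minimal model $Y$. This gives surjectivity of $G$; injectivity is \ref{Per0 global general} (which you do use correctly), and only then is \cite[5.2.5]{Joe} invoked to identify the inverse with $F$. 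Your treatment of parts (1)--(3) is essentially the paper's, but each of them, like the finiteness claim, also rests on first knowing $N\cong H^0(\cV_Y)$, so they inherit the same gap until the surjectivity step is supplied.
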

\begin{proof}
Pick a minimal model $X\to\Spec R$ and denote $N:=H^0(\cV_X)$, which we know to be an MM generator by \ref{Min model - MMA}.  We now mutate $N$ at all possible non-free indecomposable summands.  By \ref{MMmodule under flop cor}, the only new MM generators that this produces are the global sections from the progenerators of perverse sheaves of the possible flops.  We continue mutating these at the non-free indecomposable summands, then either we go back to the original $N$, or the only new MM generators are those arising from flops of flops.  Continuing in this way, since there are only a finite number of minimal models \cite[Main Theorem]{KMa}, which are connected by a finite sequence of simple flops (see e.g.\ \cite{KollarFlops}), by repeatedly mutating at non-free summands we recover only a finite number of MM generators.  By \cite[4.3]{IW6} this implies that they are all the MM generators, in particular there is only a finite number and each is isomorphic to $H^0(\cV_Y)$ for some minimal model $Y$. This shows that the function $G$ is surjective.  The fact that $G$ is injective is just \ref{Per0 global general}, and so $G$ is bijective.  The fact that its inverse is given by $F$ is precisely \cite[5.2.5]{Joe}, with the small caveat that \cite[5.2.5]{Joe} works with
the opposite algebra, but only since his conventions for composing morphisms are opposite to ours. \\
(1) Let $N$ be an MM generator, then since by the above $N\cong H^0(\cV_Y)$ for some minimal model $Y\to\Spec R$, the statement follows from the construction of the bundle $\cV_Y$ in \S\ref{perverse and tilting}.\\
(2) Since the quiver of $\uEnd_R(N)$ is the quiver of $\End_R(N)\cong\End_Y(\cV_Y)$ with the vertex $\star$ (corresponding to the summand $R$ of $N$) removed, it follows from \ref{reconstruction new} that the quiver of $\uEnd_R(N)$ is the double of the dual graph, together with some loops.\\
(3) This follows from \ref{MMmodule under flop cor} and the above argument.
\end{proof}

We extend the correspondence later in \S\ref{AM section 2}--\S\ref{AM isolated subsection}.

\begin{remark}
$\Spec R$ may be its own minimal model, in which case the correspondence in \ref{NCvsC min model} reduces to the statement that  $R$ is the only basic modifying generator.
\end{remark}

\begin{remark}
The proof of \ref{NCvsC min model} uses the fact that there are only a finite number of minimal models, and that they are connected by a finite sequence of simple flops.  We use these results only to simplify the exposition; it is possible to instead use the moduli tracking results of \S\ref{stab and mut section}, specifically \ref{chamber 3fold=chamber surface}, to give a purely homological proof of \ref{NCvsC min model}.
\end{remark}
 
 \begin{remark}
The bijection in \ref{NCvsC min model} extends to a bijection
\[
\begin{array}{c}
\begin{tikzpicture}
\node (A) at (-1,0) {$\{ \mbox{basic modifying objects in $\uCM R$}\}$};
\node (B) at (6.1,0) {$
\{\mbox{crepant modifications $f_i\colon X_i\to\Spec R$} \},
$};
\draw[->] (1.95,0.1) -- node [above] {$\scriptstyle F$} (2.9,0.1);
\draw [<-] (1.95,-0.1) -- node [below] {$\scriptstyle G$} (2.9,-0.1);
\end{tikzpicture}
\end{array}
\]
satisfying the obvious extensions of (1) and (2), where by crepant modification we mean that $X_i$ is obtained from a minimal model by contracting curves to points, and divisors to curves.  However, because of \ref{not true multiple}, comparing mutation and flop at arbitrary summands is not so well behaved when the $X_i$ are not minimal models.
\end{remark}
 
Sometimes the minimal models of $\Spec R$ can be smooth.  Recall from \ref{CT defin} the definition of a CT module.
\begin{cor}
With the setup as in \ref{NCvsC min model}, assume further that the minimal models of $\Spec R$ are smooth (equivalently, $R$ admits a CT module).  Then \ref{NCvsC min model} reduces to a one-to-one correspondence 
\[
\begin{array}{c}
\begin{tikzpicture}
\node (A) at (0,0) {$\{ \mbox{basic CT $R$-modules}\}$};
\node (B) at (6.25,0) {$
\{\mbox{crepant resolutions $f_i\colon X_i\to\Spec R$} \}
$};
\draw[<->] (2,0) -- node [above] {$\scriptstyle $} (3,0);
\end{tikzpicture}
\end{array}
\]
satisfying the same conditions \t{(1)--(3)}.
\end{cor}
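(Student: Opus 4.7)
The plan is to apply Theorem~\ref{NCvsC min model} directly and show that, under the smoothness hypothesis, the displayed bijection between MM generators and minimal models restricts to a bijection between basic CT $R$-modules and crepant resolutions. By \cite{VdBNCCR} the smoothness hypothesis is equivalent to the existence of a CT module, so there is something to restrict on both sides. The cleanest route is to combine \ref{NCvsC min model} with the characterisation \cite[5.9(1)]{IW4} of CT modules as precisely those NCCR-giving modules which are themselves maximal Cohen--Macaulay.

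First I would treat the forward direction. Suppose $X_i\to\Spec R$ is a crepant resolution, i.e.\ a smooth minimal model. By Theorem~\ref{NCvsC min model}, $N_i:=H^0(\cV_{X_i})$ is a basic MM generator, so by \ref{Min model - MMA} the algebra $\End_R(N_i)\in\CM R$ is an MMA. Using \ref{flop up=down} and the derived equivalence \eqref{derived equivalence}, smoothness of $X_i$ transfers through derived equivalence to give $\gl\End_R(N_i)=3<\infty$, so $\End_R(N_i)$ is in fact an NCCR. To invoke \cite[5.9(1)]{IW4} and conclude that $N_i$ is CT, I must verify that $N_i\in\CM R$. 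Since $\cV_{X_i}$ is locally free on the smooth threefold $X_i$ and $f_i$ has one-dimensional fibres with $\RDerived (f_i)_*\cO_{X_i}=\cO_R$, higher direct images of $\cV_{X_i}$ vanish, and a standard local cohomology (depth) argument on the complete local Gorenstein threefold $R$ then gives $\depth_R N_i=3$.

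For the converse, let $N$ be a basic CT $R$-module. By definition $N\in\CM R$, and by \cite[5.9(1)]{IW4} $\End_R(N)$ is an NCCR with $R\in\add N$; in particular $N$ is a basic MM generator. Applying the correspondence of \ref{NCvsC min model} produces a minimal model $X\to\Spec R$ with $N\cong H^0(\cV_X)$. Via \ref{flop up=down} and \eqref{derived equivalence} we have a derived equivalence between $\coh X$ and $\mod\End_R(N)$; finite global dimension on the algebra side forces $\Db(\coh X)=\Perf X$, so $X$ is smooth and thus a crepant resolution. Hence $G$ restricts to the claimed bijection, and $F$ restricts to its inverse by the same formula as in \ref{NCvsC min model}. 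Properties (1)--(3) are then inherited verbatim from \ref{NCvsC min model}, since no new verification is required beyond the restriction of the correspondence itself.

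The only real obstacle is verifying that $N_i=H^0(\cV_{X_i})\in\CM R$ when $X_i$ is smooth. I would handle this either via the local cohomology/depth argument sketched above, or more slickly by observing that $\cN_i$ locally free on smooth $X_i$ with one-dimensional fibres forces $\RDerived (f_i)_*\cN_i=(f_i)_*\cN_i$, and combining this with the Auslander--Buchsbaum formula applied inside the NCCR $\End_R(N_i)$ (where $\Hom_R(N_i,-)$ sends $N_i$ to a projective of the appropriate depth). Either way the conclusion is standard; the substance of the corollary is already contained in \ref{NCvsC min model} and \cite[5.9(1)]{IW4}.
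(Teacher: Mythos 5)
Your proposal is correct and, in substance, follows the same strategy as the paper: smoothness of a minimal model gives, via the tilting equivalence, finite global dimension of $\End_R(N)$, hence an NCCR, hence a CT module. The difference is in how the two sides of the bijection get identified. The paper disposes of the module side in one line by citing \cite[5.11(2)]{IW4}: as soon as $\CM R$ admits one CT module, the CT modules are \emph{precisely} the MM generators, so the left-hand set of \ref{NCvsC min model} literally equals the set of basic CT modules, while the right-hand set equals the set of crepant resolutions because all minimal models are smooth by hypothesis. You instead re-derive this equality by pushing every MM generator through the bijection $G$ of \ref{NCvsC min model} and using smoothness of the corresponding minimal model, and conversely using that an NCCR is an MMA of finite global dimension. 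That route is valid, just longer; in particular your $\Db(\coh X)=\Perf X$ smoothness argument in the converse is not really needed, since all minimal models are smooth by the standing hypothesis.

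The one step you should tighten is the claim that $N_i=H^0(\cV_{X_i})\in\CM R$. Vanishing of $\mathbf{R}^1 (f_i)_*\cV_{X_i}$ alone only yields $\depth_R N_i\geq 2$: identifying $\RGamma_\m(N_i)$ with cohomology supported on the exceptional fibre and applying formal duality, the missing vanishing $H^2_\m(N_i)=0$ amounts to vanishing of $\mathbf{R}^1(f_i)_*$ of the \emph{dual} bundle, so the ``standard local cohomology argument'' needs more input than you state. The clean fix is much simpler: $N$ is a generator, so $N\cong\Hom_R(R,N)$ is an $R$-module direct summand of $\End_R(N)$, which lies in $\CM R$ (it is an MMA by \ref{Min model - MMA}), hence $N\in\CM R$. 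With that substitution your proof is complete.
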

\begin{proof} 
If one of (equivalently, all of) the minimal models is smooth, then $\End_X(\cV_X)\cong\End_R(N)$ has finite global dimension and hence is an NCCR.  Thus $N$ is a CT module. Since $\CM R$ has a CT module, by \cite[5.11(2)]{IW4} CT modules are precisely the MM generators.
\end{proof}

\subsection{Flops and Mutation Revisited}\label{flops and mu section 2} In this subsection we use the Auslander--McKay Correspondence to finally prove \ref{flop=mut general thm} in full generality, then run the Homological MMP in Figure~\ref{Fig2} when $X$ has only Gorenstein terminal singularities. 

We first track certain objects under the mutation functor $\Upphi_I$ in \eqref{post referee}.  As notation, suppose that $M$ is a basic modifying $R$-module, where $R$ is complete local $d$-sCY.  Then for each indecomposable summand $M_j$ of $M$, denote the corresponding simple and projective $\End_R(M)$-modules by $S_j$ and $P_j$ respectively.  For an indecomposable summand $X_j$ of $\upnu_IM$, we order the indecomposable summands of $\upnu_IM$ so that either $X_j\cong M_j$ when $j\notin I$, or $X_i\cong J_i^*$ when $i\in I$. We denote the corresponding simple and projective $\End_R(\upnu_IM)$-modules by $S'_i$ and $P'_i$ respectively.

\begin{lemma}\label{track mutation}
Suppose that $R$ is a complete local $d$-sCY normal domain, and $M\in\refl R$ is a basic modifying module.  With notation as above, choose a summand $M_I$ of $M$, and consider the mutation functor $\Upphi_I$ in \eqref{post referee}. Then
\begin{enumerate}
\item\label{track mutation 1} $\Upphi_I(P_j)=P'_j$ for all $j\notin I$.
\item\label{track mutation 2} $\Upphi_I(S_i)=S'_i[-1]$ for all $i\in I$.
\end{enumerate}  
\end{lemma}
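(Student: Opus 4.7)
The plan is to compute $\RHom_\Lambda(T_I,-)$ directly on each of $P_j$ and $S_i$, using the explicit description $T_I = \Hom_R(M,M_{I^c}) \oplus C_I$ together with the projective resolution
\[
0 \to \Hom_R(M,M_I) \xrightarrow{\cdot b^*} \Hom_R(M,U_I) \to C_I \to 0
\]
of the nonprojective summand. Since $\pd_\Lambda T_I = 1$ by \ref{pd 1 for TI both sides}, we have $\Ext^k_\Lambda(T_I,-)=0$ for all $k \geq 2$, so every value of $\Upphi_I$ is automatically concentrated in cohomological degrees $0$ and $1$.

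For part (1), I would first observe that $P_j = \Hom_R(M,M_j)$ for $j \notin I$ is an indecomposable summand of the projective piece $\Hom_R(M,M_{I^c})$, and hence of the tilting module $T_I$. The tilting property forces $\Ext^k_\Lambda(T_I,P_j) \subseteq \Ext^k_\Lambda(T_I,T_I) = 0$ for $k \geq 1$, so $\Upphi_I(P_j) = \Hom_\Lambda(T_I,P_j)$ is concentrated in degree zero. The standard bijection between indecomposable summands of a tilting module and indecomposable projectives over its endomorphism ring, combined with the identification $\Gamma = \End_\Lambda(T_I) \cong \End_R(\upnu_I M)$ of \cite[6.7, 6.8]{IW4}, then matches the summand $\Hom_R(M,M_j) \subset T_I$ with the summand $M_j \subset \upnu_I M$ (using the convention $X_j = M_j$ for $j \notin I$), yielding $\Upphi_I(P_j) = P'_j$.

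For part (2), fix $i \in I$. The projective summand of $T_I$ contributes $\RHom_\Lambda(\Hom_R(M,M_{I^c}),S_i) = \bigoplus_{k\notin I}\Hom_\Lambda(P_k,S_i) = 0$, since the simple $S_i$ satisfies $e_k S_i = 0$ for all $k \neq i$. For the $C_I$ summand, apply $\Hom_\Lambda(-,S_i)$ to the projective resolution above. Because $U_I \in \add M_{I^c}$, the middle term $\Hom_\Lambda(\Hom_R(M,U_I),S_i)$ vanishes for the same reason, while $\Hom_\Lambda(\Hom_R(M,M_I),S_i) = \bigoplus_{j \in I}\Hom_\Lambda(P_j,S_i) \cong \mathbb{C}$ (only $j=i$ contributes, as $M$ is basic). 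This yields $\Hom_\Lambda(C_I,S_i)=0$ and $\Ext^1_\Lambda(C_I,S_i) \cong \mathbb{C}$ as an abelian group, so $\Upphi_I(S_i)$ is a one-dimensional $\Gamma$-module concentrated in cohomological degree $1$.

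The main obstacle is identifying this one-dimensional $\Gamma$-module as $S'_i$. The $\Gamma$-action on $\Ext^1_\Lambda(T_I,S_i)$ comes not from the resolution but from the right $\Gamma$-action on $C_I$ induced by its being a bimodule summand of $T_I$. Under the decomposition $C_I = \bigoplus_{j \in I} C_j$, the primitive idempotent $e'_k \in \Gamma$ for $k \notin I$ acts as zero on $C_I$ (it is the identity on $\Hom_R(M,M_k)$ and zero elsewhere on $T_I$), whereas $e'_j$ for $j \in I$ projects $C_I$ onto $C_j$. Repeating the resolution computation for each individual $C_j$ shows $\Ext^1_\Lambda(C_j,S_i) = 0$ whenever $j \neq i$, so only $e'_i$ acts nontrivially on $\Ext^1_\Lambda(C_I,S_i)$. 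This pins down the $\Gamma$-module structure as the one-dimensional simple $S'_i$ at vertex $i$, giving $\Upphi_I(S_i) = S'_i[-1]$.
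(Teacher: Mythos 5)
Your proposal is correct and follows essentially the same route as the paper: both compute $\RHom_\Lambda(T_I,-)$ on $P_j$ and $S_i$ using the defining exact sequence $0\to\Hom_R(M,M_I)\to\Hom_R(M,U_I)\to C_I\to 0$ (the paper works summand-by-summand with $0\to P_i\to Q_i\to C_i\to 0$), using $U_I\in\add M_{I^c}$ to kill the middle term, and then identify the resulting one-dimensional module as $S'_i$ by checking which idempotent of $\Gamma$ acts nontrivially — which is exactly the paper's computation of $e_j\RHom_\Lambda(T_I,S_i)$. Your part (1) merely spells out the standard tilting-module/projective correspondence that the paper dismisses as obvious.
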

\begin{proof}
By definition, $T_I:=(\oplus_{j\notin I} P_j)\oplus C_I$, so part (1) is obvious.  For (2),  fix $i\in I$ and consider $S_i$.  Set $P_I:=\bigoplus_{k\in I} P_k$, and note that $C_I=\bigoplus_{k\in I}C_k$, where the sequence \eqref{defin of CI} is a direct sum of exact sequences
\[
0\to P_k\to Q_k\to C_k\to 0
\]
with $P_I\notin \add Q_k$ for all $k\in I$.  Applying $\Hom_\Lambda(-,S_i)$ gives, for every $k\in I$, an exact sequence
\[
0\to \Hom_\Lambda(C_k,S_i)\to \Hom_\Lambda(Q_k,S_i)\to \Hom_\Lambda(P_k,S_i)\to \Ext^1_\Lambda(C_k,S_i)\to 0.
\]
Since $P_I\notin\add Q_k$, necessarily $\Hom_\Lambda(Q_k,S_i)=0$.  Thus by the above sequence 
\[
e_j\RHom_\Lambda(T_I,S_i)
\cong
\left\{\begin{array}{cc}
\RHom_\Lambda(P_j,S_i) &\mbox{if }j\notin I\\
\RHom_\Lambda(C_j,S_i) &\mbox{if }j\in I
\end{array}
\right.
\cong
\left\{\begin{array}{cc}
0 &\mbox{if }j\notin I\phantom{,}\\
\Hom_\Lambda(P_j,S_i)[-1] &\mbox{if }j\in I,
\end{array}
\right.
\]
and thus $e_j\RHom_\Lambda(T_I,S_i)\cong \mathbb{C}\updelta_{ij}[-1]$ follows.  From this, $\Upphi_I(S_i)\cong S'_i[-1]$.
\end{proof}

We also require the following, which does not need the crepant assumption.
\begin{lemma}\label{E in deg 0}
In the general setup of \ref{general setup}, $\Uppsi_X^{-1}\Lambda_I$ is a sheaf in degree zero, which we denote by $\cE_I$.
\end{lemma}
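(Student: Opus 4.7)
The plan is to show that $\cE_I := \Uppsi_X^{-1}\Lambda_I$ lies in the abelian subcategory $\coh X \subset \Db(\coh X)$. Since $\Lambda_I \in \mod\Lambda$, the inverse equivalence \eqref{derived equivalence} places $\cE_I$ in $\Per(X,R)$, so its cohomology is concentrated in degrees $-1$ and $0$, and the goal reduces to showing $H^{-1}(\cE_I) = 0$. The idea is to exploit the canonical surjection $\Lambda \twoheadrightarrow \Lambda_I$ of left $\Lambda$-modules together with the defining conditions of $\Per(X,R)$.

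Concretely, this surjection in $\mod\Lambda$ corresponds, under the abelian-category equivalence $\Per(X,R) \simeq \mod\Lambda$, to a short exact sequence
\[
0 \to \cK \to \cV_X \to \cE_I \to 0
\]
in $\Per(X,R)$, where we have used that $\Uppsi_X^{-1}\Lambda \cong \cV_X$ since $\cV_X$ is tilting. Viewing this as a triangle in $\Db(\coh X)$ and taking the long exact cohomology sequence for the standard t-structure, and noting that $\cV_X$ is a vector bundle while $\cK$ and $\cE_I$ are perverse sheaves (hence concentrated in degrees $-1, 0$), we obtain an exact sequence
\[
0 \to H^{-1}(\cE_I) \to H^0(\cK) \xrightarrow{\alpha} \cV_X \to H^0(\cE_I) \to 0.
\]

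By the definition of $\Per(X,R)$ we have $\RDerived f_* H^0(\cK) = 0$, i.e.\ $H^0(\cK) \in \cC_f$. Since $f$ is birational and the exceptional locus is a proper closed subset of the integral normal scheme $X$, every object of $\cC_f$ is supported on this exceptional locus and is therefore torsion, while $\cV_X$, being locally free, is torsion-free. Hence $\alpha = 0$, and the sequence collapses to $H^{-1}(\cE_I) \cong H^0(\cK)$, so in particular $H^{-1}(\cE_I) \in \cC_f$.

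The final step invokes the remaining condition in the definition of $\Per(X,R)$ applied to $\cE_I$, namely $\Hom(c, H^{-1}(\cE_I)) = 0$ for every $c \in \cC_f$. Taking $c := H^{-1}(\cE_I)$ and evaluating on the identity then forces $H^{-1}(\cE_I) = 0$, completing the argument. The main subtlety to watch is that $\cC_f$ need not be closed under subobjects, so the mere inclusion $H^{-1}(\cE_I) \hookrightarrow H^0(\cK)$ would be insufficient; it is the torsion-free versus torsion dichotomy that upgrades this inclusion to an isomorphism and thereby places $H^{-1}(\cE_I)$ inside $\cC_f$ where the perverse condition can bite.
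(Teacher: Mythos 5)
Your reduction to showing $H^{-1}(\cE_I)=0$, the four-term exact sequence $0\to H^{-1}(\cE_I)\to H^0(\cK)\xrightarrow{\alpha}\cV_X\to H^0(\cE_I)\to 0$, and your final step (applying $\Hom(c,H^{-1}(\cE_I))=0$ with $c=H^{-1}(\cE_I)$ itself) are all sound, and that last step is exactly how the paper concludes. The gap is the middle claim that $H^0(\cK)\in\cC_f$ ``by the definition of $\Per(X,R)$''. Perversity does not impose $\RDerived f_*H^0(a)=0$; it imposes only $\mathbf{R}^1f_*H^0(a)=0$ (the displayed definition in \S\ref{perverse and tilting} has a typo here --- read literally it would exclude $\cO_X$ and $\cV_X$ themselves from $\Per(X,R)$, contradicting \cite[3.5.5]{VdB1d}). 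Moreover the claim is genuinely false in your situation: your own long exact sequence gives $H^{-1}(\cK)\hookrightarrow H^{-1}(\cV_X)=0$, so $\cK=H^0(\cK)$ is a sheaf, and applying $\RDerived f_*$ to the triangle $\cK\to\cV_X\to\cE_I\to$ together with $\RDerived f_*\cE_I=0$ gives $\RDerived f_*\cK\cong\RDerived f_*\cV_X\neq 0$, since $f_*\cO_X=\cO_R$ is a summand of $f_*\cV_X$. Hence $H^0(\cK)\notin\cC_f$, it is not torsion, and $\alpha\neq 0$. Indeed, were your argument valid it would give $H^0(\cK)\cong H^{-1}(\cE_I)=0$, i.e.\ $\cK=0$, i.e.\ $[\cN_{I^c}]=0$, which is absurd --- so the argument cannot be repaired along these lines.

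What makes the lemma work is not a property of arbitrary perverse sheaves but the special feature of $\Lambda_I$ that $e\Lambda_I=0$, where $e$ is the idempotent corresponding to $\cO_X$ (the identity of $\cO_X$ factors through $\add\cN_{I^c}$, so $e\in[\cN_{I^c}]$). By the commutative diagram of \ref{KIWY}\eqref{KIWY 2}, contracting all curves to $\Spec R$, this gives $\RDerived f_*\cE_I=0$ directly. Since $\cE_I$ has cohomology only in degrees $-1,0$ and the fibres of $f$ are at most one-dimensional, the spectral sequence $\mathbf{R}^pf_*H^q(\cE_I)\Rightarrow \mathbf{R}^{p+q}f_*\cE_I$ degenerates, forcing $\RDerived f_*H^{-1}(\cE_I)=0=\RDerived f_*H^{0}(\cE_I)$, so in particular $H^{-1}(\cE_I)\in\cC_f$. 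Your final step then applies verbatim and finishes the proof; replacing your middle paragraph by this argument recovers the proof given in the paper.
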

\begin{proof}
Clearly $\Lambda_I$ is a finitely generated $\Lambda$-module, so $\Uppsi_X^{-1}\Lambda_I\in\Per(X,R)$. Further, by \ref{KIWY} $\RDerived f_*(\Uppsi_X^{-1}\Lambda_I)=0$ since $e\Lambda_I=0$, so since the spectral sequence collapses it follows that $H^{-1}(\Uppsi_X^{-1}\Lambda_I)\in\cC_f$ and $H^{0}(\Uppsi_X^{-1}\Lambda_I)\in\cC_f$.  But since $\Uppsi_X^{-1}\Lambda_I\in\Per(X,R)$, by definition $\Hom(\cC_f,H^{-1}(\Uppsi_X^{-1}\Lambda_I))=0$. Thus $\Hom(H^{-1}(\Uppsi_X^{-1}\Lambda_I),H^{-1}(\Uppsi_X^{-1}\Lambda_I))=0$ and so $H^{-1}(\Uppsi_X^{-1}\Lambda_I)=0$.  Thus  $\Uppsi_X^{-1}\Lambda_I$ is a sheaf in degree zero.
\end{proof}

The following is one of the main results, from which \ref{flop=mut general thm} will follow easily. 
\begin{prop}\label{key flops lemma}
Assume the crepant setup of \ref{crepant setup}, with $d=3$.  We have $f\colon X\to\Spec R$ and as always set $N:=H^0(\cV_{X})$, $\Lambda:=\End_R(N)$, and pick a collection of curves $\bigcup_{i\in I} C_i$ above the origin.  If $\dim_{\mathbb{C}}\Lambda_I<\infty$ and $\upnu_IN\cong H^0(\cV_{X^+})$ for some other $f^+\colon X^+\to\Spec R$ satisfying the crepant setup \ref{crepant setup}, then
\begin{enumerate}
\item $X^+$ is the flop of $X$ at the curves $\bigcup_{i\in I} C_i$.
\item The following diagram of equivalences is naturally commutative
\[
\begin{array}{c}
\begin{tikzpicture}[scale=1.2]
\node (a1) at (0,0) {$\Db(\coh X)$};
\node (a2) at (2.5,0) {$\Db(\coh X^+)$};
\node (b1) at (0,-1) {$\Db(\mod\Lambda)$};
\node (b2) at (2.5,-1) {$\Db(\mod\upnu_I\Lambda).$};
\draw[->] (a1) to node[above] {$\scriptstyle \flop$} (a2);
\draw[->] (b1) to node[above] {$\scriptstyle \Upphi_I$} (b2);
\draw[->] (a1) to node[left] {$\scriptstyle \Uppsi_X$} (b1);
\draw[->] (a2) to node[right] {$\scriptstyle \Uppsi_{X^+}$} (b2);
\end{tikzpicture}
\end{array}
\]
\end{enumerate}
\end{prop}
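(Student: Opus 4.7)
The strategy is to $\mathds{Q}$-factorialize $X$ to lift the problem to the $\mathds{Q}$-factorial single-curve setting handled by Lemma~\ref{mut=flop single curve min model} and Corollary~\ref{MMmodule under flop cor}, and then descend back to $X$.

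\smallskip

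\noindent\textbf{Part (1).} Choose a small crepant $\mathds{Q}$-factorialization $\pi\colon Y\to X$, so $Y\to\Spec R$ is a $\mathds{Q}$-factorial minimal model. Proposition~\ref{KIWY} applied to $\pi$ shows that $N = H^0(\cV_X)$ is a summand of $N' := H^0(\cV_Y)$, the extra summands corresponding to the curves contracted by $\pi$ and all lying in $N'_{I^c}$. Since the local contraction algebra at each curve is a geometric invariant of the curve, $\dim_\mathbb{C}\Lambda_I<\infty$ forces $\dim_\mathbb{C}(\End_R(N')_i)<\infty$ for each $i\in I$, and this finiteness persists through any partial flop of the remaining curves in $I$. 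Iterating Corollary~\ref{MMmodule under flop cor} one curve at a time therefore yields $\upnu_I N'\cong H^0(\cV_{Y^+})$ for a minimal model $Y^+\to\Spec R$ obtained from $Y$ by sequentially flopping the curves indexed by $I$. Mutation at $I$ fixes all summands outside $I$; in particular it fixes $N'\setminus N$, so contracting the corresponding curves on $Y^+$ produces a factorization $Y^+\to Z\to\Spec R$ with $H^0(\cV_Z)\cong \upnu_I N\cong H^0(\cV_{X^+})$. By Corollary~\ref{Per0 global general}, $Z\cong X^+$ over $\Spec R$. The flop claim is then verified through Lemma~\ref{check flop}: Cartier divisors $D_i$ on $X$ with $D_i\cdot C_j=\delta_{ij}$ pull back to $Y$, their proper transforms across $Y\dashrightarrow Y^+$ are Cartier with the required intersection numbers by the $\mathds{Q}$-factorial case, and these descend under $Y^+\to X^+$ to the Cartier divisors on $X^+$ needed to conclude that $X^+$ is the flop of $X$ at $\bigcup_{i\in I}C_i$ in the sense of Definition~\ref{flopsdefin}.

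\smallskip

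\noindent\textbf{Part (2).} Set $F:=\Uppsi_{X^+}^{-1}\circ\Upphi_I\circ\Uppsi_X$, a Fourier--Mukai equivalence. By Lemma~\ref{track mutation}, since $\cO_X$ corresponds under $\Uppsi_X$ to the projective at the distinguished vertex $\star\notin I$ and $\cO_{C_i}(-1)$ corresponds to the simple $S_i$, we obtain $F(\cO_X)\cong\cO_{X^+}$ and $F(\cO_{C_i}(-1))\cong\cO_{C^+_i}(-1)[-1]$ for every $i\in I$. Proposition~\ref{KIWY} furthermore shows that $F$ intertwines $\RDerived g_*$ and $\RDerived g^+_*$ to the common contraction $X_{\con}$ produced in (1). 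These properties characterise the inverse Bridgeland--Chen flop functor among Fourier--Mukai equivalences $\Db(\coh X)\to\Db(\coh X^+)$ up to natural isomorphism, and so $F\cong\flop$. Alternatively one first establishes this functorial identification at the level $Y\dashrightarrow Y^+$ in the $\mathds{Q}$-factorial case, and then descends along $\RDerived\pi_*$ and $\RDerived\pi^+_*$.

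\smallskip

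\noindent\textbf{Main obstacle.} The principal technical difficulty is the descent step from the $\mathds{Q}$-factorialization $Y$ back to $X$: checking that the iterated sequence of flops on $Y$ descends to a genuine flop of $X$ at $\bigcup_{i\in I}C_i$ via the divisor criterion of Lemma~\ref{check flop}, and that the natural isomorphism in (2) descends compatibly along $\RDerived\pi_*$. A subsidiary point is to confirm that the finite-dimensionality of the intermediate contraction algebras is preserved step-by-step through the iterated mutation on $Y$, reflecting the local nature of the flop operation and the fact that partially flopping some curves in $I$ cannot destroy the floppability of those remaining.
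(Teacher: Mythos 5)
Your strategy (pass to a $\mathds{Q}$-factorialization $Y\to X$, flop single curves upstairs, descend) is not the paper's, and as written it contains genuine gaps. The paper proves this proposition directly on $X$ and $X^+$: the hypothesis $\upnu_IN\cong H^0(\cV_{X^+})$ is used with \ref{KIWY} and \ref{Per0 global general} to produce the common contraction $X_{\con}$, and then \ref{check flop} is verified on $X^+$ itself. The most serious problem in your proposal is the descent of Cartierness: $X^+$ is not assumed $\mathds{Q}$-factorial, and the image of a Cartier divisor under the small contraction $Y^+\to X^+$ is in general only a Weil divisor --- the failure of exactly this is what $\mathds{Q}$-factoriality measures --- so knowing the proper transform is Cartier on the $\mathds{Q}$-factorialization gives no control on $X^+$. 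Proving that $D_i'$ is Cartier on $X^+$ is the technical heart of part (1), and the paper does it homologically: since $X_{\con}$ has isolated (Gorenstein terminal, hence hypersurface) singularities, \ref{mutmutI general} gives $\upnu_I\upnu_IN\cong N$, and \ref{proj res thm} applied to $\End_R(\upnu_IN)$ transports across $\Uppsi_{X^+}$ to an exact sequence of sheaves $0\to\cN_i^+\to\cW_i\to\cU_i\to\cN_i^+\to\cE\to 0$ with $\cW_i,\cU_i$ trivial on the contracted curves and $\cE$ filtered by the $\cO_{C_j^+}(-1)$; a depth/Auslander--Buchsbaum argument then shows $\cF_i=\Cok(\cN_i^+\to\cW_i)$ is locally free, and $\det\cF_i$ represents the proper transform, giving both Cartierness and the intersection numbers needed for \ref{check flop}. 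Nothing in your proposal substitutes for this step.

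The lifting step is also unjustified at two points. You identify the result of flopping the curves of $I$ one at a time on $Y$ with $\upnu_IN'$, but mutation at the summand $N_I$ is defined via approximations by the whole complement and is not shown (here or in the paper) to agree with a composition of single-summand mutations; and removing the extra summands coming from the $\mathds{Q}$-factorialization does not obviously commute with mutation, since minimal approximations with respect to $\add N'_{I^c}$ may differ from those with respect to $\add N_{I^c}$, so the claim $H^0(\cV_Z)\cong\upnu_IN$ needs proof. (Your finite-dimensionality assertions are partly repairable --- $\Lambda_i$ is a quotient of $\Lambda_I$ --- but ``the contraction algebra is a geometric invariant of the curve'' independent of the ambient morphism is not available.) Your part (2) is essentially the paper's argument: one checks that $\Uppsi_{X^+}^{-1}\circ\Upphi_I\circ\Uppsi_X$ is a Fourier--Mukai equivalence satisfying the three conditions, using \ref{KIWY} for compatibility with pushforward and \ref{track mutation} for $\cO_X$ and $\cO_{C_i}(-1)$; the paper then applies \ref{Toda argument} to $\Uptheta^{-1}\circ\flop$ and uses part (1) to force the resulting isomorphism to be the identity. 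Since this comparison requires part (1), the gap above propagates to part (2) as well.
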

\begin{proof}
(1) We first establish that there are morphisms
\[
\begin{tikzpicture}
\node (X) at (-1.33,0) {$X$};
\node (Xprime) at (1.33,0) {$X^+$};
\node (Xcon) at (0,-0.8) {$X_{\con}$};
\node (R) at (0,-2) {$\Spec R$};
\draw[->] (X) -- node[above right] {$\scriptstyle g$} (Xcon);
\draw[->] (Xprime) -- node[above left, pos=0.4] {$\scriptstyle g^+$} (Xcon);
\draw[->] (Xcon) -- node[right] {$\scriptstyle $} (R);
\draw[->] (X) -- node[below left] {$\scriptstyle f$} (R);
\draw[->] (Xprime) -- node[below right] {$\scriptstyle f^+$} (R);
\end{tikzpicture}
\]
to a common $X_{\con}$.  We define $X_{\con}$ to be the base space of the contraction of the curves $\bigcup_{i\in I}C_i$ in $X$.  By \ref{KIWY}, $\End_R(N_{I^c})$ is derived equivalent to $X_{\con}$, with $H^0(\cV_{X_{\con}})\cong N_{I^c}$.  

On the other hand, since $\End_R(\upnu_IN)$ is derived equivalent to $X^+$ via $H^0(\cV_{X^+})\cong \upnu_IN$, the summands of $\upnu_IN$ correspond to exceptional curves.  If we contract all the curves corresponding to the summand $J_I^*$, we obtain $X^+_{\con}$ say.  But again by \ref{KIWY} $\End_R(\frac{\upnu_IN}{J_I^*})= \End_R(N_{I^c})$ is derived equivalent to  $X^+_{\con}$ with $H^0(\cV_{X^+_{\con}})\cong N_{I^c}$, so by \ref{Per0 global general} $X_{\con}\cong X^+_{\con}$ and we can suppose that we are in the situation of the diagram above.  As notation, we denote $C_i^+$ to be the curve in $X^+$ corresponding to the summand $J_i^*$ of $\upnu_IN$.

We next claim that $g^+\colon X^+\to X_{\con}$ is the flop of $g$, and to do this we use \ref{check flop}.  First, $g^+\colon X^+\to X_{\con}$ does not contract a divisor to a curve, since $\Lambda_I\cong (\upnu_I\Lambda)_I$ and so $\dim_{\mathbb{C}}(\upnu_I\Lambda)_I<\infty$ by \cite[6.20]{IW4}.  Now with the notation as in \S\ref{perverse and tilting}, we let $D_i$ in $X$ be the Cartier divisor cutting exactly the curve $C_i$, and $D^+_i$ be the Cartier divisor in $X^+$ cutting exactly the curve $C_i^+$.  Since $-(-D_i)$ is $g$-ample, we let $D_i'$ denote the proper transform of $-D_i$.   In what follows, we will use the notation $[-]_X$ to denote something viewed in the class group of $X$.  Since $g$ and $g^+$ give reflexive equivalences, we will also abuse notation and for example refer to the divisor $D_i$ on $X_{\con}$, and on $X^+$.

We next claim that $D_i'$ is Cartier.  Since $X_{\con}$ has only Gorenstein terminal singularities, which locally are hypersurfaces, by \ref{mutmutI general} $\upnu_I\upnu_IN\cong N$, and further recall $\upnu_IN$ has summands $N_i$ ($i\notin I$) and $J_i^*\cong K_i$ ($i\in I$).  Further, by \ref{proj res thm} applied to $\End_R(\upnu_IN)$ there is an exact sequence
\begin{align*}
0\to \Hom_R(\upnu_IN,K_i)\to \Hom_R(\upnu_IN,V_i)&\to \Hom_R(\upnu_IN,U_i)\to\\
&\to \Hom_R(\upnu_IN,K_i)\to C\to 0,
\end{align*}
where $C$ has a finite filtration by the simple $\upnu_I\Lambda$-modules $S'_j$ ($j\in I$). Across the equivalence, this gives an exact sequence
\begin{eqnarray}
0\to \cN_i^+\xrightarrow{\updelta_i} \cW_i\xrightarrow{\upvarepsilon_i}\cU_i\to  \cN_i^+\to\cE\to 0\label{splice Per actually sheaves}
\end{eqnarray}
in $\Per(X^+,R)$ for some $g^+$-trivial bundles $\cW_i$ and $\cU_i$, and by \ref{simple across Db} and \ref{E in deg 0} $\cE$ is a sheaf with a finite filtration with factors from $\cO_{C_j}(-1)$ ($j\in I$).   Since all terms are sheaves, by splicing, considering the associated triangles and then taking cohomology, it follows that \eqref{splice Per actually sheaves} is an exact sequence in $\coh X^+$.  Let $\cF_i:=\Cok\updelta_i$, then for any closed point $x\in X^+\backslash C$, $\cE_x=0$, and so certainly $(\cF_i)_x$ is free.  Further, if $x\in C$ then \eqref{splice Per actually sheaves} localises to a finite projective resolution of $\cE_x$.  Since the $\cO_{C_j}(-1)$ are Cohen--Macaulay, $\depth_{\cO_{X^+,x}}\cE_x=1$, so by Auslander--Buchsbaum $\pd_{\cO_{X^+,x}}\cE_x=2$ and thus $(\cF_i)_x$ is free.  This shows that $\cF_i$ is a locally free sheaf.

Denote $\cG_i:=\Cok\upvarepsilon_i$.  
Since $\mathbf{R}^1 g^+_*\cU_i=0$ as in the proof of \ref{KIWY},  it follows that $\mathbf{R}^1 g^+_*\cG_i=0$.  Then since $\RDerived g^+_*\cE=0$, it then follows that $\mathbf{R}^1 g^+_*\cN_i^+=0$. Thus $\RDerived g^+_*\cN_i^+=g^+_*\cN_i^+$, and again as in the proof of \ref{KIWY}, $\RDerived g^+_*\cW_i=g^+_*\cW_i$.

As a consequence, $\RDerived g^+_*\cF_i=g^+_*\cF_i$ and there is an exact sequence
\begin{eqnarray}
0\to g^+_*\cN_i^+\to g^+_*\cW_i\to g^+_*\cF_i\to 0 \label{pushdown D ses}
\end{eqnarray}
on $X_{\con}$.  Across the equivalence with $\End_R(N_{I^c})$, by \ref{KIWY} this corresponds to a triangle
\[
\Hom_R(N_{I^c},K_i)\xrightarrow{\cdot c_i} \Hom_R(N_{I^c},V_i)\to\Uppsi_{X_{\con}}(g^+_*\cF_i)\to
\] 
But by construction 
\[
0\to \Hom_R(N_{I^c},K_i)\xrightarrow{\cdot c_i} \Hom_R(N_{I^c},V_i)\to\Hom_R(N_{I^c},N_i)\to 0
\]
is exact, and by \ref{KIWY} $g_*\cN_i$ corresponds under the equivalence to $\Hom_R(N_{I^c},N_i)$.   It follows that $g_*\cN_i\cong g^+_*\cF_i$, and so by reflexive equivalence $D_i'$, the proper transform of $-D_i$, is Cartier and is represented by $[\det\cF_i]_{X^+}$. Using the exact sequence
\[
0\to \cN_i^+\to \cW_i\to \cF_i \to 0
\]
it follows that
\[
[D_i']_{X^+}\cdot C^+_j=[\det\cF_i]_{X^+}\cdot C^+_j=-[\det\cN_i^+]_{X^+}\cdot C^+_j=\delta_{ij}
\]
for all $j\in I$, since $\cW_i$ is $g^+$-trivial.  Hence by \ref{check flop} $g^+\colon X^+\to X_{\con}$ is the flop of $g$.\\
(2) Denote $\Uptheta:=\Uppsi_{X^+}^{-1}\circ\Upphi_I\circ \Uppsi_X\colon \Db(\coh X)\to\Db(\coh X^+)$, then it is well known that this is a Fourier--Mukai functor \cite[6.16]{DW1}, and being the composition of equivalences it is also an equivalence.  Note that
\begin{enumerate}
\item $\RDerived g^+_*\circ F\cong \RDerived g_*$,
\item $F(\cO_X)\cong \cO_{X^+}$,
\item $F(\cO_{C_i}(-1))\cong \cO_{C^+_i}(-1)[-1]$ for all $i\in I$,
\end{enumerate}
when $F$ is either $\flop$ or $\Uptheta$.  For $\flop$, the inverse  of the Bridgeland--Chen flop functor, this is well--known; see e.g.\ \cite[7.16]{DW1} or \cite[Appendix B]{TodaGV}.  For the functor $\Uptheta$, denoting $\Gamma:=\End_R(N_{I^c})$, property (1) follows from the commutative diagram
\[
\begin{array}{c}
\begin{tikzpicture}
\node (a1) at (0,0) {$\Db(\coh X)$};
\node (b1) at (0,-1.5) {$\Db(\coh X_{\con})$};
\node (a2) at (3,0) {$\Db(\mod\Lambda)$};
\node (b2) at (3,-1.5) {$\Db(\mod \Gamma)$};
\node (a3) at (6,0) {$\Db(\mod \upnu_I\Lambda)$};
\node (b3) at (6,-1.5) {$\Db(\mod \Gamma)$};
\node (a4) at (9,0) {$\Db(\coh X^+)$};
\node (b4) at (9,-1.5) {$\Db(\coh X_{\con})$};
\draw[->] (a1) -- node[above] {$\scriptstyle\Uppsi_X$}  (a2);
\draw[->] (a2) -- node[above] {$\scriptstyle\Upphi_I$}  (a3);
\draw[->] (a3) -- node[above] {$\scriptstyle\Uppsi_{X^+}^{-1}$}  (a4);
\draw[->] (b1) -- node[above] {$\scriptstyle\Uppsi_{X_{\con}}$}  (b2);
\draw[->] (b2) -- node[above] {$\scriptstyle\Id$}  (b3);
\draw[->] (b3) -- node[above] {$\scriptstyle\Uppsi_{X_{\con}}^{-1}$}  (b4);
\draw[->] (a1) to node[left] {$\scriptstyle \RDerived g_*$} (b1);
\draw[->] (a2) to node[left] {$\scriptstyle e_{I^c}(-)$} (b2);
\draw[->] (a3) to node[right] {$\scriptstyle e^+_{I^c}(-)$} (b3);
\draw[->] (a4) to node[right] {$\scriptstyle \RDerived g^+_*$} (b4);
\end{tikzpicture}
\end{array}
\]
where the two outer squares commute by \ref{KIWY} and the commutativity of the inner square is obvious.  Property (2) follows from \ref{track mutation}\eqref{track mutation 1}, and property (3) from \ref{track mutation}\eqref{track mutation 2}. Hence the functor $\Uptheta^{-1}\circ\flop$ satisfies the conditions (1)--(3) of \ref{Toda argument}, so $\Uptheta^{-1}\circ\flop\cong\upphi_*$ where $\upphi\colon X\to X$ is an isomorphism compatible with the contraction.  Since $\upphi$ is necessarily the identity away from the flopping locus,  $\upphi=\Id$, so $\Uptheta\cong\flop$.
\end{proof}

Thus to prove \ref{flop=mut general thm}, by \ref{key flops lemma} we just need to establish that $\upnu_IN\cong H^0(\cV_{X^+})$ for some $X^+\to\Spec R$.  The trick in \ref{mut=flop single curve min model} in the minimal model case with $I=\{i\}$ was to use Riedtmann--Schofield.   To work in full generality requires another standard technique from representation theory, namely the Bongartz completion.

\begin{proof}[Proof of {\ref{flop=mut general thm}}]
Since $R$ admits an MM module, by Bongartz completion we may find $F\in\refl R$ such that $\End_R(\upnu_IN\oplus F)$ is an MMA \cite[4.18]{IW4}. Since $\upnu_IN$ is a generator, necessarily $F\in\CM R$.  By the Auslander--McKay Correspondence \ref{NCvsC min model} $\upnu_IN\oplus F$ is one of the finite number of MM generators, and further $\upnu_IN\oplus F=H^0(\cV_{Y})$ for some minimal model $Y$, where the non-free summands of $\upnu_IN\oplus F$ correspond to the exceptional curves for $Y\to \Spec R$.  Contracting all the curves in $Y$ that correspond to the summand $F$, as in \eqref{contraction f and g} we factorise $Y\to\Spec R$ as $Y\to X^+\to\Spec R$ for some $X^+$.  By \ref{KIWY} $\upnu_IN\cong H^0(\cV_{X^+})$, so parts \eqref{flop=mut general thm 1} and \eqref{flop=mut general thm 2} both follow from \ref{key flops lemma}.  
\end{proof}

\begin{cor}\label{run MMP general}
We can run the Homological MMP in Figure~\ref{Fig2} when $X$ has only Gorenstein terminal singularities, for arbitrary subsets of curves.
\end{cor}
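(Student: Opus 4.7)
The plan is to verify that each box and decision of Figure~\ref{Fig2} makes sense, and is executable, in the stated generality. The loop has three non-trivial ingredients: contraction detection, construction of the flop via mutation, and recovery of combinatorial data for the next iteration. All three have just been put in place, so the proof is essentially an assembly.

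First I would handle the detection step. Starting from any crepant $f\colon X\to\Spec R$ with $X$ having only Gorenstein terminal singularities, we form $\Lambda:=\End_X(\cV_X)\cong\End_R(N)$ by \ref{flop up=down}, which brings us into the setting of \S\ref{section 3.2}. For any subset $I$ of exceptional curves, \ref{contract on f} then gives the global test: $\bigcup_{i\in I}C_i$ contracts to a point without contracting a divisor if and only if $\dim_{\mathbb{C}}\Lambda_I<\infty$. Crucially, this is purely in terms of $\Lambda$ and the idempotents $e_i$, with no need to pass to a formal neighbourhood of the contracted point. Note that this step already works for arbitrary $I$.

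Next I would handle the flop step. When $\dim_{\mathbb{C}}\Lambda_I<\infty$, Theorem~\ref{flop=mut general thm}, which has just been established in full generality (no $\mathds{Q}$-factorial or $|I|=1$ restriction), simultaneously produces the flop $f^+\colon X^+\to\Spec R$, identifies $\upnu_IN\cong H^0(\cV_{X^+})$, and gives a natural isomorphism $\Upphi_I\cong\Uppsi_{X^+}\circ\flop\circ\Uppsi_X^{-1}$. Since a flop of a three-fold with only Gorenstein terminal singularities again has only Gorenstein terminal singularities, $X^+$ once more fits the crepant setup of \ref{crepant setup}, and $\upnu_I\Lambda\cong\End_R(\upnu_IN)\cong\End_{X^+}(\cV_{X^+})$ by \ref{flop up=down}; thus $\upnu_I\Lambda$ is precisely the correct input algebra for the next pass through the loop.

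Finally I would handle the recovery step. In order to iterate without computing $X^+$ in coordinates, we need to extract the dual graph and loop data of $X^+$ directly from $\upnu_I\Lambda$. This is exactly what \ref{reconstruction new} provides when applied to $\upnu_I\Lambda$ in place of $\Lambda$: the quiver of $(\upnu_I\Lambda)^{\op}$ encodes the number of exceptional curves in $X^+$, their incidence pattern, and their normal-bundle types when $X^+$ is smooth. Combined with the preceding two steps, this closes the loop in Figure~\ref{Fig2} and shows that every box is executable in the claimed generality. The substantive obstacle, namely lifting the mutation-equals-flop statement from the $\mathds{Q}$-factorial, single-curve setting of \ref{mut=flop single curve min model} to arbitrary Gorenstein terminal $X$ and arbitrary $I$, was already overcome by the Bongartz-completion argument feeding into \ref{key flops lemma} and \ref{flop=mut general thm}; with that in hand, nothing further is required for the corollary.
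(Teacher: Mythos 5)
Your proposal is correct and follows the paper's own argument exactly: the proof in the text is the one-line assembly of \ref{contract on f} (detection of contractibility via $\dim_{\mathbb C}\Lambda_I$), \ref{flop=mut general thm} (mutation realises the flop and identifies $\upnu_IN\cong H^0(\cV_{X^+})$, so $\upnu_I\Lambda$ is the new input), and \ref{reconstruction new} (reading off the dual graph from the mutated quiver). Your additional remarks, e.g.\ that $X^+$ again satisfies the crepant setup and that $\upnu_I\Lambda\cong\End_{X^+}(\cV_{X^+})$ by \ref{flop up=down}, simply make explicit what the paper leaves implicit.
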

\begin{proof}
This now follows from \ref{contract on f}, \ref{flop=mut general thm} and \ref{reconstruction new}.
\end{proof}

\begin{cor}\label{mut moduli gives flop text}
With the crepant setup $X\to\Spec R$ with $d=3$, choose a subset of curves $\bigcup_{i\in I}C_i$ and suppose that $\dim_\mathbb{C}\Lambda_I<\infty$, so that $\bigcup_{i\in I}C_i$ flops. Then 
\begin{enumerate}
\item\label{mut moduli gives flop text 1} $\cM_{\rk,\upvartheta}(\Lambda)\cong X$ for all $\upvartheta\in C_+(\Lambda)$.
\item\label{mut moduli gives flop text 2} $\cM_{\rk,\upvartheta}(\upnu_I\Lambda)\cong X^+$ for all $\upvartheta\in C_+(\upnu_I\Lambda)$.
\end{enumerate}
\end{cor}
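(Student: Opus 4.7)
The plan is to derive both parts from Theorem~\ref{flop=mut general thm} combined with the moduli-theoretic identification $X \cong \cM_{\rk,\upvartheta}(\End_R(H^0(\cV_X)))$ due to Karmazyn \cite[5.2.5]{Joe}, which was already the key input for the map $F$ in Theorem~\ref{NCvsC min model}. Neither part requires any genuinely new construction; the work lies entirely in invoking the right black boxes in the right order.

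First I would dispatch part~\eqref{mut moduli gives flop text 1} by applying \cite[5.2.5]{Joe} directly. With $N := H^0(\cV_X)$, Lemma~\ref{flop up=down} identifies $\Lambda = \End_X(\cV_X) \cong \End_R(N)$, and the dimension vector $\rk$ is recorded by the ranks of the indecomposable summands of $N$ (with the $R$-summand labelled $0$ and the remaining summands labelled by the curves $C_i$). The cited result then furnishes the isomorphism $\cM_{\rk,\upvartheta}(\Lambda) \cong X$ for every $\upvartheta \in C_+(\Lambda)$, as required.

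For part~\eqref{mut moduli gives flop text 2}, the plan is to reduce to part~\eqref{mut moduli gives flop text 1} applied to $X^+$ in place of $X$. Since $\dim_\mathbb{C}\Lambda_I < \infty$, Theorem~\ref{contract on f} produces the flop $X^+$, and standard MMP considerations ensure that $X^+$ again has Gorenstein terminal singularities, so that $f^+ \colon X^+ \to \Spec R$ fits the crepant setup~\ref{crepant setup}. By Theorem~\ref{flop=mut general thm}\eqref{flop=mut general thm 1}, $\upnu_I N \cong H^0(\cV_{X^+})$, and thus $\upnu_I\Lambda = \End_R(\upnu_I N) \cong \End_{X^+}(\cV_{X^+})$ via Lemma~\ref{flop up=down}. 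The dimension vector $\rk$ for $\upnu_I\Lambda$ is, by definition, the tuple of ranks of the summands of $\upnu_I N$, matching the rank data of $\cV_{X^+}$, so a second appeal to \cite[5.2.5]{Joe}, now with input $f^+$, yields $\cM_{\rk,\upvartheta}(\upnu_I\Lambda) \cong X^+$ for every $\upvartheta \in C_+(\upnu_I\Lambda)$. The only step that could have been hard is identifying the mutated algebra with the Van den Bergh algebra of the flop; but this is exactly Theorem~\ref{flop=mut general thm}, which is already in hand.
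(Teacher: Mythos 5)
Your argument is exactly the paper's proof: part (1) is an immediate application of \cite[5.2.5]{Joe} to $X$, and part (2) follows by using \ref{flop=mut general thm} to identify $\upnu_I\Lambda$ with the algebra of the flop and then applying \cite[5.2.5]{Joe} to $X^+$. The extra details you spell out (that $X^+$ again satisfies the crepant setup, and that $\rk$ matches the rank data of $\cV_{X^+}$) are correct and implicit in the paper's two-line proof.
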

\begin{proof}
Part (1) follows immediately from \cite[5.2.5]{Joe} applied to $X$.  By \ref{flop=mut general thm}, part (2) follows  from \cite[5.2.5]{Joe} applied to $X^+$.
\end{proof}

The following extends \ref{MMmodule under flop cor} by dropping the $\mathds{Q}$-factorial assumption and considering multiple curves, but now the statement is a little more subtle.

\begin{cor}\label{MMmodule under flop cor 2}
With the crepant setup of \ref{crepant setup}, suppose further that $d=3$.  Set $N:=H^0(\cV_X)$, and pick a subset of curves $\bigcup_{i\in I}C_i$.  Then
\begin{enumerate}
\item\label{MMmodule under flop cor 2 A} If $\bigcup_{i\in I}C_i$ flops (equivalently, by \ref{contract on f}, $\dim_{\mathbb{C}}\Lambda_I<\infty$), then $\upnu_IN\cong  H^0(\cV_{X^+})$.
\item\label{MMmodule under flop cor 2 B} If $I=\{i\}$, $\pd_\Lambda\Lambda_i<\infty$ and $C_i$ does not flop (equivalently,  $\dim_{\mathbb{C}}\Lambda_i=\infty$), then $\upnu_IN\cong  N$.
\end{enumerate}
\end{cor}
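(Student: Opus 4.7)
Part \eqref{MMmodule under flop cor 2 A} is immediate from \ref{flop=mut general thm}\eqref{flop=mut general thm 1}, which identifies $\upnu_IN$ with $H^0(\cV_{X^+})$ whenever the curves in $I$ flop.

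For part \eqref{MMmodule under flop cor 2 B}, the plan is to bootstrap to the MM case using Bongartz completion, then apply \ref{basic2}\eqref{basic2 4}, and finally descend. By Bongartz completion \cite[4.18]{IW4}, choose $F$ so that $M:=N\oplus F$ is a basic MM generator and set $\Gamma:=\End_R(M)$; let $e\in\Gamma$ be the idempotent projecting onto $N$, so that $\Lambda\cong e\Gamma e$. By the Auslander--McKay correspondence \ref{NCvsC min model}, $M\cong H^0(\cV_Y)$ for a minimal model $Y\to\Spec R$, and by \ref{KIWY} together with \ref{Per0 global general} this factorises as $Y\xrightarrow{h}X\to\Spec R$, with $h$ contracting exactly those exceptional curves of $Y$ that correspond to summands of $F$. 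The summand $N_i\subset M$ thus corresponds to an exceptional curve $\tilde C_i\subset Y$ which maps birationally onto $C_i\subset X$ under $h$. The geometric input is that $\tilde C_i$ does not flop in $Y$: since $C_i$ does not flop, the single-curve contraction $X\to X_\con$ of $C_i$ collapses an irreducible divisor $E\subset X$, and the strict transform $\widetilde E\subset Y$ is contracted by the composition $Y\xrightarrow{h}X\to X_\con$; by the universal property of the single-curve contraction $Y\to Y'_\con$ of $\tilde C_i$ alone, this factors as $Y\to Y'_\con\to X_\con$. Hence $Y\to Y'_\con$ is itself divisorial, so by \ref{contract on f} applied to $Y$ we have $\dim_{\mathbb{C}}\Gamma_i=\infty$, and then \ref{basic2}\eqref{basic2 4} yields $\upnu_iM\cong M$, equivalently $J_i^{M,*}\cong N_i$ in the minimal right $\add(M_{I^c})^*$-approximation of $N_i^*$.

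It remains to descend from $\upnu_iM\cong M$ to $\upnu_iN\cong N$, and this is where the hypothesis $\pd_\Lambda\Lambda_i<\infty$ enters. The plan is to compare the tilting $\Gamma$-module $T_i^M$ with the tilting $\Lambda$-module $T_i^N$ arising from the mutations of $M$ and $N$ respectively: since $\upnu_iM\cong M$ forces $\End_\Gamma(T_i^M)\cong\Gamma$, the idempotent truncation satisfies $\End_\Lambda(eT_i^M)\cong e\Gamma e\cong\Lambda$, so identifying $eT_i^M$ with $T_i^N$ as tilting $\Lambda$-modules would give $\upnu_i\Lambda\cong\Lambda$ and hence $\upnu_iN\cong N$. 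The finiteness of $\pd_\Lambda\Lambda_i$ should force the $F$-contributions in the $M$-exchange sequence to be redundant, so that the $N$-part of the $M$-mutation coincides with the mutation of $N$ at $N_i$ taken directly. I expect this final identification to be the main obstacle: it requires careful homological bookkeeping combining the finite projective resolution of $\Lambda_i$ with the explicit structure of $T_i^M$ coming from $\upnu_iM\cong M$, in order to rule out any extra components appearing in $eT_i^M$ that are absent in $T_i^N$.
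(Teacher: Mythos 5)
Part \eqref{MMmodule under flop cor 2 A} of your proposal is exactly the paper's argument. For part \eqref{MMmodule under flop cor 2 B}, however, your route (Bongartz completion to a minimal model $Y\to X\to\Spec R$, show the strict transform $\tilde C_i$ does not flop in $Y$, apply \ref{basic2}\eqref{basic2 4}, then descend) contains a genuine gap at the geometric step. From the factorisation $Y\to Y'_{\con}\to X_{\con}$ and the fact that $\widetilde E$ is contracted by the composite you conclude that $Y\to Y'_{\con}$ contracts a divisor; this is a non sequitur, since $\widetilde E$ may map finitely onto a divisor in $Y'_{\con}$ which is only contracted by the second morphism $Y'_{\con}\to X_{\con}$. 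The phenomenon you are implicitly ruling out really occurs: divisors can appear when curves are contracted \emph{jointly} even though no single curve is divisorial. Concretely, in \ref{Z2Z2example} let $X$ be the contraction of the single curve $C_2$ from the minimal model $Y$ (a conifold point, so $X$ still satisfies \ref{crepant setup}); contracting the image $\bar C_1$ of $C_1$ in $X$ is the same as contracting $C_1\cup C_2$ from $Y$, which contracts a divisor since $\dim_{\mathbb{C}}\Lambda_{\{1,2\}}=\infty$, so $\bar C_1$ does not flop in $X$ -- yet its strict transform $C_1$ does flop in $Y$. Your argument for this step never invokes the hypothesis $\pd_\Lambda\Lambda_i<\infty$, and \ref{not true multiple} makes clear that the conclusion of the corollary genuinely fails without it, so an argument that does not use it somewhere essential cannot be repaired by bookkeeping alone.

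Even granting $\upnu_iM\cong M$, the descent to $\upnu_iN\cong N$ is left open in your write-up, and it is not routine: the two mutations are defined by approximations in different subcategories ($\add M_{I^c}$ versus $\add N_{I^c}$), and the claimed isomorphism $\End_\Lambda(eT^M_i)\cong e\End_\Gamma(T^M_i)e$ does not hold for idempotent truncations in general. The paper's proof avoids the detour entirely and is where the hypothesis does its work: since $I=\{i\}$ the algebra $\Lambda_i$ is local, so $\pd_\Lambda\Lambda_i<\infty$ together with Ramras \cite[2.15]{Ramras} gives $\depth_R\Lambda_i=\dim_R\Lambda_i=\id_{\Lambda_i}\Lambda_i$; as $\dim_{\mathbb{C}}\Lambda_i=\infty$ and $\Lambda_i$ is supported on the (at most one-dimensional) contracted locus, $\dim_R\Lambda_i=1=d-2$, whence $\pd_\Lambda\Lambda_i=2$ by Auslander--Buchsbaum, and then $\upnu_iN\cong N$ follows from the argument of \cite[6.23(1)]{IW4}, i.e.\ the equivalences recorded in \ref{cor to pd thm 1}. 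I would recommend replacing your part \eqref{MMmodule under flop cor 2 B} argument with this direct homological one, which needs no $\mathds{Q}$-factorial reduction at all.
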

\begin{proof}
Part (1) is just \ref{flop=mut general thm}. For part (2), since $\Lambda_i$ is local and has finite projective dimension, by \cite[2.15]{Ramras} $\depth_R\Lambda_i=\dim_R\Lambda_i=\id_{\Lambda_i}\Lambda_i$.  The result follows using the argument of \cite[6.23(1)]{IW4}.  
\end{proof}

\begin{remark}\label{not true multiple}
The statement in \ref{MMmodule under flop cor 2}\eqref{MMmodule under flop cor 2 B} is not true for multiple curves, indeed the hypothesis in \ref{MMmodule under flop cor 2}\eqref{MMmodule under flop cor 2 B} cannot be weakened.  First, if $I\neq\{i\}$ then $\Lambda_i$ is not local and there are examples that satisfy $\upnu_IN\ncong N$ even when $\pd_\Lambda\Lambda_I<\infty$ and $\dim_{\mathbb{C}}\Lambda_I=\infty$.  Second, if $I=\{i\}$ and $\pd_\Lambda\Lambda_I=\infty$, there are examples that satisfy $\dim_{\mathbb{C}}\Lambda_i=\infty$ but $\upnu_IN\ncong N$.

There are two separate problems here, namely in general $\Lambda_I$ need not be perfect, and it need not be Cohen--Macaulay.  Both cause independent technical difficulties, and this will also be evident in \S\ref{stab and mut section}.  See also \ref{B1}.
\end{remark}

One further corollary of this section is that both commutative and noncommutative deformations of curves are preserved under flop.

\begin{cor}
With the crepant setup of \ref{crepant setup}, and $d=3$, pick a  subset $\bigcup_{i\in I}C_i$ of curves, and suppose that $\bigcup_{i\in I}C_i$ flops.  Then
\begin{enumerate}
\item The noncommutative deformation functor of $\bigcup_{i\in I}C_i$ is represented by the same ring as the noncommutative deformation functor of $\bigcup_{i\in I}C^+_i$. 
\item The statement in (1) also holds for commutative deformations.
\end{enumerate}
\end{cor}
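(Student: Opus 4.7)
The plan is to show that both deformation functors are prorepresented by the same algebra, namely $\Lambda_I$, and deduce the corollary by uniqueness of prorepresenting objects. The noncommutative case is the main point; the commutative case then follows formally by abelianization.

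First I would apply Theorem \ref{contract on f}\eqref{contract on f 1} directly to $f\colon X\to\Spec R$ to see that the noncommutative deformation functor of the collection $\{\cO_{C_i}(-1)\mid i\in I\}$ is prorepresented by $\Lambda_I=\Lambda/[N_{I^c}]$. Next, by \ref{flop=mut general thm}\eqref{flop=mut general thm 1} the flop $f^+\colon X^+\to\Spec R$ exists with $\upnu_I N\cong H^0(\cV_{X^+})$ and $\upnu_I\Lambda\cong\End_{X^+}(\cV_{X^+})$; moreover, under $\Uppsi_{X^+}$ the simple $\upnu_I\Lambda$-modules attached to the summands $J_i^*$ ($i\in I$) of $\upnu_I N$ correspond to $\cO_{C_i^+}(-1)$. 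Hence applying Theorem \ref{contract on f}\eqref{contract on f 1} on the flop side shows that the noncommutative deformation functor of $\{\cO_{C_i^+}(-1)\mid i\in I\}$ is prorepresented by $(\upnu_I\Lambda)_I$.

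The remaining ingredient, which was already invoked in the proof of \ref{key flops lemma}, is the algebraic identification $\Lambda_I\cong (\upnu_I\Lambda)_I$ from \cite[6.20]{IW4}. Intuitively this is reasonable: the common summand $N_{I^c}$ is fixed by mutation, and factoring out all morphisms through $\add N_{I^c}$ kills precisely the part of $\Lambda$ and $\upnu_I\Lambda$ where they differ. Combining the three steps gives part (1).

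For part (2), observe that $\cDef_{\cS}$ is by definition the restriction of $\Def_{\cS}$ to $\cart_{|I|}$. Consequently, if $\Def_{\cS}$ is prorepresented by an algebra $A\in\art_{|I|}^{\wedge}$, then $\cDef_{\cS}$ is prorepresented by the abelianization $A^{\ab}$. Applying this observation on both sides and using the isomorphism $\Lambda_I\cong (\upnu_I\Lambda)_I$ gives $\Lambda_I^{\ab}\cong (\upnu_I\Lambda)_I^{\ab}$, which proves (2). I do not expect any serious obstacle here: the entire argument is a formal consequence of results already assembled in the paper, with the only nontrivial input being the cited isomorphism $\Lambda_I\cong (\upnu_I\Lambda)_I$.
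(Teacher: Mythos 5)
Your proposal is correct and follows essentially the same route as the paper: \ref{contract on f} gives prorepresentability by $\Lambda_I$ on the $X$ side, \ref{contract on f} combined with \ref{flop=mut general thm} gives $(\upnu_I\Lambda)_I$ on the $X^+$ side, the isomorphism $\Lambda_I\cong(\upnu_I\Lambda)_I$ is \cite[6.20]{IW4}, and part (2) follows by abelianization. No gaps.
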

\begin{proof}
By \ref{contract on f}, since $\bigcup_{i\in I}C_i$ flops, $\dim_{\mathbb{C}}\Lambda_I<\infty$ and the noncommutative deformations of $\bigcup_{i\in I}C_i$ are represented by $\Lambda_I$.  By \ref{contract on f} and \ref{flop=mut general thm}, the noncommutative deformations of $\bigcup_{i\in I}C^+_i$ are represented by $(\upnu_I\Lambda)_I$.  By \cite[6.20]{IW4} $\Lambda_I\cong(\upnu_I\Lambda)_I$.\\
(2) This follows by taking the abelianization of (1).
\end{proof}

\subsection{Auslander--McKay Revisited} Now that \ref{flop=mut general thm} has been established in full generality, we can extend the Auslander--McKay Correspondence in \ref{NCvsC min model}.

\begin{defin}\label{groupoids definition}
Let $R$ be as above, then
\begin{enumerate}
\item The \emph{derived mutation groupoid} is defined by the following generating set.  It has vertices $\Db(\mod\End_R(N))$, running over all isomorphism classes of basic MM generators $N$, and as arrows each vertex $\Db(\mod\End_R(N))$ has the mutation functors $\Upphi_I$ emerging, as $I$ runs through all possible summands satisfying $\dim_{\mathbb{C}}\Lambda_I<\infty$.
\item The \emph{derived flops groupoid} is defined by the following generating set.  It has vertices $\Db(\coh X)$, running over all minimal models $X$, and as arrows we connect vertices by the inverse of the Bridgeland--Chen flop functors, running through all possible combinations of flopping curves.
\end{enumerate}
\end{defin}

\begin{thm}\label{new thm in email}
The correspondence in \ref{NCvsC min model} further satisfies
\begin{enumerate}
\item[(3)$'$] The full mutation graph of the MM generators coincides with the full flops graph of the minimal models.
\item[(4)] The derived groupoid of the MM modules is functorially isomorphic to the derived flops groupoid of the minimal models.
\end{enumerate}
\end{thm}
\begin{proof}
(3)$'$ By definition, the full mutation graph and derived mutation groupoid only considers $\upnu_I$ provided that $\dim_{\mathbb{C}}\Lambda_I<\infty$, which by \ref{contract on f} is equivalent to the condition that $\bigcup_{i\in I}C_i$ flops.  Hence the result follows by combining the bijection in  \ref{NCvsC min model} with \ref{flop=mut general thm}\eqref{flop=mut general thm 1}.\\
(4) This follows by combining the bijection with \ref{flop=mut general thm}\eqref{flop=mut general thm 2}.  
\end{proof}

\section{Stability and Mutation}\label{stab and mut section}

In this section we relate stability and mutation, then use this together with the Homological MMP (proved in \ref{run MMP general}) to give results in GIT, specifically regarding chamber decompositions and later in \S\ref{CI subsection} the Craw--Ishii conjecture.

After first proving general moduli--tracking results in \S\ref{stability through mutation}, running Figure~\ref{Fig2} over all possibilities and tracking all the moduli back then computes the full GIT chamber decomposition.  We further prove in \S\ref{chamber red to surface subsection} that mutation is preserved under generic hyperplane section, which in effect means (in \S\ref{calculate chamber subsection}) that the chamber decomposition reduces to knitting on ADE surface singularities, which is very easy to calculate.  Amongst other things, this observation can be used to prove the braiding of flops in dimension three \cite{DW3}.

\subsection{GIT background}\label{GIT background}  There are two GIT approaches to moduli that could be used in this paper.  The first is quiver GIT, which relies on presenting $\Lambda^{\op}$ as (the completion of) a quiver with relations, and the second is the more abstract approach given in \cite[\S6.2]{VdBNCCR}.  For most purposes either is sufficient, so for ease of exposition we use quiver GIT.

Consider $\Lambda=\End_R(N)$ and present $\Lambda^{\op}$ as the complete path algebra of a quiver $Q$ subject to relations $I$, where the number of vertices in $Q$ equals the number of indecomposable summands of $N$.  We denote by $Q_0$ the vertex set of $Q$, and remark that under the conventions in \S\ref{conventions}, $\Lambda$-modules correspond to representations of $(Q,I)$.  Below, we will implicitly use this identification.  We call an element $\upbeta\in\mathbb{Z}_{\geq 0}^{|Q_0|}$ a \emph{dimension vector}.   We denote by $(\mathbb{Z}^{|Q_0|})^*$ the dual lattice of $\mathbb{Z}^{|Q_0|}$, and define the parameter space $\Uptheta$ by
\[
\Uptheta:=(\mathbb{Z}^{|Q_0|})^*\otimes_\mathbb{Z}\mathds{Q}.
\]
An element $\upvartheta\in\Uptheta$ is called a \emph{stability parameter}.  For a stability parameter $\upvartheta$  and a dimension vector $\upbeta$, the canonical pairing defines us
\[
\upvartheta\cdot\upbeta:=\sum_{i\in Q_0}\upvartheta_i\upbeta_i.
\]
Given $x\in\fl\Lambda=\rep(Q,I)$, let $\vdim x\in\mathbb{Z}_{\geq 0}^{|Q_0|}$ denote its dimension vector, considering $x$ as a finite dimensional representation.

\begin{defin}\label{defin stab king} \cite{King}
Given $\upvartheta\in\Uptheta$, $x\in\fl\Lambda=\rep(Q,I)$ is called $\upvartheta$-semistable if $\upvartheta\cdot\vdim x=0$ and every subobject $x'\subseteq x$ satisfies $\upvartheta\cdot\vdim x'\geq 0$.  Such an object $x$ is called $\upvartheta$-stable if the only subobjects $x'$ with $\upvartheta\cdot\vdim x'=0$ are $x$ and $0$.  Two $\upvartheta$-semistable modules are called \emph{$S$-equivalent} if they have filtrations by $\upvartheta$-stable modules which give isomorphic associated graded modules.  Further, for a given $\upbeta$, we say that $\upvartheta$ is \emph{generic} if every $\upvartheta$-semistable module of dimension vector $\upbeta$ is $\upvartheta$-stable.
\end{defin}

\begin{notation}
For any $\upvartheta\in\Uptheta$ and any dimension vector $\upbeta$, 
\begin{enumerate}
\item Denote by $\cM_{\upbeta,\upvartheta}(\Lambda)$ the moduli space of $\upvartheta$-semistable $\Lambda$-modules of dimension vector $\upbeta$.
\item Denote by $\cS_{\upvartheta}(\Lambda)$  the full subcategory of $\fl\Lambda$ which has as objects the $\upvartheta$-semistable objects, and denote by $\cS_{\upbeta,\upvartheta}(\Lambda)$  the full subcategory of $\cS_{\upvartheta}(\Lambda)$ consisting of those elements with dimension vector $\upbeta$.   
\end{enumerate}
\end{notation}

By King \cite{King} (see also \cite[6.2.1]{VdBNCCR}) $\cM_{\upbeta,\upvartheta}(\Lambda)$ is a coarse moduli space that parameterises $S$-equivalence classes of $\upvartheta$-semistable modules of dimension vector $\upbeta$.  If further $\upbeta$ is an indivisible vector and $\upvartheta$ is generic, then $\cM_{\upbeta,\upvartheta}(\Lambda)$ is a fine moduli space, and $S$-equivalence classes coincide with isomorphism classes.

\subsection{Tracking Stability Through Mutation}\label{stability through mutation}
In this subsection we track stability conditions through mutation, extending \cite{SY, NS} to work in a much greater level of generality.    Throughout, we will make use of the following setup.

\begin{setup}\label{stab setup}
Suppose that $R$ is a normal complete local $d$-sCY commutative algebra with $d\geq 2$, $M$ is a basic modifying module and $M_I$ is a summand of $M$.   Set $\Lambda:=\End_R(M)$ and $\Gamma:=\upnu_I\Lambda$. We denote the projective $\Lambda$-modules by $P_j$, the simple $\Lambda$-modules by $S_j$, and the simple $\Gamma$-modules by $S'_j$.
\end{setup}

For each indecomposable summand $M^*_i$ of $M^*_I$, consider its minimal right $\add M^*_{I^c}$-approximation
\[
\bigoplus_{j\notin I}M_j^{*\oplus b_{i,j}}\to M_i^*
\]
for some collection $b_{i.j}\in\mathbb{Z}_{\geq 0}$.  Dualizing and using \eqref{K1D} gives an exact sequence 
\begin{eqnarray}
0\to M_i\to\bigoplus_{j\notin I}M_j^{\oplus b_{i,j}}.\label{defin bs}
\end{eqnarray}
Summing the sequences \eqref{defin bs} together gives  the minimal left $\add M_{I^c}$-approximation of $M_I$, namely \eqref{K1D}.  In other words, we decompose $U_I$ as $U_I=\oplus_{i\in I}U_i$, then decompose each $U_i$ as $U_i=\oplus_{j\notin I}M_j^{\oplus b_{i,j}}$.

Applying $\Hom_R(M,-)$ to \eqref{defin bs} gives exact sequences
\begin{eqnarray}
0\to \Hom_R(M,M_i)\to\Hom_R(M,\oplus_{j\notin I}M_j^{\oplus b_{i,j}})\to C_i\to 0\label{defin of Ci}
\end{eqnarray}
for each $i\in I$, and summing the sequences in \eqref{defin of Ci} together gives \eqref{defin of CI}. Hence by definition $T_I=(\oplus_{j\notin I}P_j)\oplus(\oplus_{i\in I} C_i)$, where recall that $T_I$ is the tilting module defined in \S\ref{mut prelim}.

\begin{defin}\label{bi defin post ref}
Suppose that $\upbeta$ is a dimension vector, and $\upvartheta\in\Uptheta$ is a stability condition.  Given the data $\boldb=(b_{i,j})_{i\in I,j\notin I}$ from \eqref{defin bs}, we define the  vectors $\upnu_{\boldb}\upbeta$ and $\upnu_{\boldb}\upvartheta$ by
\[
(\upnu_{\boldb}\upbeta)_i=\left\{\begin{array}{cl} \upbeta_i&\mbox{if }i\notin I\\ \left(\sum_{j\notin I}b_{i,j}\upbeta_j\right)-\upbeta_i&\mbox{if }i\in I \end{array}\right.
\qquad
(\upnu_{\boldb}\upvartheta)_j=\left\{\begin{array}{cl} \upvartheta_j+\sum_{i\in I} b_{i,j}\upvartheta_i&\mbox{if }j\notin I \\ -\upvartheta_j&\mbox{if }j\in I. \end{array}\right.
\]
\end{defin}
Thus given the data of $\boldb=(b_{i,j})$, we thus view $\upnu_{\boldb}$ as an operation on dimension vectors, and as a (different) operation on stability parameters.  

\begin{remark}
We remark that the $b$'s are defined with respect to the mutation $\Lambda\mapsto\upnu_I\Lambda$.  When we iterate and consider another mutation $\upnu_I\Lambda\mapsto\upnu_J\upnu_I\Lambda$, the $b$'s may change for this second mutation.  This change may occur even in the situation $\upnu_I\upnu_I\Lambda\cong \Lambda$, and we are considering the mutation back $\upnu_I\Lambda\mapsto \upnu_I\upnu_I\Lambda\cong \Lambda$. The papers \cite{SY, NS} involve a global rule for $\upnu_{\boldb}\upvartheta$ (in their notation, $s_i\upvartheta$), and this is the reason why their combinatorial rule, and proofs, only work in a very restricted setting.
\end{remark}

The following two lemmas are elementary.

\begin{lemma}\label{stab basic}
For any dimension vector $\upbeta$ and any stability $\upvartheta\in\Uptheta$,
\begin{enumerate}
\item\label{stab basic 1} $\upnu_{\boldb}\upbeta\cdot \upnu_{\boldb}\upvartheta=\upbeta\cdot\upvartheta$. 
\item\label{stab basic 2} $\upnu_{\boldb}\upnu_{\boldb}\upbeta=\upbeta$. 
\item\label{stab basic 3} $\upvartheta\cdot\upnu_{\boldb}\upbeta=\upnu_{\boldb}\upvartheta\cdot\upbeta$.
\end{enumerate}
\end{lemma}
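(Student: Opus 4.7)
The plan is to verify all three statements by direct expansion from the definitions; the lemma is purely formal, so there is no real obstacle, only bookkeeping of the $i\in I$ versus $j\notin I$ contributions.

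For (2) I would proceed component by component. For $j\notin I$ we have $(\upnu_{\boldb}\upbeta)_j=\upbeta_j$, so the definition applied a second time again leaves the $j$-th coordinate unchanged. For $i\in I$, applying the definition twice gives
\[
(\upnu_{\boldb}\upnu_{\boldb}\upbeta)_i = \Bigl(\sum_{j\notin I} b_{i,j}(\upnu_{\boldb}\upbeta)_j\Bigr) - (\upnu_{\boldb}\upbeta)_i = \sum_{j\notin I} b_{i,j}\upbeta_j - \Bigl(\sum_{j\notin I} b_{i,j}\upbeta_j - \upbeta_i\Bigr) = \upbeta_i,
\]
which establishes (2). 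The key point is that the $b_{i,j}$ occurring in $\upnu_{\boldb}\upnu_{\boldb}$ are the same as those in $\upnu_{\boldb}$, since the operation $\upnu_{\boldb}$ is defined purely in terms of the fixed tuple $\boldb=(b_{i,j})$ (as emphasised in the remark preceding the lemma), not via any further mutation of the module.

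For (1) and (3) I would split the canonical pairing into its contributions from $i\in I$ and $j\notin I$ and expand. For (1), the $i\in I$ contribution to $\upnu_{\boldb}\upbeta\cdot\upnu_{\boldb}\upvartheta$ is $\sum_{i\in I}(-\upvartheta_i)\bigl((\sum_{j\notin I}b_{i,j}\upbeta_j)-\upbeta_i\bigr)=\sum_{i\in I}\upbeta_i\upvartheta_i-\sum_{i\in I,j\notin I}b_{i,j}\upbeta_j\upvartheta_i$, whereas the $j\notin I$ contribution is $\sum_{j\notin I}\upbeta_j(\upvartheta_j+\sum_{i\in I}b_{i,j}\upvartheta_i)=\sum_{j\notin I}\upbeta_j\upvartheta_j+\sum_{i\in I,j\notin I}b_{i,j}\upbeta_j\upvartheta_i$. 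The double sums cancel, leaving $\sum_{i\in I}\upbeta_i\upvartheta_i+\sum_{j\notin I}\upbeta_j\upvartheta_j=\upbeta\cdot\upvartheta$.

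Part (3) is the analogous calculation, and in fact formally follows by combining (1) with (2): since $\upnu_{\boldb}$ is an involution on dimension vectors by (2), I can apply (1) to $\upnu_{\boldb}\upbeta$ in place of $\upbeta$ to obtain $\upbeta\cdot\upnu_{\boldb}\upvartheta=\upnu_{\boldb}(\upnu_{\boldb}\upbeta)\cdot\upnu_{\boldb}\upvartheta=\upnu_{\boldb}\upbeta\cdot\upvartheta$. Writing this symmetrically (the canonical pairing is symmetric), this is exactly (3). Alternatively, a direct expansion of $\upvartheta\cdot\upnu_{\boldb}\upbeta$ and $\upnu_{\boldb}\upvartheta\cdot\upbeta$ shows that both equal $\sum_{j\notin I}\upvartheta_j\upbeta_j-\sum_{i\in I}\upvartheta_i\upbeta_i+\sum_{i\in I,j\notin I}b_{i,j}\upvartheta_i\upbeta_j$, confirming the equality.
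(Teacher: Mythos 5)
Your proof is correct, and it is exactly the approach the paper takes: the paper's proof of this lemma is simply "easily verified by direct calculation," which is the component-wise expansion you carry out (your shortcut deriving (3) from (1) and (2) is also valid, since the identities are purely formal and need no positivity of the entries).
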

\begin{proof}
This is easily verified by direct calculation.
\end{proof}

\begin{lemma}\label{stab dim vec}
With the setup \ref{stab setup} of this subsection, let $x\in\mod \Lambda$ and $y\in\mod\Gamma$.
\begin{enumerate}
\item\label{stab dim vec 1} If $\Ext^1_\Lambda(T_I,x)=0$, then $\vdim\Hom_\Lambda(T_I,x)=\upnu_{\boldb}\vdim x$.
\item\label{stab dim vec 2} If $\Tor_1^{\Gamma}(T_I,y)=0$, then $\vdim (T_I\otimes_{\Gamma} y)=\upnu_{\boldb}\vdim y$.
\end{enumerate}
\end{lemma}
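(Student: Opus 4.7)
The plan for part \eqref{stab dim vec 1} is a direct dimension count using the defining resolution of $C_i$. Since $M$ is basic, so is the tilting module $T_I$, and hence the indecomposable summands of $T_I$ correspond bijectively via $\End_\Lambda(T_I)\cong\Gamma$ to the primitive idempotents $e'_k$ of $\Gamma$, with $(T_I)_k:=T_I e'_k$ being $P_k$ when $k\notin I$ and $C_k$ when $k\in I$. Thus the $k$-th entry of $\vdim\Hom_\Lambda(T_I,x)$ equals $\dim_\mathbb{C}\Hom_\Lambda((T_I)_k,x)$. When $k\notin I$ this is $\dim_\mathbb{C}\Hom_\Lambda(P_k,x)=\dim_\mathbb{C} e_k x=(\vdim x)_k=(\upnu_{\boldb}\vdim x)_k$. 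When $k\in I$, apply $\Hom_\Lambda(-,x)$ to
\[
0\to P_k\to\bigoplus_{l\notin I}P_l^{\oplus b_{k,l}}\to C_k\to 0,
\]
and use that $\Ext^1_\Lambda(T_I,x)=0$ forces $\Ext^1_\Lambda(C_k,x)=0$ since $C_k$ is a summand of $T_I$. The resulting three-term exact sequence yields
\[
\dim_\mathbb{C}\Hom_\Lambda(C_k,x)=\sum_{l\notin I}b_{k,l}(\vdim x)_l-(\vdim x)_k=(\upnu_{\boldb}\vdim x)_k,
\]
completing part \eqref{stab dim vec 1}.

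For part \eqref{stab dim vec 2}, rather than repeating the computation on the opposite side (which would involve invoking \ref{pd 1 for TI both sides}\eqref{pd 1 for TI both sides 2} and the analogous exchange sequences for $(\upnu_IM)^*$), I will leverage part \eqref{stab dim vec 1} via the derived equivalence. By \ref{pd 1 for TI both sides}\eqref{pd 1 for TI both sides 2} we have $\pd_{\Gamma^{\op}}T_I\leq 1$, so $\Tor^{\Gamma}_i(T_I,-)=0$ for $i\geq 2$. Combined with the hypothesis $\Tor^{\Gamma}_1(T_I,y)=0$, this forces the complex $T_I\Lotimes_{\Gamma}y$ to be concentrated in degree $0$ and equal to $T_I\otimes_{\Gamma}y$. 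Applying the inverse derived equivalence $\RHom_\Lambda(T_I,-)$ to this complex recovers $y$, whence $\Hom_\Lambda(T_I,T_I\otimes_\Gamma y)\cong y$ and $\Ext^1_\Lambda(T_I,T_I\otimes_\Gamma y)=0$. Part \eqref{stab dim vec 1} then applies with $x=T_I\otimes_\Gamma y$ to yield $\vdim y=\upnu_{\boldb}\vdim(T_I\otimes_\Gamma y)$. Applying $\upnu_{\boldb}$ to both sides and invoking the involution property $\upnu_{\boldb}\upnu_{\boldb}=\Id$ from \ref{stab basic}\eqref{stab basic 2} gives the claim.

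Neither step should present a genuine obstacle: the only bookkeeping point in part \eqref{stab dim vec 1} is the matching of summands of $T_I$ with primitive idempotents of $\Gamma$, which is automatic since $T_I$ is basic, and the only subtle point in part \eqref{stab dim vec 2} is the combined use of the two Tor vanishings to ensure that $T_I\Lotimes_\Gamma y$ is a module. The slickness of part \eqref{stab dim vec 2} hinges on the observation that the inverse of the mutation functor is $T_I\Lotimes_\Gamma-$, which allows us to avoid explicitly describing $T_I$ as a right $\Gamma$-module.
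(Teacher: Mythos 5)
Your proof is correct and follows essentially the same route as the paper: part \eqref{stab dim vec 1} is the same dimension count on the sequence \eqref{defin of Ci} using that $\Ext^1_\Lambda(C_k,x)$ vanishes because $C_k$ is a summand of $T_I$, and part \eqref{stab dim vec 2} is exactly the paper's argument via $\pd_{\Gamma^{\op}}T_I\leq 1$, the derived equivalence giving $\Hom_\Lambda(T_I,T_I\otimes_\Gamma y)\cong y$ with the required Ext vanishing, then part \eqref{stab dim vec 1} and the involution \ref{stab basic}\eqref{stab basic 2}. The identification of the summands of $T_I$ with the idempotents of $\Gamma$ that you spell out is also implicit in the paper's computation of $e_t\Hom_\Lambda(T_I,x)$, so there is no substantive difference.
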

\begin{proof}
(1) By definition $T_I=(\oplus_{j\notin I}P_j)\oplus(\oplus_{i\in I} C_i)$.  Set $\upbeta:=\vdim x$.   It is clear that 
\[
e_t \Hom_\Lambda(T_I,x)=\left\{\begin{array}{cc} \Hom_\Lambda(P_t,x) & \mbox{if }t\notin I\\ \Hom_\Lambda(C_t,x) & \mbox{if }t\in I \end{array}\right.
\cong
\left\{\begin{array}{cc} e_tx & \mbox{if }t\notin I\\ \Hom_\Lambda(C_t,x) & \mbox{if }t\in I \end{array}\right. 
\]
and thus $(\vdim \Hom_\Lambda(T_I,x))_t=\upbeta_t$ when $t\notin I$, and hence we just need to verify that $\dim_{\mathbb{C}}\Hom_\Lambda(C_t,x)=\left(\sum_{j\notin I}b_{t,j}\upbeta_j\right)-\upbeta_t$. But by the assumptions, applying $\Hom_\Lambda(-,x)$ to the exact sequence \eqref{defin of Ci} gives an exact sequence
\[
0\to \Hom_\Lambda(C_t,x)\to\Hom_\Lambda(\oplus_{j\notin I}P_j^{\oplus b_{t,j}},x)\to \Hom_\Lambda(P_t,x)\to 0.
\]
Counting dimensions, using $\Hom_\Lambda(P_j,x)\cong e_jx$,  yields the result.\\
(2) By assumption and \ref{pd 1 for TI both sides}, $T_I\otimes^{\bf L}_{\Gamma}y=T_I\otimes_{\Gamma}y$.  Mutation gives a derived equivalence, so $\RHom_\Lambda(T_I,T_I\otimes_{\Gamma}y)\cong y$, which implies that $T_I\otimes_{\Gamma}y$ satisfies the conditions in (1), and further $\Hom_\Lambda(T_I,T_I\otimes_{\Gamma}y)\cong y$.  Consequently 
\[
\vdim y=\vdim\Hom_\Lambda(T_I,T_I\otimes_{\Gamma}y)
\stackrel{\scriptsize\mbox{\eqref{stab dim vec 1}}}{=}\upnu_{\boldb}\vdim (T_I\otimes_{\Gamma}y),
\]
which by \ref{stab basic}\eqref{stab basic 2} implies that $\vdim (T_I\otimes_{\Gamma}y)=\upnu_{\boldb}\upnu_{\boldb}\vdim (T_I\otimes_{\Gamma}y)=\upnu_{\boldb}\vdim y$.
\end{proof}

When tracking stability under mutation, as in \ref{not true multiple} the fact that $\Lambda_I$ need not be Cohen-Macaulay and need not be perfect causes problems.  The following two technical results allows us to overcome the first.  To avoid cases in the statement and proof, as a convention $\frac{M}{(a_1,\hdots,a_t)M}:=M$ when $t=0$.
\begin{lemma}\label{technical comm Ext vanishing}
With the setup \ref{stab setup} of this subsection, let $M,N\in\mod\Lambda$ with $\depth_R M:=t$, and choose a regular sequence $\{ a_1,\hdots,a_t\}$ for $M$.   If
\begin{enumerate}
\item $\pd_\Lambda M<\infty$,
\item $N\in\fl\Lambda$,
\item $\Hom_\Lambda\left(N,\frac{M}{(a_1,\hdots,a_t)M}\right)=0$,
\end{enumerate}  
then $\Ext^{d-t}_\Lambda(M,N)=0$.
\end{lemma}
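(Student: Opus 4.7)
The plan is to split the argument into two independent pieces: a Koszul-style nilpotency induction that first gives the companion vanishing $\Ext^t_\Lambda(N,M)=0$, and then a single application of Serre duality for the singular $d$-CY algebra $\Lambda$ (which is the only place condition~(1) enters) to convert this into the desired $\Ext^{d-t}_\Lambda(M,N)=0$.

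For the first step, I would induct on $t=\depth_R M$. The base case $t=0$ is immediate, as the convention $M/(a_1,\dots,a_0)M=M$ makes hypothesis~(3) read $\Hom_\Lambda(N,M)=0$. For the inductive step, set $M':=M/a_1M$; by standard depth theory $M'$ has depth $t-1$, and $a_2,\dots,a_t$ is an $M'$-regular sequence with $M'/(a_2,\dots,a_t)M'=M/(a_1,\dots,a_t)M$. The inductive hypothesis therefore applies to $M'$ (note the induction is on depth only, and does not reference (1)) and gives $\Ext^{t-1}_\Lambda(N,M')=0$. Applying $\Hom_\Lambda(N,-)$ to the short exact sequence $0\to M\xrightarrow{a_1}M\to M'\to 0$ yields the fragment
\[
\Ext^{t-1}_\Lambda(N,M')\to\Ext^t_\Lambda(N,M)\xrightarrow{a_1}\Ext^t_\Lambda(N,M),
\]
so multiplication by $a_1$ is injective on $E:=\Ext^t_\Lambda(N,M)$. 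Because $N\in\fl\Lambda$ it is annihilated by some fixed power of $\m_R$, and hence $a_1$ acts nilpotently (with uniform index) on $E$; an injective nilpotent endomorphism is zero, forcing $E=0$.

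For the second step, condition~(1) puts $M\in\Kb(\proj\Lambda)$ and condition~(2) puts $N\in\Db(\fl\Lambda)$, so the $d$-sCY property of $\Lambda$ (applied to $x=N$, $y=M[-(d-t)]$) furnishes a functorial isomorphism
\[
\Ext^t_\Lambda(N,M)\cong D\Ext^{d-t}_\Lambda(M,N),
\]
whose left-hand side has just been shown to vanish; thus $\Ext^{d-t}_\Lambda(M,N)=0$. The only step with any content is the Koszul reduction along the regular sequence, and the main obstacle to watch is simply that one must pick the right sCY shift so that the hypothesis $\Hom_\Lambda(N,M/(a_1,\dots,a_t)M)=0$ — which lives on the ``$\Hom(N,-)$ side'' — ends up controlling an Ext group on the ``$\Hom(M,-)$ side''; everything else is formal.
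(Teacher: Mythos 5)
Your proof is correct, but it runs along the opposite side of the duality from the paper's argument. The paper works directly with $\Ext^{d-t}_\Lambda(M,N)$ and argues by contradiction: applying $\Hom_\Lambda(-,N)$ to $0\to M\xrightarrow{a_1}M\to M/a_1M\to 0$ and invoking Nakayama (so finite generation of the Ext module), nonvanishing of $\Ext^{d-t}_\Lambda(M,N)$ propagates up to $\Ext^{d}_\Lambda(M/(a_1,\hdots,a_t)M,N)\neq 0$, and the sCY duality is then applied in top degree to the \emph{quotient} $M/(a_1,\hdots,a_t)M$ --- which is why the paper needs the auxiliary fact (cited to Weibel 4.3.14) that quotienting by a regular sequence preserves finite projective dimension; that is where hypothesis (1) enters there. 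You instead first prove $\Ext^{t}_\Lambda(N,M)=0$ by inducting on the regular sequence on the $\Hom_\Lambda(N,-)$ side, using only that $N$ is killed by a power of $\m$ (injective plus nilpotent forces the Ext group to vanish --- no Nakayama, no finite generation needed), and then apply the sCY isomorphism once, with $x=N$ and $y=M[-(d-t)]$, to get $\Ext^{t}_\Lambda(N,M)\cong D\Ext^{d-t}_\Lambda(M,N)$; this is legitimate since $\pd_\Lambda M<\infty$ puts $M$, and hence its shifts, in $\Kb(\proj\Lambda)$. Your route buys a cleaner separation of the two hypotheses (the induction never touches (1), and you avoid the finite-projective-dimension-of-the-quotient citation), at the mild cost of using the duality in a shifted, intermediate-degree form rather than only the degree-$d$-versus-$\Hom$ instance used in the paper; both are instances of the same defining isomorphism, so the difference is one of bookkeeping rather than strength.
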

\begin{proof}
When $t=0$,  $\Ext^{d}_\Lambda(M,N)\cong D\Hom_\Lambda(N,M)=0$ since $\Lambda$ is $d$-sCY, $M$ has finite projective dimension and $N$ has finite length.  This establishes the result in the case $t=0$, so we can assume that $t>0$.  Hence $a_1$ exists, and applying $\Hom_\Lambda(-,N)$ to the exact sequence
\[
0\to M\xrightarrow{a_1}M\to\tfrac{M}{a_1M}\to 0
\]
gives an exact sequence
\[
\hdots\to\Ext^{d-t}_\Lambda(M,N)\xrightarrow{\cdot a_1}\Ext^{d-t}_\Lambda(M,N)\to\Ext^{(d-t)+1}_\Lambda\left(\tfrac{M}{a_1M},N\right)\to\hdots
\]
If $\Ext^{d-t}_\Lambda(M,N)\neq 0$, then by Nakayama's Lemma the image of $(\cdot a_1)$ is a proper submodule of $\Ext^{d-t}_\Lambda(M,N)$, which implies that $\Ext^{(d-t)+1}_\Lambda(\tfrac{M}{a_1M},N)\neq 0$.  Inducting along the regular sequence gives $\Ext^d_\Lambda(\frac{M}{(a_1,\hdots,a_t)M},N)\neq 0$.  But again
\[
\Ext^d_\Lambda\left(\tfrac{M}{(a_1,\hdots,a_t)M},N\right)\cong  D\Hom_\Lambda\left(N,\tfrac{M}{(a_1,\hdots,a_t)M}\right)=0
\]
since $\Lambda$ is $d$-sCY, $\frac{M}{(a_1,\hdots,a_t)M}$ has finite projective dimension \cite[4.3.14]{Weibel}, and $N$ has finite length. This is a contradiction, and so $\Ext^{d-t}_\Lambda(M,N)= 0$, as claimed.
\end{proof}

\begin{cor}\label{stab Ext van key}
With the setup \ref{stab setup} of this subsection, suppose that either 
\begin{enumerate}
\item[(a)] $\upnu_IM\cong M$, or
\item[(b)] $d= 3$, $\upnu_I\upnu_IM\cong M$ and $\dim_{\mathbb{C}}\Lambda_I<\infty$.
\end{enumerate}
Set $t:=\depth_R\Lambda_I$, then $\Ext^{d-t}_\Lambda(\Lambda_I,x)=0$ for all $x\in \fl\Lambda$ provided $\Hom_\Lambda(x,S_i)=0$ for all $i\in I$. 
\end{cor}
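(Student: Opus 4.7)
The plan is to invoke Lemma~\ref{technical comm Ext vanishing} with $M := \Lambda_I$ and $N := x$, so the task reduces to verifying its three hypotheses. The assumption $N \in \fl\Lambda$ is built into the statement, and finite projective dimension of $\Lambda_I$ over $\Lambda$ is already known: Theorem~\ref{Ext 2 thm}\eqref{Ext 2 thm 2} gives $\pd_\Lambda\Lambda_I = 2$ in case~(a), and Theorem~\ref{Ext 3 thm}\eqref{Ext 3 thm 2} gives $\pd_\Lambda\Lambda_I = 3 = d$ in case~(b).

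By definition of $t = \depth_R \Lambda_I$, there is a $\Lambda_I$-regular sequence $a_1,\ldots,a_t$ in the maximal ideal $\m$ of $R$. The outstanding hypothesis of the lemma is therefore $\Hom_\Lambda(x, Q) = 0$, where $Q := \Lambda_I/(a_1,\ldots,a_t)\Lambda_I$. Since the $a_i$ lie in the centre, $Q$ inherits a $\Lambda_I$-module structure. From the presentation $\Lambda_I = \Lambda/\Lambda(1-e_I)\Lambda$ one checks that the only simple $\Lambda$-modules annihilated by $(1-e_I)$ are the $S_i$ with $i \in I$, so every simple $\Lambda$-subquotient of a finite-length $\Lambda_I$-module lies in $\{S_i : i \in I\}$.

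The delicate point, and the real content of the argument, is that $Q$ need not have finite length over $R$, so one cannot directly invoke a composition-series induction. The remedy is to exploit that $x \in \fl\Lambda$ is annihilated by some power $\m^k$: any $\Lambda$-morphism $x \to Q$ factors through the $\m^k$-torsion submodule
\[
P := \{q \in Q : \m^k q = 0\},
\]
which is a $\Lambda$-submodule of $Q$, a finitely generated $R/\m^k$-module, and hence of finite length over $R$ (and thus over $\Lambda$). As a finite-length $\Lambda_I$-module, $P$ is filtered by the $S_i$ with $i \in I$. An induction on the composition length of $P$, using the hypothesis $\Hom_\Lambda(x, S_i) = 0$ for all $i \in I$, then gives $\Hom_\Lambda(x, P) = 0$, whence $\Hom_\Lambda(x, Q) = 0$. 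Lemma~\ref{technical comm Ext vanishing} now applies verbatim to yield $\Ext^{d-t}_\Lambda(\Lambda_I, x) = 0$.

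The main obstacle is precisely this $\m$-torsion reduction: one must recognise that although $Q$ may be infinite-dimensional over $\mathbb{C}$, morphisms from the finite-length object $x$ only see its $\m^k$-torsion part, at which level the description of the $\Lambda$-composition factors of $\Lambda_I$ forces the desired vanishing. Once this step is in place, everything else is a direct appeal to \ref{technical comm Ext vanishing}, \ref{Ext 2 thm} and \ref{Ext 3 thm}.
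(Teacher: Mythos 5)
Your proof is correct and follows essentially the same route as the paper: reduce via Lemma~\ref{technical comm Ext vanishing} with $M=\Lambda_I$, $N=x$, get $\pd_\Lambda\Lambda_I<\infty$ from \ref{Ext 2 thm} or \ref{Ext 3 thm}, and verify $\Hom_\Lambda\bigl(x,\Lambda_I/(a_1,\hdots,a_t)\Lambda_I\bigr)=0$ by a finite filtration with factors among $\{S_i\mid i\in I\}$. The only (cosmetic) difference is in that last verification: you pass to the $\m^k$-torsion submodule of the quotient, whereas the paper simply observes that the image of any morphism from the finite-length module $x$ is finite dimensional and hence filtered by the $S_i$ — both devices handle the possible infinite-dimensionality of $\Lambda_I/(a_1,\hdots,a_t)\Lambda_I$ in the same way.
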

\begin{proof}
By either \ref{Ext 2 thm}\eqref{Ext 2 thm 2} or \ref{Ext 3 thm}\eqref{Ext 3 thm 2}, $\pd_\Lambda\Lambda_I<\infty$. Thus by \ref{technical comm Ext vanishing} applied with $M=\Lambda_I$ and $N=x$, we only need to verify that $\Hom_\Lambda(x,\tfrac{\Lambda_I}{(a_1,\hdots,a_t)\Lambda_I})$ is zero.  Consider an element $f$, then since $x$ is finite dimensional, so is $\Im f$.  Thus being a submodule of a factor of $\Lambda_I$, $\Im f$ must have a finite filtration with factors from the set $\{ S_i\mid i\in I\}$. Since $\Hom_\Lambda(x,S_i)=0$ for all $i\in I$, inducting along the finite filtration gives $\Hom_\Lambda(x,\Im f)=0$, and hence $\Hom_\Lambda(x,\tfrac{\Lambda_I}{(a_1,\hdots,a_t)\Lambda_I})=0$.\end{proof}

The following, which is a consequence of \ref{surfaces hack} and \ref{stab Ext van key}, will be needed in \ref{main stab track}.

\begin{cor}\label{new stab}
With the setup \ref{stab setup} of this subsection, assume that either
\begin{enumerate}
\item[(a)] $\upnu_IM\cong M$, or
\item[(b)] $\upnu_I\upnu_IM\cong M$ and $\dim_{\mathbb{C}}\Lambda_I<\infty$.
\end{enumerate}
Then for all $x\in\fl\Lambda$ and $y\in\fl\Gamma$, 
\begin{enumerate}
\item\label{new stab 1} $\Ext^{1}_\Lambda(T_I,x)=0$ provided $\Hom_\Lambda(x,S_i)=0$ for all $i\in I$.
\item\label{new stab 2}  $\Tor_{1}^{\Gamma}(T_I,y)=0$ provided $\Hom_\Gamma(S'_i,y)=0$ for all $i\in I$.
\end{enumerate}
\end{cor}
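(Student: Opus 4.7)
For part~\eqref{new stab 1}, I would reduce the vanishing of $\Ext^1_\Lambda(T_I,x)$ to a vanishing of an Ext against $\Lambda_I$.  In case~(a), \ref{Ext 2 thm}\eqref{Ext 2 thm 2} identifies $\Ext^1_\Lambda(T_I,x)$ with $\Ext^2_\Lambda(\Lambda_I,x)$; in case~(b), \ref{Ext 3 thm}\eqref{Ext 3 thm 2} identifies it with $\Ext^3_\Lambda(\Lambda_I,x)$.  The required vanishing then follows from Corollary~\ref{stab Ext van key}, which produces $\Ext^{d-t}_\Lambda(\Lambda_I,x)=0$ for $t:=\depth_R\Lambda_I$: in case~(a), the noncommutative Auslander--Buchsbaum formula applied to $\Lambda\in\CM R$ with $\pd_\Lambda\Lambda_I=2$ forces $t=d-2$, so that the relevant degree is $d-t=2$; in case~(b), $\Lambda_I$ has finite length, so $t=0$ and the relevant degree is $d-t=3$.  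In both situations the assumption on $x$ in \ref{stab Ext van key} coincides with the one imposed in \eqref{new stab 1}.

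For part~\eqref{new stab 2}, my plan is to apply part~\eqref{new stab 1} on the opposite side and translate via Matlis duality.  By \ref{pd 1 for TI both sides}\eqref{pd 1 for TI both sides 2}, $T_I$ is a tilting $\Gamma^{\op}$-module of projective dimension~$1$ with $\End_{\Gamma^{\op}}(T_I)\cong\Lambda^{\op}$, so the mutation setup is symmetric in the two sides of $T_I$: one may take $M_{\op}:=(\upnu_IM)^*$ as basic modifying $R$-module, $\Lambda_{\op}:=\End_R(M_{\op})\cong\Gamma^{\op}$ as the algebra, and $(M_{\op})_I=J_I$ as the mutated summand, noting that the contraction algebra satisfies $(\Lambda_{\op})_I\cong(\Lambda_I)^{\op}$ by \cite[6.20]{IW4}.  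Because of this symmetry, the corollary's hypotheses~(a) and~(b) transfer verbatim to the opposite setup, so that part~\eqref{new stab 1} applied there yields $\Ext^1_{\Gamma^{\op}}(T_I,z)=0$ whenever $z\in\fl\Gamma^{\op}$ satisfies $\Hom_{\Gamma^{\op}}(z,S''_i)=0$ for all $i\in I$, where $S''_i:=DS'_i$ denotes the simple right $\Gamma$-module at vertex~$i$.

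Applying this with $z:=Dy$, where $D:=\Hom_\mathbb{C}(-,\mathbb{C})$, the derived tensor--Hom adjunction over $\mathbb{C}$ gives
\[
\RHom_\mathbb{C}(T_I\otimes^{\bf L}_\Gamma y,\mathbb{C})\cong\RHom_{\Gamma^{\op}}(T_I,Dy),
\]
whose first cohomology is the Matlis isomorphism $D\Tor_1^\Gamma(T_I,y)\cong\Ext^1_{\Gamma^{\op}}(T_I,Dy)$; simultaneously, Matlis duality on simples yields $\Hom_{\Gamma^{\op}}(Dy,S''_i)\cong\Hom_\Gamma(S'_i,y)$.  Thus the hypothesis $\Hom_\Gamma(S'_i,y)=0$ for $i\in I$ produces $\Ext^1_{\Gamma^{\op}}(T_I,Dy)=0$ by the opposite-side \eqref{new stab 1}, and this in turn forces $\Tor_1^\Gamma(T_I,y)=0$.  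The most delicate step I anticipate is verifying that hypotheses~(a) and~(b) truly transfer to the opposite setup; this is a notation-heavy but routine check built on the bimodule symmetry in \ref{pd 1 for TI both sides}\eqref{pd 1 for TI both sides 2} together with the identification $(\Lambda_{\op})_I\cong(\Lambda_I)^{\op}$.
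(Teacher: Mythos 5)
Your part~\eqref{new stab 2} is essentially the paper's own argument: the identification $\Tor_1^\Gamma(T_I,y)\cong D\Ext^1_{\Gamma^{\op}}(T_I,Dy)$, the dualisation of the Hom-vanishing hypothesis, and the application of \eqref{new stab 1} to $\Gamma^{\op}\cong\End_R((\upnu_IM)^*)$ via \ref{pd 1 for TI both sides}\eqref{pd 1 for TI both sides 2} are exactly what the paper does; the ``delicate step'' you flag (transferring hypotheses (a)/(b) to $(\upnu_IM)^*$) is precisely Lemma~\ref{iterate and dual}, which is already available and which you should cite rather than redo.

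The genuine gap is in part~\eqref{new stab 1}, case (b): your argument silently assumes $d=3$. The setup \ref{stab setup} allows $d\geq 2$, and hypothesis (b) only forces $d\leq 3$ (via \ref{proj res thm}\eqref{proj res thm part 4}), so $d=2$ is a live possibility --- indeed it is the crucial one for the paper's later reduction of $3$-fold chamber structures to ADE surfaces, where one mutates at non-maximal summands over a Kleinian singularity and (b) holds with $d=2$. In that situation neither \ref{Ext 3 thm} nor \ref{stab Ext van key} applies: \ref{Ext 3 thm} is stated only for $d=3$, and \ref{stab Ext van key}'s hypothesis (b) likewise includes $d=3$, both because $\pd_\Lambda\Lambda_I$ can be \emph{infinite} for surfaces (this is exactly the problem the paper flags before \ref{surfaces hack}). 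So your chain $\Ext^1_\Lambda(T_I,x)\cong\Ext^3_\Lambda(\Lambda_I,x)=\Ext^{d-t}_\Lambda(\Lambda_I,x)=0$ breaks down completely when $d=2$: there is no finite projective resolution of $\Lambda_I$ to shift along, and ``$d-t=3$'' is false. The fix, as in the paper, is to split case (b): observe $d\leq 3$; if $d=2$ the vanishing $\Ext^1_\Lambda(T_I,x)=0$ is exactly \ref{surfaces hack}, whose proof avoids $\pd_\Lambda\Lambda_I$ altogether by using the $2$-sCY duality on $C_I$ and a finite-length filtration argument; if $d=3$ your argument via \ref{Ext 3 thm}\eqref{Ext 3 thm 2} and \ref{stab Ext van key} is correct and agrees with the paper. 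Note also that this gap propagates into your part~\eqref{new stab 2}, since the opposite-side application of \eqref{new stab 1} must likewise cover the $d=2$ instance of (b).
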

\begin{proof}
Denote $t=\depth\Lambda_I$.\\
(1) In situation (a), by \ref{Ext 2 thm}\eqref{Ext 2 thm 2} $\pd_\Lambda\Lambda_I=2$ and $\Ext_{\Lambda}^1(T_I,x)\cong\Ext_\Lambda^{2}(\Lambda_I,x)$, which is $\Ext_\Lambda^{d-t}(\Lambda_I,x)$ by Auslander--Buchsbaum.  This is zero by \ref{stab Ext van key}.   In situation (b), by \ref{proj res thm}\eqref{proj res thm part 4} the assumptions in fact force $d\leq 3$.  If $d=2$ then the result is precisely \ref{surfaces hack}, so we can assume that $d= 3$.  In this case, by \ref{Ext 3 thm}\eqref{Ext 3 thm 2} $\pd_\Lambda\Lambda_I=3$ and $\Ext_{\Lambda}^1(T_I,x)\cong\Ext_\Lambda^{3}(\Lambda_I,x)=\Ext_\Lambda^{d-t}(\Lambda_I,x)$, which again is zero by \ref{stab Ext van key}.   \\
(2) By \cite[VI.5.1]{CE} There is an isomorphism 
\[
\Tor_{1}^\Gamma(T_I,y)\cong D\Ext^{1}_{\Gamma^{\op}}(T_I,Dy),
\]
where $D$ is the $\mathbb{C}$-dual.  Note that $0=
\Hom_\Gamma(S'_i,y)\cong\Hom_{\Gamma^{\op}}(Dy,DS'_i)$
for all $i\in I$.  Now the simple left $\Gamma^{\op}$-modules are the $DS'_j$, and by \ref{iterate and dual} either the assumptions (a) or (b) hold for $(\upnu_IM)^*$.  Hence by \ref{pd 1 for TI both sides}\eqref{pd 1 for TI both sides 2} we can apply \eqref{new stab 1} to $\Gamma^{\op}\cong\End_R((\upnu_IM)^*)$ to obtain $\Ext^{1}_{\Gamma^{\op}}(T_I,Dy)=0$, and so $\Tor_{1}^\Gamma(T_I,y)=0$.   
\end{proof}

The following lemma is elementary.
\begin{lemma}\label{very easy}
Assume the setup \ref{stab setup} of this subsection.  Suppose that $\upvartheta\in\Uptheta(\Lambda)$ and $\upphi\in\Uptheta(\Gamma)$ are stability parameters, with $x\in \cS_{\upvartheta}(\Lambda)$ and $y\in \cS_{\upphi}(\Gamma)$.
\begin{enumerate}
\item\label{very easy 1} If $\upvartheta_i>0$ for all $i\in I$, then $\Hom_\Lambda(x,S_i)=0$ for all $i\in I$.
\item\label{very easy 2} If $\upphi_i<0$ for all $i\in I$, then $\Hom_\Gamma(S'_i,y)=0$ for all $i\in I$.
\end{enumerate}
\end{lemma}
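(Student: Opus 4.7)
The plan is straightforward: each statement reduces to testing semistability against a single sub- or quotient-object of dimension vector $e_i$, and the sign conditions on $\upvartheta$ (respectively $\upphi$) will force the relevant Hom to vanish by a one-line contradiction. Since $S_i$ and $S'_i$ are simple, any nonzero homomorphism either gives a surjection (for part (1)) or an injection (for part (2)), and this is all that is needed.

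For part (1), I would argue by contrapositive. Suppose $\Hom_\Lambda(x,S_i)\neq 0$ for some $i\in I$. Because $S_i$ is simple, any nonzero map $x\to S_i$ is surjective, yielding a short exact sequence
\[
0\to K\to x\to S_i\to 0
\]
in $\fl\Lambda$. Since $x$ is $\upvartheta$-semistable, $\upvartheta\cdot\vdim x=0$, and $K\subseteq x$ is a subobject, semistability forces $\upvartheta\cdot\vdim K\geq 0$. But $\vdim K=\vdim x-\vdim S_i=\vdim x-e_i$, so $\upvartheta\cdot\vdim K=-\upvartheta_i$. Hence $\upvartheta_i\leq 0$, contradicting the assumption $\upvartheta_i>0$.

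For part (2), the argument is dual. If $\Hom_\Gamma(S'_i,y)\neq 0$ for some $i\in I$, then since $S'_i$ is simple, any nonzero map is an injection, so $S'_i\hookrightarrow y$ is a subobject of dimension vector $e_i$. Semistability of $y$ then yields $\upphi_i=\upphi\cdot\vdim S'_i\geq 0$, contradicting $\upphi_i<0$. The main (and only) subtlety to verify is that semistability in $\fl\Lambda$ indeed tests subobjects in the abelian category $\fl\Lambda$, which is exactly the definition recalled in \S\ref{GIT background}, so no further work is required.
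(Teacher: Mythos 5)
Your proposal is correct and follows essentially the same argument as the paper: a nonzero map $x\to S_i$ must be surjective, and semistability applied to its kernel forces $\upvartheta_i\leq 0$; dually, a nonzero $S'_i\to y$ is injective and forces $\upphi_i\geq 0$. No differences worth noting.
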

\begin{proof}
(1) If there exists a non-zero morphism $x\to S_i$, then necessarily it has to be surjective, so there is a short exact sequence
\[
0\to K_i\to x\to S_i\to 0.
\]
This implies that $\upvartheta_i=\upvartheta\cdot\vdim S_i=\upvartheta\cdot\vdim x-\upvartheta\cdot\vdim K_i\leq 0$, since $x\in \cS_{\upvartheta}(\Lambda)$, contradicting the assumption $\upvartheta_i>0$.\\
(2) Any non-zero morphism $S'_i\to y$ is necessarily injective, so $\upphi_i=\upphi\cdot \vdim S'_i\geq 0$ since $y\in \cS_{\upphi}(\Gamma)$. Since $\upphi_i<0$, the morphism must be zero.
\end{proof}

Given the technical preparation above, the following is now very similar to \cite[3.5]{SY}.

\begin{thm}\label{main stab track}
With the setup \ref{stab setup} of this subsection, assume that either 
\begin{enumerate}
\item[(a)] $\upnu_IM\cong M$, or
\item[(b)] $\upnu_I\upnu_IM\cong M$ and $\dim_{\mathbb{C}}\Lambda_I<\infty$.
\end{enumerate}
Then for every dimension vector $\upbeta$, and for every $\upvartheta\in\Uptheta$ with $\upvartheta_i>0$ for all $i\in I$, 
\begin{enumerate}
\item\label{main stab track 1} $\Hom_{\Lambda}(T_I,-)\colon\mathcal{S}_{\upvartheta}(\Lambda)\to\mathcal{S}_{\upnu_{\boldb}\upvartheta}(\Gamma)$ is an exact functor.
\item\label{main stab track 2} $T_I\otimes_\Gamma-\colon\mathcal{S}_{\upnu_{\boldb}\upvartheta}(\Gamma)\to\mathcal{S}_{\upvartheta}(\Lambda)$ is an exact functor.
\item\label{main stab track 3} There is a categorical equivalence
\begin{eqnarray}
\begin{array}{c}
\begin{tikzpicture}
\node (A) at (0,0) {$\mathcal{S}_{\upbeta,\upvartheta}(\Lambda)$};
\node (B) at (4.25,0) {$\mathcal{S}_{\upnu_{\boldb}\upbeta,\upnu_{\boldb}\upvartheta}(\Gamma)$};
\draw[->] (0.75,0.1) -- node [above] {$\scriptstyle \Hom_\Lambda(T_I,-)$} (3,0.1);
\draw [<-] (0.75,-0.1) -- node [below] {$\scriptstyle T_I\otimes_{\Gamma}-$} (3,-0.1);
\end{tikzpicture}
\end{array}\label{proved}
\end{eqnarray}
preserving $S$-equivalence classes, under which $\upvartheta$-stable modules correspond to $\upnu_{\boldb}\upvartheta$-stable modules.
\item\label{main stab track 4} $\upvartheta$ is generic if and only if $\upnu_{\boldb}\upvartheta$ is generic.
\end{enumerate}  
\end{thm}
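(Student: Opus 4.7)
The strategy is to lift the mutation derived equivalence $\Upphi_I = \RHom_\Lambda(T_I,-)$ to an exact equivalence between appropriate abelian subcategories of $\fl\Lambda$ and $\fl\Gamma$, then show that this restricts further to the semistable subcategories. The hypotheses (a) and (b) enter only through \ref{new stab}, which supplies all the Ext and Tor vanishings that power the argument.

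Parts (1) and (2) follow immediately from \ref{very easy} and \ref{new stab}. For (1), any short exact sequence $0 \to x' \to x \to x'' \to 0$ in $\cS_{\upvartheta}(\Lambda)$ has $\Hom_\Lambda(-,S_i) = 0$ on each term (since $\upvartheta_i > 0$ for $i \in I$), so $\Ext^1_\Lambda(T_I,x') = 0$ and the functor is exact. Part (2) is symmetric: $(\upnu_{\boldb}\upvartheta)_i = -\upvartheta_i < 0$ gives $\Hom_\Gamma(S'_i,-) = 0$ on each term of a SES in $\cS_{\upnu_{\boldb}\upvartheta}(\Gamma)$, whence $\Tor_1^\Gamma(T_I,y'') = 0$.

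For (3), introduce the full subcategories
\[
\cS^+(\Lambda) := \{x \in \fl\Lambda : \Hom_\Lambda(x,S_i) = 0 \text{ for all } i \in I\}
\]
and $\cS^-(\Gamma) := \{y \in \fl\Gamma : \Hom_\Gamma(S'_i,y) = 0 \text{ for all } i \in I\}$; by \ref{very easy} these contain $\cS_{\upvartheta}(\Lambda)$ and $\cS_{\upnu_{\boldb}\upvartheta}(\Gamma)$ respectively. The key point is that $\Upphi_I$ restricts to a mutually inverse abelian equivalence $\cS^+(\Lambda) \simeq \cS^-(\Gamma)$: for $x \in \cS^+(\Lambda)$, \ref{new stab}\eqref{new stab 1} places $\Upphi_I(x) = \Hom_\Lambda(T_I,x)$ in degree zero, and since $\Upphi_I(S_i[1]) = S'_i$ by \ref{track mutation}\eqref{track mutation 2},
\[
\Hom_\Gamma(S'_i,\Upphi_I(x)) \cong \Hom_{\Db(\mod\Lambda)}(S_i[1],x) = 0,
\]
placing $\Upphi_I(x) \in \cS^-(\Gamma)$; the reverse direction is symmetric using \ref{new stab}\eqref{new stab 2}.

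To cut this equivalence down to semistable objects, take $x \in \cS_{\upvartheta}(\Lambda)$ and set $y := \Upphi_I(x)$. The dimension formula \ref{stab dim vec}\eqref{stab dim vec 1} gives $\vdim y = \upnu_{\boldb}\vdim x$, and then \ref{stab basic}\eqref{stab basic 1} yields $\upnu_{\boldb}\upvartheta \cdot \vdim y = \upvartheta \cdot \vdim x = 0$. For any submodule $y' \subseteq y$, left-exactness of $\Hom_\Gamma(S'_i,-)$ keeps $y'$ in $\cS^-(\Gamma)$, so $x' := T_I \otimes_\Gamma y'$ sits inside $x$ (by part (2) applied to $0 \to y' \to y \to y/y' \to 0$); semistability of $x$ together with \ref{stab basic}\eqref{stab basic 3} then gives $\upnu_{\boldb}\upvartheta \cdot \vdim y' = \upvartheta \cdot \vdim x' \geq 0$, whence $y \in \cS_{\upnu_{\boldb}\upvartheta}(\Gamma)$. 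Stability and $S$-equivalence are preserved automatically because the equivalence is exact and matches subobject lattices, and (4) is an immediate consequence. The main obstacle is really \ref{new stab}, whose proof is forced to split along cases (a) and (b) because $\Lambda_I$ need not be Cohen--Macaulay; once that vanishing is in hand, the numerics drop out mechanically from \ref{stab basic} and \ref{stab dim vec}.
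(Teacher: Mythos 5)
Your parts (1) and (2), read as statements about exactness, and your observation that $y:=\Hom_\Lambda(T_I,x)$ satisfies $\Hom_\Gamma(S'_i,y)=0$ for $i\in I$, are fine and match the paper's use of \ref{very easy} and \ref{new stab}. The gap is in the semistability transfer in part (3). For a submodule $y'\subseteq y$ you assert that $x':=T_I\otimes_\Gamma y'$ embeds in $x$, citing part (2) applied to $0\to y'\to y\to y/y'\to 0$. This is not available: part (2) only gives exactness on short exact sequences all of whose terms lie in $\cS_{\upnu_{\boldb}\upvartheta}(\Gamma)$, and at this point neither $y$ (that is exactly what is being proved), nor $y'$, nor $y/y'$ is known to be semistable. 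More seriously, the embedding claim is false in general: your class $\cS^-(\Gamma)$ is closed under submodules, so $y'$ stays in it and $\Tor_1^\Gamma(T_I,y')=0$, but it is \emph{not} closed under quotients, so for $c:=y/y'$ the module $\Tor_1^\Gamma(T_I,c)$ can be nonzero; applying $T_I\otimes_\Gamma-$ to the sequence yields $0\to\Tor_1^\Gamma(T_I,c)\to T_I\otimes_\Gamma y'\to a\to 0$ where only $a$ is a submodule of $x$. In particular $\cS^+(\Lambda)$ and $\cS^-(\Gamma)$ are torsion-type classes, not abelian subcategories, and the tilting equivalence between them does not ``match subobject lattices'' as you claim, so stability and $S$-equivalence preservation are not automatic either.

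The paper closes precisely this gap: since $e_jT_I$ is a projective $\Gamma^{\op}$-module for $j\notin I$, the dimension vector of $\Tor_1^\Gamma(T_I,c)$ is supported on the vertices in $I$, whence $\upvartheta\cdot\vdim(T_I\otimes_\Gamma y')=\upvartheta\cdot\vdim a+\sum_{i\in I}\upvartheta_in_i\geq 0$, using the hypothesis $\upvartheta_i>0$ for $i\in I$ a second time (beyond \ref{very easy}). Without tracking this correction term the inequality $\upnu_{\boldb}\upvartheta\cdot\vdim y'\geq 0$ does not follow. The same issue arises, dually, in the direction $\Gamma\to\Lambda$ that you dismiss as symmetric: there the correction term is $\Ext^1_\Lambda(T_I,x')$, again supported on $I$, and one shows $\upvartheta\cdot\vdim d\leq 0$ for quotients $d$ of $x$. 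Once these correction terms are handled, the rest of your outline (the bookkeeping via \ref{stab dim vec} and \ref{stab basic}, and part (4)) goes through as in the paper.
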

\begin{proof}
(1) By \ref{new stab}\eqref{new stab 1} and \ref{very easy}\eqref{very easy 1}, $\Hom_\Lambda(T_I,-)$ is exact out of $\cS_{\upvartheta}(\Lambda)$. To see that $\Hom_{\Lambda}(T_I,-)$ maps $\mathcal{S}_{\upvartheta}(\Lambda)$ to $\mathcal{S}_{\upnu_{\boldb}\upvartheta}(\Gamma)$, suppose that $x\in\mathcal{S}_{\upvartheta}(\Lambda)$, let $y:=\Hom_\Lambda(T_I,x)\cong\RHom_\Lambda(T_I,x)$ and consider a $\Gamma$-submodule $y'\subseteq y$.  Since $\upnu_{\boldb}\upvartheta\cdot\vdim y=\upnu_{\boldb}\upvartheta\cdot\upnu_{\boldb}\vdim x=\upvartheta\cdot\vdim x=0$ by \ref{stab dim vec}\eqref{stab dim vec 1} and \ref{stab basic}\eqref{stab basic 1},  it suffices to show that $\upnu_{\boldb}\upvartheta\cdot \vdim y'\geq 0$.  

The inclusion $y'\subseteq y$ induces an exact sequence
\[
0\to y'\to y\to c\to 0.
\]
Since mutation is a derived equivalence, $T_I\otimes_{\Gamma}^{\bf L}y\cong x$, so $\Tor_1^{\Gamma}(T_I,y)=0$.  Thus applying $T_I\otimes_{\Gamma}-$ to the above sequence and using \ref{pd 1 for TI both sides} gives an exact sequence
\begin{eqnarray}
0\to \Tor_1^{\Gamma}(T_I,y')\to 0 \to \Tor_1^{\Gamma}(T_I,c)\to  T_I\otimes_\Gamma y'\to T_I\otimes_\Gamma y\to T_I\otimes_\Gamma c\to 0\label{for splicing 1} 
\end{eqnarray}
of $\Lambda$-modules.  Now $e_j \Tor_1^\Gamma(T_I,c)=\Tor_1^\Gamma(e_jT_I,c)=0$ for all $j\notin I$, since $e_jT_I=e_j\Hom_\Lambda(\Lambda,T_I)\cong\Hom_\Lambda(P_j,T_I)$ is a projective $\Gamma^{\op}$-module if $j\notin I$.  Consequently
\[
(\vdim \Tor_1^\Gamma(T_I,c))_j=
\left\{\begin{array}{cc}
0 &\mbox{if }j\notin I\\
n_j &\mbox{if }j\in I
\end{array}
\right.
\]
for some collection $n_j\in\mathbb{Z}_{\geq 0}$.  Splicing \eqref{for splicing 1} gives an exact sequence
\begin{eqnarray}
0 \to \Tor_1^{\Gamma}(T_I,c)\to  T_I\otimes_\Gamma y'\to a\to 0\label{after splicing 1}
\end{eqnarray}
where $a$ is a submodule of $T_I\otimes_\Gamma y\cong x$, so $\upvartheta\cdot\vdim a\geq 0$ since $x$ is $\upvartheta$-semistable.  Thus applying $\upvartheta\cdot$ to \eqref{after splicing 1} we obtain
\begin{eqnarray}
\upvartheta\cdot\vdim (T_I\otimes_\Gamma y')=\upvartheta\cdot\vdim a+\upvartheta\cdot\vdim \Tor_1^{\Gamma}(T_I,c)=
\upvartheta\cdot\vdim a+\sum_{i\in I}\upvartheta_in_i \geq 0.\label{geq Hom}
\end{eqnarray}
It follows that 
\begin{align*}
\upnu_{\boldb}\upvartheta\cdot\vdim y'
&=\upvartheta\cdot\upnu_{\boldb}\vdim y' \tag{by \ref{stab basic}\eqref{stab basic 3}}\\
&=\upvartheta\cdot \vdim (T_I\otimes_\Gamma y')\tag{by \eqref{for splicing 1} and  \ref{stab dim vec}\eqref{stab dim vec 2}}\\
&\geq 0, \tag{by \eqref{geq Hom}}
\end{align*}
and so $y$ is $\upnu_{\boldb}\upvartheta$-semistable, proving the claim.\\
(2) This is similar to \eqref{main stab track 1}, but we give the proof for completeness. By \ref{new stab}\eqref{new stab 2}  and \ref{very easy}\eqref{very easy 2}, $T_I\otimes_\Gamma-$ is exact out of $\cS_{\upnu_{\boldb}\upvartheta}(\Gamma)$.  To see that $T_I\otimes_\Gamma-$ maps $\mathcal{S}_{\upnu_{\boldb}\upvartheta}(\Gamma)$ to $\mathcal{S}_{\upvartheta}(\Lambda)$, let $y\in\mathcal{S}_{\upnu_{\boldb}\upvartheta}(\Gamma)$ and consider $x:=T_I\otimes_\Gamma y\cong T_I\otimes^{\bf L}_\Gamma y$ where the last isomorphism holds by \ref{pd 1 for TI both sides} and $\Tor_1$ vanishing.  Consider a $\Lambda$-submodule $x'\subseteq x$, then this induces an exact sequence
\[
0\to x'\to x\to d\to 0.
\]
Since $\upvartheta\cdot\vdim x=\upvartheta\cdot\upnu_{\boldb}\vdim y=\upnu_{\boldb}\upvartheta\cdot\vdim y=0$ by \ref{stab dim vec}\eqref{stab dim vec 2} and \ref{stab basic}\eqref{stab basic 3},  it suffices to show that $\upvartheta\cdot\vdim x'\geq 0$, or equivalently $\upvartheta\cdot\vdim d\leq 0$.  

The above exact sequence induces an exact sequence
\begin{eqnarray}
0\to\Hom_\Lambda(T_I,x')\to y\to\Hom_\Lambda(T_I,d)\to\Ext^1_\Lambda(T_I,x')\to 0\to\Ext^1_\Lambda(T_I,d)\to 0,\label{for splicing 2} 
\end{eqnarray}
again using \ref{pd 1 for TI both sides}. Splicing this sequence gives an exact sequence
\begin{eqnarray}
0\to b\to\Hom_\Lambda(T_I,d)\to\Ext^1_\Lambda(T_I,x')\to 0\label{after splicing 2}
\end{eqnarray}
where $\upnu_{\boldb}\upvartheta\cdot \vdim b\leq 0$ since $b$ is a factor of the $\upnu_{\boldb}\upvartheta$-stable module $y$.  But now $e_j \Ext^1_\Lambda(T_I,x')=\Ext^1_\Lambda(P_j,x')=0$ for all $j\notin I$, so
\[
(\vdim \Ext^1_\Lambda(T_I,x'))_j=
\left\{\begin{array}{cc}
0 &\mbox{if }j\notin I\\
m_j &\mbox{if }j\in I
\end{array}
\right.
\]
for some collection $m_j\in\mathbb{Z}_{\geq 0}$.  Hence
\begin{align*}
\upvartheta\cdot\vdim d
&=\upvartheta\cdot\upnu_{\boldb}\vdim \Hom_\Lambda(T_I,d) \tag{by \eqref{for splicing 2} and \ref{stab dim vec}\eqref{stab dim vec 1}}\\
&=\upnu_{\boldb}\upvartheta\cdot\vdim \Hom_\Lambda(T_I,d) \tag{by \ref{stab basic}\eqref{stab basic 3}}\\
&=\upnu_{\boldb}\upvartheta\cdot \vdim b+ \upnu_{\boldb}\upvartheta\cdot\vdim\Ext^1_\Lambda(T_I,x')\tag{by \eqref{after splicing 2}}\\
&=\upnu_{\boldb}\upvartheta\cdot \vdim b+ \sum_{i\in I}(\upnu_{\boldb}\upvartheta)_im_i,
\end{align*}
which is less than or equal to zero.\\
(3) If $x\in\mathcal{S}_{\upbeta, \upvartheta}(\Lambda)$ then by \ref{new stab}\eqref{new stab 1} and \ref{very easy}\eqref{very easy 1} $\Ext^1_\Lambda(T_I,x)=0$.  Thus $\vdim\Hom_\Lambda(T_I,x)=\upnu_{\boldb}\upbeta$ by \ref{stab dim vec}\eqref{stab dim vec 1}.   Similarly if $y\in\mathcal{S}_{\upnu_{\boldb}\upbeta,\upnu_{\boldb}\upvartheta}(\Gamma)$ then by \ref{new stab}\eqref{new stab 2} and and \ref{very easy}\eqref{very easy 2} $\Tor_1^\Gamma(T_I,y)=0$ and so $\vdim(T_I\otimes_\Gamma y)=\upnu_{\boldb}\vdim y=\upnu_{\boldb}\upnu_{\boldb}\upbeta=\upbeta$ by \ref{stab dim vec}\eqref{stab dim vec 2} and \ref{stab basic}\eqref{stab basic 2}.  Thus the functors in \eqref{proved} are well defined, and further since $T_I$ has projective dimension one (on both sides) by \ref{pd 1 for TI both sides}, they are isomorphic to their derived versions.  Since the derived versions are an equivalence, we deduce the underived versions are. They are exact by \eqref{main stab track 1} and \eqref{main stab track 2}, so it follows that they preserve the $S$-equivalence classes.  It is also clear in the above proof that replacing $\geq 0$ by $>0$ throughout shows that under the equivalence, stable modules correspond to stable modules.\\
(4) Follows immediately from \eqref{main stab track 3}.
\end{proof}

Leading up to the next corollary, recall that $\boldb$ is defined in \eqref{defin bs} by decomposing first $U_I=\oplus_{i\in I}U_i$, then further decomposing each $U_i$.  We may play the same trick to the $V_I$'s, namely for each indecomposable summand $M_i$ of $M_I$ consider its minimal right $\add M_{I^c}$-approximation
\[
\bigoplus_{j\notin I}M_j^{\oplus c_{i,j}}\to M_i
\]
for some collection $c_{i,j}\in\mathbb{Z}_{\geq 0}$.  These give $\boldc:=(c_{i,j})_{i\in I,j\notin I}$.  In general, $\boldb\neq \boldc$.

\begin{cor}\label{stab change b and c}
With notation and assumptions as in \ref{main stab track}, 
 for every dimension vector $\upbeta$, and for every $\upvartheta\in\Uptheta$ with $\upvartheta_i>0$ for all $i\in I$,
\begin{enumerate}  
\item\label{stab change b and c 1} There is an isomorphism  $\cM_{\upbeta,\upvartheta}(\Lambda)\cong \cM_{\upnu_{\boldb}\upbeta,\upnu_{\boldb}\upvartheta}(\Gamma)$.
\item\label{stab change b and c 2} There is an isomorphism  $\cM_{\upnu_{\boldc}\upbeta,\upnu_{\boldc}\upvartheta}(\Lambda)\cong \cM_{\upbeta,\upvartheta}(\Gamma)$.
\end{enumerate}
\end{cor}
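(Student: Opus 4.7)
The plan is to bootstrap the categorical equivalence of \ref{main stab track}\eqref{main stab track 3} into an isomorphism of moduli schemes, and then to deduce \eqref{stab change b and c 2} from \eqref{stab change b and c 1} by applying it in reverse, with the roles of $\boldb$ and $\boldc$ swapping under biduality.

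For \eqref{stab change b and c 1}, by \ref{main stab track}\eqref{main stab track 3} the functors $\Hom_\Lambda(T_I,-)$ and $T_I\otimes_\Gamma-$ are quasi-inverse equivalences between $\cS_{\upbeta,\upvartheta}(\Lambda)$ and $\cS_{\upnu_{\boldb}\upbeta,\upnu_{\boldb}\upvartheta}(\Gamma)$ that preserve $S$-equivalence classes and restrict to a bijection between stable modules. Since by King's construction $\cM_{\upbeta,\upvartheta}(\Lambda)$ parameterises $S$-equivalence classes of $\upvartheta$-semistable modules, this already gives the required bijection on closed points. To upgrade this to a scheme-theoretic isomorphism, I would use that $T_I$ has projective dimension one both as a $\Lambda$- and a $\Gamma^{\op}$-module (\ref{pd 1 for TI both sides}) and $\Tor$ vanishes on the respective semistable loci (\ref{new stab}), so that the tilting functors $T_I\otimes_\Gamma-$ and $\Hom_\Lambda(T_I,-)$ send flat families of semistable modules to flat families of semistable modules of the right dimension vectors (the dimension vector calculation is exactly \ref{stab dim vec}, which holds in families since $\Tor_1$ vanishes uniformly by \ref{new stab}). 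Functoriality of this assignment turns the equivalence into an isomorphism of moduli functors, and hence of their (coarse, or under genericity fine) representing schemes; this is the standard argument underlying \cite[3.6]{SY} and \cite[6.12]{NS}.

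For \eqref{stab change b and c 2}, I would apply \eqref{stab change b and c 1} to the mutation of $\Gamma=\End_R(\upnu_IM)$ at the summand $J_I^*\subseteq \upnu_IM$. Under either hypothesis (a) or (b), the analogue of these hypotheses holds for $\upnu_IM$ by \ref{iterate and dual} (in case (b) via $\upnu_I\upnu_IM\cong M$ directly, in case (a) via $\upnu_I\Gamma\cong \Gamma$), so \ref{main stab track} applies and yields $\upnu_I\Gamma\cong \Lambda$. Hence for $\upphi\in\Uptheta(\Gamma)$ with $\upphi_i>0$ for $i\in I$ we obtain $\cM_{\upalpha,\upphi}(\Gamma)\cong\cM_{\upnu_{\boldb'}\upalpha,\upnu_{\boldb'}\upphi}(\Lambda)$, where $\boldb'$ is the approximation data for the mutation $\Gamma\to\upnu_I\Gamma$. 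The key bookkeeping step is to identify $\boldb'$ with the original $\boldc$: by definition $\boldb'$ is read off the minimal left $\add M_{I^c}$-approximation of $J_i^*$, but the exchange sequence $0\to K_i\to V_i\to M_i$ with $K_i\cong J_i^*$ (which is available under (b) and trivially under (a)) exhibits exactly $V_i=\bigoplus_{j\notin I}M_j^{\oplus c_{i,j}}$ as such a left approximation; by the uniqueness of minimal approximations this forces $b'_{i,j}=c_{i,j}$. Substituting $\upalpha=\upbeta$ and $\upphi=\upvartheta$ and observing $\upnu_{\boldc}\upnu_{\boldc}=\Id$ (the analogue of \ref{stab basic}\eqref{stab basic 2}) gives the claim.

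The main technical obstacle is the scheme-theoretic upgrade in the first paragraph above, since \ref{main stab track} is a categorical statement about $\fl\Lambda$ and $\fl\Gamma$; once one checks that the equivalence propagates to families over an arbitrary Noetherian base (which reduces to flatness of the tilting bimodule and the vanishing of the relevant $\Tor$s on stable loci), everything else is formal. The identification $\boldb'=\boldc$ in the second reduction is a genuine point but a short one, as it follows from the uniqueness of minimal approximations together with the already--established isomorphism $J_I^*\cong K_I$.
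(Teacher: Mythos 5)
Your proposal is correct and follows essentially the same route as the paper: part \eqref{stab change b and c 1} is the categorical equivalence of \ref{main stab track}\eqref{main stab track 3} upgraded to a scheme isomorphism via the base-change/families argument (which the paper simply delegates to \cite[4.20]{SY}), and part \eqref{stab change b and c 2} is obtained by applying the result to the reverse mutation $\Gamma\mapsto\upnu_I\Gamma\cong\Lambda$, legitimised by \ref{iterate and dual}, with the approximation data for that mutation identified with $\boldc$. Your identification $b'_{i,j}=c_{i,j}$ via $K_I\cong J_I^*$, \ref{dualityofapprox} and uniqueness of minimal approximations is exactly the content of the paper's appeal to \eqref{8O} together with $W_I\cong V_I$, so the two arguments coincide.
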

\begin{proof}
(1) It follows immediately from \ref{main stab track}\eqref{main stab track 3} that there is a bijection on closed points. The fact that \ref{main stab track}\eqref{main stab track 3} holds after base change, and so there is an isomorphism of schemes, is dealt with in \cite[4.20]{SY}, noting the small correction in \cite[Appendix A]{Joe}.\\
(2) By \ref{iterate and dual} either the assumption (a) or (b) holds for $\upnu_IM$. Hence we can apply \ref{main stab track}\eqref{main stab track 3} to the mutation $\Gamma\mapsto\upnu_I\Gamma\cong \Lambda$.  For this mutation, the $b$'s are given by $\boldc$, using \eqref{8O} (and the fact that $W_I\cong V_I$ there).
\end{proof}

Recall from the introduction \S\ref{GIT intro} the definition of the dimension vector $\rk$.
\begin{cor}\label{stab change for b=c}
With the notation and assumptions as in \ref{stab change b and c}, suppose further that $\boldb=\boldc$ (equivalently, $U_i\cong V_i$ for all $i\in I$).  Then for every dimension vector $\upbeta$, and for every $\upvartheta\in\Uptheta$ with $\upvartheta_i\neq 0$ for all $i\in I$, there is an isomorphism
\[
\cM_{\upbeta,\upvartheta}(\Lambda)\cong \cM_{\upnu_{\boldb}\upbeta,\upnu_{\boldb}\upvartheta}(\Gamma).
\]
In particular, if $\boldb=\boldc$, then $\cM_{\rk,\upvartheta}(\Lambda)\cong \cM_{\rk,\upnu_{\boldb}\upvartheta}(\Gamma)$ for every $\upvartheta\in\Uptheta$ with $\upvartheta_i\neq 0$ for all $i\in I$.
\end{cor}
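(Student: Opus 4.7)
The plan is to derive this corollary directly from Proposition~\ref{stab change b and c} by exploiting the symmetry $\boldb=\boldc$. Under this hypothesis the operations $\upnu_\boldb$ and $\upnu_\boldc$ on dimension vectors and stability parameters coincide, so that parts \eqref{stab change b and c 1} and \eqref{stab change b and c 2} provide the same transformation rule $\cM_{\upbeta,\upvartheta}(\Lambda)\cong\cM_{\upnu_\boldb\upbeta,\upnu_\boldb\upvartheta}(\Gamma)$ on the regions of $\Uptheta$ they respectively cover. Combined with the involution $\upnu_\boldb\upnu_\boldb=\Id$ from Lemma~\ref{stab basic}\eqref{stab basic 2}, this lets me handle opposite signs by a simple substitution.

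First I would dispose of the uniform-sign cases. When $\upvartheta_i>0$ for all $i\in I$, the isomorphism is immediate from \ref{stab change b and c}\eqref{stab change b and c 1}. When $\upvartheta_i<0$ for all $i\in I$, I set $\upvartheta':=\upnu_\boldb\upvartheta$ and $\upbeta':=\upnu_\boldb\upbeta$, now viewed on $\Gamma$; then $\upvartheta'_i=-\upvartheta_i>0$, and using $\upnu_\boldc=\upnu_\boldb$ together with $\upnu_\boldb\upnu_\boldb=\Id$ one has $\upvartheta=\upnu_\boldc\upvartheta'$ and $\upbeta=\upnu_\boldc\upbeta'$, so \ref{stab change b and c}\eqref{stab change b and c 2} applied to $(\upbeta',\upvartheta')$ yields
\[
\cM_{\upbeta,\upvartheta}(\Lambda)=\cM_{\upnu_\boldc\upbeta',\upnu_\boldc\upvartheta'}(\Lambda)\cong\cM_{\upbeta',\upvartheta'}(\Gamma)=\cM_{\upnu_\boldb\upbeta,\upnu_\boldb\upvartheta}(\Gamma).
\]

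For the remaining mixed-sign case I would reduce to the uniform-sign case by iterated single-summand mutation: decomposing the full mutation at $I$ into successive mutations at single summands $\{i_t\}$, the singleton condition $\upvartheta_{i_t}\neq 0$ falls into one of the two uniform-sign subcases already treated, so each step applies and the isomorphisms compose. The main obstacle will be bookkeeping: one must verify that hypothesis (a) or (b) of \ref{stab change b and c} is preserved at each intermediate algebra, and that the single-step $b$-data assemble correctly into the global $\boldb$ for mutation at $I$. The symmetry $\boldb=\boldc$ is precisely what ensures this, since it forces the minimal left and right approximations to agree and hence behave compatibly under iteration. Finally, the ``in particular'' assertion with $\upbeta=\rk$ follows by observing that $\upnu_\boldb\rk=\rk$ (under the natural identification of the rank vectors on $\Lambda$ and $\Gamma$): taking $R$-ranks of the short exact sequence $0\to M_i\to\bigoplus_{j\notin I}M_j^{\oplus b_{i,j}}\to J_i^*\to 0$ from \eqref{K1D} gives $(\upnu_\boldb\rk)_i=\sum_{j\notin I}b_{i,j}\rk M_j-\rk M_i=\rk J_i^*$, which is precisely the $i$-th rank component on the $\Gamma$ side, so the transformation acts as the identity on $\rk$.
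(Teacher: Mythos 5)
Your treatment of the two uniform-sign cases is correct and is exactly how the corollary is meant to follow from \ref{stab change b and c} (the paper gives no separate argument): when $\upvartheta_i>0$ for all $i\in I$ one quotes \ref{stab change b and c}\eqref{stab change b and c 1}, and when $\upvartheta_i<0$ for all $i\in I$ one applies \ref{stab change b and c}\eqref{stab change b and c 2} to $(\upnu_{\boldb}\upbeta,\upnu_{\boldb}\upvartheta)$, using $\boldb=\boldc$ together with the involutivity of $\upnu_{\boldb}$ -- for stability parameters this involutivity is not literally \ref{stab basic}\eqref{stab basic 2} but follows from the same direct computation (or from \ref{stab basic}\eqref{stab basic 1} and \eqref{stab basic 3}), so do record that check. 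Your rank computation showing that $\upnu_{\boldb}$ carries the rank vector of $N$ to that of $\upnu_IN$ is also the right justification of the ``in particular'' clause, provided you note that in the relevant setting the approximation is surjective (as in \ref{chamber 3fold=chamber surface prep}), so that ranks are additive on the exchange sequence.

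The genuine gap is your mixed-sign case. Decomposing $\upnu_I$ into successive single-summand mutations does not work as described: mutation at the set $I$ is not in general the composite of mutations at its elements, the intermediate algebras $\upnu_{i_1}\Lambda,\hdots$ are not $\Gamma$, and the exchange data at each later step is computed with respect to the new complement (which now contains already-mutated summands), so it is not determined by $\boldb$ -- the paper warns explicitly, in the remark following the definition of $\upnu_{\boldb}$, that the $b$'s change under iteration, and $\boldb=\boldc$ only says that the left and right $\add M_{I^c}$-approximations of $M_I$ agree, which says nothing about approximations after some summands have been replaced. You would also have to re-verify hypotheses (a)/(b) of \ref{main stab track} at every intermediate algebra, which \ref{iterate and dual} does not supply. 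Note, however, that the mixed-sign case is neither proved nor used in the paper: in the applications (\ref{see the flop}, \ref{surface intersection root}, the multi-curve flop lemma) the parameter has uniform sign on $I$, and when $|I|=1$ the condition $\upvartheta_i\neq 0$ is precisely the union of the two cases you did handle. So either restrict the claim to uniform signs on $I$, or supply a genuinely new argument; the iterated-mutation reduction you sketch does not give one.
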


\begin{remark}
We will prove later in \ref{Ui=Vi for cDV} that $U_i\cong V_i$ for all $i\in I$ in the case of cDV singularities, so $\boldb= \boldc$ in this case.  However, even for NCCRs in dimension three with $I=\{i\}$, $\boldb\neq \boldc$ in general.
\end{remark}

\subsection{Chamber Structure: Reduction to Surfaces}\label{chamber red to surface subsection}
In this subsection we revert back to the crepant one dimensional fibre setting of \ref{crepant setup}.  Throughout, we restrict to the dimension vector $\rk$, and show that (for this dimension vector) the chamber structure on the stability parameters can be calculated by passing to a Kleinian singularity.  

\begin{remark}\label{stab abuse}
As the moduli space $\cM_{\rk,\upvartheta}(\Lambda)$ parameterises only semistable $\Lambda$-modules of dimension vector $\rk$, and such modules $x$ necessarily satisfy $\upvartheta\cdot \rk=\upvartheta\cdot \vdim x=0$ by definition of semistability, henceforth we are only concerned with those stability parameters for which $\upvartheta\cdot\rk=0$.  This subspace of $\Uptheta$, which we will temporarily denote by $\Uptheta_{\rk}$, has a wall and chamber structure.  The non-generic parameters cut out walls, dividing the generic parameters of $\Uptheta_{\rk}$ into chambers.  
\end{remark}

Recall in the general setup of \ref{general setup} that $\Lambda=\End_X(\cV_X)$, where $\cV_X$ has a summand $\cO_X$, which has rank one.  Write $\star$ (or sometimes $0$) for the vertex in $\Lambda$ corresponding to $\cO_X$, and consider the dimension vector $\rk$.  Since by definition all elements $\upvartheta\in\Uptheta_{\rk}$ satisfy $\upvartheta\cdot\rk=0$, it follows that
\[
\upvartheta_{\star}=-\sum_{i\in Q_0\backslash \star}(\rank \cN_i)\,\upvartheta_i
\]
and so $\Uptheta_{\rk}$ can be viewed as $\mathds{Q}^{|Q_0|-1}$, with co-ordinates $\upvartheta_i$ for $i\neq 0$.  Later, this means that to calculate the wall and chamber structure in $\Uptheta_{\rk}$, we do not need to mutate at the summand $R$.

Each $\Lambda$ in the general setup of \ref{general setup} has an associated $\Uptheta_{\rk}$, and as the chamber structure of $\Uptheta_{\rk}$ depends on $\Lambda$, later care will be required.  When it is necessary to emphasise which ring is being considered, we will use the notation $\Uptheta_{\rk}(\Lambda)$. 

\begin{notation}
Henceforth, until the end of the paper, we will write $\Uptheta_{\rk}$ as simply $\Uptheta$, and $\Uptheta_{\rk}(\Lambda)$ as simply $\Uptheta(\Lambda)$, with it being implicit that everywhere walls and chambers are discussed, this involves only working with those stability parameters $\upvartheta$ such that $\upvartheta\cdot\rk=0$.  This is an abuse of notation, but it is required to maintain readability later.  
\end{notation}

\begin{lemma}\label{Cplus is chamber}
In the general setup of \ref{general setup}, consider $\Lambda:=\End_X(\cV_X)$, with dimension vector $\rk$.  As above, consider $\Uptheta$ with co-ordinates $\upvartheta_i$ for $i\neq 0$.   Then
\[
C_+:=\{ (\upvartheta_i)\in\Uptheta\mid \upvartheta_i>0\mbox{ for all }i \}
\]
is a chamber in $\Uptheta$.
\end{lemma}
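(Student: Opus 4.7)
The plan is to verify that every parameter in $C_+$ is generic with respect to the dimension vector $\rk$ and that every boundary point of $C_+$ in $\Uptheta$ is non-generic; since $C_+$ is convex (hence connected), this identifies it as a full connected component of $\Uptheta^{\mathrm{gen}}$, that is, a chamber.

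For the genericity step I would argue by a direct dimension count. Fix $\upvartheta\in C_+$ and let $x$ be a $\upvartheta$-semistable $\Lambda$-module of dimension vector $\rk$. For any proper nonzero submodule $x'\subsetneq x$, write $\vdim x' = (d_\star,d_i)_{i\neq 0}$; since $\rk_\star = 1$ we have $d_\star\in\{0,1\}$, and the inclusion forces $0\le d_i\le \rk_i = \rank\cN_i$ for each $i\neq 0$. If $d_\star = 0$ then $\upvartheta\cdot\vdim x' = \sum_i d_i\upvartheta_i > 0$, since some $d_i>0$ and every $\upvartheta_i>0$. If $d_\star = 1$ then, using $\upvartheta\cdot\rk = 0$, one rewrites $\upvartheta\cdot\vdim x' = \sum_i(d_i-\rk_i)\upvartheta_i<0$, since some $d_i<\rk_i$ and every $\upvartheta_i>0$. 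In either case the value is nonzero, so $x$ admits no destabilising proper subobject and is automatically $\upvartheta$-stable, proving $C_+\subseteq\Uptheta^{\mathrm{gen}}$.

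For the wall step, let $\upvartheta^0$ be a boundary point of $C_+$, so $\upvartheta^0_i = 0$ for some $i\neq 0$ and $\upvartheta^0_j\ge 0$ for every other $j$. I would produce a strictly $\upvartheta^0$-semistable module of dimension vector $\rk$ as follows. For any $\upvartheta\in C_+$, Karmazyn's identification $\cM_{\rk,\upvartheta}(\Lambda)\cong X$ \cite[5.2.5]{Joe} furnishes, for each closed point $p\in C_i$, an associated $\upvartheta$-stable $\Lambda$-module $M_p$ of dimension vector $\rk$. Since semistability is a closed condition in $\upvartheta$, $M_p$ remains $\upvartheta^0$-semistable. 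The composition factors of $M_p$ include the simple $S_i$ because $e_iM_p\neq 0$, and by a suitable choice of $p$ one can arrange that $S_i$ occurs as either the socle or a quotient of $M_p$; this destabilises $M_p$ at $\upvartheta^0$ because $\upvartheta^0\cdot\vdim S_i = \upvartheta^0_i = 0$. Hence $\upvartheta^0\notin\Uptheta^{\mathrm{gen}}$, and combined with the first step this identifies $C_+$ as a chamber.

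The hard part is the wall step: one must upgrade ``$S_i$ is a composition factor of $M_p$'' to ``$M_p$ has a proper subobject annihilated by $\upvartheta^0$''. The cleanest route I see is to model $M_p$ via the derived equivalence $\Uppsi_X$ and carefully untwist the sequence $0\to \cO_{C_i}(-1)\to\cO_{C_i}\to\cO_p\to 0$ inside $\Per(X,R)$, which is subtle since $\cO_p$ itself need not lie in $\Per(X,R)$. A more abstract alternative, avoiding this bookkeeping, is to invoke \ref{mut thm 3 main text A}: mutation at the summand corresponding to $C_i$ carries $C_+(\Lambda)$ to $C_+(\upnu_i\Lambda)$ on the opposite side of $\upvartheta_i = 0$ and the moduli there is genuinely different (the flop $X^+$, or a twist-related variant), forcing $\upvartheta_i = 0$ to lie in the non-generic locus without an explicit destabiliser.
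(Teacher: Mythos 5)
Your genericity step is fine---it is exactly the dimension count behind the paper's one-line remark that every element of $C_+$ is generic (using $\rk_\star=1$ and $\upvartheta\cdot\rk=0$), and the convexity/connectedness framing is harmless. The gap is in the wall step, which you flag yourself but never close. The assertion that ``by a suitable choice of $p$ one can arrange that $S_i$ occurs as the socle or a quotient of $M_p$'' is precisely the statement requiring proof, and the paper closes it with a short explicit construction that you gesture at but do not carry out: for \emph{any} $y\in C_i$, tensor $0\to\cO_{C_i}(-1)\to\cO_{C_i}\to\cO_y\to0$ by $\cL_i^{*}$ and rotate to obtain a triangle $\cO_{C_i}(-1)\to\cO_y\to\cO_{C_i}(-2)[1]\to$; applying $\Uppsi_X$ and using \ref{simple across Db} gives a morphism $S_i\to x_i$, where $x_i:=\Uppsi_X(\cO_y)$ is the $C_+$-stable module of dimension vector $\rk$ supplied by \cite[\S5.2]{Joe}. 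Both objects are modules concentrated in degree zero (there are no higher $\Ext$'s from the bundle $\cV_X$ into a skyscraper), the map is nonzero because $\Uppsi_X$ is an equivalence and $\cO_{C_i}(-1)\to\cO_y$ is nonzero, and since $S_i=\mathbb{C}$ it is injective. Thus $x_i$ is strictly semistable on the face $\upvartheta_i=0$, for every point $y\in C_i$: no choice of $p$, no socle analysis, and no perversity of $\cO_y$ enters (your worry about $\cO_y$ and $\Per(X,R)$ is beside the point, since one only needs $\Uppsi_X(\cO_y)$ to be a module of dimension vector $\rk$).

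Your proposed fallback via \ref{mut thm 3 main text A} does not prove the lemma. First, \ref{Cplus is chamber} is stated in the general setup \ref{general setup}---no crepancy, no $d=3$, no $\mathds{Q}$-factoriality---whereas \ref{mut thm 3 main text A} requires all of these, so it cannot cover the statement as given. Second, even in that restricted setting the inference ``the moduli on the other side is genuinely different, forcing $\upvartheta_i=0$ to be non-generic'' fails exactly in the twist case $\dim_\mathbb{C}\Lambda_i=\infty$: there $\upnu_iN\cong N$ and the moduli on both sides of $\upvartheta_i=0$ is $X$ again (cf.\ \ref{see the flop}), yet $\upvartheta_i=0$ is still a wall. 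Third, the tracking-across-the-wall statements you would implicitly need (\ref{see the flop}, \ref{surface intersection root}) are proved later in the paper \emph{using} \ref{Cplus is chamber}, so leaning on them here is circular in substance. The direct destabiliser construction above is the argument to make.
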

\begin{proof}
It is clear that every element of $C_+$ is generic, and further if $\upvartheta, \upvartheta'\in C_+$, then $x$ is $\upvartheta$-stable if and only if it is $\upvartheta'$-stable.  Hence $C_+$ is contained in some GIT chamber.  It suffices to show that for each $i$,  there exists some $x_i\in\cM_{\rk,C_+}(\Lambda)$ and an injection $S_i\hookrightarrow x_i$, since this implies that $x_i$ is not stable in the limit $\upvartheta_i\rightarrow 0$ and so $\upvartheta_i=0$ then defines a wall.  

Consider the curve $C_i$ and pick a point $y\in C_i$.  There is a short exact sequence
\[
0\to \cO_{C_i}(-1)\to \cO_{C_i}\to\cO_y\to 0
\] 
and thus after tensoring by $\cL_i^{*}$ and rotating gives a triangle
\[
\cO_{C_i}(-1)\to\cO_y\to \cO_{C_i}(-2)[1]\to
\]
in $\Db(\coh X)$.  Applying $\RHom_X(\cV_X,-)$, using \ref{simple across Db}  gives a triangle
\[
S_i\to x_i\to C\to 
\]
in $\Db(\mod\End_X(\cV_X))$, where $x_i$ is a $C_+$-stable module of dimension vector $\rk$ by \cite[\S5.2]{Joe}.  The first morphism is non-zero, and so since $S_i=\mathbb{C}$ it is necessarily injective. 
\end{proof}

The strategy to describe the chambers of $\Uptheta(\Lambda)$ is to track $C_+$ through mutation, and calculate the combinatorics by passing to surfaces.  This requires a special case of the following general result.
\begin{prop}\label{mut preserved by generic}
Let $R$ be a complete local $3$-sCY normal domain, suppose that $M$ is modifying, and $M_I$ is a summand of $M$ with $R\in\add M_{I^c}$.  Consider the exchange sequence \eqref{K0prime}
\[
0\to K_i\xrightarrow{c_i} V_i\xrightarrow{a_i} M_i.
\]
Then $a_i$ is surjective.  Further, for any $x\in\m$ which is an $\Ext^1_R(M,M)$-regular element, denoting $F:=(R/xR)\otimes_R-$, then
\begin{enumerate}
\item\label{mut preserved by generic 1}  $(R/xR)\otimes_R\End_R(M)\cong\End_{FR}(FM)$, and $FM$ is indecomposable.
\item\label{mut preserved by generic 2} If further $x$ is $\Ext^1_R(K_i,K_i)$-regular, the sequence
\begin{eqnarray}
0\to F{K}_i\to F{V}_i\xrightarrow{F{a_i}} F{M}_i\to 0\label{after cut}
\end{eqnarray}
is exact, and further $F{a_i}$ is a minimal $\add F{M_{I^c}}$-approximation.
\end{enumerate}
\end{prop}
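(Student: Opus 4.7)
The surjectivity of $a_i$ is immediate from $R\in\add M_{I^c}$: the defining surjectivity $\Hom_R(R,V_i)\twoheadrightarrow\Hom_R(R,M_i)$ of the approximation is just surjectivity of $a_i$. The exactness of \eqref{after cut} then follows from $\Tor_1^R(R/xR,M_i)=0$, which holds because any nonzero $x\in\m$ is $R$-regular ($R$ being a three-dimensional normal domain), and hence $M_i$-regular since $M_i$ is a summand of the reflexive module $M$.

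For part~(1), I plan to apply $\Hom_R(M,-)$ to $0\to M\xrightarrow{x}M\to FM\to 0$ and use that $x$ is $\Ext^1_R(M,M)$-regular to conclude $\End_R(M)/x\cong\Hom_R(M,FM)$; Hom--tensor adjunction then identifies the right-hand side with $\End_{FR}(FM)$. For the indecomposability claim, since $R$ is complete local, $x\in\m$ lies in the Jacobson radical of $\End_R(M)$, so idempotents lift along the quotient $\End_R(M)\twoheadrightarrow \End_{FR}(FM)$ via the standard Newton iteration, and applying this summand-by-summand shows that each indecomposable summand of $M$ stays indecomposable after $F$.

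For part~(2), proving that $Fa_i$ is a right $\add FM_{I^c}$-approximation reduces to surjectivity of $\Hom_{FR}(FM_{I^c},FV_i)\to\Hom_{FR}(FM_{I^c},FM_i)$. The same long-exact-sequence argument as in part~(1), applied to the pairs $(M_{I^c},M_i)$ and $(M_{I^c},V_i)$, identifies these Hom groups with the mod-$x$ reductions $\Hom_R(M_{I^c},M_i)/x$ and $\Hom_R(M_{I^c},V_i)/x$; here one uses that $\Ext^1_R(M_{I^c},M_i)$ and $\Ext^1_R(M_{I^c},V_i)$ are summands of $\Ext^1_R(M,M)$ and hence $x$-regular. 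The defining surjectivity attached to $a_i$ then descends to the required surjectivity after $F$.

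The main obstacle is minimality of $Fa_i$, which I plan to prove by contradiction via idempotent lifting. If $Fa_i$ fails to be minimal, there is a nontrivial decomposition $FV_i=W\oplus W'$ with $Fa_i|_W=0$; the projection idempotent onto $W$, viewed in $\End_{FR}(FV_i)\cong\End_R(V_i)/x$, lifts to an idempotent $\tilde e\in\End_R(V_i)$ because $x$ lies in the Jacobson radical and $\End_R(V_i)$ is $\m$-adically complete. The resulting summand $U:=\tilde eV_i$ is nonzero by Nakayama but satisfies $a_i(U)\subseteq xM_i$. Using $M_i$-regularity of $x$ to divide out $x$ from $a_i|_U$, and iteratively correcting $\tilde e$ by elements of $x^n\End_R(V_i)$, Cauchy convergence in the $\m$-adic topology produces a genuine summand of $V_i$ annihilated by $a_i$, contradicting minimality of $a_i$. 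The hypothesis that $x$ is $\Ext^1_R(K_i,K_i)$-regular feeds into the same formalism applied to $K_i$: it yields $\End_R(K_i)/x\cong\End_{FR}(FK_i)$, so $FK_i$ plays the correct role as the kernel of the new minimal exchange sequence over $FR$, which will be needed when iterating mutations across the base change $F$.
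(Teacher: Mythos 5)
Your treatment of the surjectivity of $a_i$, the exactness of \eqref{after cut} (via $\Tor_1^R(R/xR,M_i)=0$ from torsion-freeness, where the paper instead notes every term of \eqref{K0} is CM and applies the snake lemma, an inessential difference), part (1), and the fact that $Fa_i$ is an $\add FM_{I^c}$-approximation all agree with the paper. The genuine gap is the minimality step, which is also exactly where you depart from the paper's argument. Your iteration is unjustified at its two crucial points. First, to improve $a_i\tilde e\in x\Hom_R(V_i,M_i)$ to vanishing modulo $x^2$ (and so on) you must be able to write the error term, modulo $x$, as an endomorphism of $V_i$ composed with $a_i$; this is not a consequence of $M_i$-regularity of $x$, which only says the divided map exists, but of the approximation property of $a_i$ applied to maps out of $V_i$ itself, namely the surjectivity of $\End_R(V_i)\to\Hom_R(V_i,M_i)$ given by composition with $a_i$ (using $V_i\in\add M_{I^c}$), a fact you never invoke. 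Second, even granting this, the correction restoring idempotency at stage $n$ is only controlled modulo $x^n$, so it can destroy the improvement in the order of vanishing just gained; to make the interleaved iteration converge one must at least control these corrections (for instance by taking them in the off-diagonal Peirce components of the current idempotent), and nothing in your sketch addresses this. As written, the limiting nonzero idempotent killed by $a_i$ is not produced, so no contradiction with minimality of $a_i$ is reached.

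Relatedly, you never use the hypothesis that $x$ is $\Ext^1_R(K_i,K_i)$-regular, yet this is precisely what the paper's minimality argument rests on: it yields $\End_R(K_i)/x\End_R(K_i)\cong\End_{FR}(FK_i)$, and idempotent lifting over the complete local base then rules out $FK_i$ (equivalently $FV_i$) splitting off a nonzero summand on which $Fa_i$ vanishes. If you prefer to keep your endomorphism-ring viewpoint, the gap can be closed without any iteration by transporting minimality through $\Hom_R(M_{I^c},-)$: minimality of $a_i$ says exactly that $\Hom_R(M_{I^c},V_i)\to\Hom_R(M_{I^c},M_i)$ is a projective cover over $\End_R(M_{I^c})$, i.e.\ its kernel lies in $\mathrm{rad}\,\End_R(M_{I^c})\cdot\Hom_R(M_{I^c},V_i)$; since $x$ lies in the radical this persists after applying $F$, and the identifications you already established for the approximation property, together with the equivalence $\add FM_{I^c}\simeq\proj\End_{FR}(FM_{I^c})$, then give right minimality of $Fa_i$. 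Either complete the argument along such lines, or follow the paper and use the $\Ext^1_R(K_i,K_i)$-regularity; as it stands, the minimality assertion is not proved.
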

\begin{proof}
(1) The first statement is well-known; see for example the argument in \cite[5.24]{IW6}, which uses the fact that $x$ is $\Ext^1_R(M,M)$-regular.  The second follows from the first, since if $FM$ decomposes, since $R$ is complete local we can lift idempotents to obtain a contradiction.\\
(2) Since $M_{I^c}$ is a generator and $\Hom_R(M_{I^c},-)$ applied to \eqref{K0} is exact, it follows that $a_i$ is surjective.  Also, since $M_{I^c}$ is a generator and $\End_R(M)\in\CM R$, necessarily $M\in\CM R$ and since CM modules are closed under kernels of epimorphisms, $K_I\in\CM R$.  

Now since $x\in\m$ and CM modules are submodules of free modules, $x$ is not a zero divisor on any of the modules in \eqref{K0}, thus
\[
\begin{tikzpicture}
\node (A0) at (0,0) {$0$};
\node (A1) at (1,0) {$K_i$};
\node (A2) at (2.25,0) {$V_i$};
\node (A3) at (3.5,0) {$M_i$};
\node (A4) at (4.5,0) {$0$};
\node (B0) at (0,-1.25) {$0$};
\node (B1) at (1,-1.25) {$K_i$};
\node (B2) at (2.25,-1.25) {$V_i$};
\node (B3) at (3.5,-1.25) {$M_i$};
\node (B4) at (4.5,-1.25) {$0$};
\draw[->] (A0)--(A1);
\draw[->] (A1)--(A2);
\draw[->] (A2)--(A3);
\draw[->] (A3)--(A4);
\draw[->] (B0)--(B1);
\draw[->] (B1)--(B2);
\draw[->] (B2)--(B3);
\draw[->] (B3)--(B4);
\draw[right hook->] (A1) -- node[left] {$\scriptstyle x$} (B1);
\draw[right hook->] (A2) -- node[left] {$\scriptstyle x$} (B2);
\draw[right hook->] (A3) -- node[left] {$\scriptstyle x$} (B3);
\end{tikzpicture}
\]
and so by the snake lemma \eqref{after cut} is exact.  Now since $a_i$ is an $\add M_{I^c}$-approximation, there is a commutative diagram
\[
\begin{array}{c}
\begin{tikzpicture}
\node (a1) at (0,0) {$\Hom_R(M_{I^c},V_i)$};
\node (a2) at (5.5,0) {$\Hom_R(M_{I^c},M_i)$};
\node (b1) at (0,-1.25) {$(R/xR)\otimes_R\Hom_R(M_{I^c},V_i)$};
\node (b2) at (5.5,-1.25) {$(R/xR)\otimes_R\Hom_R(M_{I^c},M_i)$};
\node (c1) at (0,-2.5) {$\Hom_{F{R}}(F{M}_{I^c},F{V}_i)$};
\node (c2) at (5.5,-2.5) {$\Hom_{F{R}}(F{M}_{I^c},F{M}_i)$};
\draw[->>] (a1) -- node[above] {$\scriptstyle \cdot a_i$} (a2);
\draw[->] (b1) -- node[above] {$\scriptstyle F(\cdot a_i)$}  (b2);
\draw[->] (c1) --  node [above] {$\scriptstyle \cdot Fa_i$} (c2);
\draw[->>] (a1) -- (b1);
\draw[->] (b1) -- node[left] {$\scriptstyle \sim$} (c1);
\draw[->>] (a2) to (b2);
\draw[->] (b2) to node[right] {$\scriptstyle \sim$} (c2);
\end{tikzpicture}
\end{array}
\]
where the bottom two vertical maps are isomorphisms by \eqref{mut preserved by generic 1}.  It follows that the bottom horizontal map is surjective, so $Fa_i$ is indeed an $\add FM_{I^c}$-approximation.  For minimality, by \eqref{mut preserved by generic 1} applied to $K_i$, $\End_R(K_i)/x\End_R(K_i)\cong \End_{FR}(FK_i)$.  Thus if $Fa_i$ were not minimal then $FK_i$ would decompose into more summands than $K_i$, which since $\End_R(K_i)/x\End_R(K_i)\cong \End_{FR}(FK_i)$  is impossible since $R$ is complete local so we can lift idempotents.
\end{proof}

In the crepant setup of \ref{crepant setup} with $d=3$, by Reid's general elephant principle \cite[1.1, 1.14]{Pagoda}, cutting by a generic hyperplane section yields
\[
\begin{array}{c}
\begin{tikzpicture}[yscale=1.25]
\node (Xp) at (-1,0) {$X_2$}; 
\node (X) at (1,0) {$X$};
\node (Rp) at (-1,-1) {$\Spec (R/g)$}; 
\node (R) at (1,-1) {$\Spec R$};
\draw[->] (Xp) to node[above] {$\scriptstyle $} (X);
\draw[->] (Rp) to node[above] {$\scriptstyle $} (R);
\draw[->] (Xp) --  node[left] {$\scriptstyle \upvarphi$}  (Rp);
\draw[->] (X) --  node[right] {$\scriptstyle f$}  (R);
\end{tikzpicture}
\end{array}
\]
where $R/g$ is an ADE singularity and $\upvarphi$ is a partial crepant resolution. Since $N\in\CM R$ and $g$ is not a zero-divisor on $N$, necessarily $N/gN\in\CM R/g$, and so any indecomposable summand $N_i$ of $N$ cuts to $N_i/gN_i$, which must correspond to a vertex in an ADE Dynkin diagram via the original Auslander--McKay correspondence. 

Following the notation from \cite{Katz}, we encode $X_2$ pictorially by simply describing which curves are blown down from the minimal resolution.  The diagrams 

\[
\begin{array}{cccc}
\begin{array}{c}
\begin{tikzpicture}
\node (0) at (0,0) [DW] {};
\node (1) at (0.75,0) [DW] {};
\node (1b) at (0.75,0.75) [DW] {};
\node (2) at (1.5,0) [DW] {};
 \node (3) at (2.25,0) [DW] {};
\draw [-] (0) -- (1);
\draw [-] (1) -- (2);
\draw [-] (2) -- (3);
\draw [-] (1) -- (1b);
\end{tikzpicture}
\end{array}
&&
\begin{array}{c}
\begin{tikzpicture}
\node (0) at (0,0) [DW] {};
\node (1) at (0.75,0) [DW] {};
\node (1b) at (0.75,0.75) [DB] {};
\node (2) at (1.5,0) [DB] {};
 \node (3) at (2.25,0) [DB] {};
\draw [-] (0) -- (1);
\draw [-] (1) -- (2);
\draw [-] (2) -- (3);
\draw [-] (1) -- (1b);
\end{tikzpicture}
\end{array}
\end{array}
\]
represent, respectively, the minimal resolution of the $D_5$ surface singularity, and the partial resolution obtained from it by contracting the curves corresponding to the black vertices.

\begin{cor}\label{chamber 3fold=chamber surface prep}
With the crepant setup \ref{crepant setup} with $d=3$, if $g$ is a sufficiently generic hyperplane section, then
\begin{enumerate}
\item\label{chamber 3fold=chamber surface prep 1} $\Lambda/g\Lambda\cong \End_{R/gR}(N/gN)$.
\item\label{chamber 3fold=chamber surface prep 2} Let $N_I$ be a summand as in \ref{N notation}, and consider the exchange sequences
\begin{align*}
0\to K_i\xrightarrow{c_i} V_i\xrightarrow{a_i} N_i\\
0\to J_i\xrightarrow{d_i} U_i^*\xrightarrow{b_i} N_i^*
\end{align*}
Then $a_i$ and $b_i$ are surjective, and 
\begin{align*}
0\to FK_i\xrightarrow{Fc_i} FV_i\xrightarrow{Fa_i} FN_i\to0\\
0\to FJ_i\xrightarrow{Fd_i} FU^*_i\xrightarrow{Fb_i} FN_i^*\to0
\end{align*}
are exact, with $Fa_i$ and $Fb_i$ being minimal right approximations.
\item\label{chamber 3fold=chamber surface prep 3} We have that $0\to J_i\xrightarrow{d_i} U_i^*\xrightarrow{b_i} N_i^*\to 0$ is exact, inducing an exact sequence
\[
0\to FN_i\xrightarrow{F(b_i^*)} FU_i\xrightarrow{F(d_i^*)} FJ_i^*\to0
\]
where $F(b_i^*)$ is a minimal left $\add FM_{I^c}$-approximation.
\end{enumerate}
\end{cor}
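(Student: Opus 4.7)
The plan is to reduce everything to two applications of \ref{mut preserved by generic}, one to $N$ and one to the dual module $N^*$. The preliminary observation is that $\Ext^1_R(N,N)$, $\Ext^1_R(N^*,N^*)$, and the finitely many modules $\Ext^1_R(K_i,K_i)$ and $\Ext^1_R(J_i,J_i)$ for $i\in I$ are all finitely generated over $R$, so any $g\in\m$ lying outside the union of their associated primes and those of $R$ is an $R$-regular element that is simultaneously regular on each of these Ext modules; this is the operating meaning of ``sufficiently generic''.

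Part (1) is immediate from \ref{mut preserved by generic}\eqref{mut preserved by generic 1}. For part (2), \ref{N notation} provides $R\in\add N_{I^c}$, so \ref{mut preserved by generic} applied to $N$ with summand $N_I$ delivers the surjectivity of $a_i$, the exactness of the cut sequence, and the minimality of $Fa_i$. For the dual exchange sequence the crucial point is that $N^*$ is itself modifying, since $\End_R(N^*)\cong\End_R(N)^{\op}$ remains CM, and that $R\cong R^*\in\add N^*_{I^c}$; feeding the pair $(N^*,N^*_I)$ into \ref{mut preserved by generic} then yields the surjectivity of $b_i$ and the remaining half of part (2). The first sentence of part (3) is then only a restatement.

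For the final display in part (3), the strategy is to dualize the short exact sequence $0\to J_i\to U_i^*\to N_i^*\to 0$ over $R$ and then cut by $g$. The crepant setup with $d=3$ together with $R\in\add N$ forces $N\in\CM R$, since $N\cong\Hom_R(R,N)$ is an $R$-summand of $\End_R(N)\in\CM R$, and the depth lemma applied to the exchange sequences then shows that $N_i$, $U_i$ and $J_i$ all lie in $\CM R$. Since $R$ is Gorenstein, $\Ext^{>0}_R(-,R)$ vanishes on $\CM R$ and double dualization is the identity there, so dualizing gives the short exact sequence $0\to N_i\to U_i\to J_i^*\to 0$. Tensoring with $S:=R/gR$ preserves left-exactness because $J_i^*$ is reflexive over the domain $R$ and hence $g$-torsion-free, whence $\Tor_1^R(S,J_i^*)=0$. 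Minimality of $F(b_i^*)$ as a left $\add FN_{I^c}$-approximation is then extracted by $S$-dualizing the minimality of $Fb_i$ established in part (2), using that $S$ is Gorenstein and all relevant $S$-modules are CM.

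The main obstacle is that \ref{mut preserved by generic} controls only the right-approximation side of the mutation data and says nothing directly about the dual side. This is circumvented by the symmetry $\End_R(N^*)\cong\End_R(N)^{\op}$, which allows the same proposition to be invoked for $N^*$ and the conclusion to be transported back via the duality $(-)^*$ on $\CM R$; the remainder is routine depth-lemma bookkeeping ensuring that $R$-dualization commutes with tensoring by $S$.
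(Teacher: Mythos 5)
Your proposal is correct and takes essentially the same route as the paper: genericity is read as regularity of $g$ on the finitely many relevant $\Ext^1$ modules (whose associated primes are non-maximal), parts (1) and (2) follow from two applications of \ref{mut preserved by generic} — to $N$ and, via $\End_R(N^*)\cong\End_R(N)^{\op}$ and $R\in\add N^*_{I^c}$, to $N^*$ — and part (3) is the duality statement. The only cosmetic difference is in (3): you dualize over $R$ first and then cut, using $\Tor_1^R(R/gR,J_i^*)=0$, whereas the paper cuts first and dualizes over $R/gR$ via $\Hom_{R/gR}(F(-),R/gR)\cong F\Hom_R(-,R)$; both rest on the same compatibility of $(-)^*$ with the hyperplane section, which you also invoke to transfer minimality from $Fb_i$ to $F(b_i^*)$.
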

\begin{proof}
(1)(2) Since $\End_R(N)\in\CM R$, $\depth\Ext^1_R(N,N)>0$ and so if an element $g$ acts on $E_N:=\Ext^1_R(N,N)$ as a zero divisor, then it is contained in one of the finitely many associated primes of $E_N$, which are all non-maximal.  We can apply the same logic to both $K_I$ and $J_I^*$, and thus the finite number of associated primes of $E_N\oplus E_{K_I}\oplus E_{J_I^*}$ are non-maximal.  Hence  we can find $g$ sufficiently generic to be $E_N\oplus E_{K_I}\oplus E_{J_I^*}$-regular, so the first two parts follow from \ref{mut preserved by generic}.\\
(3) This is just the dual of \eqref{chamber 3fold=chamber surface prep 2}, and follows by \ref{dualityofapprox},  part \eqref{chamber 3fold=chamber surface prep 2} and isomorphisms such as $ \Hom_{FR}(FN_i^*,FR)\cong F\Hom_{R}(N_i^*,R)\cong FN_i$.
\end{proof}

The proof of the next lemma, \ref{Ui=Vi for cDV}, requires a little knowledge regarding knitting on AR quivers, which we briefly review in an example.  We refer the reader to \cite[\S4]{IW} for full details.
\begin{example}\label{knitting demonstration}
Consider the $D_5$ ADE surface singularity $R$.  The AR quiver, which coincides with the McKay quiver \cite{Aus3}, is 
\[
\begin{array}{c}
\begin{tikzpicture}[>=stealth,xscale=1.6,yscale=1.5]
\node (A0) at (0,0)  {$\scriptstyle R$};
\node (A1) at (1,0)  {$\scriptstyle R$};
\node (B0) at (0,-0.5) {$\scriptstyle A_1$};
\node (B1) at (0.5,-0.5) {$\scriptstyle B_1$};
\node (B2) at (1,-0.5) {$\scriptstyle A_1$};
\node (C0) at (0,-1)  {$\scriptstyle B_2$};
\node (C1) at (0.5,-1)  {$\scriptstyle A_2$};
\node (C2) at (1,-1)   {$\scriptstyle B_2$};
\node (D0) at (0.5,-1.5)  {$\scriptstyle A_3$};
\draw[->] (A0)--(B1);
\draw[->] (B0)--(B1);
\draw[->] (B1)--(B2);
\draw[->] (B1)--(A1);
\draw[->] (C0)--(B1);
\draw[->] (B1)--(C2);
\draw[->] (C0)--(C1);
\draw[->] (C1)--(C2);
\draw[->] (C0)--(D0);
\draw[->] (D0)--(C2);
\end{tikzpicture}
\end{array}
\]
where the left and right hand sides are identified.  In this example, suppose that $N:=R\oplus A_1\oplus B_2$ and $N_I:=B_2$ (so that $N_{I^c}:=R\oplus A_1$). We calculate the minimal $\add N_{I^c}$-approximation of $N_I$.

Consider the cover of the AR quiver, since this is easiest to draw, and drop labels and the directions of arrows.  The calculation begins by placing a $1$ in the position of $B_2$ (boxed below), and circling all the vertices corresponding to indecomposable summands of $N_{I^c}$
\[
\begin{array}{c}
\begin{tikzpicture}[>=stealth,scale=1.35]
\node (A-2) at (-2,0)  [vertex] {};
\node (A-1) at (-1,0)  [vertex] {};
\node (A0) at (0,0)  [vertex] {};
\node (A1) at (1,0)  [vertex] {};
\node (B-4) at (-2,-0.5)  [vertex] {};
\node (B-3) at (-1.5,-0.5)  [vertex] {};
\node (B-2) at (-1,-0.5)  [vertex] {};
\node (B-1) at (-0.5,-0.5)  [vertex] {};
\node (B0) at (0,-0.5)  [vertex] {};
\node (B1) at (0.5,-0.5)  [vertex] {};
\node (B2) at (1,-0.5)  [vertex] {};
\node (C-4) at (-2,-1)   [vertex] {};
\node (C-3) at (-1.5,-1)   [vertex] {};
\node (C-2) at (-1,-1)   [vertex] {};
\node (C-1) at (-0.5,-1)   [vertex] {};
\node (C0) at (0,-1)   [vertex] {};
\node (C1) at (0.5,-1)   [vertex] {};
\node (C2) at (1,-1)   {$1$};
\node (D-2) at (-1.5,-1.5)   [vertex] {};
\node (D-1) at (-0.5,-1.5)   [vertex] {};
\node (D0) at (0.5,-1.5)   [vertex] {};
\node at (-2.5,-0.75) {$\hdots$};
\draw[-] (A0)--(B1);
\draw[-] (B0)--(B1);
\draw[-] (B1)--(B2);
\draw[-] (B1)--(A1);
\draw[-] (C0)--(B1);
\draw[-] (B1)--(C2);
\draw[-] (C0)--(C1);
\draw[-] (C1)--(C2);
\draw[-] (C0)--(D0);
\draw[-] (D0)--(C2);

\draw[-] (A-1)--(B-1);
\draw[-] (B-2)--(B-1);
\draw[-] (B-1)--(B0);
\draw[-] (B-1)--(A0);
\draw[-] (C-2)--(B-1);
\draw[-] (B-1)--(C0);
\draw[-] (C-2)--(C-1);
\draw[-] (C-1)--(C0);
\draw[-] (C-2)--(D-1);
\draw[-] (D-1)--(C0);

\draw[-] (A-2)--(B-3);
\draw[-] (B-4)--(B-3);
\draw[-] (B-3)--(B-2);
\draw[-] (B-3)--(A-1);
\draw[-] (C-4)--(B-3);
\draw[-] (B-3)--(C-2);
\draw[-] (C-4)--(C-3);
\draw[-] (C-3)--(C-2);
\draw[-] (C-4)--(D-2);
\draw[-] (D-2)--(C-2);
\draw (1,0) circle (3.3pt);
\draw (0,0) circle (3.3pt);
\draw (-1,0) circle (3.3pt);
\draw (-2,0) circle (3.3pt);
\draw (1,-0.5) circle (3.3pt);
\draw (0,-0.5) circle (3.3pt);
\draw (-1,-0.5) circle (3.3pt);
\draw (-2,-0.5) circle (3.3pt);
\node[draw,regular polygon, regular polygon sides=4, minimum size=0.6cm] at (1,-1) {};
\end{tikzpicture}
\end{array}
\]
The calculation continues by counting backwards, using the usual knitting rule that for any given AR sequence, the left-hand value is the sum of the middle values, minus the right-hand value.   Doing this, we obtain
\[
\begin{array}{cccccccc}
\begin{array}{c}
\begin{tikzpicture}[>=stealth,scale=1.35]
\node (A1) at (1,0)  [vertex] {};
\node (B1) at (0.5,-0.5)  {$ 1$};
\node (B2) at (1,-0.5)  [vertex] {};
\node (C1) at (0.5,-1)  {$ 1$};
\node (C2) at (1,-1)   {$ 1$};
\node (D0) at (0.5,-1.5)   {$ 1$};
\draw[-] (B1)--(B2);
\draw[-] (B1)--(A1);
\draw[-] (B1)--(C2);
\draw[-] (C1)--(C2);
\draw[-] (D0)--(C2);
\draw (1,0) circle (3.3pt);
\draw (1,-0.5) circle (3.3pt);
\node[draw,regular polygon, regular polygon sides=4, minimum size=0.6cm] at (1,-1) {};
\end{tikzpicture}
\end{array}
&&
\begin{array}{c}
\begin{tikzpicture}[>=stealth,scale=1.35]
\node (A0) at (0,0)  {$ 1$};
\node (A1) at (1,0)  [vertex] {};
\node (B0) at (0,-0.5)  {$ 1$};
\node (B1) at (0.5,-0.5)  {$ 1$};
\node (B2) at (1,-0.5)  [vertex] {};
\node (C0) at (0,-1)   {$ 2$};
\node (C1) at (0.5,-1)   {$ 1$};
\node (C2) at (1,-1)   {$ 1$};
\node (D0) at (0.5,-1.5) {$ 1$};
\draw[-] (A0)--(B1);
\draw[-] (B0)--(B1);
\draw[-] (B1)--(B2);
\draw[-] (B1)--(A1);
\draw[-] (C0)--(B1);
\draw[-] (B1)--(C2);
\draw[-] (C0)--(C1);
\draw[-] (C1)--(C2);
\draw[-] (C0)--(D0);
\draw[-] (D0)--(C2);
\draw (1,0) circle (3.3pt);
\draw (0,0) circle (4.5pt);
\draw (1,-0.5) circle (3.3pt);
\draw (0,-0.5) circle (4.5pt);
\node[draw,regular polygon, regular polygon sides=4, minimum size=0.6cm] at (1,-1) {};
\end{tikzpicture}
\end{array}
&&
\begin{array}{c}
\begin{tikzpicture}[>=stealth,scale=1.35]
\node (A0) at (0,0)   {$ 1$};
\node (A1) at (1,0)  [vertex] {};
\node (B-1) at (-0.5,-0.5)   {$ 1$};
\node (B0) at (0,-0.5)   {$ 1$};
\node (B1) at (0.5,-0.5)  {$ 1$};
\node (B2) at (1,-0.5)  [vertex] {};
\node (C-1) at (-0.5,-1)   {$ 1$};
\node (C0) at (0,-1)    {$ 2$};
\node (C1) at (0.5,-1)  {$ 1$};
\node (C2) at (1,-1)   {$ 1$};
\node (D-1) at (-0.5,-1.5)   {$ 1$};
\node (D0) at (0.5,-1.5)   {$ 1$};
\draw[-] (A0)--(B1);
\draw[-] (B0)--(B1);
\draw[-] (B1)--(B2);
\draw[-] (B1)--(A1);
\draw[-] (C0)--(B1);
\draw[-] (B1)--(C2);
\draw[-] (C0)--(C1);
\draw[-] (C1)--(C2);
\draw[-] (C0)--(D0);
\draw[-] (D0)--(C2);
\draw[-] (D-1)--(C0);
\draw[-] (C-1)--(C0);
\draw[-] (B-1)--(C0);
\draw[-] (B-1)--(B0);
\draw[-] (B-1)--(A0);
\draw (1,0) circle (3.3pt);
\draw (0,0) circle (4.5pt);
\draw (0,-0.5) circle (4.5pt);
\node[draw,regular polygon, regular polygon sides=4, minimum size=0.6cm] at (1,-1) {};
\end{tikzpicture}
\end{array}\\
\mbox{Step 1}
&&
\mbox{Step 2}
&&
\mbox{Step 3}
\end{array}
\]
where in Step 3 the values in the circled vertices act like zero.  Continuing, the process stops when the value $-1$ appears
\[
\begin{array}{c}
\begin{tikzpicture}[>=stealth,scale=1.35]
\node (A-2) at (-2,0)  {$ 0$};
\node (A-1) at (-1,0)  {$ 1$};
\node (A0) at (0,0)  {$ 1$};
\node (A1) at (1,0)  [vertex] {};
\node (B-4) at (-2,-0.5)  {$ 0$};
\node (B-3) at (-1.5,-0.5)  {$ 0$};
\node (B-2) at (-1,-0.5)  {$ 1$};
\node (B-1) at (-0.5,-0.5)  {$ 1$};
\node (B0) at (0,-0.5)  {$ 1$};
\node (B1) at (0.5,-0.5)  {$ 1$};
\node (B2) at (1,-0.5) [vertex] {};
\node (C-4) at (-2,-1)   {$-1$};
\node (C-3) at (-1.5,-1)   {$ 0$};
\node (C-2) at (-1,-1)   {$ 1$};
\node (C-1) at (-0.5,-1)   {$ 1$};
\node (C0) at (0,-1)   {$ 2$};
\node (C1) at (0.5,-1)   {$ 1$};
\node (C2) at (1,-1)   {$ 1$};
\node (D-2) at (-1.5,-1.5)   {$ 0$};
\node (D-1) at (-0.5,-1.5)   {$ 1$};
\node (D0) at (0.5,-1.5)   {$ 1$};
\draw[-] (A0)--(B1);
\draw[-] (B0)--(B1);
\draw[-] (B1)--(B2);
\draw[-] (B1)--(A1);
\draw[-] (C0)--(B1);
\draw[-] (B1)--(C2);
\draw[-] (C0)--(C1);
\draw[-] (C1)--(C2);
\draw[-] (C0)--(D0);
\draw[-] (D0)--(C2);

\draw[-] (A-1)--(B-1);
\draw[-] (B-2)--(B-1);
\draw[-] (B-1)--(B0);
\draw[-] (B-1)--(A0);
\draw[-] (C-2)--(B-1);
\draw[-] (B-1)--(C0);
\draw[-] (C-2)--(C-1);
\draw[-] (C-1)--(C0);
\draw[-] (C-2)--(D-1);
\draw[-] (D-1)--(C0);

\draw[-] (A-2)--(B-3);
\draw[-] (B-4)--(B-3);
\draw[-] (B-3)--(B-2);
\draw[-] (B-3)--(A-1);
\draw[-] (C-4)--(B-3);
\draw[-] (B-3)--(C-2);
\draw[-] (C-4)--(C-3);
\draw[-] (C-3)--(C-2);
\draw[-] (C-4)--(D-2);
\draw[-] (D-2)--(C-2);
\draw (1,0) circle (3.3pt);
\draw (0,0) circle (4.5pt);
\draw (-1,0) circle (4.5pt);
\draw (-2,0) circle (4.5pt);
\draw (1,-0.5) circle (3.3pt);
\draw (0,-0.5) circle (4.5pt);
\draw (-1,-0.5) circle (4.5pt);
\draw (-2,-0.5) circle (4.5pt);
\node[draw,regular polygon, regular polygon sides=4, minimum size=0.6cm] at (1,-1) {};
\end{tikzpicture}
\end{array}
\]
Summing up the values on the circled vertices gives $R^{\oplus 2}\oplus A_1^{\oplus 2}$, and the vertex with $-1$ corresponds to $B_2$.  From this, we read off that
\[
0\to B_2\to R^{\oplus 2}\oplus A_1^{\oplus 2}\to B_2\to 0
\]
is an exact sequence, with the first map a minimal right $\add N_{I^c}$-approximation.  The calculation for the minimal left approximation is similar, by placing a $1$ and counting forwards to give
\[
\begin{array}{c}
\begin{tikzpicture}[>=stealth,scale=1.35]
\node (A-2) at (-2,0)  [vertex] {};
\node (A-1) at (-1,0)  {$ 1$};
\node (A0) at (0,0)  {$ 1$};
\node (A1) at (1,0)  {$ 0$};
\node (B-4) at (-2,-0.5)  [vertex] {};
\node (B-3) at (-1.5,-0.5)  {$ 1$};
\node (B-2) at (-1,-0.5)  {$ 1$};
\node (B-1) at (-0.5,-0.5)  {$ 1$};
\node (B0) at (0,-0.5)  {$ 1$};
\node (B1) at (0.5,-0.5)  {$ 0$};
\node (B2) at (1,-0.5) {$ 0$};
\node (C-4) at (-2,-1)   {$ 1$};
\node (C-3) at (-1.5,-1)   {$ 1$};
\node (C-2) at (-1,-1)   {$ 2$};
\node (C-1) at (-0.5,-1)   {$ 1$};
\node (C0) at (0,-1)   {$ 1$};
\node (C1) at (0.5,-1)   {$ 0$};
\node (C2) at (1,-1)   {$- 1$};
\node (D-2) at (-1.5,-1.5)   {$ 1$};
\node (D-1) at (-0.5,-1.5)   {$ 1$};
\node (D0) at (0.5,-1.5)   {$ 0$};
\draw[-] (A0)--(B1);
\draw[-] (B0)--(B1);
\draw[-] (B1)--(B2);
\draw[-] (B1)--(A1);
\draw[-] (C0)--(B1);
\draw[-] (B1)--(C2);
\draw[-] (C0)--(C1);
\draw[-] (C1)--(C2);
\draw[-] (C0)--(D0);
\draw[-] (D0)--(C2);

\draw[-] (A-1)--(B-1);
\draw[-] (B-2)--(B-1);
\draw[-] (B-1)--(B0);
\draw[-] (B-1)--(A0);
\draw[-] (C-2)--(B-1);
\draw[-] (B-1)--(C0);
\draw[-] (C-2)--(C-1);
\draw[-] (C-1)--(C0);
\draw[-] (C-2)--(D-1);
\draw[-] (D-1)--(C0);

\draw[-] (A-2)--(B-3);
\draw[-] (B-4)--(B-3);
\draw[-] (B-3)--(B-2);
\draw[-] (B-3)--(A-1);
\draw[-] (C-4)--(B-3);
\draw[-] (B-3)--(C-2);
\draw[-] (C-4)--(C-3);
\draw[-] (C-3)--(C-2);
\draw[-] (C-4)--(D-2);
\draw[-] (D-2)--(C-2);
\draw (1,0) circle (4.5pt);
\draw (0,0) circle (4.5pt);
\draw (-1,0) circle (4.5pt);
\draw (-2,0) circle (3.3pt);
\draw (1,-0.5) circle (4.5pt);
\draw (0,-0.5) circle (4.5pt);
\draw (-1,-0.5) circle (4.5pt);
\draw (-2,-0.5) circle (3.3pt);
\node[draw,regular polygon, regular polygon sides=4, minimum size=0.6cm] at (-2,-1) {};
\end{tikzpicture}
\end{array}
\]
This also gives the exact sequence $0\to B_2\to R^{\oplus 2}\oplus A_1^{\oplus 2}\to B_2\to 0$.  Observe that the second calculation can be obtained from the first simply by reflecting in the vertical line through the boxed vertex.
\end{example}

The following is an extension of the above observation.

\begin{lemma}\label{Ui=Vi for cDV}  
With the crepant setup \ref{crepant setup}, consider the modifying $R$-module $N:=H^0(\cV_X)$ and choose a summand $N_I$ as in \ref{N notation}.  With the notation in \ref{chamber 3fold=chamber surface prep},
\begin{enumerate}
\item $FU_i\cong FV_i$.
\item $U_i\cong V_i$.
\end{enumerate}
\end{lemma}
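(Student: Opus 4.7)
The plan is to reduce part (2) to part (1), and then to establish part (1) by cutting down to the Kleinian case and appealing to knitting combinatorics on the AR quiver.

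For part (2), note that both $U_i$ and $V_i$ lie in $\add N_{I^c}$, so after choosing $g$ sufficiently generic (in addition to the genericity assumed in \ref{chamber 3fold=chamber surface prep}), each $FN_j$ is indecomposable by \ref{mut preserved by generic}\eqref{mut preserved by generic 1}, and we may further arrange that $FN = \bigoplus_{j} FN_j$ remains basic, i.e., the $FN_j$ are pairwise non-isomorphic. Decomposing $U_i \cong \bigoplus_{j \notin I} N_j^{\oplus u_{i,j}}$ and $V_i \cong \bigoplus_{j \notin I} N_j^{\oplus v_{i,j}}$, an isomorphism $FU_i \cong FV_i$ combined with Krull--Schmidt on the right forces $u_{i,j} = v_{i,j}$ for all $j$, yielding $U_i \cong V_i$.

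For part (1), apply \ref{chamber 3fold=chamber surface prep}\eqref{chamber 3fold=chamber surface prep 2}\eqref{chamber 3fold=chamber surface prep 3} to obtain, inside $\CM(FR)$, a minimal right $\add FN_{I^c}$-approximation $FV_i \twoheadrightarrow FN_i$ and a minimal left $\add FN_{I^c}$-approximation $FN_i \hookrightarrow FU_i$. By Reid's general elephant theorem, $FR = R/gR$ is an ADE (Kleinian) surface singularity, $FN$ is a basic modifying $FR$-module (with $FR \in \add FN_{I^c}$), and $\CM(FR)$ is a Frobenius category whose stable category is $2$-Calabi--Yau with Serre functor $[1]$. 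The indecomposable objects of $\CM(FR)$ are classified by the vertices of an extended Dynkin diagram, and the AR quiver coincides with the McKay quiver; hence both minimal approximations can be read off by knitting from the position of $FN_i$ on this quiver, exactly as in the demonstration of \ref{knitting demonstration}: the right approximation is obtained by counting backwards until a $-1$ appears and summing the multiplicities at the $\add FN_{I^c}$-vertices, and the left approximation is obtained by the mirror procedure counting forwards.

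The key assertion, and the main obstacle to the proof, is that the two knitting computations produce the same middle term. The cleanest route is to observe that in any $2$-CY triangulated category $\cC$ with a rigid subcategory $\cT = \add FN_{I^c}$, the functorial duality $\Hom_\cC(x,y[1]) \cong D\Hom_\cC(y,x[1])$ pairs the right exchange triangle $FK_i \to FV_i \to FN_i \to FK_i[1]$ with a left exchange triangle sharing the same middle term $FV_i$. Combined with the fact (from \ref{mutmutI general} applied in the $2$-dimensional setting) that $\upnu_I\upnu_I FN \cong FN$, so that $FK_i \cong FJ_i^*$, this forces $FU_i \cong FV_i$. If a case-free argument via the Serre duality is deemed insufficient, a direct verification is available by using the explicit shape of the AR quivers of the Kleinian singularities, together with the action of $\tau$ (trivial outside types $A_n$, $D_{2n+1}$, $E_6$, and a nontrivial Dynkin automorphism in those three cases); in every case, the two knitting regions are genuine reflections of each other and sum to the same multiplicities on the circled vertices of $FN_{I^c}$.
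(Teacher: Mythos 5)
Your reduction of (2) to (1) is essentially sound (the $FN_j$ are indeed pairwise non-isomorphic, since distinct summands of $N$ cut to CM modules attached to distinct vertices of the ADE Dynkin diagram, though you assert rather than justify this), and it is only a cosmetic reversal of the paper's logic, which proves the multiplicity equality $\boldb=\boldc$ directly and gets both parts at once. The genuine gap is in your proof of (1). Your ``cleanest route'' rests on two incorrect claims. First, $\uCM$ of a Kleinian singularity $R/gR$ is \emph{not} $2$-CY with the duality $\Hom(x,y[1])\cong D\Hom(y,x[1])$: for a Gorenstein isolated surface singularity the stable category is $1$-CY (Serre functor $[1]$, which is also inconsistent with your phrase ``$2$-Calabi--Yau with Serre functor $[1]$''), and since $\omega_{R/g}\cong R/g$ forces $\tau\cong\Id$, AR duality gives $\Ext^1(x,y)\cong D\uHom(y,x)$, not Ext-symmetry. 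Second, and more fundamentally, even in an honest $2$-CY category the two exchange triangles of a mutation do \emph{not} in general share a middle term: the right $\add T'$-approximation and the left $\add T'$-approximation of a summand typically have different middle terms (this is precisely the cluster exchange phenomenon), and the paper itself warns, in the remark following \ref{stab change for b=c}, that $\boldb\neq\boldc$ in general even for $3$-fold NCCRs with $I=\{i\}$. So no abstract Serre-duality argument can yield $FU_i\cong FV_i$; the equality of middle terms is a special feature of the ADE slice and is the whole content of the lemma.

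Your fallback paragraph is the correct route (and is the paper's proof), but you only assert the key point --- that the backward and forward knitting computations are mirror images summing to the same multiplicities --- and your justification misidentifies the relevant symmetry: for Kleinian singularities $\tau$ is trivial in \emph{every} type (again because $\omega\cong R$); the nontrivial Dynkin automorphism in types $A_n$, $D_{2n+1}$, $E_6$ is the duality $(-)^*$ (equivalently $w_0\neq -1$), not $\tau$. The argument that actually closes the gap is: by \cite{Aus3} the AR quiver of $R/gR$ coincides with the McKay quiver, which is symmetric (every arrow has a reverse arrow) and has trivial translation, so the knitting pattern computed backwards from the boxed vertex $FN_i$ is the reflection, in the vertical line through that vertex, of the pattern computed forwards; this reflection fixes each $\tau$-orbit row, hence sends circled vertices (summands of $FN_{I^c}$) to circled vertices for the same module, so the two computations return identical multiplicities $b_{i,j}=c_{i,j}$, giving $FU_i\cong FV_i$ and $U_i\cong V_i$.
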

\begin{proof}
As in \ref{chamber 3fold=chamber surface prep}, let $g$ be a sufficiently generic hyperplane section and let $F:=(R/gR)\otimes_R(-)$.  As before, decompose $U_i\cong\bigoplus_{j\notin I}N_j^{\oplus b_{i,j}}$ and $V_i\cong \bigoplus_{j\notin I}N_j^{\oplus c_{i,j}}$, from which it is clear that $FU_i\cong\bigoplus_{j\notin I}(FN_j)^{\oplus b_{i,j}}$ and $FV_i\cong \bigoplus_{j\notin I}(FN_j)^{\oplus c_{i,j}}$.  By \ref{mut preserved by generic}\eqref{mut preserved by generic 1}  the $FN_j$ are indecomposable, so by Krull--Schmidt to prove both parts it suffices to show that $b_{i,j}=c_{i,j}$ for all $i\in I$, $j\notin I$.

Both $FU_i$ and $FV_i$ can be calculated by knitting on the AR quiver of the ADE singularity $FR$.  As in \ref{knitting demonstration}, the calculation for $FU_i$ begins by placing a $1$ in the place of $FN_i$, and proceeds by counting to the left, using the usual knitting rules, and records the numbers in the circles whilst treating them as zero for the next step.  At the end of the calculation, we read off the $b_{i,j}$ by adding the numbers in the circled vertices.

On the other hand, the calculation for $FV_i$ begins by placing a $1$ in the place of $FN_i$, then proceeds by counting to the right.  In exactly the same way, we read off the $c_{i,j}$ by adding the numbers in the circled vertices.  Since the AR quiver of ADE surface singularities coincides with the McKay quiver \cite{Aus3}, which is symmetric, we can obtain one calculation from the other by reflecting in the line through the original boxed vertex.  Thus both calculations return the same numbers, so $c_{i,j}=b_{i,j}$ for all $i\in I$ and $j\notin I$.
\end{proof}

\begin{cor}\label{see the flop}
With the $d=3$ crepant setup $f\colon X\to\Spec R$ of \ref{crepant setup}, set $N:=H^0(\cV_X)$ and $\Lambda:=\End_X(\cV_X)\cong\End_R(N)$.  Suppose further that either
\begin{enumerate}
\item[(A)] $f\colon X\to\Spec R$ is a minimal model, or
\item[(B)] $f\colon X\to\Spec R$ is a flopping contraction.
\end{enumerate}
Then, for any $i$, the region
\[
\upvartheta_i<0,\quad\upvartheta_j+b_j\upvartheta_i>0\mbox{ for all }j\neq i
\]
defines a chamber in $\Uptheta(\Lambda)$, and for any parameter $\upvartheta$ inside this chamber,
\[
\cM_{\rk, \upvartheta}(\Lambda)\cong\left\{  \begin{array}{ll}
X_i^+&\mbox{if $C_i$ flops}\\
X&\mbox{else,}
\end{array}\right.
\]
where $X_i^+$ denotes the flop of $X$ at $C_i$.  Thus the flop of $C_i$, if it exists, is obtained by crashing through the single wall $\upvartheta_i=0$ in $\Uptheta(\Lambda)$.
\end{cor}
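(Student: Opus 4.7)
The plan is to transport the statement across the mutation $\Lambda \rightsquigarrow \upnu_i\Lambda$, where the desired chamber pulls back to the positive chamber $C_+(\upnu_i\Lambda)$, and then apply \ref{mut moduli gives flop text} on the other side.

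First, since we are in the crepant setup of \ref{crepant setup} with $d=3$, \ref{Ui=Vi for cDV} gives $U_i \cong V_i$, so $\boldb = \boldc$. Next, we check that the hypotheses of \ref{stab change for b=c} hold for the summand $N_i$:  in case (B) the curve $C_i$ automatically flops; in case (A), \ref{contract on f} together with \ref{basic2}\eqref{basic2 2}\eqref{basic2 4} gives the dichotomy that either $\upnu_iN \cong N$ (when $C_i$ does not flop) or $\upnu_i\upnu_iN \cong N$ with $\dim_\mathbb{C}\Lambda_i<\infty$ (when it does); both fall under the assumptions of \ref{main stab track}. Hence for every $\upvartheta$ with $\upvartheta_i \neq 0$, \ref{stab change for b=c} produces an isomorphism
\[
\cM_{\rk,\upvartheta}(\Lambda) \;\cong\; \cM_{\rk,\upnu_{\boldb}\upvartheta}(\upnu_i\Lambda).
\]

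Now I would compute the image of the specified region under the involution $\upnu_{\boldb}$ of \ref{stab basic}\eqref{stab basic 2}. Writing $\upphi := \upnu_{\boldb}\upvartheta$, the formulas defining $\upnu_{\boldb}$ give $\upphi_i = -\upvartheta_i$ and $\upphi_j = \upvartheta_j + b_j\upvartheta_i$ for $j \neq i$; in particular the region
\[
\upvartheta_i < 0, \qquad \upvartheta_j + b_j\upvartheta_i > 0 \text{ for all } j \neq i,
\]
is precisely the image of $C_+(\upnu_i\Lambda)$ under $\upnu_{\boldb}$. By \ref{Cplus is chamber} applied to $\upnu_i\Lambda$, $C_+(\upnu_i\Lambda)$ is a GIT chamber, and genericity is preserved under mutation by \ref{main stab track}\eqref{main stab track 4}, so the described region is also a chamber in $\Uptheta(\Lambda)$.

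To identify the moduli, split into two cases. If $C_i$ flops, then by \ref{mut moduli gives flop text}\eqref{mut moduli gives flop text 2} applied to $\upnu_i\Lambda$, $\cM_{\rk,\upphi}(\upnu_i\Lambda) \cong X^+$ for every $\upphi \in C_+(\upnu_i\Lambda)$, so combining with the transport isomorphism gives $\cM_{\rk,\upvartheta}(\Lambda) \cong X^+$. If $C_i$ does not flop (which can happen only in case (A)), then \ref{basic2}\eqref{basic2 4} forces $\upnu_iN \cong N$ so $\upnu_i\Lambda \cong \Lambda$, and \ref{mut moduli gives flop text}\eqref{mut moduli gives flop text 1} yields $\cM_{\rk,\upphi}(\upnu_i\Lambda) \cong \cM_{\rk,\upphi}(\Lambda) \cong X$. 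Since the original chamber $C_+(\Lambda)$ and the new chamber are separated by the coordinate hyperplane $\upvartheta_i = 0$ (the remaining walls $\upvartheta_j + b_j\upvartheta_i = 0$ lie outside both), crossing the single wall $\upvartheta_i = 0$ produces the flop, if it exists, and otherwise returns $X$. The main obstacle is purely bookkeeping: verifying that the hypotheses of \ref{stab change for b=c} are met uniformly in both the flopping and non-flopping subcases, and checking that the involution $\upnu_{\boldb}$ identifies the two chambers across exactly this single wall, rather than a higher-codimension stratum.
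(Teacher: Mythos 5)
Your route is essentially the paper's: get $U_i\cong V_i$ from \ref{Ui=Vi for cDV}, verify the hypotheses of \ref{main stab track} so that \ref{stab change for b=c} tracks moduli whenever $\upvartheta_i\neq 0$, note that the stated region is exactly the image of $C_+(\upnu_i\Lambda)$ under $\upnu_{\boldb}$, and then identify the moduli via \ref{mut moduli gives flop text}. The case (A) dichotomy via \ref{contract on f} and \ref{basic2} is precisely what the paper does.

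However, there is a genuine gap in case (B). Saying that $C_i$ ``automatically flops'' only yields $\dim_\mathbb{C}\Lambda_i<\infty$ (via \ref{contract on f}); assumption (b) of \ref{main stab track} additionally requires $\upnu_i\upnu_iN\cong N$, and nothing in your argument supplies this. Your only tool for it, \ref{basic2}, is unavailable here: it needs $N$ to be a maximal modifying module, equivalently $X$ to be $\mathds{Q}$-factorial by \ref{Min model - MMA}, which is exactly what case (B) does not assume. The paper closes this by invoking \ref{mutmutI general}: contracting $C_i$ gives $X_{\con}$ with only isolated hypersurface (Gorenstein terminal, hence cDV) singularities, and the singularity-category argument there gives $\upnu_i\upnu_iN\cong N$; combined with \ref{contract on f} this places case (B) under assumption (b). With that lemma inserted, your proof matches the paper's. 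A minor cosmetic point: in the non-flopping subcase you cite \ref{mut moduli gives flop text}\eqref{mut moduli gives flop text 1}, whose statement formally carries the hypothesis that the curve flops; the fact actually needed, $\cM_{\rk,\upvartheta}(\Lambda)\cong X$ for $\upvartheta\in C_+(\Lambda)$, is \cite[5.2.5]{Joe} and requires no such hypothesis, so it is cleaner to cite that directly.
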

\begin{proof}
Pick a curve $C_i$ (i.e.\ consider $I=\{i\}$), and mutate at the indecomposable summand $N_i$ of $N$.  By \ref{Cplus is chamber}, $\upvartheta_i=0$ is a wall.  Since we are mutating only at indecomposable summands, in situation (A) \ref{basic2} shows that the assumptions of \ref{main stab track} are satisfied.  In situation (B), \ref{mutmutI general} together with \ref{contract on f} shows that the assumptions of \ref{main stab track} are satisfied.  Thus, in either case, since $U_i\cong V_i$ by \ref{Ui=Vi for cDV},  provided that $\upvartheta_i\neq 0$ it is possible to track moduli using \ref{stab change for b=c}.

In either (A) or (B), if $\dim_{\mathbb{C}}\Lambda_i<\infty$ then $C_i$ flops, in which case $\upnu_iN\cong H^0(\cV_{X_i^+})$ by \ref{MMmodule under flop cor 2}.  Thus $\cM_{\rk,\upphi}(\upnu_i\Lambda)\cong X_i^+$ for all $\upphi\in C_+(\upnu_i\Lambda)$ by \ref{mut moduli gives flop text}\eqref{mut moduli gives flop text 2}, so the result then follows by moduli tracking \ref{stab change for b=c}. The only remaining case is when $\dim_{\mathbb{C}}\Lambda_i=\infty$ in situation (A), but then $\upnu_iN\cong N$ by \ref{basic2} and so the result is obvious. 
\end{proof}

The main result of this subsection needs the following result, which may be of independent interest. The case when $Y$ is the minimal resolution is well known \cite{CS, Kr}.

\begin{thm}\label{surface intersection root}
Consider an ADE singularity $\Spec R$, let $\Lambda$ be the corresponding NCCR, and let $Y\to\Spec R$ be a partial crepant resolution.  Set $N:=H^0(\cV_Y)$ and $\Gamma:=\End_R(N)$.  Suppose that the minimal resolution $X\to\Spec R$ has curves $C_1,\hdots,C_n$, and after re-indexing if necessary $Y$ is obtained from $X$ by contracting the curves $C_{r+1},\hdots,C_n$. Then
\begin{enumerate}
\item\label{surface intersection root 1} The walls of $\Uptheta(\Gamma)$ are obtained by intersecting the subspace $L$ of $\Uptheta(\Lambda)$ spanned by $\upvartheta_1,\hdots,\upvartheta_r$ with the walls of $\Uptheta(\Lambda)$ that do not contain $L$. 
\item\label{surface intersection root 2} $\Uptheta(\Gamma)$ has a finite number of chambers, and the walls are given by a finite collection of hyperplanes containing the origin.  The co-ordinate hyperplanes $\upvartheta_i=0$ are included in this collection.
\item\label{surface intersection root 3} Considering iterated mutations at indecomposable summands, tracking the chamber $C_+$ on $\upnu_{i_1}\hdots\upnu_{i_t}(\Gamma)$ back to $\Uptheta(\Gamma)$ gives all the chambers of $\Uptheta(\Gamma)$.
\end{enumerate}
\end{thm}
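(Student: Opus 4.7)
The plan is to exploit the isomorphism $\Gamma\cong e\Lambda e$, where $\Lambda=\End_R(R\oplus M_1\oplus\cdots\oplus M_n)$ realises the minimal resolution $X$ via the McKay correspondence, and $e=e_0+e_1+\cdots+e_r$ is the idempotent picking out the summands appearing in $N$. This identifies $\Uptheta(\Gamma)$ with the coordinate subspace $L\subseteq\Uptheta(\Lambda)$ by extending $\upvartheta\in\Uptheta(\Gamma)$ to $\tilde\upvartheta\in\Uptheta(\Lambda)$ with $\tilde\upvartheta_j=0$ for $j>r$. Since the walls of $\Uptheta(\Lambda)$ form the classical root hyperplane arrangement (Cassens--Slodowy, Kronheimer), the strategy is to reduce everything on $\Gamma$ to this known picture.

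For \textbf{(1)}, I would use the adjoint pair $F:=\Lambda e\otimes_{\Gamma}(-)$ and $G:=e(-)$ between $\mod\Gamma$ and $\mod\Lambda$, which satisfies $G\circ F\cong\Id$. The crucial identity is that for any $\Lambda$-submodule $N'\subseteq F(M)$, the $\Gamma$-submodule $eN'\subseteq eF(M)=M$ satisfies
\[
\tilde\upvartheta\cdot\vdim N'\;=\;\upvartheta\cdot\vdim eN',
\]
because $\tilde\upvartheta$ vanishes at vertices $j>r$. This matches destabilising subobjects of $F(M)$ under $\tilde\upvartheta$ with destabilising subobjects of $M$ under $\upvartheta$, so $\upvartheta\in L$ lies on a wall of $\Uptheta(\Gamma)$ exactly when $\tilde\upvartheta$ lies on a wall of $\Uptheta(\Lambda)$ not contained in $L$: root hyperplanes entirely containing $L$ contribute nothing upon intersection.

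Part \textbf{(2)} then falls out. Finiteness of the hyperplane arrangement in $\Uptheta(\Gamma)$ is immediate from (1), and the appearance of the coordinate hyperplanes $\upvartheta_i=0$ as walls follows from \ref{Cplus is chamber} applied to $\Gamma$, which exhibits $C_+(\Gamma)$ as a chamber bounded by precisely these hyperplanes. For part \textbf{(3)}, my plan is to start from $C_+(\Gamma)$ and iteratively mutate at each indecomposable non-free summand: by \ref{stab change b and c}, the chamber $C_+(\upnu_i\Gamma)$ tracks back to the chamber of $\Uptheta(\Gamma)$ adjacent to $C_+(\Gamma)$ across $\upvartheta_i=0$. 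Iterating yields a tree of chambers, and to show it exhausts $\Uptheta(\Gamma)$ one argues by adjacency: any chamber $C$ is connected to $C_+(\Gamma)$ by a generic path crossing finitely many walls $W_1,\ldots,W_s$, each of which by (1) is exactly the wall $\upvartheta_{i_j}=0$ of some iterated mutation $\upnu_{i_{j-1}}\cdots\upnu_{i_1}\Gamma$, forcing $C=C_+(\upnu_{i_s}\cdots\upnu_{i_1}\Gamma)$ tracked back. Finiteness of the mutation procedure is guaranteed by the classical ADE Auslander--McKay correspondence.

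I expect the main obstacle to be in \textbf{(1)}: pinning down sharply which walls of $\Uptheta(\Lambda)$ survive intersection with $L$. Walls of $\Uptheta(\Lambda)$ are detected by arbitrary sub-dimension vectors (corresponding to positive roots of the affine Weyl group), whereas on $\Gamma$ only the dimension vector $\rk$ matters; the identity $\tilde\upvartheta\cdot\vdim N'=\upvartheta\cdot\vdim eN'$ is what makes the translation go through, but making this uniform across the arrangement is the delicate step.
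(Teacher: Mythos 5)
Your plan for part (1) has a genuine gap, and it is exactly the step you flag at the end as ``delicate''. The pairing identity $\tilde\upvartheta\cdot\vdim N'=\upvartheta\cdot\vdim\, eN'$ and the adjunction between $\Lambda e\otimes_\Gamma(-)$ and $e(-)$ do not by themselves match walls of $\Uptheta(\Gamma)$ with intersections $L\cap(\mbox{walls of }\Uptheta(\Lambda))$. A wall of $\Uptheta(\Gamma)$ is the locus where some $\Gamma$-module of the fixed dimension vector is strictly semistable; to transport this to $\Lambda$ you must produce from it a $\Lambda$-module of dimension vector $\rk$ \emph{including the correct multiplicities at the contracted vertices} which is strictly $\tilde\upvartheta$-semistable, and conversely show that every strictly $\tilde\upvartheta$-semistable $\Lambda$-module of dimension $\rk$ restricts under $e(-)$ to a strictly semistable (not unstable, not stable) $\Gamma$-module. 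The functor $\Lambda e\otimes_\Gamma(-)$ controls neither the dimension vector at the vertices $j>r$ nor semistability, so neither direction is established; in particular you cannot yet rule out that a root hyperplane meeting $L$ fails to be a wall for $\Gamma$, or that $\Uptheta(\Gamma)$ has extra walls invisible in $\Uptheta(\Lambda)$. Since parts (2) and (3) in your write-up lean on (1) (finiteness of the arrangement, and the identification of each crossed wall with a coordinate wall $\upvartheta_i=0$ of some iterated mutation), the gap propagates.

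For comparison, the paper never compares (semi)stable modules across the idempotent directly. It contracts one curve at a time, $X\to X_{n-1}\to\cdots\to Y$, and runs an induction in which all three statements are proved simultaneously: on each algebra it mutates at indecomposable summands and tracks $C_+$ back using \ref{stab change for b=c} (valid since $U_i\cong V_i$ by \ref{Ui=Vi for cDV}) together with \ref{main stab track}\eqref{main stab track 4}, which guarantees the tracked regions are genuine chambers bounded by strictly semistable loci. Comparing the chamber tracked on the big algebra $\Lambda$ (mutating $N_i\oplus N_n$) with the chamber tracked on $\Lambda_{n-1}$ (mutating $N_i$) shows the walls of the latter are obtained by setting $\upvartheta_n=0$ in the walls of the former, and the known finite root-hyperplane arrangement for the full NCCR (Cassens--Slodowy, Kronheimer) is what lets the induction terminate and sweep out every chamber. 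Your outline of (3) is close in spirit to this mutation-tracking argument, but without a correct proof of (1) the identification of arbitrary walls with mutation walls is not available. To repair your approach you would either have to prove the module-theoretic correspondence across $e(-)$ honestly (a nontrivial statement about partial resolutions as moduli for degenerate parameters), or fold (1) into the inductive tracking argument as the paper does.
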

\begin{proof}
With the ordering of the curves as in the statement, we first contract $C_n$, then $C_{n-1}$, and continue to obtain a chain of crepant morphisms
\[
X\xrightarrow{f_1} X_{n-1}\xrightarrow{f_2}\hdots\to Y.
\]
The intersection in \eqref{surface intersection root 1} can be calculated inductively, so we first establish the result is true for  $\Lambda_{n-1}:=\End_{X_{n-1}}(\cV_{X_{n-1}})\cong(1-e_n)\Lambda(1-e_n)$.  

As notation, $\Uptheta(\Lambda)$ has coordinates $\upvartheta_1,\hdots,\upvartheta_{n}$, and we let $S$ be the subspace spanned by $\upvartheta_1,\hdots,\upvartheta_{n-1}$.  By abuse of notation, we let $\upvartheta_1,\hdots,\upvartheta_{n-1}$ also denote the coordinates of $\Uptheta(\Lambda_{n-1})$, so that we identify $\Uptheta(\Lambda_{n-1})$ with $S$.  We let $W_S$ be the set of walls of $\Uptheta(\Lambda)$ not containing $S$, then the intersection $S\cap W_S$ partitions $S$ into a finite number of regions.  We claim that these are precisely the chambers of $\Uptheta(\Lambda_{n-1})$.

First, since by \ref{Cplus is chamber} $\{\upvartheta\in\Uptheta(\Lambda)\mid \upvartheta_i>0 \mbox{ for all }1\leq i\leq n\}$ is a chamber of $\Uptheta(\Lambda)$, certainly no walls of $ \Uptheta(\Lambda)$ intersect $\{\upvartheta\in S\mid \upvartheta_i>0 \mbox{ for all }1\leq i\leq n-1\}$.  Thus we may identify this region of $S\backslash(S\cap W_S)$ with $C_+$ in $\Uptheta(\Lambda_{n-1})$.

Next, on $\Lambda$ we mutate the summand $N_1\oplus N_n$.  By \ref{stab change for b=c}, tracking $C_+$ from $\upnu_{\{1,n\}}\Lambda$ to $\Lambda$ using the formula in \ref{bi defin post ref} gives the chamber 
\begin{eqnarray}
\upvartheta_1<0,\quad\upvartheta_n<0,\quad\upvartheta_i+b_i\upvartheta_1+a_i\upvartheta_n>0\mbox{ for all }i\notin\{1,n\}\label{cham 1}
\end{eqnarray}
of $\Uptheta(\Lambda)$, where the $b_i$ are obtained from an $\add(R\oplus N_2\oplus\hdots\oplus N_{n-1})$-approximation of $N_1$, and the  $a_i$ are obtained from an $\add(R\oplus N_2\oplus\hdots\oplus N_{n-1})$-approximation of $N_n$.   On the other hand, using the approximation of $N_1$ above, by \ref{stab change for b=c} tracking $C_+$ from $\upnu_1\Lambda_{n-1}$ to $\Lambda_{n-1}$ using the formula in \ref{bi defin post ref} gives the chamber 
\begin{eqnarray}
\upvartheta_1<0,\quad\upvartheta_i+b_i\upvartheta_1>0\mbox{ for all }i\neq 1\label{cham 2}
\end{eqnarray}
of $\Uptheta(\Lambda_{n-1})$.  We already know the $\upvartheta_1=0$ edge of \eqref{cham 2} is a wall, and since the other edge walls of $C_+$ on $\upnu_1\Lambda_{n-1}$ can also be tracked by \ref{main stab track}\eqref{main stab track 4} to give strictly semi-stable points, the walls bounding \eqref{cham 2} are precisely the intersection of the walls bounding \eqref{cham 1} with $S$ (just set $\upvartheta_n=0$).  Since we know that the walls of $\Uptheta(\Lambda)$ are a hyperplane arrangement of planes through the origin \cite{CS, Kr}, this implies that the walls of the chamber \eqref{cham 2} of $\Uptheta(\Lambda_{n-1})$ are given by intersecting $S$ with all members of $W_S$.  There is nothing special about $\upvartheta_1$, so by symmetry all the walls of all the chambers bordering $C_+$ in $\Uptheta(\Lambda_{n-1})$ are given by intersecting $S$ with the elements of  $W_S$.

The proof then proceeds by induction.  By applying the argument above, tracking $C_+$ from $\upnu_{\{2,n\}}\upnu_{\{1,n\}}\Lambda$ to $\upnu_{\{1,n\}}\Lambda$, implies that tracking $C_+$ from $\upnu_2\upnu_1\Lambda_{n-1}$ to $\upnu_1\Lambda_{n-1}$ gives a chamber in $\Uptheta(\upnu_1\Lambda_{n-1})$, adjacent to $C_+$, cut out by intersecting walls from $\Uptheta(\Lambda)$.  In particular, the plane $x_1=0$ does not cut through this chamber, so by  \ref{stab change for b=c} we can track the full chamber all the way back to $\Uptheta(\Lambda_{n-1})$ to obtain a chamber adjacent to \eqref{cham 2}.  Again, the same argument shows that its walls are given by intersecting $S$ with the elements of  $W_S$.  By symmetry,  all the walls of all the chambers bordering all the chambers that border $C_+$ in $\Uptheta(\Lambda_{n-1})$ are given by intersecting $S$ with the elements of  $W_S$. 

Since $\Uptheta(\Lambda)$ has finitely many walls \cite{CS, Kr}, so does $S\cap W_S$, so continuing the above process all the walls of $\Uptheta(\Lambda_{n-1})$ are given by intersecting $S$ with the elements of  $W_S$, and each region is the tracking of $C_+$ under iterated mutation.  This proves \eqref{surface intersection root 1}, \eqref{surface intersection root 2} and \eqref{surface intersection root 3} for $X_{n-1}$. 

Next, consider $f_2\colon X_{n-1}\to X_{n-2}$.  Since by above $\Uptheta(\Lambda_{n-1})$, and all other $\Lambda'_{n-1}$ obtained from $X$ by contracting only a single curve, have walls given by a finite collection of hyperplanes passing through the origin, the above argument can be repeated to $\Lambda_{n-2}\cong(1-e_{n-1})\Lambda_{n-1}(1-e_{n-1})$ to show that $\Uptheta(\Lambda_{n-2})$ can be obtained from $\Uptheta(\Lambda_{n-1})$ by intersecting (and thus from $\Uptheta(\Lambda)$ by intersecting), and each region is the tracking of $C_+$ under iterated mutation.  By induction, parts \eqref{surface intersection root 1}, \eqref{surface intersection root 2} and \eqref{surface intersection root 3} follow.    
\end{proof}

The following is the main result of this subsection.

\begin{cor}\label{chamber 3fold=chamber surface}
With the $d=3$ crepant setup $f\colon X\to\Spec R$ of \ref{crepant setup}, set $N:=H^0(\cV_X)$ and $\Lambda:=\End_X(\cV_X)\cong\End_R(N)$.  Suppose further that either
\begin{enumerate}
\item[(A)] $f\colon X\to\Spec R$ is a minimal model, or
\item[(B)] $f\colon X\to\Spec R$ is a flopping contraction.
\end{enumerate}
Then for sufficiently generic $g$,
\begin{enumerate}
\item\label{chamber 3fold=chamber surface 1} The chamber structure of $\Uptheta(\Lambda)$ is the same as the chamber structure of $\Uptheta(\Lambda/g\Lambda)$.
\item\label{chamber 3fold=chamber surface 2}  $\Uptheta(\Lambda)$ has a finite number of chambers, and the walls are given by a finite collection of hyperplanes containing the origin.  The co-ordinate hyperplanes $\upvartheta_i=0$ are included in this collection.
\item\label{chamber 3fold=chamber surface 3}
Considering iterated mutations at indecomposable summands, tracking the chamber $C_+$ on $\upnu_{i_1}\hdots\upnu_{i_t}\Lambda$ back to $\Uptheta(\Lambda)$ gives all the chambers of $\Uptheta(\Lambda)$.
\end{enumerate}
\end{cor}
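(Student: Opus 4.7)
The plan is to reduce to the surface case established in \ref{surface intersection root} by showing that the chamber structures of $\Uptheta(\Lambda)$ and $\Uptheta(\Lambda/g\Lambda)$ coincide, then quoting the surface result.

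First I would fix a sufficiently generic hyperplane section $g\in\m_R$ using Reid's general elephant principle \cite{Pagoda}, so that $R/g$ is an ADE surface singularity and $X_2\to \Spec(R/g)$ is a partial crepant resolution. By \ref{chamber 3fold=chamber surface prep}\eqref{chamber 3fold=chamber surface prep 1} one has $\bar\Lambda:=\Lambda/g\Lambda\cong\End_{R/g}(N/gN)$, and since the quivers of $\Lambda$ and $\bar\Lambda$ have the same vertex set (namely the indecomposable summands of $N$, whose reductions remain indecomposable by \ref{mut preserved by generic}\eqref{mut preserved by generic 1}), the parameter spaces $\Uptheta(\Lambda)$ and $\Uptheta(\bar\Lambda)$ are naturally identified as $\mathds{Q}$-vector spaces with the same co-ordinates $\upvartheta_i$ for $i\neq 0$.

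The crucial observation is that the combinatorics governing mutation is preserved under $-\otimes_R R/g$. By \ref{chamber 3fold=chamber surface prep}\eqref{chamber 3fold=chamber surface prep 2}\eqref{chamber 3fold=chamber surface prep 3}, every minimal approximation sequence in the mutation of $N$ reduces to the corresponding minimal approximation sequence for $N/gN$, so the integers $\boldb=(b_{i,j})$ and $\boldc=(c_{i,j})$ that appear in the operations $\upnu_{\boldb}\upvartheta$ and $\upnu_{\boldc}\upvartheta$ agree on the nose between $\Lambda$ and $\bar\Lambda$. Moreover \ref{Ui=Vi for cDV} gives $U_i\cong V_i$, so $\boldb=\boldc$, allowing the use of \ref{stab change for b=c} to cross walls $\upvartheta_i=0$ in a single step.

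I would then track the chamber $C_+$ (which is a chamber of $\Uptheta(\Lambda)$ by \ref{Cplus is chamber}) through iterated mutation at indecomposable summands. To invoke \ref{main stab track} and \ref{stab change for b=c} at each step, the hypothesis $\upnu_i\upnu_i(-)\cong \mathrm{id}$ must hold on the sequence of algebras obtained from $\Lambda$ by mutation: in case (A), this follows from \ref{basic2}\eqref{basic2 2} together with the fact that the mutated algebras remain MMAs by \cite[4.16]{IW4}; in case (B), it follows from \ref{mutmutI general} applied to each contracted model, using \ref{contract on f} to transport the hypothesis that $X_{\con}$ has only hypersurface singularities along the mutation. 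Granted this, each iterated mutation $\upnu_{i_1}\cdots\upnu_{i_t}\Lambda$ has its own positive chamber $C_+$, whose image under moduli-tracking is a chamber of $\Uptheta(\Lambda)$ bounded by hyperplanes determined entirely by the integers $\boldb$ at each step, and by the same token it produces the identical region in $\Uptheta(\bar\Lambda)$.

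Finally I would conclude by combining the above with \ref{surface intersection root}\eqref{surface intersection root 3}, which asserts that the tracked regions in $\Uptheta(\bar\Lambda)$ exhaust all of its chambers. Since those regions are literally the same subsets of the common parameter space as the tracked regions in $\Uptheta(\Lambda)$, the hyperplane arrangements coincide; this gives parts \eqref{chamber 3fold=chamber surface 1} and \eqref{chamber 3fold=chamber surface 3}. Part \eqref{chamber 3fold=chamber surface 2} then follows from \ref{surface intersection root}\eqref{surface intersection root 2}. The main obstacle I foresee is bookkeeping the hypotheses of \ref{main stab track} along the iteration, in particular verifying in case (B) that after a sequence of mutations the resulting algebra still corresponds to a scheme with only hypersurface singularities above the origin, so that the involutive property $\upnu_i\upnu_i\cong\mathrm{id}$ remains available; the rest is a clean transfer of combinatorics across the generic hyperplane section.
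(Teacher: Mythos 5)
Your proposal is correct and follows essentially the same route as the paper: reduce to the surface by using \ref{chamber 3fold=chamber surface prep} to show the tracking combinatorics (the approximation data $\boldb=\boldc$, via \ref{Ui=Vi for cDV}) are identical for $\Lambda$ and $\Lambda/g\Lambda$, verify the hypotheses of \ref{main stab track} via \ref{basic2} in case (A) and \ref{mutmutI general} with \ref{contract on f} in case (B), and then quote \ref{surface intersection root} to get exhaustion and finiteness. The extra bookkeeping you flag is exactly what the paper handles in the proof of \ref{see the flop}, so there is no gap.
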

\begin{proof}
By \ref{chamber 3fold=chamber surface prep} the combinatorics of tracking $C_+$ are the same for the surface $R/gR$ as they are for the $3$-fold $R$, so all parts follow immediately from \ref{surface intersection root}.
\end{proof}

\subsection{Surfaces Chamber Structure via AR theory}\label{calculate chamber subsection} Having in \ref{surface intersection root} and \ref{chamber 3fold=chamber surface} reduced the problem to tracking the chamber $C_+$ under iterated mutation for partial crepant resolutions of Kleinian singularities, in this subsection we illustrate the combinatorics in two examples, summarising others in \S\ref{GIT examples section}, and give some applications.  

The intersection in \ref{surface intersection root}\eqref{surface intersection root 1} is in practice very cumbersome to calculate, since the full root systems are very large and contain much redundant  information.  In addition to giving an easy, direct way of calculating the chamber structure, the benefit of working with mutation is that we also obtain, in \ref{minimal bound},  a lower bound for the number of minimal models on the $3$-fold.

\begin{example}\label{knitting example}
Let $S$ be the $E_7$ surface singularity, and consider the partial resolution $Y\to\Spec S$ depicted by
\begin{eqnarray}
\begin{array}{c}
\begin{tikzpicture}
\node (-1) at (-0.75,0) [DB] {};
\node (0) at (0,0) [DB] {};
\node (1) at (0.75,0) [DW] {};
\node (1b) at (0.75,0.75) [DB] {};
\node (2) at (1.5,0) [DB] {};
\node (3) at (2.25,0) [DW] {};
\node (4) at (3,0) [DB] {};
\draw [-] (-1) -- (0);
\draw [-] (0) -- (1);
\draw [-] (1) -- (2);
\draw [-] (2) -- (3);
\draw [-] (3) -- (4);
\draw [-] (1) -- (1b);
\node at (-0.75,-0.3) {$\scriptstyle B_1$};
\node at (0,-0.3) {$\scriptstyle C_1$};
\node at (0.75,-0.3) {$\scriptstyle D$};
\node at (1.1,0.75) {$\scriptstyle B_3$};
\node at (1.5,-0.3) {$\scriptstyle C_2$};
\node at (2.25,-0.3) {$\scriptstyle B_2$};
\node at (3,-0.3) {$\scriptstyle A_2$};
\end{tikzpicture}
\end{array}\label{E7 config example}
\end{eqnarray}
where the vertices have been labelled by their corresponding CM $S$-modules.  We calculate the chamber structure of $\End_S(S\oplus B_2\oplus D)$.  The AR quiver for $\CM S$ is
\[
\begin{array}{c}
\begin{tikzpicture}[>=stealth,xscale=1.6,yscale=1.5]
\node (R0) at (0.5,0.5) {$\scriptstyle S$};
\node (A0) at (0,0)  {$\scriptstyle B_1$};
\node (A1) at (1,0)  {$\scriptstyle B_1$};
\node (B0) at (0.5,-0.5) {$\scriptstyle C_1$};
\node (C0) at (0,-1)  {$\scriptstyle D$};
\node (C1) at (0.5,-1)  {$\scriptstyle B_3$};
\node (C2) at (1,-1)   {$\scriptstyle D$};
\node (D0) at (0.5,-1.5)  {$\scriptstyle C_2$};
\node (E0) at (0,-2)  {$\scriptstyle B_2$};
\node (E1) at (1,-2)  {$\scriptstyle B_2$};
\node (F0) at (0.5,-2.5)  {$\scriptstyle A_2$};
\draw[->] (A0)--(R0);
\draw[->] (R0)--(A1);
\draw[->] (A0)--(B0);
\draw[->] (B0)--(A1);
\draw[->] (C0)--(B0);
\draw[->] (B0)--(C2);
\draw[->] (C0)--(C1);
\draw[->] (C1)--(C2);
\draw[->] (C0)--(D0);
\draw[->] (D0)--(C2);
\draw[->] (E0)--(D0);
\draw[->] (D0)--(E1);
\draw[->] (E0)--(F0);
\draw[->] (F0)--(E1);
\end{tikzpicture}
\end{array}
\]
where the left and right hand sides are identified.  Via knitting, as in \ref{knitting demonstration},
\[
\begin{array}{ccc}
\begin{array}{c}
\begin{tikzpicture}[xscale=0.85,yscale=0.85]
\node (R0) at (0.5,0.5) {$0$};
\node (R1) at (1.5,0.5) {$0$};
\node (A0) at (0,0) [vertex] {};
\node (A1) at (1,0) {$0$};
\node (A2) at (2,0) [vertex] {};
\node (B0) at (0.5,-0.5) {$0$};
\node (B1) at (1.5,-0.5) {$0$};
\node (C0) at (0,-1)  [vertex] {};
\node (C1) at (0.5,-1) {$0$};
\node (C2) at (1,-1)  {$1$};
\node (C3) at (1.5,-1) {$0$};
\node (C4) at (2,-1)  [vertex] {};
\node (D0) at (0.5,-1.5) {$0$};
\node (D1) at (1.5,-1.5) {$1$};
\node (E0) at (0,-2)  {$-1$};
\node (E1) at (1,-2)  {$1$};
\node (E2) at (2,-2)  {$1$};
\node (F0) at (0.5,-2.5)  {$0$};
\node (F1) at (1.5,-2.5)  {$1$};
\draw (1,-1) circle (7.5pt);
\draw (0.5,0.5) circle (7.5pt);
\draw (1.5,0.5) circle (7.5pt);
\node[draw,regular polygon, regular polygon sides=4, minimum size=0.6cm] at (2,-2) {};
\end{tikzpicture}
\end{array}
&
&
\begin{array}{c}
\begin{tikzpicture}[xscale=0.85,yscale=0.85]
\node (R8) at (8.5,0.5) {$0$};
\node (R9) at (9.5,0.5) {$1$};
\node (R10) at (10.5,0.5) {$1$};
\node (R11) at (11.5,0.5) {$0$};
\node (A8) at (8,0) [vertex] {};
\node (A9) at (9,0) {$1$};
\node (A10) at (10,0) {$1$};
\node (A11) at (11,0) {$0$};
\node (A12) at (12,0) [vertex] {};
\node (B8) at (8.5,-0.5) {$0$};
\node (B9) at (9.5,-0.5) {$1$};
\node (B10) at (10.5,-0.5) {$2$};
\node (B11) at (11.5,-0.5) {$1$};
\node (C16) at (8,-1)  {$-1$};
\node (C17) at (8.5,-1)  {$0$};
\node (C18) at (9,-1)  {$1$};
\node (C19) at (9.5,-1)  {$1$};
\node (C20) at (10,-1)  {$2$};
\node (C21) at (10.5,-1)  {$1$};
\node (C22) at (11,-1)  {$2$};
\node (C23) at (11.5,-1)  {$1$};
\node (C24) at (12,-1)   {$1$};
\node (D8) at (8.5,-1.5)  {$0$};
\node (D9) at (9.5,-1.5)  {$1$};
\node (D10) at (10.5,-1.5)  {$1$};
\node (D11) at (11.5,-1.5)  {$1$};
\node (E8) at (8,-2)  [vertex] {};
\node (E9) at (9,-2) {$1$};
\node (E10) at (10,-2)  {$1$};
\node (E11) at (11,-2)  {$1$};
\node (E12) at (12,-2)  [vertex] {};
\node (F8) at (8.5,-2.5)  {$0$};
\node (F9) at (9.5,-2.5)  {$0$};
\node (F10) at (10.5,-2.5)  {$0$};
\node (F11) at (11.5,-2.5)  {$0$};
\draw (11,-2) circle (7.5pt);
\draw (10,-2) circle (7.5pt);
\draw (9,-2) circle (7.5pt);
\draw (8.5,0.5) circle (7.5pt);
\draw (9.5,0.5) circle (7.5pt);
\draw (10.5,0.5) circle (7.5pt);
\draw (11.5,0.5) circle (7.5pt);
\node[draw,regular polygon, regular polygon sides=4, minimum size=0.6cm] at (12,-1) {};
\end{tikzpicture}
\end{array}
\end{array}
\]
we obtain the exchange sequences
\begin{eqnarray}
0\to B_2\to D\to B_2\to 0\label{mut 1 stab}\\
0\to D\to R^{\oplus 2}\oplus B_2^{\oplus 3}\to D\to 0\label{mut 2 stab}
\end{eqnarray}
Thus in this example, the dual graph does not change under mutation.  Now fix the ordering of the curves  
\[
\begin{array}{c}
\begin{tikzpicture}
\node (-1) at (-0.75,0) [DB] {};
\node (0) at (0,0) [DB] {};
\node (1) at (0.75,0) [DW] {};
\node (1b) at (0.75,0.75) [DB] {};
\node (2) at (1.5,0) [DB] {};
\node (3) at (2.25,0) [DW] {};
\node (4) at (3,0) [DB] {};
\draw [-] (-1) -- (0);
\draw [-] (0) -- (1);
\draw [-] (1) -- (2);
\draw [-] (2) -- (3);
\draw [-] (3) -- (4);
\draw [-] (1) -- (1b);
\node at (0.75,-0.3) {$\scriptstyle 2$};
\node at (2.25,-0.3) {$\scriptstyle 1$};
\end{tikzpicture}
\end{array}
\]
First, we track the $C_+$ chamber from $\upnu_1\Lambda$ to $\Lambda$.  By \ref{stab change for b=c}, 
\[
\begin{array}{c}
\upphi_1\\
\upphi_2
\end{array}
\stackrel{\scriptsize\mbox{\eqref{mut 1 stab}}}{\mapsto} 
\begin{array}{c}
-\upphi_1\\
\upphi_1+\upphi_2
\end{array}
\]
and so the $C_+$ chamber from $\upnu_1\Lambda$ maps to  the region $\upvartheta_1<0$ and $\upvartheta_1+\upvartheta_2>0$ of $\Lambda$, and thus this is a chamber for $\Lambda$.  Next, we track the $C_+$ chamber from $\upnu_2\upnu_1\Lambda$ to $\upnu_1\Lambda$ to $\Lambda$.  By the same logic   
\[
\begin{array}{c}
\upphi_1\\
\upphi_2
\end{array}
\stackrel{\scriptsize\mbox{\eqref{mut 2 stab}}}{\mapsto} 
\begin{array}{c}
\upphi_1+3\upphi_2\\
-\upphi_2
\end{array}
\stackrel{\scriptsize\mbox{\eqref{mut 1 stab}}}{\mapsto} 
\begin{array}{c}
-(\upphi_1+3\upphi_2)\\
-\upphi_2+(\upphi_1+3\upphi_2)
\end{array}
=
\begin{array}{r}
-\upphi_1-3\upphi_2\\
\upphi_1+2\upphi_2
\end{array}
\]
which is precisely the region $\upvartheta_1+\upvartheta_2<0$
and $2\upvartheta_1+3\upvartheta_2>0$ of $\Lambda$, and so this too is a chamber.  Continuing in this fashion, we obtain the chamber structure illustrated in Figure~\ref{E7 config example 2}, where there are 12 chambers in total.
\begin{figure}[H]
\[
\begin{array}{ccc}
\begin{array}{c}
\begin{tikzpicture}[scale=0.75]
\coordinate (A1) at (135:2cm);
\coordinate (A2) at (-45:2cm);
\coordinate (B1) at (-33.690:2cm);
\coordinate (B2) at (146.31:2cm);
\coordinate (C1) at (153.435:2cm);
\coordinate (C2) at (-26.565:2cm);
\coordinate (D1) at (161.565:2cm);
\coordinate (D2) at (-18.435:2cm);
\draw[red] (A1) -- (A2);
\draw[blue] (B1) -- (B2);
\draw[green] (C1) -- (C2);
\draw[purple] (D1) -- (D2);
\draw[->] (-2,0)--(2,0);
\node at (2.5,0) {$\upvartheta_1$};
\draw[->] (0,-2)--(0,2);
\node at (0.5,2.25) {$\upvartheta_2$};
\draw[densely dotted,gray] (0,0) circle (2cm);
\end{tikzpicture}
\end{array}
&&
\begin{array}{cl}
&\upvartheta_1=0\\
&\upvartheta_2=0\\
\begin{array}{c}\begin{tikzpicture}\node at (0,0){}; \draw[red] (0,0)--(0.5,0);\end{tikzpicture}\end{array}&\upvartheta_1+\upvartheta_2=0\\
\begin{array}{c}\begin{tikzpicture}\node at (0,0){}; \draw[blue] (0,0)--(0.5,0);\end{tikzpicture}\end{array}&2\upvartheta_1+3\upvartheta_2=0\\
\begin{array}{c}\begin{tikzpicture}\node at (0,0){}; \draw[green] (0,0)--(0.5,0);\end{tikzpicture}\end{array}&\upvartheta_1+2\upvartheta_2=0\\
\begin{array}{c}\begin{tikzpicture}\node at (0,0){}; \draw[purple] (0,0)--(0.5,0);\end{tikzpicture}\end{array}&\upvartheta_1+3\upvartheta_2=0
\end{array}
\end{array}
\]
\caption{Chamber structure for $E_7$ with configuration \eqref{E7 config example}.}\label{E7 config example 2}
\end{figure}
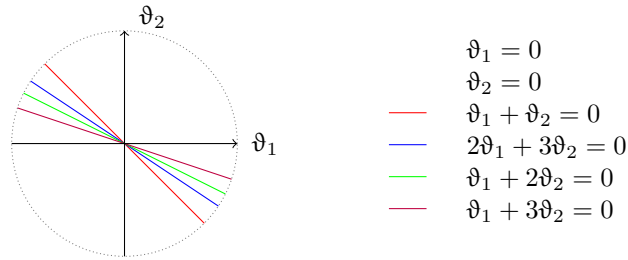
\end{example}

However, often the dual graph does change under mutation.
\begin{example}\label{knitting example 2}
Let $S$ be the $D_4$ surface singularity, and consider the partial resolution depicted by
\begin{eqnarray}
\begin{array}{c}
\begin{tikzpicture}
\node (0) at (0,0) [DW] {};
\node (1) at (0.75,0) [DB] {};
\node (1b) at (0.75,0.75) [DW] {};
\node (2) at (1.5,0) [DW] {};
\draw [-] (0) -- (1);
\draw [-] (1) -- (2);
\draw [-] (1) -- (1b);
\node at (0,-0.3) {$\scriptstyle A_1$};
\node at (0.75,-0.3) {$\scriptstyle M$};
\node at (1.1,0.75) {$\scriptstyle A_3$};
\node at (1.5,-0.3) {$\scriptstyle A_2$};
\end{tikzpicture}
\end{array}\label{D4 3 curve text figure}
\end{eqnarray}
Via knitting, to mutate at $A_3$ the relevant exchange sequence is
\begin{eqnarray}
0\to M\to R\oplus A_1\oplus A_2\to A_3\to 0\label{mut stab 3}
\end{eqnarray}
Hence $\upnu_3\Lambda=\End_R(R\oplus A_1\oplus A_2\oplus M)$, and this corresponds to the partial resolution 
\begin{eqnarray}
\begin{array}{c}
\begin{tikzpicture}
\node (0) at (0,0) [DW] {};
\node (1) at (0.75,0) [DW] {};
\node (1b) at (0.75,0.75) [DB] {};
\node (2) at (1.5,0) [DW] {};
\draw [-] (0) -- (1);
\draw [-] (1) -- (2);
\draw [-] (1) -- (1b);
\end{tikzpicture}
\end{array}\label{D4 3 curve text figure 2}
\end{eqnarray}
Hence under mutation, the dual graph changes from \eqref{D4 3 curve text figure} to \eqref{D4 3 curve text figure 2}.  Tracking the chamber $C_+$ from $\upnu_3\Lambda$ to $\Lambda$, by \ref{stab change for b=c}
\[
\begin{array}{c}
\upphi_1\\
\upphi_2\\
\upphi_3
\end{array}
\stackrel{\scriptsize\mbox{\eqref{mut stab 3}}}{\mapsto} 
\begin{array}{c}
\upphi_1+\upphi_3\\
\upphi_2+\upphi_3\\
-\upphi_3
\end{array}
\]
which is the region $\upvartheta_3<0$, $\upvartheta_1+\upvartheta_3>0$, $\upvartheta_2+\upvartheta_3>0$.  Hence this is a chamber for $\Lambda$, and it corresponds to a difference curve configuration.  Note that since $\upnu_3N\ncong N$ on the surface, by combining \ref{MMmodule under flop cor} and \ref{chamber 3fold=chamber surface prep} any cDV singularity with minimal model that cuts under generic hyperplane section to \eqref{D4 3 curve text figure} must flop when crossing the wall $\upvartheta_3=0$.  By symmetry, it must also flop when crossing the walls $\upvartheta_1=0$ and $\upvartheta_2=0$. This shows that any such $3$-fold must have at least four minimal models, since we can flop three different curves.  In this example,  the full chamber structure coincides (after a change in parameters) with the chamber structure in Figure~\ref{cD4 121}, so there are 32 chambers in total.
\end{example}

The following is an extension of the above observation.  In each chamber of $\Uptheta(\Lambda/g\Lambda)$, we draw the curve configuration appearing on the surface mutation calculation, as calculated in the above example.  We refer to this as the \emph{enhanced chamber structure} of $\Uptheta(\Lambda/g\Lambda)$.  See \ref{enriched E7 example} for an example.

\begin{lemma}\label{minimal bound}
Suppose that $R$ is a $cDV$ singularity, with a minimal model $X\to\Spec R$.  Set $\Lambda:=\End_R(N)$, where $N:=H^0(\cV_X)$.  Calculating $\Uptheta(\Lambda)$ by passing to a general hyperplane section $g$, the number of minimal models of $\Spec R$ is bounded below by the number of different curve configurations obtained in the enhanced chamber structure of $\Uptheta(\Lambda/g\Lambda)$.  
\end{lemma}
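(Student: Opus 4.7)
The plan is to use the Auslander--McKay correspondence together with the compatibility of mutation under generic hyperplane section, and to argue that distinct enhanced chambers produce distinct MM generators.

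First, by Corollary~\ref{NCvsC min model}, the minimal models $X_i\to\Spec R$ are in bijection with the basic MM generators $N_i$ of $R$, via $N_i=H^0(\cV_{X_i})$. Thus it suffices to show that the number of distinct curve configurations that appear in the enhanced chamber structure of $\Uptheta(\Lambda/g\Lambda)$ is bounded above by the number of basic MM generators of $R$. Next, by Corollary~\ref{chamber 3fold=chamber surface}, for sufficiently generic $g$ we have an identification of chamber structures $\Uptheta(\Lambda)\cong\Uptheta(\Lambda/g\Lambda)$, and every chamber of $\Uptheta(\Lambda)$ is obtained by tracking $C_+$ on some iterated mutation $\upnu_{i_t}\cdots\upnu_{i_1}\Lambda$ back to $\Uptheta(\Lambda)$. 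Each such iterated mutation is of the form $\End_R(M)$ for $M:=\upnu_{i_t}\cdots\upnu_{i_1}N$, which is another basic MM generator by Corollary~\ref{NCvsC min model}, and thus $M=H^0(\cV_{Y})$ for a (unique) minimal model $Y\to \Spec R$.

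The key compatibility is Corollary~\ref{chamber 3fold=chamber surface prep}\eqref{chamber 3fold=chamber surface prep 1}\eqref{chamber 3fold=chamber surface prep 2}, which asserts that generic hyperplane section commutes with mutation: namely $\Lambda/g\Lambda\cong\End_{R/gR}(N/gN)$, and the iterated exchange sequences defining the mutation of $N$ reduce, after tensoring with $R/gR$, to the corresponding iterated exchange sequences for $N/gN$ over the Kleinian surface singularity $R/gR$. In particular
\begin{equation*}
(\upnu_{i_t}\cdots\upnu_{i_1}N)/g(\upnu_{i_t}\cdots\upnu_{i_1}N)\cong \upnu_{i_t}\cdots\upnu_{i_1}(N/gN).
\end{equation*}
Hence the curve configuration labelling a given chamber of the enhanced structure of $\Uptheta(\Lambda/g\Lambda)$ is precisely the partial crepant resolution of $\Spec(R/gR)$ corresponding to the indecomposable non-free summands of $M/gM$, where $M$ is the MM generator attached to that chamber as above.

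Now suppose two chambers carry different curve configurations, with associated MM generators $M$ and $M'$, and corresponding minimal models $Y$ and $Y'$. If $Y\cong Y'$ compatibly with the contractions, then $M\cong M'$ by Corollary~\ref{NCvsC min model}, and therefore $M/gM\cong M'/gM'$, so by Krull--Schmidt (applicable since $R$ is complete local, cf.\ Proposition~\ref{mut preserved by generic}\eqref{mut preserved by generic 1}) the two chambers share the same labelling curve configuration, a contradiction. Consequently the assignment of curve configurations to chambers factors through the (finite) set of minimal models of $\Spec R$, and distinct curve configurations force distinct minimal models. The stated lower bound follows. The only delicate point, and the main thing requiring care, is verifying that the curve configuration attached to a chamber is well-defined, i.e.\ independent of the particular sequence of indecomposable mutations used to reach the chamber from $C_+$; but this is immediate from the identification above, since $M$ itself depends only on the chamber (by Corollary~\ref{NCvsC min model} combined with Corollary~\ref{mut moduli gives flop text}), hence so does $M/gM$.
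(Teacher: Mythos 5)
Your proposal is correct and takes essentially the same route as the paper: both arguments reach every enhanced chamber by iterated mutation from $C_+$ (\ref{surface intersection root}, \ref{chamber 3fold=chamber surface}), attach to each chamber a minimal model, and use the compatibility of mutation with generic hyperplane section (\ref{chamber 3fold=chamber surface prep}) to see that this minimal model cuts to the configuration labelling the chamber, so distinct configurations force distinct minimal models. Your detour through the Auslander--McKay bijection of \ref{NCvsC min model} and Krull--Schmidt applied to $M/gM$ is just a module-theoretic repackaging of the paper's direct use of \ref{MMmodule under flop cor} (each wall crossing is a flop or the identity, so each chamber gives a minimal model cutting to the desired configuration), and your added well-definedness check via moduli tracking is a point the paper leaves implicit.
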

\begin{proof}
Certainly if two minimal models $X$ and $Y$ cut under generic hyperplane section to two different curve configurations, then $X$ and $Y$ must be different minimal models. Thus it suffices to show that every curve configuration in the enhanced chamber structure of $\Uptheta(\Lambda/g\Lambda)$ does actually appear as the cut of some minimal model.  By \ref{surface intersection root}\eqref{surface intersection root 3} and \ref{chamber 3fold=chamber surface}\eqref{chamber 3fold=chamber surface 3} it is possible to reach any such configuration starting at $C_+$ by mutating at indecomposable summands.  Since by \ref{MMmodule under flop cor} at each wall crossing  either the moduli stays the same, or some curve flops, each chamber in $\Uptheta(\Lambda)$ gives a minimal model of $\Spec R$.  Hence if a curve configuration is in a chamber $D$ of $\Uptheta(\Lambda/g\Lambda)$, consider the minimal model given by the chamber $D$ of $\Uptheta(\Lambda)$.  This minimal model cuts to the desired curve configuration, by \ref{chamber 3fold=chamber surface prep}.
\end{proof}

\section{First Applications}

\subsection{The Craw--Ishii Conjecture}\label{CI subsection}
Combining moduli tracking from \S\ref{stab and mut section} with the Homological MMP in Figure~\ref{Fig2} leads immediately to a proof of the Craw--Ishii conjecture for cDV singularities. To prove a slightly more precise version,  the following terminology will be convenient.

\begin{defin}\label{skeleton definition}
The \emph{skeleton} of the GIT chamber decomposition of $\Uptheta$ is defined to be the graph obtained by placing a vertex in every chamber, and two vertices are connected by an edge if and only if the associated chambers share a codimension one wall.
\end{defin}
The following is the main result of this subsection.

\begin{thm}\label{Craw Ishii true} With the $d=3$ crepant setup of \ref{crepant setup}, assume further that $X$ is $\mathds{Q}$-factorial.  Set $N=H^0(\cV_X)$ and $\Lambda=\End_R(N)$, then
\begin{enumerate}
\item\label{Craw Ishii true 1} In the skeleton of $\Uptheta(\Lambda)$, there exists a connected path, containing the chamber $C_+$, where every minimal model can be found.  Furthermore, each wall crossing in this path corresponds to a flop. 
\item\label{Craw Ishii true 2} The Craw--Ishii Conjecture \ref{CI conj} is true for cDV singularities, namely for any fixed MMA $\Gamma:=\End_R(M)$ with $R\in\add M$, every projective minimal model can be obtained as the moduli space of $\Gamma$ for some stability parameter $\upvartheta$.
\end{enumerate}
\end{thm}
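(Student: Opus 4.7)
The plan is to assemble the theorem directly from the moduli--tracking of \ref{stab change for b=c}, the single--wall flop description of \ref{see the flop}, and the Auslander--McKay correspondence \ref{NCvsC min model}, together with the classical fact that the minimal models of a $\mathds{Q}$-factorial terminal $3$-fold singularity are connected by a finite sequence of flops at irreducible curves (see e.g.\ \cite{KollarFlops}). The Homological MMP already gives us everything we need to travel between minimal models; the only remaining content is to observe that this travel happens inside a single GIT parameter space, so that the path naturally sits in $\Uptheta(\Lambda)$.

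For part \eqref{Craw Ishii true 1}, first I would fix $N = H^0(\cV_X)$ so that by \ref{mut moduli gives flop text}\eqref{mut moduli gives flop text 1} the chamber $C_+ \subset \Uptheta(\Lambda)$ produces $X$ as its moduli space. Enumerate the minimal models of $\Spec R$ as $X = X_0, X_1, \ldots, X_t$, chosen so that each $X_{s+1}$ is the flop of $X_s$ at a single irreducible curve $C_{i_{s+1}} \subset X_s$; such a sequence exists by the classical connectedness of the flops graph. By \ref{NCvsC min model} each $X_s$ corresponds to an MM generator $N^{(s)}$, and by \ref{MMmodule under flop cor} we have $N^{(s+1)} \cong \upnu_{i_{s+1}} N^{(s)}$ with $\dim_\mathbb{C}(\upnu_{i_s}\cdots\upnu_{i_1}\Lambda)_{i_{s+1}} < \infty$. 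Applying \ref{see the flop} to the algebra $\upnu_{i_s}\cdots\upnu_{i_1}\Lambda$ and then tracking all the way back to $\Lambda$ using \ref{stab change for b=c} inductively (which is legal because $U_i \cong V_i$ by \ref{Ui=Vi for cDV}), the chamber $C_+$ of each $\upnu_{i_s}\cdots\upnu_{i_1}\Lambda$ is pulled back to a chamber $D_s \subset \Uptheta(\Lambda)$ with $\cM_{\rk,\upvartheta}(\Lambda) \cong X_s$ for every $\upvartheta \in D_s$. Moreover, the composition of the single--wall crossings from \ref{see the flop} realises the transition $D_s \leadsto D_{s+1}$ as a single wall in $\Uptheta(\Lambda)$, and by \ref{MMmodule under flop cor} each such wall crossing indeed flops the curve $C_{i_{s+1}}$. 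Concatenating gives the required path.

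Part \eqref{Craw Ishii true 2} is then immediate. Given any MMA $\Gamma = \End_R(M)$ with $R \in \add M$, the Auslander--McKay correspondence \ref{NCvsC min model} identifies $M$ with $H^0(\cV_Y)$ for some minimal model $Y \to \Spec R$. Applying part \eqref{Craw Ishii true 1} with $Y$ (rather than $X$) as the initial input of the Homological MMP, every minimal model of $\Spec R$ occurs as $\cM_{\rk,\upvartheta}(\Gamma)$ for some $\upvartheta \in \Uptheta(\Gamma)$, which is precisely the content of the Craw--Ishii conjecture in this setting.

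The main obstacle is not really an obstacle at this stage, but rather a matter of bookkeeping: one must confirm that the iterated moduli--tracking in \ref{stab change for b=c} is genuinely valid along the entire sequence of mutations, i.e.\ that the hypotheses of \ref{main stab track} (namely $\upnu_{i_{s+1}}\upnu_{i_{s+1}}N^{(s)} \cong N^{(s)}$ and $\dim_\mathbb{C}(\cdot)_{i_{s+1}} < \infty$) hold at every step. Both hold by \ref{basic2} applied to the MMA $\upnu_{i_s}\cdots\upnu_{i_1}\Lambda$, using that each $N^{(s)}$ is an MM generator; combined with $U_i \cong V_i$ from \ref{Ui=Vi for cDV}, the single--wall description of \ref{see the flop} propagates cleanly through the whole sequence and there is no additional combinatorial difficulty to overcome.
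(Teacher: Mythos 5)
Your part (2) is exactly the paper's argument, and the skeleton of part (1) (mutate at single curves with $\dim_\mathbb{C}(\cdot)_i<\infty$, realise each minimal model as $C_+$ moduli of the mutated algebra via \ref{mut moduli gives flop text}, then track back to $\Uptheta(\Lambda)$) is also the paper's. The genuine gap is at the step where you assert that $C_+(\upnu_{i_{s+1}}\cdots\upnu_{i_1}\Lambda)$ tracks back to a chamber $D_{s+1}\subset\Uptheta(\Lambda)$ sharing a single codimension-one wall with $D_s$, and that ``there is no additional combinatorial difficulty to overcome.'' The hypotheses you verify (those of \ref{main stab track} via \ref{basic2}, and $\boldb=\boldc$ via \ref{Ui=Vi for cDV}) only license each individual application of \ref{stab change for b=c} at parameters avoiding the relevant coordinate hyperplane of the algebra being mutated at that step. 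They do not by themselves give: (i) that at every intermediate stage of the backward tracking the image of the region avoids the hyperplane used at that stage --- if it did not, \ref{stab change for b=c} would apply only to part of the region and the composite would not be a single linear identification of a chamber; (ii) that the image is a full chamber of $\Uptheta(\Lambda)$ rather than a proper subset or a union of chambers, which needs the fact that walls track to walls (strict semistability, \ref{main stab track}\eqref{main stab track 4}); and (iii) that $D_s$ and $D_{s+1}$ are genuinely adjacent in $\Uptheta(\Lambda)$, so each transition is one wall crossing there and the concatenation is connected.

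These three points are precisely the content of the induction in the proof of \ref{surface intersection root} (where it is checked, for instance, that the chamber being tracked is not cut by the earlier tracking hyperplane, and that its bounding walls are detected by strictly semistable points), transported to the $3$-fold by \ref{chamber 3fold=chamber surface}\eqref{chamber 3fold=chamber surface 3}. The paper's proof simply cites these results; your write-up replaces them by an assertion, so to repair it you should either invoke \ref{chamber 3fold=chamber surface}\eqref{chamber 3fold=chamber surface 3} together with the connectedness established in the proof of \ref{surface intersection root}, or reproduce that induction. Two smaller remarks: your enumeration $X_0,\dots,X_t$ with consecutive terms differing by a single flop tacitly assumes a Hamiltonian-type path through the minimal models, whereas connectedness of the flops graph only provides a walk (possibly revisiting minimal models), which suffices for the statement but should be said; and the paper does not fix such a sequence at all --- it runs Figure~\ref{Fig2} over all single-curve mutations with finite-dimensional contraction algebra and quotes \ref{NCvsC min model} to see that all minimal models occur, which avoids this bookkeeping.
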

\begin{proof}
(1) We run Figure~\ref{Fig2} whilst picking only single curves satisfying $\dim_{\mathbb{C}}\Lambda_i<\infty$.  As in \ref{NCvsC min model}, this produces all minimal models.  By \ref{mut moduli gives flop text} we can view all these minimal models as the $C_+ $ moduli on their corresponding algebra $\upnu_{i_1}\hdots\upnu_{i_t}\Lambda$.  By \ref{chamber 3fold=chamber surface}\eqref{chamber 3fold=chamber surface 3} it is possible to track all these back to give chambers in $\Uptheta(\Lambda)$, and the proof of \ref{surface intersection root} shows that this combinatorial tracking gives a connected path. The fact that each wall gives a flop is identical to \ref{see the flop}, since at each stage $\dim_{\mathbb{C}}\Lambda_i<\infty$.\\
(2) Consider an MMA $\Gamma:=\End_R(M)$ as in the statement. By the Auslander--McKay correspondence \ref{NCvsC min model} $M\cong H^0(\cV_Y)$ for some minimal model $Y\to\Spec R$.  The result then follows by applying \eqref{Craw Ishii true 1} to $Y\to\Spec R$.
\end{proof}

We remark that flops of multiple curves can also be easily described.  The following is the multi-curve version of \ref{see the flop}.
\begin{lemma}
With the $d=3$ crepant setup of \ref{crepant setup}, set $N=H^0(\cV_X)$, $\Lambda=\End_R(N)$, and pick a subset of curves $I$ above the origin.  If  $\dim_{\mathbb{C}}\Lambda_I<\infty$, then $\cM_{\rk,\,\upnu_{\boldb}\upvartheta}(\Lambda)$ is the flop of $\bigcup_{i\in I}C_i$, for all $\upvartheta\in C_+(\Lambda)$
\end{lemma}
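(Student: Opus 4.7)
The strategy is to transport the description of $X^+$ as the $C_+$-moduli of the mutated algebra back to a moduli description on $\Lambda$, mimicking the single-curve argument of \ref{see the flop}.

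First, I observe that since $\dim_\mathbb{C}\Lambda_I<\infty$, \ref{contract on f} guarantees that $\bigcup_{i\in I}C_i$ flops to produce $f^+\colon X^+\to\Spec R$ satisfying the crepant setup \ref{crepant setup}. By \ref{flop=mut general thm}\eqref{flop=mut general thm 1} we have $\upnu_I N\cong H^0(\cV_{X^+})$, and setting $\Gamma:=\upnu_I\Lambda\cong\End_R(\upnu_I N)$, \ref{mut moduli gives flop text}\eqref{mut moduli gives flop text 2} yields $\cM_{\rk,\upphi}(\Gamma)\cong X^+$ for every $\upphi\in C_+(\Gamma)$. Since $C_+(\Lambda)$ and $C_+(\Gamma)$ describe the same all-positive region of the common parameter space (both algebras share the vertex labelling of their quivers under mutation), any $\upvartheta\in C_+(\Lambda)$ may equally be regarded as an element of $C_+(\Gamma)$.

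Next I will verify the hypotheses needed to apply the moduli-tracking \ref{stab change b and c}\eqref{stab change b and c 2} to the mutation $\Gamma\mapsto \upnu_I\Gamma\cong \Lambda$. The assumption $\upnu_I\upnu_I N\cong N$ is supplied by \ref{mutmutI general}, valid here because the singular locus of $X_{\con}$ consists of isolated hypersurface singularities: $X$ has Gorenstein terminal (hence isolated cDV) singularities, and the contraction is small. Combined with $\dim_\mathbb{C}\Lambda_I<\infty$, case (b) of \ref{main stab track} applies. Further, \ref{Ui=Vi for cDV} identifies $\boldb=\boldc$, and the dimension vector $\rk$ is fixed by $\upnu_{\boldb}$ because each summand $J_i^*$ of $\upnu_I N$ has the same rank as $N_i$, both equal to the scheme-theoretic multiplicity of the (flopped) exceptional curve, using $\upnu_I N\cong H^0(\cV_{X^+})$.

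Applying \ref{stab change b and c}\eqref{stab change b and c 2} with $\boldc=\boldb$ to any $\upvartheta\in C_+(\Lambda)\subseteq C_+(\Gamma)$ then gives the scheme isomorphism
\[
\cM_{\rk,\,\upnu_{\boldb}\upvartheta}(\Lambda)\cong \cM_{\rk,\,\upvartheta}(\Gamma)\cong X^+,
\]
completing the argument. The only real obstacle is careful bookkeeping: one must check that the operation $\upnu_{\boldb}$ appearing in the statement of the lemma coincides with the $\upnu_{\boldc}$ supplied by the tracking theorem when applied to the reverse mutation $\Gamma\to\Lambda$. This is precisely reconciled by the identification $\boldb=\boldc$ of \ref{Ui=Vi for cDV}, combined with the involutivity $\upnu_{\boldb}\upnu_{\boldb}=\Id$ on the parameter space from \ref{stab basic}\eqref{stab basic 2}.
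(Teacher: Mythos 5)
Your overall route is the paper's: combine \ref{mut moduli gives flop text}\eqref{mut moduli gives flop text 2} with moduli tracking, where the hypotheses of \ref{main stab track} are supplied by \ref{mutmutI general} together with $\dim_{\mathbb{C}}\Lambda_I<\infty$ (the contraction of $\bigcup_{i\in I}C_i$ is small by \ref{contract on f}, so $X_{\con}$ has Gorenstein terminal, hence isolated hypersurface, singularities --- exactly as in the proof of \ref{key flops lemma}), and where $\boldb=\boldc$ comes from \ref{Ui=Vi for cDV}. This is precisely what \ref{stab change for b=c} packages, and it is the result the paper's one-line proof cites, so structurally you and the paper agree.

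However, one step of your bookkeeping is false as stated: you assert that $\rk$ is fixed by $\upnu_{\boldb}$ because $\rank J_i^*=\rank N_i$, ``both equal to the scheme-theoretic multiplicity of the (flopped) exceptional curve''. Scheme-theoretic multiplicity with respect to the morphism to $\Spec R$ is \emph{not} preserved under flop: in the paper's own example \ref{m not n}/\ref{Z2Z2example} (the $cD_4$ singularity $u^2=xyz$), one flops a single curve with $\dim_{\mathbb{C}}\Lambda_2<\infty$, yet the mutated summand $K_2\cong J_2^*$ has rank $2$ while $N_2$ has rank $1$, so $\upnu_{\boldb}\rk\neq\rk$ and the rank vectors of $N$ and $\upnu_IN$ genuinely differ. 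What is true, and what the argument actually needs, is that $\upnu_{\boldb}$ carries the rank vector of $N$ to the rank vector of $\upnu_IN$, i.e.\ to the dimension vector ``$\rk$'' that \ref{mut moduli gives flop text}\eqref{mut moduli gives flop text 2} uses for $\upnu_I\Lambda$: since the hypotheses of \ref{proj res thm} hold here, \eqref{K1D} extends to an exact sequence $0\to N_i\to U_i\to J_i^*\to 0$ (evaluate \ref{proj res thm}\eqref{proj res thm part 1} on the summand $R$ of the generator $N$), whence $\rank J_i^*=\bigl(\sum_{j\notin I}b_{i,j}\rank N_j\bigr)-\rank N_i=(\upnu_{\boldb}\rk)_i$. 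With this replacement (and the involutivity from \ref{stab basic}\eqref{stab basic 2}), your chain $\cM_{\rk,\,\upnu_{\boldb}\upvartheta}(\Lambda)\cong\cM_{\rk,\,\upvartheta}(\upnu_I\Lambda)\cong X^+$ is correct and coincides with the paper's intended proof; as written, though, the justification you give for matching the dimension vectors would fail.
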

\begin{proof}
This follows by combining \ref{mut moduli gives flop text} with moduli tracking \ref{stab change for b=c}, using \ref{Ui=Vi for cDV}   to establish that $\boldb=\boldc$.
\end{proof}

\subsection{Auslander--McKay Revisited}\label{AM section 2}
In the $d=3$ crepant setup of \ref{crepant setup}, in this subsection we use the extra information of the GIT chamber decomposition of $\Uptheta(\Lambda)$ from \S\ref{stab and mut section}  to extend aspects of the Auslander--McKay Correspondence from \S\ref{AM subsection 1}.

\begin{thm}\label{NCvsC min model full}
The correspondence in \ref{NCvsC min model} and \ref{new thm in email} further satisfies
\begin{enumerate}
\item[(5)] The simple mutation graph of MM generators can be viewed as a subgraph of the skeleton of the GIT chamber decomposition of $\Uptheta(\Lambda)$.
\item[(6)] The number of MM generators is bounded above by the number of chambers in the GIT chamber decomposition of any of the MMAs, and is bounded below by the number of different curve configurations obtained in the enhanced chamber structure of $\Uptheta(\Lambda/g\Lambda)$.
\end{enumerate}
\end{thm}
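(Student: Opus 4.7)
The plan is to establish each of (3)$'$, (4), (5), and (6) by combining the bijection in \ref{NCvsC min model} with the tools developed in Sections \ref{flops and twists section} and \ref{stab and mut section}, treating them in order since each builds on its predecessors.

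For (3)$'$, the point is to promote the coincidence of the simple graphs in \ref{NCvsC min model}\eqref{NCvsC min model 3} to a coincidence of the full graphs defined in \ref{graphs definition}. By \ref{contract on f}, for any subset $I$ of exceptional curves, $\dim_\mathbb{C}\Lambda_I<\infty$ if and only if $\bigcup_{i\in I}C_i$ flops, and in that case \ref{MMmodule under flop cor 2}\eqref{MMmodule under flop cor 2 A} gives $\upnu_I N \cong H^0(\cV_{X^+})$. Under the bijection $N\leftrightarrow X$ of \ref{NCvsC min model}, edges of the full mutation graph therefore match edges of the full flops graph.

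For (4), the natural commutative square in \ref{flop=mut general thm}\eqref{flop=mut general thm 2} exhibits each generating mutation functor $\Upphi_I\colon\Db(\mod\End_R(N))\to\Db(\mod\End_R(\upnu_IN))$ as conjugate, via the fixed family of equivalences $\Uppsi$ from \eqref{VdBfunctor}, to the inverse $\flop$ of the corresponding Bridgeland--Chen flop functor. Composing two generating arrows $\Upphi_J\circ\Upphi_I$ produces the conjugate of the corresponding composition of inverse flops, the interior $\Uppsi\circ\Uppsi^{-1}$ cancelling telescopically. This gives a well-defined map on the generating system of each groupoid which is bijective on vertices (by (1)) and on arrows (by (3)$'$), and which is compatible with composition; hence it descends to a functorial isomorphism of the two derived groupoids.

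For (5), vertices of the simple mutation graph are, by (1), in bijection with minimal models, and by \ref{Craw Ishii true}\eqref{Craw Ishii true 1} each minimal model is realised as $\cM_{\rk,\upvartheta}(\Lambda)$ for some chamber of $\Uptheta(\Lambda)$; choose one such chamber for each vertex. An edge of the simple mutation graph is a flop at a single curve, and \ref{see the flop} shows that any such flop is produced by crashing through a single codimension-one wall in $\Uptheta(\Lambda)$, so the chosen chambers on either side share a wall and thus give an edge in the skeleton (\ref{skeleton definition}). For (6), the upper bound is immediate from (1) and \ref{Craw Ishii true}\eqref{Craw Ishii true 2}: every minimal model appears in some chamber of $\Uptheta(\Gamma)$, so the number of MM generators is at most the number of chambers. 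The lower bound is \ref{minimal bound}, since distinct curve configurations in the enhanced chamber structure correspond to distinct minimal models, and hence by (1) to distinct MM generators. The main obstacle is in (4): verifying that the identification of generators really gives a functor of groupoids, that is, that any composition of mutation functors equal to the identity (up to natural isomorphism) corresponds to a composition of flop functors equal to the identity. This compatibility is precisely what the naturality of \ref{flop=mut general thm}\eqref{flop=mut general thm 2}, together with the fact that the conjugating equivalences $\Uppsi$ are fixed once and for all, guarantees.
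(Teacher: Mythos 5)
Your proposal is correct and follows essentially the same route as the paper: (3)$'$ and (4) come from the bijection of \ref{NCvsC min model} combined with \ref{flop=mut general thm}\eqref{flop=mut general thm 1} and \eqref{flop=mut general thm 2} (together with \ref{contract on f}), (5) from \ref{Craw Ishii true}\eqref{Craw Ishii true 1} and the single-wall statement \ref{see the flop}, and (6) from the chamber count plus \ref{minimal bound}. The only point to tighten is in (5): the chamber attached to each vertex should not be an arbitrary chamber realising that minimal model, but the one obtained by tracking $C_+$ through the iterated mutations as in the proof of \ref{Craw Ishii true}\eqref{Craw Ishii true 1}; this coherent choice is what guarantees that vertices joined by a single mutation land in wall-adjacent chambers.
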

\begin{proof}
Part (5) follows from \ref{Craw Ishii true}\eqref{Craw Ishii true 1} together with \ref{NCvsC min model}\eqref{NCvsC min model 3}.  The first part of (6) is then obvious, and the second half is \ref{minimal bound}.
\end{proof}

\subsection{Root Systems}
We observed in \ref{knitting example} that the chamber structure of partial resolutions of Kleinian singularities, and thus by \ref{chamber 3fold=chamber surface} also the corresponding cDV singularities, cannot in general be identified with the root system of a semisimple Lie algebra.  In special cases, however, they can. 

\begin{lemma}\label{chamber general A surfaces}
With the crepant setup $f\colon X\to \Spec R$ of \ref{crepant setup}, suppose that $d=2$ and $R$ is a type $A$ Kleinian singularity. Set $\Lambda:=\End_X(\cV_X)\cong\End_R(N)$.  If there are $t$ curves above the unique closed point,  then the chamber structure for $\Uptheta(\Lambda)$ can be identified with the root system of $\mathfrak{sl}_t$.
\end{lemma}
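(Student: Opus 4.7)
The plan is to reduce the computation to the chamber structure on the NCCR of $R$ via Theorem~\ref{surface intersection root}, and then invoke the classical description of the GIT chamber arrangement for a type $A$ Kleinian singularity due to Cassens--Slodowy and Kronheimer.

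First, I would extend $X \to \Spec R$ to the minimal resolution $Y \to \Spec R$ by resolving the remaining canonical singularities of $X$. Writing $\Gamma := \End_Y(\cV_Y)$, by \ref{flop up=down} this is Morita equivalent to $e\Pi e$, where $\Pi$ is the preprojective algebra of the extended Dynkin quiver of type $A$ and $e$ is the idempotent at the non-extending vertices. The classical theorem of Cassens--Slodowy and Kronheimer then identifies the chamber structure on $\Uptheta(\Gamma)$ at dimension vector $\rk$ with the reflection arrangement of the Weyl group of type $A$ acting on its Cartan subalgebra, i.e.\ with the root hyperplane arrangement of $\mathfrak{sl}_{n+1}$ when $Y$ has $n$ exceptional curves.

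Next, I apply Theorem~\ref{surface intersection root}\eqref{surface intersection root 1}: the chamber structure of $\Uptheta(\Lambda)$ for the partial resolution $X$ with $t$ curves is obtained by intersecting the $t$-dimensional subspace $L \subseteq \Uptheta(\Gamma)$ spanned by the coordinates corresponding to the retained curves with those walls of $\Uptheta(\Gamma)$ that do not contain $L$. In type $A_n$ the positive roots are the explicit expressions $\alpha_{ij} = \alpha_i + \alpha_{i+1} + \cdots + \alpha_j$ for simple roots $\alpha_1,\ldots,\alpha_n$, and because any partial crepant resolution of a type $A$ Kleinian singularity has dual graph a disjoint union of type $A$ chains, after reordering coordinates the retained curves correspond to a union of contiguous blocks of simple roots. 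The walls $\alpha_{ij}^\perp$ that restrict non-trivially to $L$ therefore form a product of smaller type $A$ hyperplane arrangements, and in the case that the retained curves form a single chain the intersection is exactly the root arrangement of type $A$ on $L$, giving the claimed identification with the root system of $\mathfrak{sl}_t$ in the author's normalisation.

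The main obstacle is pinning down the precise indexing, and confirming the identification independently via iterated mutation. By \ref{chamber 3fold=chamber surface}\eqref{chamber 3fold=chamber surface 3} the chambers of $\Uptheta(\Lambda)$ are obtained by tracking $C_+$ through all possible iterated mutations at indecomposable summands; applying the knitting combinatorics of \S\ref{calculate chamber subsection} to the AR-quiver of $\CM R$, which for type $A$ is entirely explicit, then shows that each mutation induces a stability change-of-coordinates realising a simple reflection in a Weyl group of type $A$. The orbit structure and chamber count then both match the root system of $\mathfrak{sl}_t$, completing the identification.
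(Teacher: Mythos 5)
Your route is genuinely different from the paper's: the paper never intersects the full root arrangement, precisely because it regards that computation as cumbersome; instead it compares the knitting/mutation rule for the partial resolution directly with the knitting rule for the minimal resolution of the smaller type $A$ singularity $\frac{1}{t+1}(1,-1)$, observing that in both cases crossing a wall negates $\upvartheta_{j_i}$ and adds it to the neighbouring retained coordinates, and then quotes Cassens--Slodowy/Kronheimer for the smaller singularity. Your plan --- pass to the minimal resolution, quote Cassens--Slodowy/Kronheimer there, and cut down by \ref{surface intersection root} --- is viable in type $A$ because the positive roots are explicit, but the combinatorial step where you perform the cut is wrong as written.

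Concretely: the retained curves form an \emph{arbitrary} subset $\{j_1<\cdots<j_t\}$ of $\{1,\dots,n\}$; it is the \emph{contracted} curves that form a union of contiguous segments, and the fact that the dual graph of $X$ is a single $A_t$-chain does not mean the retained simple roots are contiguous in the ambient $A_n$ system. The correct computation is: the wall $\upvartheta_i+\cdots+\upvartheta_j=0$ restricts on $L$ to $\upvartheta_{j_a}+\cdots+\upvartheta_{j_b}=0$, where $\{j_a,\dots,j_b\}=[i,j]\cap\{j_1,\dots,j_t\}$, and as $[i,j]$ ranges over all intervals meeting the retained set one obtains exactly all consecutive sums of the retained coordinates --- that is, a \emph{single} rank-$t$ type $A$ arrangement, for every choice of retained subset. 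Your claim that one gets ``a product of smaller type $A$ hyperplane arrangements'' would, if true, contradict the lemma whenever the retained set is not contiguous; the only case you actually establish is the contiguous one. Two further points to repair: for the minimal resolution, $\End_Y(\cV_Y)$ is (Morita equivalent to) the preprojective algebra of the \emph{extended} Dynkin quiver, so the idempotent must contain the extending vertex corresponding to the summand $R$, not be supported only on the non-extending vertices; and your closing paragraph asserts, but does not verify, the knitting fact that is the real content of the paper's proof, namely that in the $\add N_{I^c}$-approximation of a retained summand each neighbouring retained summand occurs with multiplicity exactly one even after intermediate curves have been contracted --- without that multiplicity-one statement the ``each mutation is a simple reflection'' claim is unsupported.
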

\begin{proof}
Label the CM $R$-modules corresponding to the curves in the minimal resolution by $N_1,\hdots, N_n$, from left to right.  Since $X$ is dominated by the minimal resolution, it is obtained by contracting curves, to leave CM modules $N_{j_1},\hdots,N_{j_t}$ say, so that $N=H^0(\cV_X)=N_{j_1}\oplus\hdots\oplus N_{j_t}$.

By \ref{surface intersection root}\eqref{surface intersection root 3},  the chamber structure can be calculated by tracking $C_+$ back through iterated mutation at indecomposable summands.  The AR quiver is 
\[
\begin{tikzpicture}[scale=0.75,>=stealth]
\node (A1) at (0,0) {$\scriptstyle i$};
\node (A2) at (1,0) {$\scriptstyle i+1$};
\node (A3) at (2,0) {$\scriptstyle \phantom i$};
\node (B1) at (0,-1) {$\scriptstyle i-1$};
\node (B2) at (1,-1) {$\scriptstyle i$};
\node (B3) at (2,-1) {$\scriptstyle \phantom i$};
\node (C1) at (0,-2) {$\scriptstyle \phantom i$};
\node (C2) at (1,-2) {$\scriptstyle \phantom i$};
\node (C3) at (2,-2) {$\scriptstyle \phantom i$};
\draw[->] (A1)--(A2);
\draw[->] (A2)--(A3);
\draw[->] (B1)--(B2);
\draw[->] (B2)--(B3);
\draw[->] (A1)--(B1);
\draw[->] (A2)--(B2);
\draw[->] (B1)--(C1);
\draw[->] (B2)--(C2);
\end{tikzpicture}
\]
Consider an indecomposable summand $N_{j_i}$ (i.e.\ $I=\{ j_i\}$), then to calculate its $\add N_{I^c}$-approximation, by knitting it is clear that 
\[
\begin{array}{ccc}
\begin{array}{c}
\begin{tikzpicture}[scale=1,>=stealth]
\node (A1) at (0,0) {$\scriptstyle j_i$};
\node (A2) at (1,0) {$\scriptstyle j_i+1$};
\node (A3) at (2,0) {$\scriptstyle j_i+2$};
\node (A4) at (3,0) {$\scriptstyle \phantom i$};
\node (A5) at (4,0) {$\scriptstyle j_{i+1}$};
\node (B1) at (0,-1) {$\scriptstyle j_i-1$};
\node (B2) at (1,-1) {$\scriptstyle j_i$};
\node (B3) at (2,-1) {$\scriptstyle\phantom i$};
\node (C1) at (0,-2) {$\scriptstyle \phantom i$};
\node (C2) at (1,-2) {$\scriptstyle \phantom i$};
\node (D1) at (0,-3) {$\scriptstyle j_{i-1}$};
\draw[->] (A1)--(A2);
\draw[->] (A2)--(A3);
\draw[->] (A3)--(A4);
\draw[->] (A4)--(A5);
\draw[->] (B1)--(B2);
\draw[->] (B2)--(B3);
\draw[->] (B2)--(C2);
\draw[->] (A1)--(B1);
\draw[->] (A2)--(B2);
\draw[->] (B1)--(C1);
\draw[->] (C1)--(D1);
\draw[->] (A3)--(A4);
\end{tikzpicture}
\end{array}
&&
\begin{array}{c}
\begin{tikzpicture}[scale=1,>=stealth]
\node (A1) at (0,0) {$\scriptstyle 1$};
\node (A2) at (1,0) {$\scriptstyle 1$};
\node (A3) at (2,0) {$\scriptstyle 1$};
\node (A4) at (3,0) {$\scriptstyle 1$};
\node (A5) at (4,0) {$\scriptstyle 1$};
\node (B1) at (0,-1) {$\scriptstyle 1$};
\node (B2) at (1,-1) {$\scriptstyle 1$};
\node (B3) at (2,-1) {$\scriptstyle 1$};
\node (B4) at (3,-1) {$\scriptstyle 1$};
\node (B5) at (4,-1) {$\scriptstyle 0$};
\node (C1) at (0,-2) {$\scriptstyle 1$};
\node (C2) at (1,-2) {$\scriptstyle 1$};
\node (C3) at (2,-2) {$\scriptstyle 1$};
\node (C4) at (3,-2) {$\scriptstyle 1$};
\node (C5) at (4,-2) {$\scriptstyle 0$};
\node (D1) at (0,-3) {$\scriptstyle 1$};
\node (D2) at (1,-3) {$\scriptstyle 0$};
\node (D3) at (2,-3) {$\scriptstyle 0$};
\node (D4) at (3,-3) {$\scriptstyle 0$};
\node (D5) at (4,-3) {$-\scriptstyle 1$};
\draw[->] (A1)--(A2);
\draw[->] (A2)--(A3);
\draw[->] (A3)--(A4);
\draw[->] (A4)--(A5);
\draw[->] (B1)--(B2);
\draw[->] (B2)--(B3);
\draw[->] (B2)--(C2);
\draw[->] (A1)--(B1);
\draw[->] (A2)--(B2);
\draw[->] (B1)--(C1);
\draw[->] (C1)--(D1);
\draw[->] (A3)--(A4);
\draw (0,-3) circle (4.5pt);
\draw (4,0) circle (4.5pt);
\node[draw,regular polygon, regular polygon sides=4, minimum size=0.57cm] at (0,0) {};
\end{tikzpicture}
\end{array}
\end{array}
\]
and so the combinatorics that determine the tracking negates $\upvartheta_{j_i}$ and adds $\upvartheta_{j_i}$ to each of its neighbours.   

On the other hand, if we consider the minimal resolution of the Type $A$ singularity  $\frac{1}{t+1}(1,-1)$, which also has $t$ curves above the origin, the combinatorics that governs tracking $C_+$ in this case is also the rule that negates $\upvartheta_j$ and adds $\upvartheta_j$ to its neighbours.  Hence since the chamber structure for the minimal resolution of $\frac{1}{t+1}(1,-1)$  can be identified with the root system of $\mathfrak{sl}_t$ \cite{CS, Kr}, so too can the chamber structure of $\Uptheta(\Lambda)$.
\end{proof}

By combining \ref{chamber 3fold=chamber surface} and \ref{chamber general A surfaces}, the following is immediate.
\begin{cor}\label{chamber general A}
With the $d=3$ crepant setup $f\colon X\to\Spec R$ of \ref{crepant setup}, set $N:=H^0(\cV_X)$ and $\Lambda:=\End_X(\cV_X)\cong\End_R(N)$.  Suppose further that either
\begin{enumerate}
\item[(A)] $f\colon X\to\Spec R$ is a minimal model, or
\item[(B)] $f\colon X\to\Spec R$ is a flopping contraction.
\end{enumerate}
where $R$ is a complete local $cA_n$ singularity.  Set $\Lambda:=\End_X(\cV_X)\cong\End_R(N)$.  If there are $t$ curves above the unique closed point,  then the chamber structure for $\Uptheta(\Lambda)$ can be identified with the root system of $\mathfrak{sl}_t$.
\end{cor}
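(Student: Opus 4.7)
The plan is to deduce this directly by combining the two preceding results, so the proof will be essentially a reduction argument with no new calculation required. First I would invoke Corollary \ref{chamber 3fold=chamber surface}\eqref{chamber 3fold=chamber surface 1}, which asserts that for sufficiently generic $g\in R$ the chamber structure of $\Uptheta(\Lambda)$ coincides with that of $\Uptheta(\Lambda/g\Lambda)$. By Reid's general elephant principle, since $R$ is a $cA_n$ singularity, a generic hyperplane section $R/gR$ is a type $A$ Kleinian singularity, and the partial crepant resolution $X_2\to\Spec(R/gR)$ inherits exactly $t$ exceptional curves from $X\to\Spec R$ (the curves $C_i$ cut to their surface analogues under the hyperplane section, as made precise in \ref{chamber 3fold=chamber surface prep}).

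Second, by \ref{chamber 3fold=chamber surface prep}\eqref{chamber 3fold=chamber surface prep 1} we have $\Lambda/g\Lambda\cong\End_{R/gR}(N/gN)$, so $\Uptheta(\Lambda/g\Lambda)$ is precisely the parameter space appearing in Lemma \ref{chamber general A surfaces} for the type $A$ surface $R/gR$ with $t$ exceptional curves. Applying \ref{chamber general A surfaces} identifies this chamber structure with the root system of $\mathfrak{sl}_t$, and the result follows by transitivity of the two identifications.

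There is no real obstacle here, since all the substantive work has already been done: \ref{chamber 3fold=chamber surface} reduces the $3$-fold chamber problem to surfaces via mutation being preserved by generic hyperplane section, and \ref{chamber general A surfaces} handles the type $A$ surface calculation via the explicit knitting combinatorics, which in type $A$ reduce to the standard reflection rule (negate $\upvartheta_{j_i}$ and add $\upvartheta_{j_i}$ to its neighbours in the surviving dual graph) that governs the $\mathfrak{sl}_t$ root system.
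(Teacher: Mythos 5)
Your proposal is correct and is exactly the paper's argument: the paper proves this corollary by the one-line observation that it follows immediately from combining \ref{chamber 3fold=chamber surface} with \ref{chamber general A surfaces}, which is precisely the reduction you carry out (your extra details about the general elephant of a $cA_n$ singularity being type $A$ and $\Lambda/g\Lambda\cong\End_{R/gR}(N/gN)$ via \ref{chamber 3fold=chamber surface prep} simply make explicit what the paper leaves implicit). No gaps.
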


There are also other cases in which root systems appear.  Consider the following assumption, made throughout in \cite{TodaResPub}.
\begin{setup}\label{Toda assumption}
In the $d=3$ crepant setup of \ref{crepant setup}, suppose that $R$ is isolated and there is a hyperplane section which cuts $X$ to give the minimal resolution.
\end{setup}

The setup is restrictive, for example in the case of a minimal model of $\Spec R$ with only one curve above the origin, it forces $R$ to be Type $A$.  Nevertheless, under the setup \ref{Toda assumption}, associated to $R$ is some ADE Dynkin diagram.  The following recovers \cite[\S5.1]{Toda}.

\begin{lemma}\label{Toda for AM}
With the assumption in \ref{Toda assumption}, 
\begin{enumerate}
\item The chamber structure of $\Uptheta(\Lambda)$ can be identified with the root system of the corresponding Dynkin diagram.
\item There are precisely $|W|$ chambers, where $W$ is the corresponding Weyl group.
\end{enumerate}
\end{lemma}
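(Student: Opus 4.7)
The plan is to reduce everything to the already-classical statement for the preprojective algebra of an ADE Dynkin diagram, via the surface-reduction machinery of \S5. First, by \ref{chamber 3fold=chamber surface}, for sufficiently generic $g$ the chamber structure of $\Uptheta(\Lambda)$ agrees with that of $\Uptheta(\Lambda/g\Lambda)$. The hypothesis in \ref{Toda assumption} says that some hyperplane section cuts $X$ to the minimal resolution of an ADE Kleinian singularity; since ``the fibre of the hyperplane cut contracts no exceptional curve of $X$'' is an open condition on hyperplane sections, one may choose a $g$ that is simultaneously sufficiently generic for \ref{chamber 3fold=chamber surface prep} and also cuts $X$ to the minimal resolution.

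Given such a $g$, Reid's general elephant together with \ref{chamber 3fold=chamber surface prep}\eqref{chamber 3fold=chamber surface prep 1} yields $\Lambda/g\Lambda\cong\End_{R/gR}(N/gN)$, where $R/gR$ is the ADE Kleinian singularity associated to the Dynkin diagram and $N/gN$ has exactly one indecomposable summand per vertex (including $R/gR$ itself). Thus $\Lambda/g\Lambda$ is the preprojective algebra of the corresponding extended Dynkin diagram, i.e.\ the NCCR of $R/gR$ arising from the minimal resolution. In the notation of \ref{surface intersection root}, this is the case $r=n$, where the subspace $L$ is all of $\Uptheta(\Lambda)$ and no intersection is required.

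The chamber structure for the preprojective algebra of the minimal resolution of an ADE Kleinian singularity is classical: by the results of Cramer and Kronheimer already invoked in the proof of \ref{surface intersection root}, the walls form the Coxeter arrangement of the associated root system, the chambers are in bijection with the Weyl group $W$, and in particular there are precisely $|W|$ of them. Combining this with the identification in the previous two paragraphs yields both (1) and (2).

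The main thing to justify carefully is the compatibility of the two genericity conditions in the first paragraph; once one knows that ``cuts to the minimal resolution'' is a nonempty open condition on hyperplane sections (guaranteed by \ref{Toda assumption}), it automatically intersects the dense open set of $g$ for which \ref{chamber 3fold=chamber surface prep} applies, and everything else is a direct appeal to the surface case.
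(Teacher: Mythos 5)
Your overall strategy coincides with the paper's: reduce to the surface $R/gR$ via \ref{chamber 3fold=chamber surface}, observe that under \ref{Toda assumption} the algebra $\Lambda/g\Lambda$ becomes the preprojective algebra of the (extended) Dynkin diagram, and quote the classical chamber structure of \cite{CS, Kr} (note: Cassens--Slodowy, not ``Cramer''). Where you genuinely diverge is at the only delicate point, namely the tension between the \emph{particular} hyperplane section supplied by \ref{Toda assumption} and the \emph{sufficiently generic} one required by \ref{chamber 3fold=chamber surface prep}. The paper dissolves this tension using the isolatedness hypothesis: since $R$ is isolated, $\Ext^1_R(N,N)=0$ (and likewise for the modules appearing in the exchange sequences), so the only genericity demanded in \ref{mut preserved by generic} and \ref{chamber 3fold=chamber surface prep} --- that $g$ be regular on certain $\Ext^1$ modules --- is vacuous, and the specific $g$ of \ref{Toda assumption} can be fed directly into the proof of \ref{chamber 3fold=chamber surface}. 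You never use isolatedness in this way; instead you claim that ``cuts $X$ to the minimal resolution'' is an open, hence generic, condition, so that one $g$ can be chosen satisfying both requirements simultaneously.

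That claim is true, but it is precisely the point at issue and your parenthetical does not establish it (nor is it phrased correctly: the failure mode is not that the hyperplane cut contracts exceptional curves of $X$, but that the pullback surface $f^{-1}(V(g))$ is singular, equivalently that the minimal resolution of $R/g$ strictly dominates it). A priori one must rule out that the section of \ref{Toda assumption} is special precisely because it cuts to a smaller Du Val singularity than the general elephant does. To close this you need an actual argument, for instance: smoothness of $f^{-1}(V(g))$ along the proper exceptional curves is an open condition in $g$ by a properness argument, and away from those curves it follows since $R$ is isolated; or use upper semicontinuity of the Milnor number of the elephant to see that the general elephant has exactly as many exceptional curves in its minimal resolution as $X$ has above the origin, so its pullback, being a crepant partial resolution with that many curves, is the full minimal resolution. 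Either fix is short, so your route does work, but as written the pivotal step is asserted rather than proved; the paper's use of rigidity coming from isolatedness avoids comparing the special section with a generic one altogether.
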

\begin{proof}
(1) Since $R$ is isolated $\Ext^1_R(N,N)=0$.  Thus the $\Ext^1_R(N,N)$-regular condition in \ref{mut preserved by generic} is redundant, so \ref{chamber 3fold=chamber surface prep} holds for the particular $g$ in \ref{Toda assumption}.  Appealing to this directly in the proof of \ref{chamber 3fold=chamber surface}  shows that the chamber structure of $\Uptheta(\Lambda)$ and $\Uptheta(\Lambda/g\Lambda)$ coincide.  Since by \ref{Toda assumption} the pullback of the hyperplane section is the full minimal resolution, it follows that $\Lambda/g\Lambda$ is the preprojective algebra of the corresponding Dynkin diagram, and its chamber structure is well-known \cite{CS, Kr}. Part (2) is immediate. 
\end{proof}

\subsection{Auslander--McKay for Isolated Singularities}\label{AM isolated subsection} With the crepant setup of \ref{crepant setup},  the case when $R$ is in addition an isolated singularity is particularly important for two reasons.  First, it aligns well with cluster theory, since in this setting $\uCM R$ is a Hom-finite $2$-CY category, with maximal rigid objects the MM generators, and cluster tilting objects (if they exist) the CT modules.  Second, the minimal models are easier to count, thus we have finer control over the mutation graph.

Recall that if $\cC$ is an exact category, then $M\in\cC$ is called \emph{rigid} if $\Ext^1_{\cC}(M,M)=0$, and  $M\in \cC$ is called \emph{maximal rigid} if $M$ is rigid and further it is maximal with respect to this property, namely if there exists $Y\in \cC$ such that $M\oplus Y$ is rigid, then $Y\in \add M$.  Equivalently, $M$ is a maximal rigid object of $\cC$ if
\[
\add M=\{  Y\in\cC\mid \Ext^1_{\cC}(M\oplus Y,M\oplus Y)=0\}.
\]
Also, recall that $M\in\cC$ is called a \emph{cluster tilting} object in $\cC$ if  
\[
\add M=\{ Y\in\cC \mid \Ext^{1}_{\cC}(M,Y)=0 \} =\{ X\in\cC \mid \Ext^{1}_{\cC}(X,M)=0 \}.
\]

\begin{cor}\label{isolated AM}
Let $R$ be a complete local isolated cDV singularity.  Then \ref{NCvsC min model} reduces to a one-to-one correspondence
\[
\begin{array}{c}
\begin{tikzpicture}
\node (A) at (-1,0) {$\{ \mbox{basic maximal rigid objects in $\CM R$}\}$};
\node (B) at (6.25,0) {$
\{\mbox{minimal models $f_i\colon X_i\to\Spec R$} \}.
$};
\draw[<->] (2.25,0) -- node [above] {$\scriptstyle $} (3.25,0);
\end{tikzpicture}
\end{array}
\]
If further the minimal models of $\Spec R$ are smooth (equivalently, $\CM R$ admits a cluster tilting object), then this reduces to
\[
\begin{array}{c}
\begin{tikzpicture}
\node (A) at (-1,0) {$\{ \mbox{basic cluster tilting objects in $\CM R$}\}$};
\node (B) at (6.25,0) {$
\{\mbox{crepant resolutions $f_i\colon X_i\to\Spec R$} \}.
$};
\draw[<->] (2.15,0) -- node [above] {$\scriptstyle $} (3.05,0);
\end{tikzpicture}
\end{array}
\]
In either case, under this correspondence properties \t{(1)--(4)} in \ref{NCvsC min model} and \ref{new thm in email} still hold, but further we now also have
\begin{enumerate}
\item[(5)]\label{NCvsC min model isolated 6} The simple mutation graph of the maximal rigid (respectively, cluster tilting) objects in $\uCM R$ is precisely the skeleton of the GIT chamber structure.
\item[(6)]\label{NCvsC min model isolated 5} The number of basic maximal rigid (respectively, cluster tilting) objects in $\uCM R$ is precisely the number of chambers in the GIT chamber decomposition. 
\end{enumerate}
If furthermore \ref{Toda assumption} is satisfied, then 
\begin{enumerate}
\item[(7)]\label{NCvsC min model isolated 7}  There are precisely $|W|$ maximal rigid objects in $\cC=\uCM R$, where $W$ is the corresponding Weyl group. 
\end{enumerate}
\end{cor}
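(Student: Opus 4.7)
The plan is to combine the general Auslander--McKay bijection of \ref{NCvsC min model} with the geometric simplifications afforded by the isolated hypothesis. For the two bijections I would check two small points. Since $\Ext^i_R(R,-) = 0$ for all $i \geq 1$, every basic maximal rigid object of $\CM R$ must contain $R$ as a summand, so ``maximal rigid in $\CM R$'' automatically means ``maximal rigid generator''. Moreover, for $R$ isolated Gorenstein of dimension three and any $N \in \CM R$, the $R$-module $\Ext^1_R(N,N)$ is supported only at the closed point and so has finite length; a standard depth computation then shows that $\End_R(N) \in \CM R$ if and only if $\Ext^1_R(N,N) = 0$, and hence that basic MM generators coincide with basic maximal rigid objects of $\CM R$. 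The first bijection is then just \ref{NCvsC min model}. The cluster-tilting case restricts this to smooth minimal models by invoking the results in \cite[\S 5]{IW4}, which identify CT modules as cluster tilting objects in $\uCM R$ and as those MM generators $N$ for which $\End_R(N)$ is an NCCR. Properties (1)--(4) carry over unchanged from \ref{NCvsC min model} and \ref{NCvsC min model full}.

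The key new input for (5), (6), (7) is that when $R$ is isolated every irreducible exceptional curve in every minimal model flops. By \ref{contract on f} this amounts to showing no such contraction contracts a divisor. If some intermediate contraction $g \colon X \to X_{\con}$ did contract a divisor $E$, then $E$ would lie in the exceptional locus of $f$, which maps to the unique singular point of $\Spec R$; hence $g^{-1}(g(E)) \supseteq E$ has dimension two and is contained in $f^{-1}(\mathrm{pt})$, contradicting the one-dimensional fibre hypothesis of \ref{crepant setup}. It follows that $\dim_\mathbb{C} \Lambda_i < \infty$ for every indecomposable summand $N_i$ of every MM generator $N$, so by \ref{basic2} every simple mutation is non-trivial and by \ref{flop=mut general thm} realises a flop at a single curve.

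To conclude, let $G$ denote the simple mutation graph of MM generators and $H$ the skeleton of $\Uptheta(\Lambda)$; by \ref{NCvsC min model full}(5) the correspondence of \ref{NCvsC min model} embeds $G$ as a subgraph of $H$. I would prove $G = H$ by outward induction from $C_+$: by \ref{Cplus is chamber} the walls of $C_+$ are exactly the coordinate hyperplanes $\upvartheta_i = 0$, one per non-free indecomposable summand of $N$, and by \ref{chamber 3fold=chamber surface}(3) together with \ref{see the flop} each adjacent chamber is the $C_+$-chamber of the mutated algebra $\upnu_i \Lambda$, which by paragraph two corresponds to a genuinely different MM generator. Since the number of non-free indecomposable summands is invariant under mutation at non-free summands, the matching of degrees holds at every MM-generator vertex, and iterating through the finite connected graph $H$ gives $V(G) = V(H)$ and $E(G) = E(H)$, proving (5) and (6) simultaneously. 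Part (7) then follows by combining (6) with \ref{Toda for AM}(2), which identifies the chamber structure of $\Uptheta(\Lambda)$ with the root system of the associated Weyl group $W$ and so yields exactly $|W|$ chambers. The main obstacle is the divisor-contraction argument of paragraph two: only there does the combination of the ``isolated'' and ``one-dimensional fibre'' hypotheses rule out the pathological behaviour of \ref{not true multiple} and make the chambers-to-MM-generators map injective.
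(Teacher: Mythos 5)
Your proposal is correct and follows essentially the same route as the paper: both obtain the two bijections from \ref{NCvsC min model} after identifying maximal rigid (respectively cluster tilting) objects with MM generators (respectively CT modules) as in \cite[\S5]{IW4}, both use isolatedness to force $\dim_{\mathbb{C}}\Lambda_i<\infty$ so that every summand mutates non-trivially and every curve flops, and both deduce (5)--(7) from the chamber-tracking argument behind \ref{Craw Ishii true}\eqref{Craw Ishii true 1} and \ref{chamber 3fold=chamber surface}\eqref{chamber 3fold=chamber surface 3} together with \ref{Toda for AM}. Your only departures are cosmetic: you re-derive the maximal-rigid $=$ MM-generator identification by a depth argument where the paper cites \cite[5.12]{IW4}, and you justify $\dim_{\mathbb{C}}\Lambda_i<\infty$ geometrically via $\mathrm{Exc}(f)\subseteq f^{-1}(\mathfrak{m})$ where the paper simply asserts it.
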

\begin{proof}
Since $R$ is isolated, $M$ is a maximal rigid object in the category $\CM R$ if and only if $M$ is an MM generator \cite[5.12]{IW4}, so the first bijection is a special case of the bijection in \ref{NCvsC min model}.  
The second bijection is similar, using \cite[5.11]{IW4}.  Further, since $R$ is isolated, it follows that always $\dim_{\mathbb{C}}\Lambda_i<\infty$, so all curves flop, and all summands non-trivially mutate.  Thus (5) follows from \ref{NCvsC min model}\eqref{NCvsC min model 3}, using the argument of \ref{Craw Ishii true}\eqref{Craw Ishii true 1}.  Part (6) follows immediately from (5), and part (7) follows from (6) together with \ref{Toda for AM}.
\end{proof}

We refer the reader to \S\ref{GIT examples section} for examples of chamber structures and mutation graphs.  The following is a non-explicit proof of \cite[4.15]{BIKR}, extended from crepant resolutions to also cover minimal models.
\begin{cor}
Consider an isolated cDV singularity $R:=\mathbb{C}[[u,v,x,y]]/(uv-f_1\hdots f_n)$ where each $f_i\in \m:=(x,y)\subseteq \mathbb{C}[[x,y]]$.  Then there are precisely $n!$ maximal rigid objects in $\uCM R$, and all are connected by mutation.
\end{cor}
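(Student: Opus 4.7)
The plan is to deduce this corollary directly from the Auslander--McKay correspondence of \ref{isolated AM} combined with the chamber structure computation of \ref{chamber general A}, applied to an isolated $cA_{n-1}$ singularity of this particular form.

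First I would verify that $R$ is an isolated $cA_{n-1}$ singularity whose minimal models are smooth with exactly $n-1$ exceptional curves.  Since $R$ is assumed isolated, the $f_i$ must be pairwise coprime, each cutting out a smooth branch through the origin of $\mathbb{C}[[x,y]]$; in particular each $f_i$ has multiplicity one, so $f_1\cdots f_n$ has multiplicity $n$ at the origin.  A generic hyperplane section $g=\alpha x+\beta y$ (with $\beta\neq 0$) allows elimination of $y$ and reduces $R$ to $\mathbb{C}[[u,v,x]]/(uv-\bar{f}_1\cdots\bar{f}_n)$, where each $\bar{f}_i$ is a unit multiple of $x$; absorbing units yields the Kleinian $A_{n-1}$ singularity $\mathbb{C}[[u,v,x]]/(uv-x^n)$, whose minimal resolution has exactly $n-1$ exceptional $\mathbb{P}^1$'s.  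By Reid's general elephant principle \cite{Pagoda}, every minimal model $X\to\Spec R$ therefore has $n-1$ rational exceptional curves, and since the surface section is smooth, the minimal models are themselves smooth crepant resolutions.

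Setting $N:=H^0(\cV_X)$ and $\Lambda:=\End_R(N)$, invoking \ref{chamber general A} identifies the chamber structure of $\Uptheta(\Lambda)$ with the type $A_{n-1}$ root system, whose Weyl group $S_n$ partitions the parameter space into exactly $n!$ chambers.  By \ref{isolated AM}(6), the number of basic maximal rigid objects in $\uCM R$ (equivalently, since the minimal models are smooth, basic cluster tilting objects) equals this number of chambers, namely $n!$.  Finally, \ref{isolated AM}(5) identifies the simple mutation graph of these objects with the skeleton of the chamber decomposition; since any finite hyperplane arrangement through the origin has a connected codimension-one skeleton, all basic maximal rigid objects in $\uCM R$ are connected by a sequence of single mutations.

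The main obstacle will be the first step, namely confirming that this particular form of $R$ is $cA_{n-1}$ with generic hyperplane section yielding the $A_{n-1}$ Kleinian singularity.  Once this multiplicity computation is in hand, the remainder is a routine application of the preceding machinery; in particular the explicit description of MM generators given in \cite{BIKR, IW5} becomes unnecessary, since the Homological MMP produces both the count and the mutation-connectedness abstractly from the chamber structure.
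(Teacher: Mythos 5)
Your overall strategy -- identify the singularity type, count the exceptional curves in a minimal model, and then combine the chamber count of \ref{chamber general A} with \ref{isolated AM} -- is the same as the paper's, but your execution of the first step contains a genuine gap. Isolatedness of $R=\mathbb{C}[[u,v,x,y]]/(uv-f_1\cdots f_n)$ is equivalent to the plane curve $f_1\cdots f_n=0$ having an isolated singularity, i.e.\ to $f_1\cdots f_n$ being reduced; it forces the $f_i$ to be pairwise coprime and squarefree, but it does \emph{not} force each $f_i$ to be smooth. For example $f_1=x^2-y^3$, $f_2=y$ gives an isolated cDV singularity of the stated form in which $f_1$ has multiplicity two: here $\ord(f_1f_2)=3$, so the general elephant is the $A_2$ (not $A_1$) Kleinian singularity, and the two minimal models are singular (each contains a point locally of the form $uv=x^2-y^3$), with a single exceptional curve. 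Consequently your claims that the generic hyperplane section is $A_{n-1}$, that the minimal models are smooth crepant resolutions, and that maximal rigid objects may be replaced by cluster tilting objects, all fail in general; moreover the curve count cannot be read off from the general elephant alone, since a minimal model of a $cA_m$ singularity cuts only to a \emph{partial} resolution of $A_m$, with possibly far fewer than $m$ curves.

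What rescues the statement is that the number of exceptional curves in a minimal model is governed by the number of factors $n$ in the factorization, not by $\ord(f_1\cdots f_n)$ -- this is precisely what the paper imports from \cite[6.1(e)]{BIKR} and the explicit calculation in \cite[\S5.1]{IW5} -- and that \ref{chamber general A} is stated for arbitrary (possibly singular) minimal models of $cA_m$ singularities, so it identifies the chamber structure with the relevant type $A$ root system directly from that curve count, yielding $n!$ chambers; then \ref{isolated AM} gives both the count of maximal rigid objects and connectedness under mutation, exactly as you argue at the end (your skeleton-connectedness remark is fine and matches the paper). As written, however, your proof only covers the special case in which every $f_i$ has order one, so you need either to restrict to that case or to replace the "smooth branches" step by the citation (or a direct verification) that the minimal models of these $cA_m$ singularities have exactly the number of curves dictated by the factorization.
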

\begin{proof}
As in \cite[6.1(e)]{BIKR}, $R$ is a $cA_m$ singularity, where $m=\ord(f_1\hdots f_n)-1$, and it is well known (see e.g.\ the calculation in \cite[\S5.1]{IW5}) that the minimal models of $\Spec R$ have $n$ curves above the origin.   But by \ref{chamber general A} the GIT chamber decomposition of any of the MMAs $\End_R(M)$ with $R\in\add M$ has precisely $n!$ chambers, so the result follows from \ref{isolated AM}.
\end{proof}

\subsection{Partial Converse}
Let $R$ be a complete local Gorenstein $3$-fold.  By the Auslander--McKay correspondence, if $R$ is cDV then there are only finitely many basic MM modules up to isomorphism.  Recall from \ref{1d finite conjecture} that we conjecture the converse to be true.  Since such $R$ are known to be  hypersurfaces, the corollary of the following result, although it does not prove the conjecture, does give it some credibility.

As preparation, recall that the \emph{complexity} of $M\in\mod R$ measures the rate of growth of the ranks of the free modules in the minimal projective resolution of $M$, and is defined
\[
\cx_R(M):=\mathrm{inf} \{t\in\mathbb{Z}_{\geq 0}\mid \exists\, a\in\mathbb{R}\mbox{ with }\dim_{\mathbb{C}}\Ext^n_R(M,k)\leq an^{t-1}\mbox{ for }n\gg 0  \}.
\]
Since $\cx_R(M)$ measures the asymptotic behaviour, it is clear that $\cx_R(M)=\cx_R(\Omega^iM)$ for all $i\geq 0$. 

The following extends \cite[\S3]{Bergh} to cover not-necessarily-isolated singularities.  
\begin{prop}\label{CT PB thm}
Suppose that $R$ is a $d$-dimensional complete local Gorenstein algebra.  If $R$ admits only finitely many basic CT modules up to isomorphism, then $R$ is a hypersurface.
\end{prop}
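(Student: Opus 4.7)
The plan is to adapt Bergh's syzygy-based pigeonhole argument from the isolated to the general Gorenstein setting. The three broad stages are: (i) show that the syzygy operator preserves the class of basic CT modules; (ii) invoke the finiteness hypothesis to force periodicity; (iii) convert periodicity of a generator into the hypersurface condition on $R$.

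First I would prove that if $N$ is a basic CT module then the basic representative of $\Omega N\oplus R$ is again CT. Since $R$ is Gorenstein, $\Omega\colon\uCM R\to\uCM R$ is an autoequivalence, so $\Omega N\in\CM R$. For the endomorphism ring condition $\End_R(\Omega N\oplus R)\in\CM R$, I would apply $\Hom_R(-,\Omega N)$ and $\Hom_R(\Omega N,-)$ to the short exact sequence $0\to\Omega N\to P\to N\to 0$ with $P$ free, and use that $\End_R(N)\in\CM R$ together with the CM-ness of $N$ and $\Omega N$. The maximality clause in the definition of CT then follows by transporting the characterization $\add N=\{X\in\CM R\mid \Hom_R(N,X)\in\CM R\}$ across the autoequivalence $\Omega$.

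Second, since there are only finitely many basic CT modules up to isomorphism, the sequence $\{\Omega^iN\}_{i\geq 0}$ of basic CT modules must repeat. By invertibility of $\Omega$ on $\uCM R$, this yields an isomorphism $\Omega^p N\cong N$ in $\uCM R$ for some $p\geq 1$. Consequently $N$ has periodic syzygies and bounded Betti numbers; in particular each indecomposable summand of $N$ has complexity at most one. Third, from the periodicity of $N$ I would propagate bounded complexity to arbitrary finitely generated modules. Since $R\in\add N$, minimal free resolutions of any $M$ can be assembled by repeatedly covering with summands of $N$, so the bound on Betti numbers transfers from $N$ to general $M$. In particular $\cx_R(k)\leq 1$ for the residue field, and a classical theorem of Gulliksen and Avramov then forces the Gorenstein ring $R$ to be a hypersurface.

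The main obstacle is the first step. In the isolated case $\uCM R$ is $\Hom$-finite and Ext-vanishing between CM modules is abundant, which makes the preservation of CT under $\Omega$ essentially formal; without isolation these vanishings fail, so one must carefully control the $\Ext^1_R(N,-)$ and $\Ext^1_R(\Omega N,-)$ terms appearing in the long exact sequences attached to $0\to\Omega N\to P\to N\to 0$, and show that they do not obstruct the CM-ness of the relevant endomorphism rings nor the maximality condition defining CT. Establishing this cleanly is where the extension of Bergh's argument demands genuine new work; the remaining steps are then relatively standard homological bookkeeping.
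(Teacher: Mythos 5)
Your overall strategy (syzygy pigeonhole giving $\Omega^t N\cong N$, hence $\cx_R N\leq 1$, then Gulliksen) is the same as the paper's, and your first two stages are fine; in fact the preservation of CT modules under syzygies, which you flag as the main obstacle, is not where the difficulty lies -- the paper simply quotes \cite[6.11]{IW4} for the fact that each $\Omega^i M$ is again CT, so no new work is needed there. The genuine gap is your third stage. Knowing that one generator $N$ has complexity $\leq 1$ does not transfer to the residue field by ``repeatedly covering with summands of $N$'': a minimal free resolution of an arbitrary module involves only free modules, and an $\add N$-approximation process neither terminates nor controls the growth of Betti numbers. Indeed the implication you are invoking is false for a bare generator: for $R=\mathbb{C}[[x,y]]/(x^2,y^2)$ (complete local Gorenstein, not a hypersurface) the CM generator $N=R\oplus R/(x)$ satisfies $\Omega(R/(x))\cong R/(x)$, so $\cx_R N\leq 1$, while $\cx_R\mathbb{C}=2$. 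Thus periodicity of a CM generator alone cannot force the hypersurface condition; the CT hypothesis must enter the propagation step itself, not just the pigeonhole step.

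This is exactly what the paper's proof does. Since $M$ is CT, $\gl\End_R(M)=d$ by \cite[5.4]{IW4}. Applying $\Hom_R(M,-)$ to $0\to X\to P_0\to P_1$ with $X:=\Omega^d k$ and the $P_i$ free shows $\pd_{\End_R(M)}\Hom_R(M,X)\leq d-2$; because $\Hom_R(M,-)$ restricts to an equivalence $\add M\to\proj\End_R(M)$ and $M$ is a generator, the finite projective resolution lifts to an exact complex $0\to M_{d-2}\to\cdots\to M_0\to X\to 0$ with all $M_i\in\add M$. Splicing this complex and using $\cx_R M_i\leq 1$ (from the pigeonhole step) yields $\cx_R X\leq 1$, hence $\cx_R k\leq 1$, and Gulliksen finishes. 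To repair your outline you must replace the ``covering'' argument by such a finite $\add M$-resolution of $\Omega^d k$, i.e., by explicitly using the finite global dimension of the endomorphism ring of the CT module.
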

\begin{proof}
Let $M$ be such a CT module, which is necessarily a generator, and consider $X:=\Omega^d k\in\CM R$.  Since $R$ is Gorenstein, by taking a projective cover of $X^*$ then dualizing, we have an exact sequence $0\to X\to P_0\to P_1$ with each $P_i\in \add R$.   Applying $\Hom_R(M,-)$ gives an exact sequence
\[
0\to\Hom_R(M,X)\to\Hom_R(M,P_0)\stackrel{g}{\to}\Hom_R(M,P_1)\to\Cok g\to 0.
\]
Since both $\Hom_R(M,P_i)$ are projective $\End_R(M)$-modules, and $\gl \End_R(M)=d$ by \cite[5.4]{IW4}, it follows that  $\pd_{\End_R(M)}\Hom_R(M,X)\leq d-2$.  

Since $M$ is a generator, $\Hom_R(M,-)\colon\mod R\to\mod\End_R(M)$ is fully faithful, and restricts to an equivalence $\add M\to\proj\End_R(M)$, see e.g.\ \cite[2.5(1)]{IW4}.  Thus we may take a projective resolution 
\[
0\to\Hom_R(M,M_{d-2})\to\hdots\to\Hom_R(M,M_{0})\to\Hom_R(M,X)\to 0
\]
which necessarily comes from a complex 
\begin{eqnarray}
0\to M_{d-2}\to M_{d-3}\hdots\to M_0\to X\to 0.\label{M gen complex}
\end{eqnarray}
This complex \eqref{M gen complex} is exact, since $M$ is a generator.

Now, by general mutation theory, $\Omega^iM$ are CT modules for all $i\in\mathbb{N}$ \cite[6.11]{IW4}, and since by assumption there are only finitely many basic CT modules, $\Omega^iM\cong \Omega^jM$ for some $i\neq j$, which by taking cosyzygies implies that $\Omega^tM\cong M$ for some $t\geq 1$.  Consequently, $\cx_RM\leq 1$.

But splicing the sequence \eqref{M gen complex} gives an exact sequence
\[
0\to M_{d-2}\to M_{d-3}\to C_{d-3}\to 0
\] 
and thus an exact sequence
\begin{eqnarray}
\hdots\to\Ext^n_R(M_{d-2},k)\to\Ext^{n+1}_R(C_{d-3},k)\to\Ext^{n+1}_R(M_{d-3},k)\to\hdots.\label{induct cx Ext}
\end{eqnarray}
Since $\cx_RM\leq 1$ there exist $a_{i}\in\mathbb{R}$ such that $\dim_{\mathbb{C}}\Ext_R^n(M_{i},k)\leq a_i$ for $n\gg0$.  Thus inspecting \eqref{induct cx Ext} it follows that $\dim_{\mathbb{C}}\Ext_R^n(C_{d-3},k)\leq a_{d-2}+a_{d-3}$ for $n\gg0$, so $\cx_R C_{d-3}\leq 1$.  Inducting along the splicing of \eqref{M gen complex} gives $\cx_R X\leq 1$, which implies that $\cx_Rk\leq 1$.  This is well-known to imply that $R$ is a hypersurface \cite{Gulliksen}.
\end{proof}

\begin{cor}\label{CT PB cor}
Suppose that $R$ is a $3$-dimensional complete local normal Gorenstein algebra, and suppose that $R$ admits an NCCR.  If there are only finitely many basic MM generators up to isomorphism,  then $R$ is a hypersurface. 
\end{cor}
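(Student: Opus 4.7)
The plan is to reduce this corollary to the more general Proposition~\ref{CT PB thm}. The proposition gives the conclusion from finiteness of basic CT modules, whereas the hypothesis here is finiteness of basic MM generators, so the whole content of the reduction is to show that under the additional NCCR assumption these two classes coincide (up to isomorphism). Once that identification is in place, the hypothesis supplies finitely many basic CT modules and \ref{CT PB thm} immediately forces $R$ to be a hypersurface.

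To produce a CT module, I would argue as follows. Since $R$ is a $3$-dimensional complete local normal Gorenstein algebra admitting an NCCR, by Van den Bergh \cite{VdBNCCR} the minimal models of $\Spec R$ are smooth, so we may fix a crepant resolution $f\colon X\to\Spec R$. Standard tilting theory on such an $X$ (as in \S\ref{perverse and tilting}, using $\cV_X$ with $\cO_X$ as a summand) produces $N:=H^0(\cV_X)$ with $R\in\add N$, $N\in\CM R$, and $\End_R(N)\cong\End_X(\cV_X)$ of finite global dimension. Thus $\End_R(N)$ is an NCCR realised by a module $N\in\CM R$, which by the characterisation \cite[5.9(1)]{IW4} already invoked in \ref{CT defin} means precisely that $N$ is a CT module.

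Now \cite[5.11(2)]{IW4}, also recalled after \ref{CT defin}, asserts that as soon as $\CM R$ contains a CT module, the class of CT modules coincides with the class of MM $R$-module generators. Applying this with the $N$ produced above, the basic CT modules of $R$ are in bijection with the basic MM generators. By hypothesis the latter set is finite, hence so is the former, and \ref{CT PB thm} then yields that $R$ is a hypersurface.

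There is no real obstacle here beyond verifying the first step; the rest is just a quote. The only point that needs care is the passage from an abstract NCCR to a CT module, and this is taken care of by Van den Bergh's theorem combined with the perverse/tilting construction on a smooth crepant resolution, which guarantees a summand $\cO_X$ and thus an MM generator lying in $\CM R$. Everything else is a formal combination of results already collected in \S\ref{general prelim}.
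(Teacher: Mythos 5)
Your overall skeleton (produce a CT module from the NCCR, identify CT modules with MM generators via \cite[5.11(2)]{IW4}, transfer finiteness, then quote \ref{CT PB thm}) is exactly the paper's reduction, but your first step has a genuine gap. The geometric construction you invoke --- $\cV_X$, $N:=H^0(\cV_X)$ with $R\in\add N$, $N\in\CM R$ and $\End_R(N)\cong\End_X(\cV_X)$ --- lives in the setups \ref{general setup} and \ref{crepant setup}, which require the fibres of $X\to\Spec R$ to have dimension at most one; the isomorphism $\End_X(\cV_X)\cong\End_R(N)$ of \ref{flop up=down} is likewise specific to that situation. The corollary makes no such assumption: $R$ is an arbitrary $3$-dimensional complete local normal Gorenstein algebra admitting an NCCR, and its crepant resolutions may well contract a divisor to a point (e.g.\ the $\tfrac13(1,1,1)$ quotient singularity, which admits an NCCR via the skew group algebra but whose crepant resolution has an exceptional $\mathbb{P}^2$). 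In that generality the paper provides no tilting bundle $\cV_X$ with the stated properties, so your passage from ``smooth minimal model'' to ``CT module in $\CM R$'' does not go through. Worse, in the regime where your argument does apply --- smooth $X$ with one-dimensional fibres --- $R$ is cDV by \cite[1.1]{Pagoda} and hence already a hypersurface, so your proof only covers the case where the statement is vacuous.

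The repair is the paper's (purely algebraic) route: the existence of an NCCR directly implies the existence of a CT module by \cite[5.9]{IW4}, with no geometry needed; note that the characterisation \cite[5.9(1)]{IW4} you cite concerns NCCRs given by modules in $\CM R$, which is precisely the extra feature your geometric detour was trying, and in general failing, to manufacture. With that substitution the remainder of your argument --- \cite[5.11(2)]{IW4} identifying CT modules with MM generators, finiteness of basic CT modules, and \ref{CT PB thm} --- is correct and agrees with the paper.
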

\begin{proof}
Since $R$ admits an NCCR, by \cite[5.9]{IW4} $\CM R$ has a CT module.  As a consequence, by \cite[5.11(2)]{IW4} CT modules are precisely the MM generators, so the assumptions now imply that there are only finitely many CT modules.  Thus the result follows from \ref{CT PB thm}.
\end{proof}

\section{Examples}\label{examples section}
In this section we summarise the GIT chamber decompositions of some crepant partial resolutions of ADE surface singularities, and give the corresponding applications to cDV singularities.  We also illustrate how to run Figure~\ref{Fig2} in some explicit cases.

\subsection{GIT Chamber structures}\label{GIT examples section} Throughout this subsection, $Y\to \Spec S$ denotes a crepant partial resolution, where $S$ is a complete local ADE surface singularity, and $X\to \Spec R$ denotes a crepant partial resolution, where $R$ is cDV.

\begin{example}
Suppose that $Y\to \Spec S$ has only one curve above the origin.  Then $\Uptheta$ is parametrised by $\upvartheta_1$, and there is a single wall at $\upvartheta_1=0$
\[
\begin{array}{c}
\begin{tikzpicture}[scale=0.75]
\draw[->] (-2,0)--(2,0);
\node at (2.5,0) {$\upvartheta_1$};
\filldraw[densely dotted,gray] (0,0) circle (2pt);
\end{tikzpicture}
\end{array}
\]
In the $d=3$ crepant setting \ref{crepant setup}, if $X\to\Spec R$ has only one curve above the origin and does not contract a divisor, then it has the above chamber structure.  This includes, as a special case, all simple flops.
\end{example}

\begin{example}\label{2 curves examples}
With notation as in \S\ref{chamber red to surface subsection}, using a similar argument as in \ref{knitting example}, the following are examples of chamber structures for some $2$-curve configurations.
\vspace{-7mm}
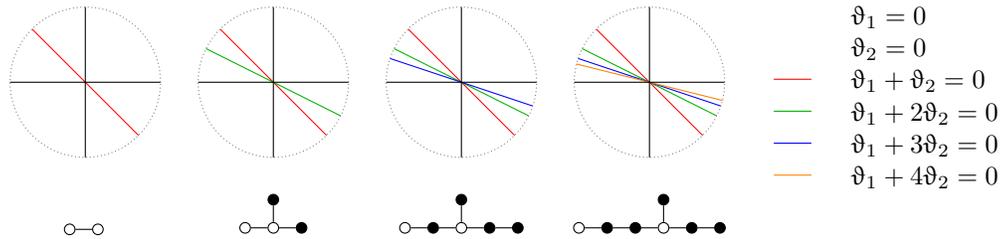
\begin{figure}[H]
\[
\begin{array}{c}
\begin{tikzpicture}
\node at (0,0) {$
\begin{tikzpicture}[scale=0.5]
\coordinate (A1) at (135:2cm);
\coordinate (A2) at (-45:2cm);
\draw[red] (A1) -- (A2);
\draw (-2,0)--(2,0);
\draw (0,-2)--(0,2);
\draw[densely dotted,gray] (0,0) circle (2cm);
\end{tikzpicture}$};
\node at (0,-1.75) {$
\begin{tikzpicture}[scale=0.5]
\node (0) at (0,0) [DW] {};
\node (1) at (0.75,0) [DW] {};
\node (1b) at (0.75,0.75) {};
\draw [-] (0) -- (1);
\end{tikzpicture}$};
\node at (2.5,0) {$
\begin{tikzpicture}[scale=0.5]
\coordinate (A1) at (135:2cm);
\coordinate (A2) at (-45:2cm);
\coordinate (B1) at (153.435:2cm);
\coordinate (B2) at (-26.565:2cm);
\draw[red] (A1) -- (A2);
\draw[green!70!black] (B1) -- (B2);
\draw (-2,0)--(2,0);
\draw (0,-2)--(0,2);
\draw[densely dotted,gray] (0,0) circle (2cm);
\end{tikzpicture}$};
\node at (2.5,-1.75) {$
\begin{tikzpicture}[scale=0.5]
\node (0) at (0,0) [DW] {};
\node (1) at (0.75,0) [DW] {};
\node (1b) at (0.75,0.75) [DB] {};
\node (2) at (1.5,0) [DB] {};
\draw [-] (0) -- (1);
\draw [-] (1) -- (2);
\draw [-] (1) -- (1b);
\end{tikzpicture}$};
\node at (5,0) {$\begin{tikzpicture}[scale=0.5]
\coordinate (A1) at (135:2cm);
\coordinate (A2) at (-45:2cm);
\coordinate (B1) at (153.435:2cm);
\coordinate (B2) at (-26.565:2cm);
\coordinate (C1) at (161.565:2cm);
\coordinate (C2) at (-18.435:2cm);
\draw[red] (A1) -- (A2);
\draw[green!70!black] (B1) -- (B2);
\draw[blue] (C1) -- (C2);
\draw (-2,0)--(2,0);
\draw (0,-2)--(0,2);
\draw[densely dotted,gray] (0,0) circle (2cm);
\end{tikzpicture}$};
\node at (5,-1.75) {$\begin{tikzpicture}[scale=0.5]
\node (-1) at (-0.75,0) [DW] {};
\node (0) at (0,0) [DB] {};
\node (1) at (0.75,0) [DW] {};
\node (1b) at (0.75,0.75) [DB] {};
\node (2) at (1.5,0) [DB] {};
\node (3) at (2.25,0) [DB] {};
\draw [-] (-1) -- (0);
\draw [-] (0) -- (1);
\draw [-] (1) -- (2);
\draw [-] (2) -- (3);
\draw [-] (1) -- (1b);
\end{tikzpicture}$};
\node at (7.5,0) {$\begin{tikzpicture}[scale=0.5]
\coordinate (A1) at (135:2cm);
\coordinate (A2) at (-45:2cm);
\coordinate (B1) at (153.435:2cm);
\coordinate (B2) at (-26.565:2cm);
\coordinate (C1) at (161.565:2cm);
\coordinate (C2) at (-18.435:2cm);
\coordinate (D1) at (165.964:2cm);
\coordinate (D2) at (-14.036:2cm);
\draw[red] (A1) -- (A2);
\draw[green!70!black] (B1) -- (B2);
\draw[blue] (C1) -- (C2);
\draw[orange] (D1) -- (D2);
\draw (-2,0)--(2,0);
\draw (0,-2)--(0,2);
\draw[densely dotted,gray] (0,0) circle (2cm);
\end{tikzpicture}$};
\node at (7.5,-1.75) {$\begin{tikzpicture}[scale=0.5]
\node (-2) at (-1.5,0) [DW] {};
\node (-1) at (-0.75,0) [DB] {};
\node (0) at (0,0) [DB] {};
\node (1) at (0.75,0) [DW] {};
\node (1b) at (0.75,0.75) [DB] {};
\node (2) at (1.5,0) [DB] {};
\node (3) at (2.25,0) [DB] {};
\draw [-] (-2) -- (-1);
\draw [-] (-1) -- (0);
\draw [-] (0) -- (1);
\draw [-] (1) -- (2);
\draw [-] (2) -- (3);
\draw [-] (1) -- (1b);
\end{tikzpicture}$};
\node at (10.5,0) {
$
\begin{array}{cl}
\\
&\upvartheta_1=0\\
&\upvartheta_2=0\\
\begin{array}{c}\begin{tikzpicture}\node at (0,0){}; \draw[red] (0,0)--(0.5,0);\end{tikzpicture}\end{array}&\upvartheta_1+\upvartheta_2=0\\
\begin{array}{c}\begin{tikzpicture}\node at (0,0){}; \draw[green!70!black] (0,0)--(0.5,0);\end{tikzpicture}\end{array}&\upvartheta_1+2\upvartheta_2=0\\
\begin{array}{c}\begin{tikzpicture}\node at (0,0){}; \draw[blue] (0,0)--(0.5,0);\end{tikzpicture}\end{array}&\upvartheta_1+3\upvartheta_2=0\\
\begin{array}{c}\begin{tikzpicture}\node at (0,0){}; \draw[orange] (0,0)--(0.5,0);\end{tikzpicture}\end{array}&\upvartheta_1+4\upvartheta_2=0
\end{array}
$};
\end{tikzpicture}
\end{array}
\]
\caption{Chamber structures for some two curve configurations.}
\end{figure}
\noindent
\end{example}

\begin{example}\label{enriched E7 example}
If further we enhance each chamber with the curve configuration for that chamber (calculated as a byproduct of mutation, as in \ref{knitting example 2}), for $E_6$ with configuration 
\begin{eqnarray}
\begin{array}{c}
\begin{tikzpicture}[scale=1]
\node (-1) at (-0.75,0) [DW] {};
\node (0) at (0,0) [DB] {};
\node (1) at (0.75,0) [DW] {};
\node (1b) at (0.75,0.75) [DB] {};
\node (2) at (1.5,0) [DB] {};
\node (3) at (2.25,0) [DB] {};
\draw [-] (-1) -- (0);
\draw [-] (0) -- (1);
\draw [-] (1) -- (2);
\draw [-] (2) -- (3);
\draw [-] (1) -- (1b);
\end{tikzpicture}
\end{array}\label{E613config}
\end{eqnarray}
after rescaling we obtain the enhanced GIT chamber decomposition
\begin{figure}[H]
\[
\begin{tikzpicture}[scale=1]
\node[draw,densely dotted,gray,name=s,regular polygon, regular polygon sides=10, minimum size=5.5cm] at (0,0) {}; 
\draw (s.corner 1)--  (s.corner 6);
\draw[red] (s.corner 2)--  (s.corner 7);
\draw[green!70!black] (s.corner 3)--  (s.corner 8);
\draw[blue] (s.corner 4)--  (s.corner 9);
\draw (s.corner 5)--  (s.corner 10);
\node[name=t,regular polygon, regular polygon sides=10, minimum size=4cm] at (0,0) {}; 
\node at (t.side 10) {\begin{tikzpicture}[scale=0.3]
\node (-1) at (-0.75,0) [DW] {};
\node (0) at (0,0) [DB] {};
\node (1) at (0.75,0) [DW] {};
\node (1b) at (0.75,0.75) [DB] {};
\node (2) at (1.5,0) [DB] {};
\node (3) at (2.25,0) [DB] {};
\draw [-] (-1) -- (0);
\draw [-] (0) -- (1);
\draw [-] (1) -- (2);
\draw [-] (2) -- (3);
\draw [-] (1) -- (1b);
\end{tikzpicture}
};
\node at (t.side 1) {\begin{tikzpicture}[scale=0.3]
\node (-1) at (-0.75,0) [DB] {};
\node (0) at (0,0) [DW] {};
\node (1) at (0.75,0) [DW] {};
\node (1b) at (0.75,0.75) [DB] {};
\node (2) at (1.5,0) [DB] {};
\node (3) at (2.25,0) [DB] {};
\draw [-] (-1) -- (0);
\draw [-] (0) -- (1);
\draw [-] (1) -- (2);
\draw [-] (2) -- (3);
\draw [-] (1) -- (1b);
\end{tikzpicture}
};
\node at (t.side 2) {\begin{tikzpicture}[scale=0.3]
\node (-1) at (-0.75,0) [DB] {};
\node (0) at (0,0) [DW] {};
\node (1) at (0.75,0) [DB] {};
\node (1b) at (0.75,0.75) [DB] {};
\node (2) at (1.5,0) [DW] {};
\node (3) at (2.25,0) [DB] {};
\draw [-] (-1) -- (0);
\draw [-] (0) -- (1);
\draw [-] (1) -- (2);
\draw [-] (2) -- (3);
\draw [-] (1) -- (1b);
\end{tikzpicture}
};
\node at (t.side 3) {\begin{tikzpicture}[scale=0.3]
\node (-1) at (-0.75,0) [DB] {};
\node (0) at (0,0) [DB] {};
\node (1) at (0.75,0) [DW] {};
\node (1b) at (0.75,0.75) [DB] {};
\node (2) at (1.5,0) [DW] {};
\node (3) at (2.25,0) [DB] {};
\draw [-] (-1) -- (0);
\draw [-] (0) -- (1);
\draw [-] (1) -- (2);
\draw [-] (2) -- (3);
\draw [-] (1) -- (1b);
\end{tikzpicture}
};
\node at (t.side 4) {\begin{tikzpicture}[scale=0.3]
\node (-1) at (-0.75,0) [DB] {};
\node (0) at (0,0) [DB] {};
\node (1) at (0.75,0) [DW] {};
\node (1b) at (0.75,0.75) [DB] {};
\node (2) at (1.5,0) [DB] {};
\node (3) at (2.25,0) [DW] {};
\draw [-] (-1) -- (0);
\draw [-] (0) -- (1);
\draw [-] (1) -- (2);
\draw [-] (2) -- (3);
\draw [-] (1) -- (1b);
\end{tikzpicture}
};
\node at (t.side 5) {\begin{tikzpicture}[scale=0.3]
\node (-1) at (-0.75,0) [DB] {};
\node (0) at (0,0) [DB] {};
\node (1) at (0.75,0) [DW] {};
\node (1b) at (0.75,0.75) [DB] {};
\node (2) at (1.5,0) [DB] {};
\node (3) at (2.25,0) [DW] {};
\draw [-] (-1) -- (0);
\draw [-] (0) -- (1);
\draw [-] (1) -- (2);
\draw [-] (2) -- (3);
\draw [-] (1) -- (1b);
\end{tikzpicture}
};
\node at (t.side 6) {\begin{tikzpicture}[scale=0.3]
\node (-1) at (-0.75,0) [DB] {};
\node (0) at (0,0) [DB] {};
\node (1) at (0.75,0) [DW] {};
\node (1b) at (0.75,0.75) [DB] {};
\node (2) at (1.5,0) [DW] {};
\node (3) at (2.25,0) [DB] {};
\draw [-] (-1) -- (0);
\draw [-] (0) -- (1);
\draw [-] (1) -- (2);
\draw [-] (2) -- (3);
\draw [-] (1) -- (1b);
\end{tikzpicture}
};
\node at (t.side 7) {\begin{tikzpicture}[scale=0.3]
\node (-1) at (-0.75,0) [DB] {};
\node (0) at (0,0) [DW] {};
\node (1) at (0.75,0) [DB] {};
\node (1b) at (0.75,0.75) [DB] {};
\node (2) at (1.5,0) [DW] {};
\node (3) at (2.25,0) [DB] {};
\draw [-] (-1) -- (0);
\draw [-] (0) -- (1);
\draw [-] (1) -- (2);
\draw [-] (2) -- (3);
\draw [-] (1) -- (1b);
\end{tikzpicture}
};
\node at (t.side 8) {\begin{tikzpicture}[scale=0.3]
\node (-1) at (-0.75,0) [DB] {};
\node (0) at (0,0) [DW] {};
\node (1) at (0.75,0) [DW] {};
\node (1b) at (0.75,0.75) [DB] {};
\node (2) at (1.5,0) [DB] {};
\node (3) at (2.25,0) [DB] {};
\draw [-] (-1) -- (0);
\draw [-] (0) -- (1);
\draw [-] (1) -- (2);
\draw [-] (2) -- (3);
\draw [-] (1) -- (1b);
\end{tikzpicture}
};
\node at (t.side 9) {\begin{tikzpicture}[scale=0.3]
\node (-1) at (-0.75,0) [DW] {};
\node (0) at (0,0) [DB] {};
\node (1) at (0.75,0) [DW] {};
\node (1b) at (0.75,0.75) [DB] {};
\node (2) at (1.5,0) [DB] {};
\node (3) at (2.25,0) [DB] {};
\draw [-] (-1) -- (0);
\draw [-] (0) -- (1);
\draw [-] (1) -- (2);
\draw [-] (2) -- (3);
\draw [-] (1) -- (1b);
\end{tikzpicture}
};
\end{tikzpicture}
\]
\caption{Enhanced chamber decomposition for $E_6$ with  configuration \eqref{E613config}.}
\end{figure}
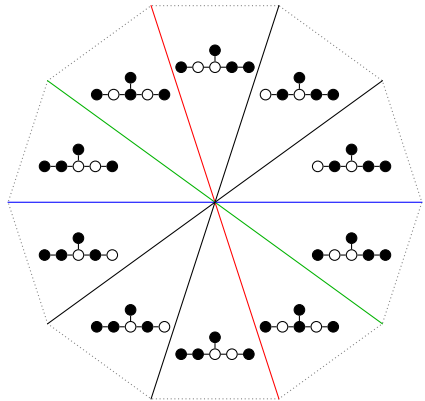
\noindent
By \ref{minimal bound}, it follows that any cDV singularity with a minimal model that cuts under generic hyperplane section to \eqref{E613config} has at least $5$, and at most $10$, minimal models. 
\end{example}

\begin{example}
For the 3-curve configuration $\begin{array}{c}
\begin{tikzpicture}[scale=0.5]
\node (0) at (0,0) [DW] {};
\node (1) at (0.75,0) [DW] {};
\node (1b) at (0.75,0.75) [DB] {};
\node (2) at (1.5,0) [DW] {};
\draw [-] (0) -- (1);
\draw [-] (1) -- (2);
\draw [-] (1) -- (1b);
\end{tikzpicture}
\end{array}
$, the chamber structure is

\begin{figure}[H]
\[
\begin{array}{ccccc}
\begin{array}{c}
\includegraphics[angle=0,scale = 0.2]{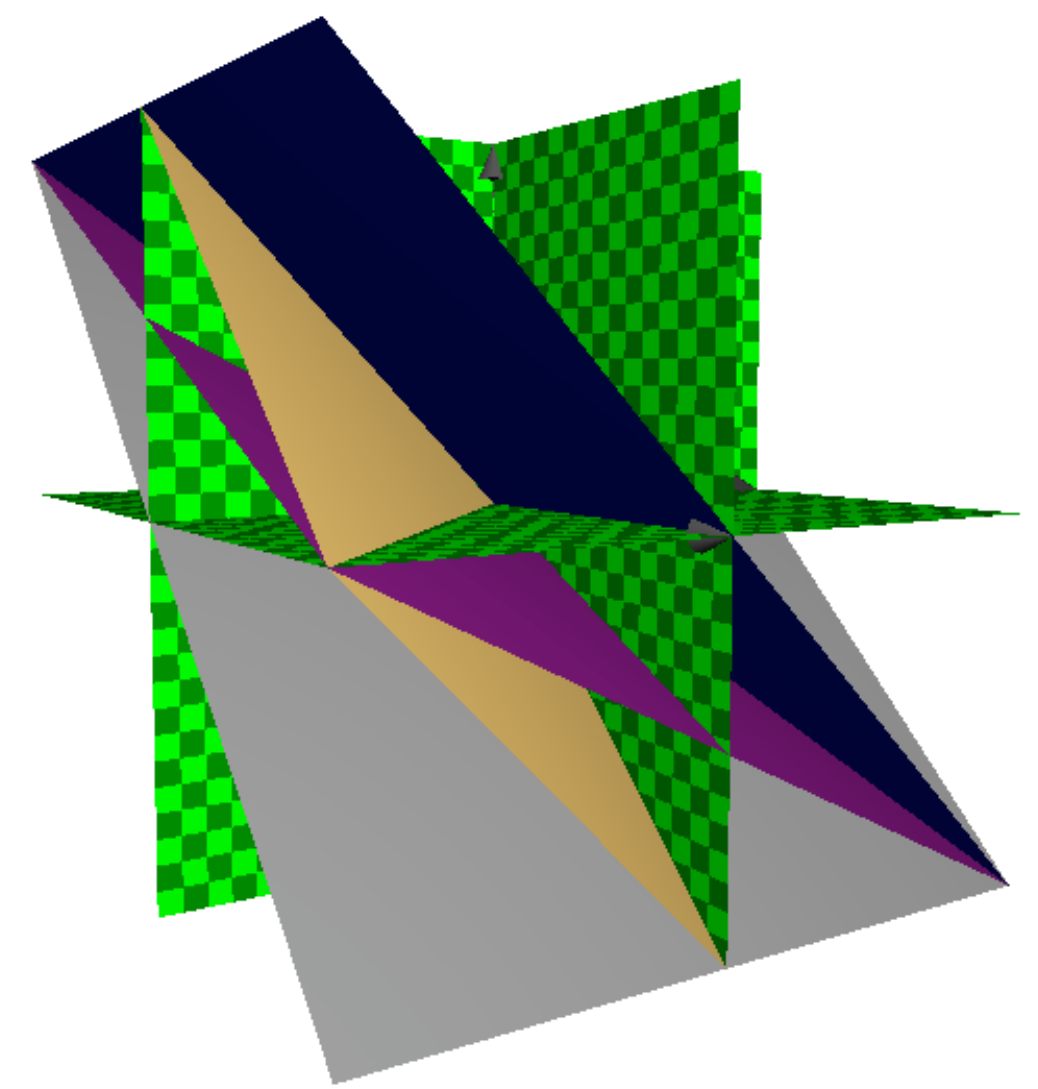}
\end{array}
&&
\begin{array}{c}
\includegraphics[angle=0,scale = 0.2]{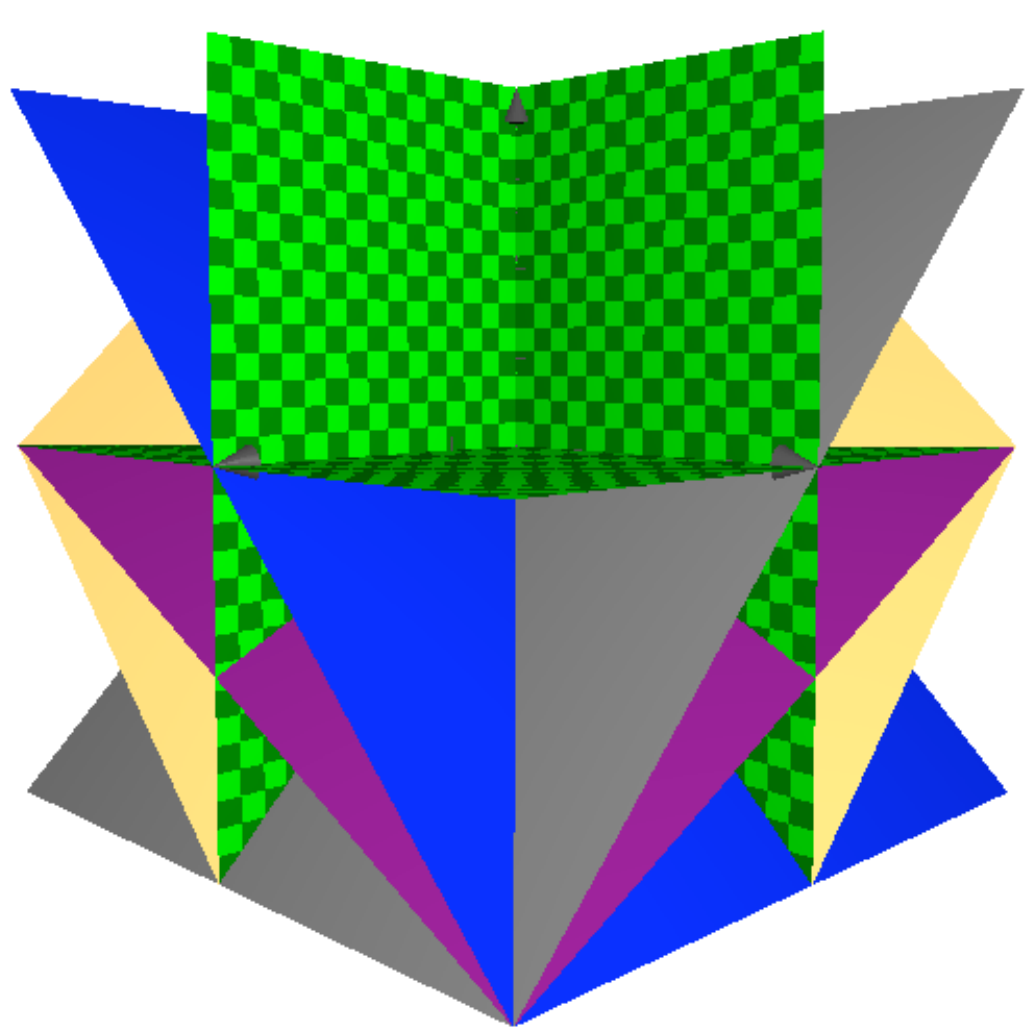}
\end{array}
&&
\begin{array}{l}
\upvartheta_1=0\\
\upvartheta_2=0\\
\upvartheta_3=0\\
\upvartheta_1+\upvartheta_3=0\\
\upvartheta_2+\upvartheta_3=0\\
\upvartheta_1+\upvartheta_2+\upvartheta_3=0\\
\upvartheta_1+\upvartheta_2+2\upvartheta_3=0
\end{array}
\end{array}
\]
\caption{The 32 chambers for $(1,2,1)$}\label{cD4 121}
\end{figure}
\noindent
whereas for the 3-curve configuration $\begin{array}{c}
\begin{tikzpicture}[scale=0.5]
\node (-1) at (-0.75,0) [DW] {};
\node (0) at (0,0) [DB] {};
\node (1) at (0.75,0) [DW] {};
\node (1b) at (0.75,0.75) [DB] {};
\node (2) at (1.5,0) [DB] {};
\node (3) at (2.25,0) [DW] {};
\draw [-] (-1) -- (0);
\draw [-] (0) -- (1);
\draw [-] (1) -- (2);
\draw [-] (2) -- (3);
\draw [-] (1) -- (1b);
\end{tikzpicture}
\end{array}
$, the chamber structure is
\begin{figure}[H]
\[
\begin{array}{ccccc}
\begin{array}{c}
\includegraphics[angle=0,scale = 0.2]{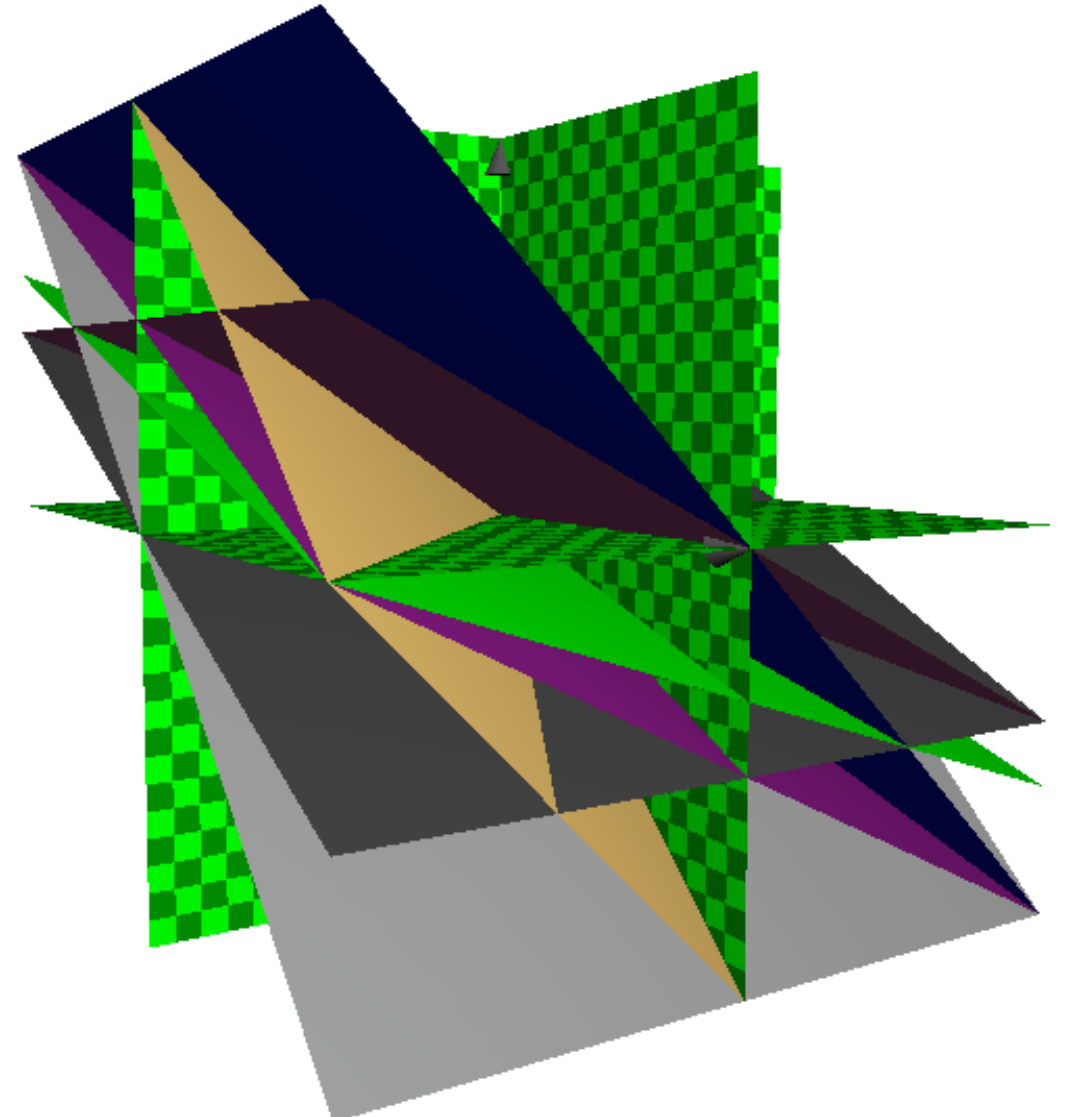}
\end{array}
&&
\begin{array}{c}
\includegraphics[angle=0,scale = 0.2]{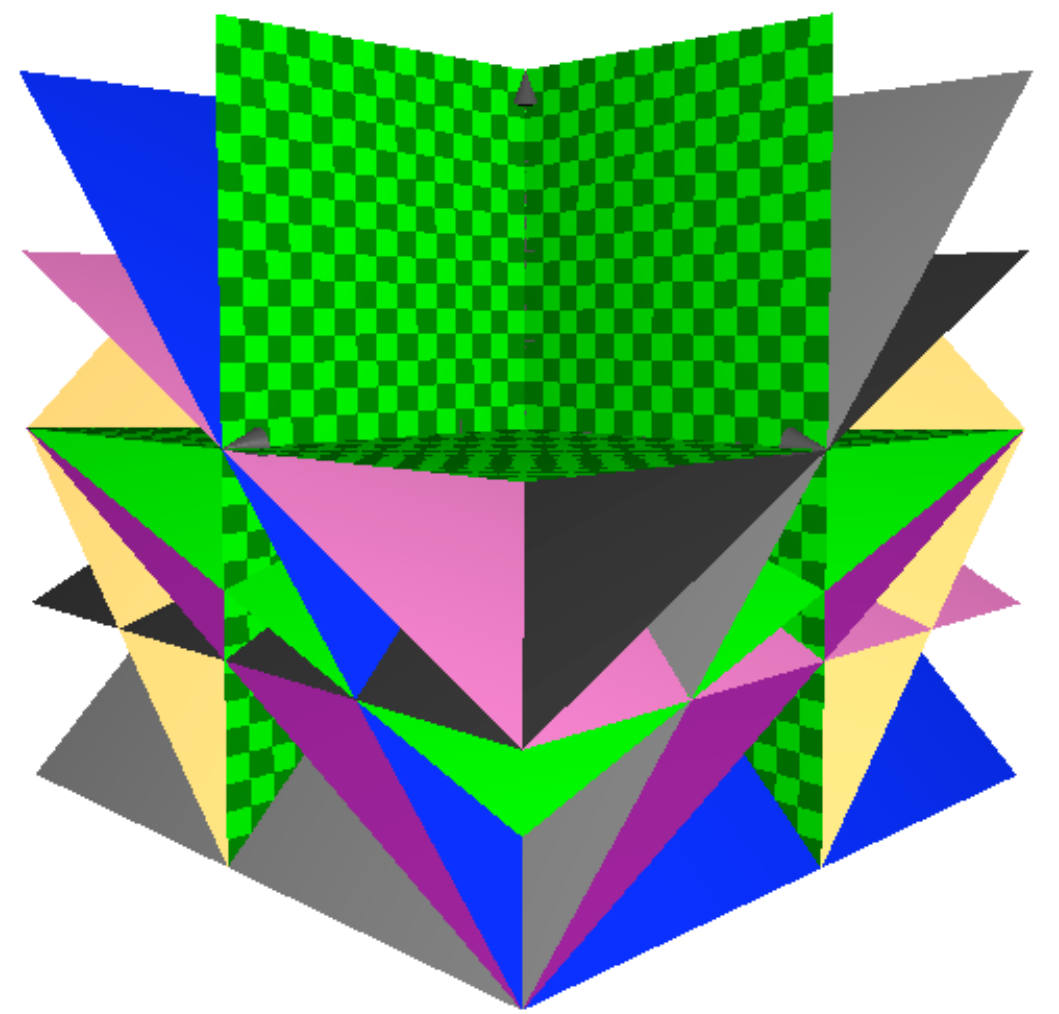}
\end{array}
&&
{\small
\begin{array}{l}
\upvartheta_1=0\\
\upvartheta_2=0\\
\upvartheta_3=0\\
\upvartheta_1+\upvartheta_3=0\\
\upvartheta_1+2\upvartheta_3=0\\
\upvartheta_2+\upvartheta_3=0\\
\upvartheta_2+2\upvartheta_3=0\\
\upvartheta_1+\upvartheta_2+\upvartheta_3=0\\
\upvartheta_1+\upvartheta_2+2\upvartheta_3=0\\
\upvartheta_1+\upvartheta_2+3\upvartheta_3=0
\end{array}}
\end{array}
\]
\caption{The 60 chambers for $(1,3,1)$}
\end{figure}
\noindent
Tracking the dual graph through mutation, as in \ref{knitting example 2} and \ref{minimal bound}, any cDV singularity with a minimal model that cuts to the above $D_4$ configuration has at least $4$ and at most $32$ minimal models.  Any cDV singularity with a minimal model that cuts to the above $E_6$ configuration has at least $5$ and at most $60$ minimal models. 
\end{example}

\begin{remark}
The singularity $R:=\mathbb{C}[[u,x,y,z]]/(u^2-xyz)$ in \ref{m not n} is in fact $cD_4$ with a three curve configuration, so the chamber structure is precisely Figure~\ref{cD4 121}.  The chamber structure for the particular example $u^2=xyz$ was computed independently, using entirely different methods, by Craw and King in 2000.  Indeed, \cite[5.31, footnote 5 p117]{AliThesis} computes the first four chambers.  See also \cite{MT}.
\end{remark}

\subsection{Running the algorithm}
This subsection illustrates how to run the Homological MMP in two examples.  For the aid of the reader, we begin with the toric example in \ref{m not n}, since the geometry will already be familiar. 
\begin{example}\label{Z2Z2example}
Consider again the $cD_4$ singularity $R:=\mathbb{C}[[u,x,y,z]]/(u^2=xyz)$.  As in \cite[6.26]{IW4}, $N:=R\oplus (u,x)\oplus (u,y)\oplus (u,z)$ is an MM (in fact, CT) $R$-module, and 

\begin{eqnarray}
\Lambda:=\End_R(N)\cong
\begin{array}{ccc}
\begin{array}{c}
\begin{tikzpicture}[bend angle=10, looseness=1,>=stealth]
\node[name=s,regular polygon, regular polygon sides=4, minimum size=2.5cm] at (0,0) {}; 
\node (1) at (s.corner 1)  {$\scriptstyle N_{2}$};
\node (2) at (s.corner 2)  {$\scriptstyle N_{1}$};
\node (3) at (s.corner 3)  {$\scriptstyle R$};
\node (4) at (s.corner 4)  {$\scriptstyle N_{3}$};
\draw[->,black,bend left] (4) to node[gap]{$\scriptstyle x$} (1);
\draw[->,black,bend left] (1) to node[gap]{$\scriptstyle x$} (4);
\draw[->,black,bend left] (3) to node[gap]{$\scriptstyle x$} (2);
\draw[->,black,bend left] (2) to node[gap]{$\scriptstyle x$} (3);
\draw[->,black,bend left] (2) to node[gap]{$\scriptstyle y$} (1);
\draw[->,black,bend left] (1) to node[gap]{$\scriptstyle y$} (2);
\draw[->,black,bend left] (3) to node[gap]{$\scriptstyle y$} (4);
\draw[->,black,bend left] (4) to node[gap]{$\scriptstyle y$} (3);
\draw[->,black,bend right,looseness=0.75]  (3) to
node[inner sep=1pt,fill=white,pos=0.56] {} 
node[inner sep=1pt,fill=white,pos=0.44] {}
node[inner sep=0.5pt,fill=white,pos=0.7] {$\scriptstyle z$} (1);
\draw[->,black,bend right,looseness=0.75]  (1) to 
node[inner sep=1pt,fill=white,pos=0.53] {}
node[inner sep=1pt,fill=white,pos=0.4] {} 
node[inner sep=0.5pt,fill=white,pos=0.7] {$\scriptstyle z$} (3);
\draw[->,black,bend left,looseness=0.75]  (4) to node[inner sep=0.5pt,fill=white,pos=0.7]  {$\scriptstyle z$} (2);
\draw[->,black,bend left,looseness=0.75]  (2) to node[inner sep=0.5pt,fill=white,pos=0.7]  {$\scriptstyle z$}  (4);
\end{tikzpicture}
\end{array}
&&
{\small \begin{array}{c}
xy=yx\\
xz=zx\\
yz=zy
\end{array}}
\end{array} \label{start NCCR}
\end{eqnarray}
with the relations being interpreted as $x,y$ and $z$ commute wherever that makes sense.  From the quiver, by \ref{reconstruction new} we read off that the fibre above the origin has three curves meeting at a point, and all are $(-1,-1)$-curves.

{\bf Step 1: Contractions.}  We inspect the contraction algebras to determine which sets of curves are floppable. It is clear that $\Lambda_{\{1\}}=\Lambda_{\{2\}}=\Lambda_{\{3\}}=\mathbb{C}$, and so each of the three curves is individually floppable.    Furthermore, 

\[
\Lambda_{\{1,2\}}\cong
\begin{array}{ccc}
\begin{array}{c}
\begin{tikzpicture}[bend angle=10, looseness=1,>=stealth]
\node[name=s,regular polygon, regular polygon sides=4, minimum size=2.5cm] at (0,0) {}; 
\node (1) at (s.corner 1)  {$\scriptstyle N_{2}$};
\node (2) at (s.corner 2)  {$\scriptstyle N_{1}$};
\node[black!20] (3) at (s.corner 3)  {$\scriptstyle R$};
\node[black!20] (4) at (s.corner 4)  {$\scriptstyle N_{3}$};
\draw[->,black!20,bend left] (4) to node[gap]{$\scriptstyle x$} (1);
\draw[->,black!20,bend left] (1) to node[gap]{$\scriptstyle x$} (4);
\draw[->,black!20,bend left] (3) to node[gap]{$\scriptstyle x$} (2);
\draw[->,black!20,bend left] (2) to node[gap]{$\scriptstyle x$} (3);
\draw[->,black,bend left] (2) to node[gap]{$\scriptstyle y$} (1);
\draw[->,black,bend left] (1) to node[gap]{$\scriptstyle y$} (2);
\draw[->,black!20,bend left] (3) to node[gap]{$\scriptstyle y$} (4);
\draw[->,black!20,bend left] (4) to node[gap]{$\scriptstyle y$} (3);
\draw[->,black!20,bend right,looseness=0.75]  (3) to
node[inner sep=1pt,fill=white,pos=0.56] {} 
node[inner sep=1pt,fill=white,pos=0.44] {}
node[inner sep=0.5pt,fill=white,pos=0.7] {$\scriptstyle z$} (1);
\draw[->,black!20,bend right,looseness=0.75]  (1) to 
node[inner sep=1pt,fill=white,pos=0.53] {}
node[inner sep=1pt,fill=white,pos=0.4] {} 
node[inner sep=0.5pt,fill=white,pos=0.7] {$\scriptstyle z$} (3);
\draw[->,black!20,bend left,looseness=0.75]  (4) to node[inner sep=0.5pt,fill=white,pos=0.7]  {$\scriptstyle z$} (2);
\draw[->,black!20,bend left,looseness=0.75]  (2) to node[inner sep=0.5pt,fill=white,pos=0.7]  {$\scriptstyle z$}  (4);
\end{tikzpicture}
\end{array}
&\cong&
\begin{array}{c}
\begin{tikzpicture}[bend angle=15, looseness=1,>=stealth]
\node (a) at (-1,0) [vertex] {};
\node (b) at (0,0) [vertex] {};
\node at (1.5,0) {\small{(no relations)}};
\draw[->,bend right] (a) to (b);
\draw[<-,bend left] (a) to (b);
\end{tikzpicture}
\end{array}
\end{array} 
\]
since all relations in \eqref{start NCCR} involve $x$'s and $z$'s, and these are zero in the quotient.  Thus $\dim_{\mathbb{C}}\Lambda_{\{1,2\}}=\infty$ and so curves $1$ and $2$ do not flop together.  By symmetry in this example, the same can be said of all pairs.  Finally $\dim_{\mathbb{C}}\Lambda_{\{1,2,3\}}=\infty$.  Hence each individual curve flops, but no other combinations do.

{\bf Step 2: Flops.} By symmetry, we only need mutate at $N_2$ (i.e.\ flop curve two), since the other cases are identical.  In this example, it is clear that the relevant approximation is
\[
0\to N_2\xrightarrow{(z\, y\, x)}R\oplus N_1\oplus N_3
\]
Thus the mutation at vertex $N_2$ changes $N=R\oplus N_{1}\oplus N_{2}\oplus N_{3}$ into $\upnu_2N:=R\oplus N_{1}\oplus K_{2}\oplus N_{3}$ where $K_2$ is the cokernel of the above map which (by counting ranks) has rank 2.   On the level of quivers of the endomorphism rings, this induces the mutation

\[
\End_R(\upnu_2N)\cong
\begin{array}{ccc}
\begin{array}{c}
\begin{tikzpicture}[bend angle=10, looseness=1,>=stealth]
\node[name=s,regular polygon, regular polygon sides=4, minimum size=2.5cm] at (0,0) {}; 
\node (1) at (s.corner 1)  {$\scriptstyle K_{2}$};
\node (2) at (s.corner 2)  {$\scriptstyle N_{1}$};
\node (2a) at ($(s.corner 2)+(180:4pt)$)  {};
\node (3) at (s.corner 3)  {$\scriptstyle R$};
\node (3a) at ($(s.corner 3)+(-135:3pt)$)  {};
\node (4) at (s.corner 4)  {$\scriptstyle N_{3}$};
\node (4a) at ($(s.corner 4)+(-90:2pt)$)  {};
\draw[->,black,bend left] (4) to node[gap]{$\scriptstyle c$} (1);
\draw[->,black,bend left] (1) to node[gap]{$\scriptstyle C$} (4);
\draw[->,black,bend left] (2) to node[gap]{$\scriptstyle b$} (1);
\draw[->,black,bend left] (1) to node[gap]{$\scriptstyle B$} (2);
\draw[->,black,bend right,looseness=0.75]  (3) to
node[gap] {$\scriptstyle a$} (1);
\draw[->,black,bend right,looseness=0.75]  (1) to 
node[gap] {$\scriptstyle A$} (3);
\draw[->]  (2a) edge [in=-150,out=150,loop,looseness=8] node[left]{$\scriptstyle v$} (2a);
\draw[->]  (3a) edge [in=-102.5,out=-167.5,loop,looseness=8] node[below left]{$\scriptstyle u$} (3a);
\draw[<-]  (4a) edge [in=-120,out=-60,loop,looseness=8] node[below]{$\scriptstyle w$} (4a);
\end{tikzpicture}
\end{array}
&&
{\scriptsize
\begin{array}{c}
\begin{array}{l}
aA=0\\
bB=0\\
cC=0
\end{array}\\
\begin{array}{l}
ua=aCcBb+aBbCc\\
vb=bAaCc+bCcAa\\
wc=cAaBb+cBbAa\\
Au=BbCcA+CcBbA\\
Bv=AaCcB+CcAaB\\
Cw=AaBbC+BbAaC
\end{array}
\end{array}
}
\end{array}
\]
By \ref{reconstruction new} we read off that the new dual graph has three curves intersecting in a type $A$ configuration, with the outer two curves being $(-2,0)$-curves, and the inner curve being a $(-1,-1)$-curve.   By the symmetry of the situation, we obtain the beginning of the simple mutation graph:
\begin{figure}[H]
\[
\begin{array}{ccc}
\begin{array}{c}
\begin{tikzpicture}
\node[name=s,regular polygon, regular polygon sides=3, minimum size=2.5cm,rotate=60] at (0,0) {}; 
\node (M) at (s.center) {$N$};
\node (L) at (s.corner 1) {$\upnu_1 N$}; 
\node (R) at (s.corner 2) {$\upnu_3 N$};  
\node (T) at (s.corner 3) {$\upnu_2 N$};  
\draw[-] (M) --  (L);
\draw[-] (M) --  (R); 
\draw[-] (M) --  (T);
\end{tikzpicture}
\end{array}
&&
\begin{array}{c}
\includegraphics[angle=0,scale = 0.15]{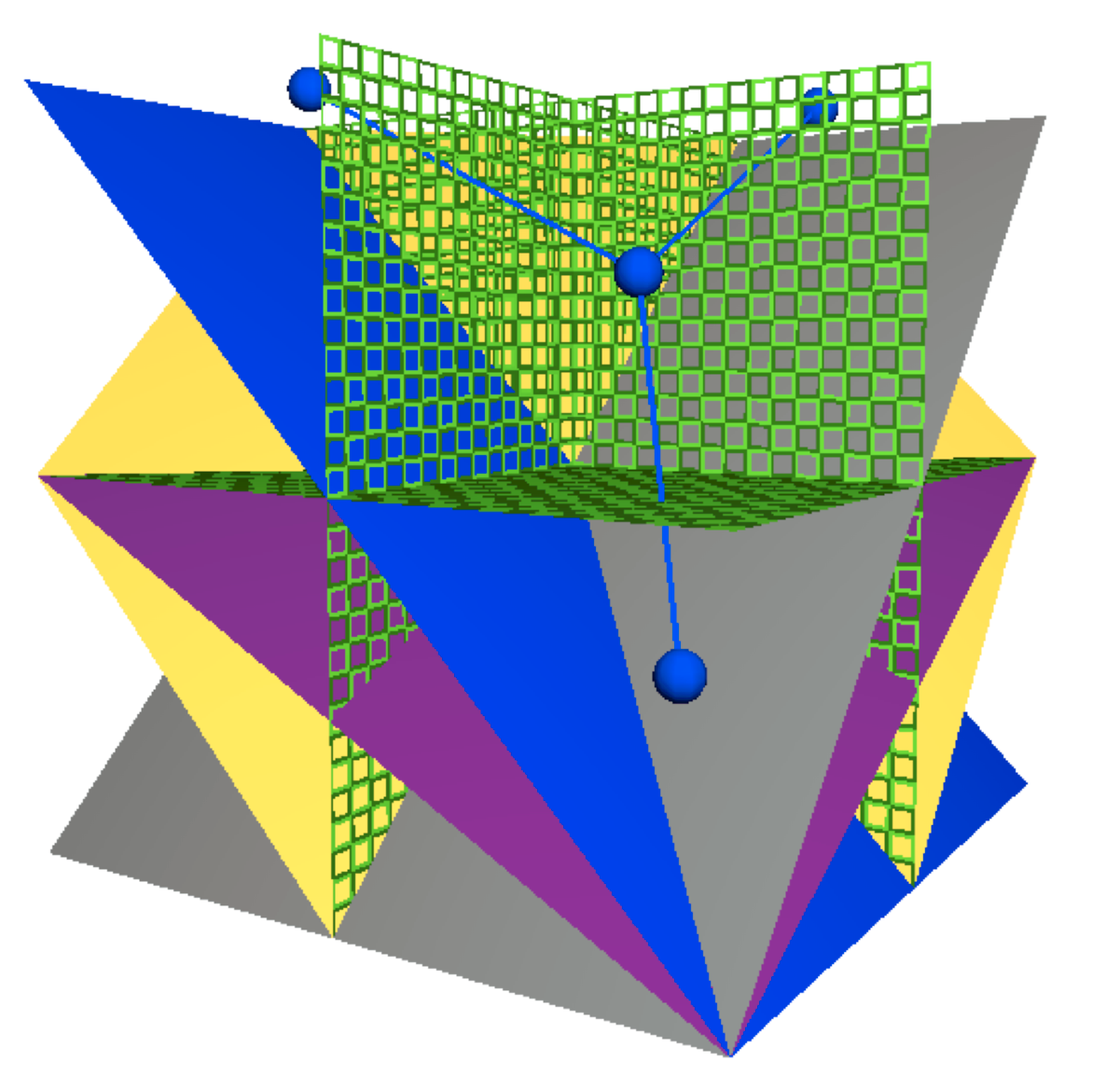}
\end{array}
\end{array}
\]
\caption{The simple mutation graph for $u^2=xyz$.}
\end{figure}
We next claim that this is precisely the simple mutation graph of the MM generators, equivalently we have already found all minimal models of $\Spec R$.

{\bf Step 1b: Contractions}.  We plug in the mutated algebra $\End_R(\upnu_{2}N)$ into Step 1, and repeat.  Due to the relations in the algebra $\upnu_{2}\Lambda=\End_R(\upnu_{2}N)$, it follows that $\dim_\mathbb{C}(\upnu_{2}\Lambda)_{\{1\}}=\infty=\dim_{\mathbb{C}}(\upnu_{2}\Lambda)_{\{3\}}$, thus in $\upnu_{2}\Lambda$ the only curve we can non-trivially mutate is the middle one, which gives us back our original $N$.  Thus the Homological MMP stops, and we have reached all minimal models.
\end{example}

\begin{example}
Consider the $cD_4$ singularity $R:=\mathbb{C}[[u,v,y,z]]/(u^2-v(x^2-4y^3))$.  Since $R\cong\mathbb{C}[[x,y,z]]^{S_3}$ for the subgroup
\[
S_3:=\left\langle \begin{pmatrix}\upvarepsilon_3&0&0\\0&\upvarepsilon_3^{2}&0\\ 0&0&1\end{pmatrix},
\begin{pmatrix}0&1&0\\1&0&0\\ 0&0&-1\end{pmatrix}
\right\rangle\leq\SL(3,\mathbb{C}),
\]
there is an MM generator (in fact, CT module) given by the skew group ring $\Lambda:=\End_R(N)$, and further by \cite{BSW} it can be presented as
\[
\Lambda\cong
\begin{array}{ccc}
\begin{array}{c}
\begin{tikzpicture}[bend angle=10, looseness=1,>=stealth]
\node[name=s,regular polygon, regular polygon sides=3, minimum size=2.5cm] at (0,0) {}; 
\node (1) at (s.corner 1)  {$\scriptstyle N_2$};
\node (2) at (s.corner 2)  {$\scriptstyle R$};
\node (1a) at ($(s.corner 1)+(120:2pt)$)  {};
\node (1b) at ($(s.corner 1)+(60:2pt)$)  {};
\node (3) at (s.corner 3)  {$\scriptstyle N_1$};
\draw[->,black,bend left] (1) to node[gap]{$\scriptstyle A$} (2);
\draw[->,black,bend left] (2) to node[gap]{$\scriptstyle a$} (1);
\draw[->,black,bend left] (2) to node[gap]{$\scriptstyle x$} (3);
\draw[->,black,bend left] (3) to node[gap]{$\scriptstyle y$} (2);
\draw[->,black,bend left]  (3) to node[gap] {$\scriptstyle b$} (1);
\draw[->,black,bend left]  (1) to node[gap] {$\scriptstyle B$} (3);
\draw[->]  (1a) edge [in=100,out=160,loop,looseness=10] node[left]{$\scriptstyle l$} (1a);
\draw[->]  (1b) edge [in=80,out=20,loop,looseness=10] node[right]{$\scriptstyle z$} (1b);
\end{tikzpicture}
\end{array}
&&
{\scriptsize
\begin{array}{c}
\begin{array}{c}
aB=0\\
bA=0\\
Aa+Bb=2l^2
\end{array}\\
\begin{array}{l}
az=xb\\
bz=ya\\
zA=By\\
zB=Ax\\
lz+zl=0
\end{array}
\end{array}
}
\end{array}
\]
given by the superpotential $\Upphi:=Axb+Bya-zAa-zBb+2zl^2$.   By \ref{reconstruction new} we read off that there are two curves intersecting transversely, one with normal bundle $(-3,1)$, the other with normal bundle $(-1,-1)$.  Further, 
\[
\Lambda_{\{2\}}\cong
\begin{array}{ccc}
\begin{array}{c}
\begin{tikzpicture}[bend angle=10, looseness=1,>=stealth]
\node[name=s,regular polygon, regular polygon sides=3, minimum size=2.5cm] at (0,0) {}; 
\node (1) at (s.corner 1)  {$\scriptstyle N_2$};
\node[black!20] (2) at (s.corner 2)  {$\scriptstyle R$};
\node (1a) at ($(s.corner 1)+(120:2pt)$)  {};
\node (1b) at ($(s.corner 1)+(60:2pt)$)  {};
\node[black!20] (3) at (s.corner 3)  {$\scriptstyle N_1$};
\draw[->,black!20,bend left] (1) to node[gap]{$\scriptstyle A$} (2);
\draw[->,black!20,bend left] (2) to node[gap]{$\scriptstyle a$} (1);
\draw[->,black!20,bend left] (2) to node[gap]{$\scriptstyle x$} (3);
\draw[->,black!20,bend left] (3) to node[gap]{$\scriptstyle y$} (2);
\draw[->,black!20,bend left]  (3) to node[gap] {$\scriptstyle b$} (1);
\draw[->,black!20,bend left]  (1) to node[gap] {$\scriptstyle B$} (3);
\draw[->]  (1a) edge [in=100,out=160,loop,looseness=10] node[left]{$\scriptstyle l$} (1a);
\draw[->]  (1b) edge [in=80,out=20,loop,looseness=10] node[right]{$\scriptstyle z$} (1b);
\end{tikzpicture}
\end{array}
&\cong&
\frac{\mathbb{C}\langle \langle l,z\rangle\rangle}{(l^2,lz+zl)}
\end{array} 
\]
which is infinite dimensional, and clearly $\Lambda_{\{1\}}=\mathbb{C}$, which is finite dimensional.  Hence by \ref{contract on f} only the $(-1,-1)$-curve flops.  It is easy to calculate that
\[
\upnu_1\Lambda\cong
\begin{array}{ccc}
\begin{array}{c}
\begin{tikzpicture}[bend angle=10, looseness=1,>=stealth]
\node[name=s,regular polygon, regular polygon sides=3, minimum size=2.5cm] at (0,0) {}; 
\node (1) at (s.corner 1)  {$\scriptstyle N_2$};
\node (2) at (s.corner 2)  {$\scriptstyle R$};
\node (1a) at ($(s.corner 1)+(120:2pt)$)  {};
\node (1b) at ($(s.corner 1)+(60:2pt)$)  {};
\node (3) at (s.corner 3)  {$\scriptstyle K_1$};
\draw[->,black,bend left] (2) to node[gap]{$\scriptstyle s$} (3);
\draw[->,black,bend left] (3) to node[gap]{$\scriptstyle t$} (2);
\draw[->,black,bend left]  (3) to node[gap] {$\scriptstyle c$} (1);
\draw[->,black,bend left]  (1) to node[gap] {$\scriptstyle C$} (3);
\draw[->]  (1a) edge [in=60,out=130,loop,looseness=8] node[above]{$\scriptstyle l$} (1a);
\draw[->]  (2) edge [in=150,out=-150,loop,looseness=6] node[left]{$\scriptstyle (xy)$} (2);
\end{tikzpicture}
\end{array}
&&
{\scriptsize
\begin{array}{c}
\begin{array}{c}
lCc+Ccl=0\\
st=0\\
t(xy)=cCcCt\\
(xy)s=scCcC\\
2l^2C=CcCts+CtscC\\
2cl^2=tscCc+cCtsc
\end{array}
\end{array}
}
\end{array}
\]
given by potential $\Upphi':=-t(xy)s-cl^2C+cCcCts$.  Again by inspection, $(\upnu_1\Lambda)_{\{2\}}=\mathbb{C}[[l]]$, which is infinite dimensional, and $(\upnu_1\Lambda)_{\{2\}}=\mathbb{C}$, which is finite dimensional.

Hence the only way to mutate is back,  so the Homological MMP finishes.  It follows that the full mutation graph, viewed inside the GIT chamber structure $\Uptheta(\Lambda)$, is
\[
\begin{array}{ccc}
\begin{array}{c}
\begin{tikzpicture}[scale=0.75]
\coordinate (A1) at (135:2cm);
\coordinate (A2) at (-45:2cm);
\coordinate (B1) at (153.435:2cm);
\coordinate (B2) at (-26.565:2cm);
\draw[red] (A1) -- (A2);
\draw[green!70!black] (B1) -- (B2);
\draw[->] (-2,0)--(2,0);
\node at (2.5,0) {$\upvartheta_1$};
\draw[->] (0,-2)--(0,2);
\node at (0.5,2.25) {$\upvartheta_2$};
\draw[densely dotted,gray] (0,0) circle (2cm);
\filldraw[blue] (45:1cm) circle (2pt);
\filldraw[blue] (112.5:1cm) circle (2pt);
\draw[blue] (45:1cm)--(112.5:1cm);
\end{tikzpicture}
\end{array}
&&
\begin{array}{cl}
\\
&\upvartheta_1=0\\
&\upvartheta_2=0\\
\begin{array}{c}\begin{tikzpicture}\node at (0,0){}; \draw[red] (0,0)--(0.5,0);\end{tikzpicture}\end{array}&\upvartheta_1+\upvartheta_2=0\\
\begin{array}{c}\begin{tikzpicture}\node at (0,0){}; \draw[green!70!black] (0,0)--(0.5,0);\end{tikzpicture}\end{array}&\upvartheta_1+2\upvartheta_2=0
\end{array}
\end{array}
\]
\end{example}

\begin{remark}
The mutation trees of quivers are usually quite easy to write down, and this then determines all the geometry.  We refer the reader to \cite[\S4.1]{NS} for the calculation of the mutation trees for some other quotient singularities, in particular \cite[4.4]{NS}.  We remark that it follows from Figure~\ref{Fig2} that \cite[\S4.1]{NS} is now enough to establish we have all minimal models.  In particular, we can immediately read off the dual graph and whether curves flop from the quivers there, avoiding all the hard explicit calculations in \cite[\S5--6]{NS}.
\end{remark}

\appendix
\section{Mutation Summary}\label{appendix mut}
This appendix contains the mutation results needed in the text, including \ref{Ext 2 thm} and \ref{Ext 3 thm}, which for the most part are just mild generalisations of some of the results in \cite[\S 6]{IW4}.  Throughout, we maintain the setup of \S\ref{mut prelim} and \ref{stab setup}, so unless stated otherwise $R$ denotes a complete local normal $d$-sCY commutative algebra with $d\geq 2$, $M\in\refl R$ denotes a basic modifying module $M$, and $M_I$ is a summand of $M$.   

The following duality proposition is important, and will be used extensively.

\begin{prop}\label{dualityofapprox}\cite[6.4]{IW4}
With notation as above, 
\begin{enumerate}
\item\label{K0b} Applying $\Hom_{R}(-,M_{I^{c}})$ to \eqref{K0} induces an exact sequence
\begin{align*}
&0\to \Hom_{R}(M_I,M_{I^{c}})\stackrel{\cdot a}\to \Hom_{R}(V_I,M_{I^{c}})\stackrel{\cdot c}\to \Hom_{R}(K_I,M_{I^{c}})\to 0.
\end{align*}
In particular $c$ is a minimal left $\add M_{I^{c}}$-approximation. 
\item\label{K1b} Applying $\Hom_{R}(-,M_{I^{c}}^*)$ to \eqref{K1} induces an exact sequence
\begin{align*}
&0\to \Hom_{R}(M_I^{*},M_{I^{c}}^*)\stackrel{\cdot b}\to \Hom_{R}(U_I^{*},M_{I^{c}}^*)\stackrel{\cdot d}\to \Hom_{R}(J_I,M_{I^{c}}^*)\to 0
\end{align*}
In particular $d$ is a minimal left $\add M_{I^{c}}^*$-approximation.
\item We have that 
\begin{align}
&0\to M_I^{*}\stackrel{a^{*}}\to V_I^{*}\stackrel{c^{*}}\to K_I^{*}\notag\\
&0\to M_I\stackrel{b^{*}}\to U_I\stackrel{d^{*}}\to J_I^{*} \label{K1D}
\end{align}
are exact, inducing exact sequences
\begin{align}
&0\to \Hom_{R}(M_{I^{c}}^*,M_I^{*})\stackrel{a^{*}\cdot}\to \Hom_{R}(M_{I^{c}}^*,V_I^{*})\stackrel{c^{*}\cdot}\to \Hom_{R}(M_{I^{c}}^*,K_I^{*})\to 0\label{K0Da}\\
&0\to \Hom_{R}(K_I^{*},M_{I^{c}}^*)\stackrel{\cdot c^{*}}\to \Hom_{R}(V_I^{*},M_{I^{c}}^*)\stackrel{\cdot a^{*}}\to \Hom_{R}(M_I^{*},M_{I^{c}}^*)\to 0\notag\\
&0\to \Hom_{R}(M_{I^{c}},M_I)\stackrel{b^{*}\cdot}\to \Hom_{R}(M_{I^{c}},U_I)\stackrel{d^{*}\cdot}\to \Hom_{R}(M_{I^{c}},J_I^{*})\to 0 \label{K1Da}\\
&0\to \Hom_{R}(J_I^{*},M_{I^{c}})\stackrel{\cdot d^{*}}\to \Hom_{R}(U_I,M_{I^{c}})\stackrel{\cdot b^{*}}\to \Hom_{R}(M_I,M_{I^{c}})\to 0 \notag
\end{align}
\end{enumerate}
\end{prop}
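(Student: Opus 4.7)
My plan is to view the six displayed exact sequences as manifestations of a single duality principle. The modules $M_I, V_I, M_{I^c}$ and their $*$-duals all lie in $\refl R$, and $K_I$ and $J_I$ are also reflexive, the latter by a standard depth argument that exploits $\End_R(M) \in \CM R$ applied to the exchange sequences \eqref{K0} and \eqref{K1}. Hence $(-)^* = \Hom_R(-, R)$ restricts to a contravariant equivalence on the additive subcategory these modules generate, yielding a natural isomorphism $\Hom_R(X, Y) \cong \Hom_R(Y^*, X^*)$. Applying this term-for-term identifies (1) with \eqref{K0Da}, (2) with \eqref{K1Da}, and \eqref{K0a} with the fourth short-exact display of (3). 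So the six statements collapse to two independent tasks: proving the pair of left-exact sequences in \eqref{K1D} and its unlabeled companion, and proving the short-exact Hom-sequences of (1) and (2).

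For the left-exact sequence $0 \to M_I^* \to V_I^* \to K_I^*$, the key is that $\Cok a$ is torsion. By \eqref{K0a}, $\Hom_R(M_{I^c}, \Cok a) = 0$; since $M_{I^c}$ is a nonzero reflexive module, it is free of positive rank at the generic point of $\Spec R$, forcing $\Cok a$ to have generic rank zero and hence to be torsion. As $R$ is a normal domain, $(\Cok a)^* = 0$, and dualizing the four-term sequence $0 \to K_I \to V_I \to M_I \to \Cok a \to 0$ yields the desired left-exact sequence, after a short diagram chase verifying injectivity of the connecting map $\Ext^1_R(\Cok a, R) \to \Ext^1_R(M_I, R)$; the latter follows from a depth count controlled by the small support of $\Cok a$. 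The companion sequence in \eqref{K1D} is proved identically, with $b$, $U_I^*$, $J_I$ in place of $a$, $V_I$, $K_I$.

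For the short-exact Hom-sequence of (1), injectivity of $(\cdot a)$ is immediate from the torsion argument: its kernel is $\Hom_R(\Cok a, M_{I^c}) = 0$, since $\Cok a$ is torsion and $M_{I^c} \in \refl R$ is torsion-free. The main obstacle is surjectivity of $(\cdot c)\colon \Hom_R(V_I, M_{I^c}) \to \Hom_R(K_I, M_{I^c})$: every morphism $f\colon K_I \to M_{I^c}$ must factor as $gc$ for some $g\colon V_I \to M_{I^c}$. Pushing out $c$ along $f$ realizes the obstruction as a class in $\Ext^1_R(\Im a, M_{I^c})$, and killing this class is the technical heart of the argument, where the hypothesis $\End_R(M) \in \CM R$ enters through a depth chase on the resulting extension; this is essentially \cite[6.4]{IW4}. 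Once (1) is established, (2) follows by applying the same argument to the basic modifying module $M^*$, and the remaining parts of (3) follow via the reflexive identifications of the first paragraph. Minimality of $c$ as a left $\add M_{I^c}$-approximation is then automatic from minimality of $a$: any factorization of $c$ through a proper summand of $V_I$ would dualize to a corresponding factorization of $a$, contradicting its minimality.
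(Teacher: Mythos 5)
Your write-up correctly reduces the six sequences to parts (1) and (2) via the reflexive duality $\Hom_R(X,Y)\cong\Hom_R(Y^*,X^*)$ (this is indeed how (3) follows, and your torsion/codimension-two analysis of $\Cok a$ giving the left-exactness of \eqref{K1D} and its companion is sound). But the actual content of the proposition is precisely the step you do not prove: the surjectivity of $(\cdot c)\colon\Hom_R(V_I,M_{I^c})\to\Hom_R(K_I,M_{I^c})$ in (1) (and of $(\cdot d)$ in (2)). Note that the paper offers no argument for this statement at all — it is quoted verbatim from \cite[6.4]{IW4} — so when you write that killing the obstruction class ``is essentially \cite[6.4]{IW4}'', you are citing the very result under discussion; that is circular, and everything you then deduce by duality rests on it. Moreover, the route you sketch is not obviously available: you place the obstruction in $\Ext^1_R(\Im a,M_{I^c})$ and propose to kill it by ``a depth chase'', but $M_{I^c}$ is only reflexive (not CM in general), $\Im a$ has no useful depth bound, and $K_I\notin\add M$, so $\Hom_R(K_I,M_{I^c})$ is not one of the Hom-modules that $\End_R(M)\in\CM R$ directly controls. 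The genuine argument has to route the CM hypothesis through modules like $\Hom_R(M,M_{I^c})$ and $\Hom_R(M_{I^c},K_I)$ (the latter CM by \eqref{K0a} and the depth lemma) rather than through a direct vanishing of that Ext group, which need not vanish; only the specific class does. Without this step, parts (1) and (2) — and hence the proposition — are unestablished.

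Two smaller points. First, your justification that $\Cok a$ is torsion is off: \eqref{K0a} does not give $\Hom_R(M_{I^c},\Cok a)=0$ (it only says every map $M_{I^c}\to M_I$ factors through $\Im a$, which bounds $\Hom_R(M_{I^c},\Cok a)$ by an Ext group, not by zero). The fix is to localise the approximation at primes of height at most one, where $M_{I^c}$ becomes nonzero free, so $a_{\p}$ is surjective; this gives the stronger and needed statement that $\Supp\Cok a$ has codimension at least two, which is also what makes $\Ext^1_R(\Cok a,R)=0$ — mere torsionness would not suffice there. Second, the final claim that minimality of $c$ ``dualizes'' from minimality of $a$ does not work as stated, since $c$ and $a$ sit in different dual sequences; the correct argument is Krull--Schmidt plus reflexive hulls: if the component of $c$ into a nonzero summand $V''$ of $V_I$ were zero, then $K_I\subseteq V'$, so $V''$ is a direct summand of $\Im a$, and taking reflexive hulls (legitimate because $\Cok a$ has codimension at least two and $M_I$ is reflexive) it becomes a direct summand of $M_I$, contradicting that $M$ is basic with $\add M_I\cap\add M_{I^c}=0$.
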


In this level of generality, usually $\upnu_IM\ncong\upmu_IM$, and $\upnu_I\upnu_IM\ncong M$.  However, we will be interested in when these, and other, nice situations occur.

\begin{prop}\label{proj res thm 1}
In setup of \ref{stab setup}, assume further that $\pd_\Lambda\Lambda_I=2$.  Then 
\begin{enumerate}
\item\label{proj res thm 1 part 1} $\upmu_IM\cong M\cong \upnu_IM$.
\item\label{proj res thm 1 part 2} The minimal projective resolution of $\Lambda_I$ as a $\Lambda$-module is
\[
0\to \Hom_R(M,M_I)\xrightarrow{\cdot c} \Hom_R(M,V_I)\to \Hom_R(M,M_I)\xrightarrow{\cdot a} \Lambda_I\to 0\label{pd2a}
\] 
\item\label{proj res thm 1 part 3} The minimal projective resolution of $\Lambda^{\op}_I$ as a $\Lambda^{\op}\cong\End_R(M^*)$-module is
\[
0\to \Hom_R(M^*,M_I^*)\xrightarrow{\cdot a^*} \Hom_R(M^*,V_I^*)\xrightarrow{\cdot c^*} \Hom_R(M^*,M_I^*)\to \Lambda^{\op}_I\to 0
\] 
\end{enumerate}
\end{prop}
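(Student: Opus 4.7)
The plan is to build the claimed four-term projective resolution by applying $\Hom_R(M,-)$ to the exchange sequence \eqref{K0}, then to exploit the hypothesis $\pd_{\Lambda}\Lambda_I=2$ to force $K_I\in\add M_I$, from which $\upmu_IM\cong M$ follows. First I would apply $\Hom_R(M,-)$ to $0\to K_I\xrightarrow{c}V_I\xrightarrow{a}M_I$. Left-exactness delivers exactness at $\Hom_R(M,K_I)$ and $\Hom_R(M,V_I)$. To identify the cokernel with $\Lambda_I$, I would pin down the image of $\cdot a\colon\Hom_R(M,V_I)\to\Hom_R(M,M_I)$: by the approximation property, any morphism $M\to M_I$ that factors through some object of $\add M_{I^c}$ factors through $a$, so this image is precisely $[M_{I^c}]$ intersected with $\Hom_R(M,M_I)\cong\bigoplus_{i\in I}P_i$. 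This yields the exact sequence displayed in part (2).

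Next, I would verify this is the \emph{minimal} projective resolution as far as it goes. The first two terms are projective since $M_I,V_I\in\add M$, via the reflexive equivalence $\Hom_R(M,-)\colon\add M\xrightarrow{\sim}\proj\Lambda$. Minimality of the projective cover $\Hom_R(M,M_I)\twoheadrightarrow\Lambda_I$ follows because $M_I$ has one indecomposable summand for each vertex $i\in I$ in the support of $\Lambda_I$. Minimality of the next step $\Hom_R(M,V_I)\twoheadrightarrow\Omega_\Lambda\Lambda_I$ translates via reflexive equivalence into the minimality of the approximation $V_I\to M_I$, which is part of the definition \ref{setup2}.

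Now the key step: the hypothesis $\pd_\Lambda\Lambda_I=2$ forces the second syzygy $\Hom_R(M,K_I)$ to be a projective $\Lambda$-module, so by the reflexive equivalence $K_I\in\add M$. Minimality of $a$ precludes any summand of $K_I$ lying in $\add M_{I^c}$, so $K_I\in\add M_I$. A Krull--Schmidt and rank comparison along the exchange sequence $0\to K_I\to V_I\to M_I$ then pins down the multiplicities at each $i\in I$ and, using that $M$ is basic, yields $K_I\cong M_I$. Hence $\upmu_IM:=M_{I^c}\oplus K_I\cong M$, proving the $\upmu$-half of (1) and establishing (2).

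For (3) and the $\upnu$-half of (1), the strategy is to dualize: running the same argument on the sequence $0\to J_I\xrightarrow{d}U_I^*\xrightarrow{b}M_I^*$ of \ref{dualityofapprox} over $\Lambda^{\op}\cong\End_R(M^*)$ yields the claimed resolution of $\Lambda_I^{\op}$ together with $J_I^*\cong M_I$, hence $\upnu_IM\cong M$. The hypothesis transfers because for modifying $M$ one has $\Lambda\in\CM R$, forcing the left and right projective dimensions of $\Lambda_I$ to coincide. The main obstacle I expect is the bookkeeping for the passage $K_I\in\add M_I\leadsto K_I\cong M_I$, together with the (standard but fiddly) minimality arguments in the middle of the resolution; both essentially recast techniques already present in \cite[\S6]{IW4}.
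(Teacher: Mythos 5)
Your skeleton matches the paper's proof (build the four-term sequence from \eqref{K0}, use minimality of $a$ and the projective cover property to identify it with the start of the minimal resolution, and use $\pd_\Lambda\Lambda_I=2$ to force $\Hom_R(M,K_I)$ projective, hence $K_I\in\add M$), and up to that point your argument is sound. The gap lies in the two clauses that carry the real content. First, ``minimality of $a$ precludes any summand of $K_I$ lying in $\add M_{I^c}$'' is not a formal consequence of right minimality: for a general minimal right approximation the kernel can perfectly well lie in the approximating subcategory (a projective cover is a minimal right $\add \Lambda$-approximation, and first syzygies can be projective). What rescues the claim here is \ref{dualityofapprox}: since $(\cdot\, c)\colon\Hom_R(V_I,M_{I^c})\to\Hom_R(K_I,M_{I^c})$ is surjective, a summand $W\in\add M_{I^c}$ of $K_I$ would have its retraction $K_I\to W$ extend along $c$, so $W$ splits off $V_I$ inside $\Ker a$, and only then does right minimality of $a$ give a contradiction. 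The paper makes the same essential use of \ref{dualityofapprox}, arguing on the dualized sequences: if $K_i\in\add M_{I^c}$ then the exactness statements of \ref{dualityofapprox} force $c_i^*\colon V_i^*\to K_i^*$ to be a split epimorphism, so $M_i^*$ is a summand of $V_i^*$, contradicting $V_i\in\add M_{I^c}$. Second, ``Krull--Schmidt and rank comparison yields $K_I\cong M_I$'' does not work: in Setup \ref{stab setup} the module $M_{I^c}$ need not be a generator, so $a$ need not be surjective and ranks are not additive along $0\to K_I\to V_I\to M_I$; and even granting surjectivity, rank cannot detect which $M_i$ occur in $K_I$ nor exclude repetitions. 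The paper instead shows $K_i\ncong K_j$ for $i\neq j$ via uniqueness of minimal approximations: $c_i^*\colon V_i^*\to K_i^*$ is the minimal right $\add M_{I^c}^*$-approximation with kernel $M_i^*$, so $K_i\cong K_j$ would force $M_i^*\cong M_j^*$, contradicting that $M$ is basic; combined with $K_i\in\add M_I$ this gives $K_I\cong M_I$.

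Your passage to the opposite side is also unjustified as stated: ``$\Lambda\in\CM R$ forces the left and right projective dimensions of $\Lambda_I$ to coincide'' only follows from Auslander--Buchsbaum once \emph{both} are known to be finite, and finiteness of $\pd_{\Lambda^{\op}}\Lambda_I$ is exactly what would need proving. The paper sidesteps this entirely: once $K_I\cong M_I$ is established, \eqref{K0} reads $0\to M_I\xrightarrow{c}V_I\xrightarrow{a}M_I$, and dualizing it via \ref{dualityofapprox} (so that $c^*$ is a minimal right $\add M_{I^c}^*$-approximation with kernel $M_I^*$) and applying $\Hom_R(M^*,-)$ produces the resolution in (3) directly; both $\upnu_IM\cong M$ and $\pd_{\Lambda^{\op}}\Lambda_I^{\op}=2$ then come out as consequences rather than inputs. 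I recommend rerouting your proof of (1) and (3) through \ref{dualityofapprox} in this way.
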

\begin{proof}
Since  $\pd_\Lambda\Lambda_I=2$ there is a minimal projective resolution
\begin{eqnarray}
0\to Q_1\to Q_0\stackrel{f}{\to} P_I\to \Lambda_I\to 0 \label{pd 2sequence}
\end{eqnarray}
where $P_I:=\Hom_R(M,M_I)$ is not a summand of $Q_0$.  Now $M_I=\bigoplus_{i\in I}M_i$, so taking the minimal right $M_{I^c}$-approximations of each $M_i$ gives exact sequences
\begin{eqnarray}
0\to K_i\xrightarrow{c_i} V_i\xrightarrow{a_i} M_i\label{22j}
\end{eqnarray}
which sum together to give the exact sequence
\begin{eqnarray}
0\to (K_I=\bigoplus_{i\in I}K_{i})\xrightarrow{c} (V_I=\bigoplus_{i\in I}V_i)\xrightarrow{a} (M_I=\bigoplus_{i\in I}M_i).\label{22}
\end{eqnarray}
This is the minimal right $\add M_{I^c}$-approximation of $M_I$, so applying $\Hom_R(M,-)$ gives \eqref{pd 2sequence}.  In particular $K_I\in\add M$.  We claim that $K_I\cong M_I$, as this proves (2).

First, each $K_{i}\in\add M_I$. To see this, suppose it is false, which since $K_I\in\add M$ would mean that $K_{i}\in\add M_{I^c}$.  By \ref{dualityofapprox}, dualizing \eqref{22j} gives exact sequences
\begin{eqnarray}
0\to M_i^*\to V_i^*\to K_i^* \label{22a}
\end{eqnarray}
such that
\begin{eqnarray}
0\to \Hom_R(M_{I^c}^*,M_i^*)\to \Hom_R(M_{I^c}^*,V_i^*)\to \Hom_R(M_{I^c}^*,K_i^*)\to 0\label{22b}
\end{eqnarray}
is exact.  Since we are assuming $K_i\in\add M_{I^c}$, this would mean that 
\[
0\to \Hom_R(K_i^*,M_i^*)\to \Hom_R(K_i^*,V_i^*)\to \Hom_R(K_i^*,K_i^*)\to 0
\]
is exact.  Considered as $\End_R(K_i^*)$-modules, the last term is projective, so the sequence splits. It follows that $\Hom_R(K_i^*,M_i^*)$ is a summand of $\Hom_R(K_i^*,V_i^*)$.  By reflexive equivalence, $M_i^*$ is then a summand of $V_i^*$, thus $M_i$ is a summand of $V_i$, which is a contradiction since $V_i\in\add M_{I^c}$. This shows that each $K_i\in\add M_I$.  

Now since each $K_i$ is indecomposable, it remains to show that $K_i\ncong K_j$ for $i\neq j$.  Suppose that it is false, i.e.\ $K_i\cong K_j$ with $i\neq j$.  By \eqref{22a} and \eqref{22b}, the map $c_i^*\colon V_i^*\to K_i^*$ is a minimal right $\add M_{I^c}^*$-approximation for all $i\in I$, thus since $K_i^*\cong K_j^*$ it follows that $M_i^*\cong M_j^*$, which is a contradiction since $M$ is basic.  It follows that $K_I\cong M_I$, so (2) holds.  Further,  \eqref{22} is the exact sequence
\begin{eqnarray}
0\to M_I\xrightarrow{c} V_I\xrightarrow{a} M_I\label{MI twice}
\end{eqnarray}
with $a$ a minimal right $\add M_{I^c}$-approximation of $M_I$, so by definition $\upmu_{I}M=M_{I^c}\oplus \Ker a\cong M$, proving the first half of (1).  Now by \ref{dualityofapprox}, dualizing \eqref{MI twice} gives an exact sequence
\begin{eqnarray}
0\to M_I^*\xrightarrow{a^*} V_I^*\xrightarrow{c^*} M_I^*\label{MI twice dual}
\end{eqnarray}
and by \eqref{K0Da} $c^*$ is a minimal right $\add M_{I^c}^*$-approximation.  Thus applying $\Hom_R(M^*,-)$ gives the minimal projective resolution of $\Lambda_I^{\op}$, proving (3).  Also, by definition $\upnu_{I}M=M_{I^c}\oplus (\Ker(c^*))^*\cong M$, proving the second half of (1).
\end{proof}

The following gives equivalent conditions to when the assumptions of \ref{proj res thm 1} hold.

\begin{lemma}\label{cor to pd thm 1}
In the setup of \ref{stab setup}, the following are equivalent 
\begin{enumerate}
\item $\pd_\Lambda\Lambda_I=2$.
\item $\pd_\Lambda\Lambda_I<\infty$ and $\depth_R\Lambda_I=d-2$.
\item $\upnu_{I}M\cong M$.
\item $\upmu_{I}M\cong M$.
\end{enumerate}
\end{lemma}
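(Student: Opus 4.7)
My plan is to leverage \ref{proj res thm 1} for one direction and close the loops by constructing resolutions. First, (1) $\Leftrightarrow$ (2) is immediate from the Auslander--Buchsbaum equality $\pd_\Lambda\Lambda_I + \depth_R\Lambda_I = \depth_R\Lambda = d$, which applies whenever $\pd_\Lambda\Lambda_I < \infty$ since $\Lambda\in\CM R$. Second, the implications (1) $\Rightarrow$ (3) and (1) $\Rightarrow$ (4) are precisely \ref{proj res thm 1}\eqref{proj res thm 1 part 1}. What remains are the two converses.

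For (4) $\Rightarrow$ (1), the hypothesis $\upmu_IM\cong M$ together with basicness forces $K_I\cong M_I$ by Krull--Schmidt, since both $\upmu_IM$ and $M$ share $M_{I^c}$ as a summand. The crucial point is identifying the cokernel of $\cdot a\colon\Hom_R(M,V_I)\to\Hom_R(M,M_I)$ with $\Lambda_I$: the image consists of morphisms $M\to M_I$ factoring through $V_I\in\add M_{I^c}$, and by the $\add M_{I^c}$-approximation property of $a$ this image is exactly $[M_{I^c}](M,M_I)$, so the cokernel is $\Hom_R(M,M_I)/[M_{I^c}](M,M_I) = \Lambda_I$. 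Combining with left exactness of $\Hom_R(M,-)$ applied to \eqref{K0} and substituting $K_I\cong M_I$ then yields
\[
0\to\Hom_R(M,M_I)\xrightarrow{\cdot c}\Hom_R(M,V_I)\xrightarrow{\cdot a}\Hom_R(M,M_I)\to\Lambda_I\to 0,
\]
which is a projective resolution of length at most two, minimal by \ref{dualityofapprox}\eqref{K0b}. That the length is exactly $2$ rather than $0$ or $1$ follows because $e_{I^c}\Lambda_I = 0$ (ruling out $\pd = 0$) and $K_I\neq 0$ (else $M_I\in\add M_{I^c}$, contradicting basicness, ruling out $\pd = 1$).

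For (3) $\Rightarrow$ (1), I would reduce to the case just proved by invoking the duality recorded in \ref{dualityofapprox}. The statement $\upnu_IM\cong M$ reads $J_I^*\cong M_I$, equivalently $J_I\cong M_I^*$, which by the definition of the exchange sequence \eqref{K1} is precisely the assertion $\upmu_I(M^*)\cong M^*$ for the modifying module $M^*\in\refl R$, whose endomorphism ring $\End_R(M^*)\cong\Lambda^{\op}$ again lies in $\CM R$. Applying the already-established (4) $\Rightarrow$ (1) to $M^*$ gives $\pd_{\Lambda^{\op}}\Lambda_I^{\op} = 2$, hence $\depth_R\Lambda_I^{\op} = d-2$. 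But $\Lambda_I$ and $\Lambda_I^{\op}$ coincide as $R$-modules (the anti-involution is $R$-linear since $R$ is central), so $\depth_R\Lambda_I = d-2$, and Auslander--Buchsbaum on the original side gives $\pd_\Lambda\Lambda_I = 2$. The main subtlety in the whole argument will be the identification of $\mathrm{im}(\cdot a)$ with $[M_{I^c}](M,M_I)$ in the (4) $\Rightarrow$ (1) step, since $a$ need not be surjective as a map of $R$-modules; but this is simply the $\add M_{I^c}$-approximation property packaged correctly.
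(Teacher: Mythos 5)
Your handling of (1)$\Leftrightarrow$(2), of (1)$\Rightarrow$(3),(4) via \ref{proj res thm 1}, and of (4)$\Rightarrow$(1) is sound, and the last of these is essentially the paper's own argument (the paper is even terser about why the projective dimension is exactly $2$). The problem is the final inference in your (3)$\Rightarrow$(1). Passing to $M^*$ is legitimate: $\upnu_IM\cong M$ does unwind, via reflexivity of $J_I$, to $\upmu_I(M^*)\cong M^*$, and applying your (4)$\Rightarrow$(1) to the basic modifying module $M^*$ gives $\pd_{\Lambda^{\op}}(\Lambda_I)^{\op}=2$, hence $\depth_R\Lambda_I=d-2$. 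But you then conclude $\pd_\Lambda\Lambda_I=2$ by ``Auslander--Buchsbaum on the original side'', and this is where the argument breaks: Auslander--Buchsbaum only applies once you already know $\pd_\Lambda\Lambda_I<\infty$, which is precisely what you are trying to establish. Knowledge of $\depth_R\Lambda_I$ alone gives no control on the left projective dimension --- this is exactly why condition (2) of the lemma carries the clause $\pd_\Lambda\Lambda_I<\infty$ in addition to $\depth_R\Lambda_I=d-2$. Nor does finiteness of projective dimension of $\Lambda_I$ as a right module automatically transfer to the left for a quotient of the form $\Lambda/\Lambda(1-e_I)\Lambda$; if you want to move across $(-)^{\op}$ you must argue that separately.

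The gap is repairable with tools you already have. Either prove (3)$\Rightarrow$(4) directly, as the paper does: if $\upnu_IM\cong M$ then \eqref{K1D} reads $0\to M_I\xrightarrow{b^*} U_I\xrightarrow{d^*} M_I$ with $d^*$ a minimal right $\add M_{I^c}$-approximation by \ref{dualityofapprox}, so uniqueness of minimal approximations gives $U_I\cong V_I$ and $K_I\cong M_I$, i.e.\ $\upmu_IM\cong M$, and your (4)$\Rightarrow$(1) finishes. Or stay on the $\op$ side but close the loop module-theoretically rather than via depth: from $\pd_{\Lambda^{\op}}(\Lambda_I)^{\op}=2$, i.e.\ condition (1) for $M^*$, your own implication (1)$\Rightarrow$(3) applied to $M^*$ yields $\upnu_I(M^*)\cong M^*$; since the exchange kernel computing $\upnu_I(M^*)$ is exactly $K_I$, this says $K_I^*\cong M_I^*$, hence $K_I\cong M_I$, which is (4) for $M$, and again (4)$\Rightarrow$(1) concludes. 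As written, however, the cycle of implications is not closed.
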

\begin{proof}
(1)$\Leftrightarrow$(2) is just the Auslander--Buchsbaum formula  \cite[2.15]{IW4}.\\
(1)$\Rightarrow$(3) is \ref{proj res thm 1}.\\
(3)$\Rightarrow$(4) Since $\upnu_{I}M\cong M$, \eqref{K1D} is simply
\[
0\to M_I\stackrel{b^{*}}\to U_I\stackrel{d^{*}}\to M_I 
\]
where $d^*$ is a minimal $\add M_{I^c}$-approximation by \ref{dualityofapprox}.  Since minimal approximations are unique, $V_I\cong U_I$ and $\upmu_{I}M\cong M_{I^c}\oplus M_I=M$.\\
(4)$\Rightarrow$(1) Applying $\Hom_R(M,-)$ to \eqref{K0} gives an exact sequence
\[
0\to\Hom_R(M,K_I)\to\Hom_R(M,V_I)\to\Hom_R(M,M_I)\to\Lambda_I\to 0.
\]
Since $\upmu_{I}M\cong M$, $K_I\cong M_I$ and so the first three terms are all projective.
\end{proof}

\begin{remark}
We remark that when $\dim R=2$, $R$ is an isolated singularity and so automatically $\dim_\mathbb{C}\Lambda_I<\infty$, which implies that $\depth_R\Lambda_I=0$.  Thus in this case the conditions in \ref{cor to pd thm 1} are equivalent to simply $\pd_\Lambda \Lambda_I<\infty$.
\end{remark}

The last two results combine to prove the following, which was stated in \S\ref{mut prelim}.
\begin{cor}\label{Ext 2 thm mut section}
In the setup of \ref{stab setup}, suppose that $\upnu_I M\cong M$.  Then
\begin{enumerate}
\item\label{Ext 2 thm MS 1} $T_I=\Lambda(1-e_I)\Lambda$ and $\Gamma:=\End_\Lambda(T_I)\cong \Lambda$.
\item\label{Ext 2 thm MS 2} $\Omega_\Lambda\Lambda_I=T_I$, thus $\pd_\Lambda \Lambda_I=2$ and $\Ext^1_\Lambda(T_I,-)\cong\Ext^2_\Lambda(\Lambda_I,-)$.
\end{enumerate}
\end{cor}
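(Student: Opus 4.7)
The plan is to leverage Lemma~\ref{cor to pd thm 1}, which tells us that the hypothesis $\upnu_I M \cong M$ is equivalent to both $\upmu_I M \cong M$ and $\pd_\Lambda \Lambda_I = 2$. In particular, from the proof of (3)$\Rightarrow$(4) in Lemma~\ref{cor to pd thm 1}, we get the identification $U_I \cong V_I$ (via uniqueness of minimal approximations), and Proposition~\ref{proj res thm 1} applies to give an explicit four-term projective resolution of $\Lambda_I$.

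For (1), the key observation is that $\Lambda(1-e_I)\Lambda$ is precisely the two-sided ideal $[M_{I^c}]$ of endomorphisms of $M$ factoring through $\add M_{I^c}$. As a left $\Lambda$-module this splits along the decomposition $M = M_I \oplus M_{I^c}$ into
\[
[M_{I^c}] \cong \Hom_R(M, M_{I^c}) \oplus \{ f \in \Hom_R(M, M_I) \mid f \text{ factors through } \add M_{I^c}\}.
\]
Since $a\colon V_I \to M_I$ is an $\add M_{I^c}$-approximation, the second summand equals $\Im(\Hom_R(M, V_I) \xrightarrow{\cdot a} \Hom_R(M, M_I))$. Using Proposition~\ref{proj res thm 1}(2), this image is isomorphic to $\Hom_R(M, V_I)/\Hom_R(M, M_I)$, and since $V_I \cong U_I$, comparing with the defining sequence \eqref{defin of CI} of $C_I$ identifies it with $C_I$. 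Combining gives $\Lambda(1-e_I)\Lambda \cong \Hom_R(M, M_{I^c}) \oplus C_I = T_I$ as left $\Lambda$-modules. The ring isomorphism $\End_\Lambda(T_I) \cong \End_R(\upnu_I M) = \End_R(M) = \Lambda$ then follows immediately from \cite[6.7, 6.8]{IW4} together with $\upnu_I M \cong M$.

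For (2), part (1) produces the short exact sequence $0 \to T_I \to \Lambda \to \Lambda_I \to 0$ of $\Lambda$-modules, identifying $\Omega_\Lambda \Lambda_I$ with $T_I$. Since $\pd_\Lambda T_I = 1$ by Lemma~\ref{pd 1 for TI both sides}(1), we deduce $\pd_\Lambda \Lambda_I \leq 2$; combined with the Auslander--Buchsbaum consequence of Lemma~\ref{cor to pd thm 1}(1), we conclude $\pd_\Lambda \Lambda_I = 2$. Applying $\Hom_\Lambda(-, X)$ to the same short exact sequence and using that $\Lambda$ is projective yields the dimension-shifting isomorphism $\Ext^1_\Lambda(T_I, X) \cong \Ext^2_\Lambda(\Lambda_I, X)$ functorially in $X$.

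The main subtlety is the identification $T_I \cong [M_{I^c}]$ as left $\Lambda$-modules: one must check that the isomorphism $\Im(\cdot a) \cong C_I$ coming from Proposition~\ref{proj res thm 1} is $\Lambda$-linear and sits correctly inside $\Hom_R(M, M_I)$, so that the direct-sum decomposition really matches the ideal $[M_{I^c}]$ on the nose. Once that bookkeeping is done, both parts drop out cleanly without any further homological input beyond what is already established in \S\ref{mut prelim} and the earlier results of this appendix.
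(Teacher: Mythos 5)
Your proof is correct and takes essentially the same route as the paper: both rest on the resolution of \ref{proj res thm 1} (entered via \ref{cor to pd thm 1} and the identification $U_I\cong V_I$), identify $T_I$ with the ideal $\Lambda(1-e_I)\Lambda=[M_{I^c}]$, and deduce (2) by dimension shifting along $0\to T_I\to\Lambda\to\Lambda_I\to 0$. The only cosmetic differences are that you make the paper's ``by inspection'' step explicit by decomposing $[M_{I^c}]$ directly, and you obtain $\End_\Lambda(T_I)\cong\Lambda$ from $\End_\Lambda(T_I)\cong\End_R(\upnu_IM)$ via \cite[6.7, 6.8]{IW4} rather than citing \cite[6.1(1)]{DW1}.
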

\begin{proof}
(1) Adding the exact sequence $0\to 0\to \Hom_R(M,M_{I^c})\xrightarrow{\Id}\Hom_R(M,M_{I^c})\to 0 \to 0$ to the minimal projective resolution in \ref{proj res thm 1}\eqref{proj res thm 1 part 2} gives the projective resolution
\[
0\to \Hom_R(M,M_I)\xrightarrow{\uppsi} \Hom_R(M,V_I\oplus M_{I^c})\to \Lambda\to\Lambda_I\to 0.
\]
By definition $T_I$ is the cokernel of the morphism $\uppsi$, which by inspection is $\Lambda(1-e_I)\Lambda$.   The isomorphism $\End_\Lambda(T_I)\cong \Lambda$ is \cite[6.1(1)]{DW1}.\\
(2) This follows from the exact sequence above, together with dimension shifting.
\end{proof}

By contrast to \ref{Ext 2 thm mut section}, it is often the case that $\upnu_{I}M\ncong M$.  The following is needed, and depends heavily on \ref{dualityofapprox}. 

\begin{lemma}\label{iterate and dual}
In the setup of \ref{stab setup}, suppose further that either 
\begin{enumerate}
\item[(a)] $\upnu_{I}M\cong M$, or
\item[(b)] $\upnu_{I}\upnu_{I}M\cong M$ and $\dim_{\mathbb{C}}\End_R(M)_I<\infty$.
\end{enumerate}
holds.  Then \t{(a)} or \t{(b)} also holds for $N_1:=\upnu_IM$, $N_2:=M^*$ and $N_3:=(\upnu_IM)^*$.
\end{lemma}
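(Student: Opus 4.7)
The plan is to handle the three modules $N_1$, $N_2$, $N_3$ separately, building on two basic observations. First, by \ref{dualityofapprox}, the duality $(-)^*$ swaps right and left mutations, giving isomorphisms
\[
(\upmu_I M)^* \cong \upnu_I(M^*), \qquad (\upnu_I M)^* \cong \upmu_I(M^*).
\]
Second, the canonical ring isomorphism $\End_R(M^*) \cong \End_R(M)^{\op}$ identifies $[M_{I^c}]$ with $[M_{I^c}^*]$, so $\End_R(M^*)_I \cong \End_R(M)_I^{\op}$ has the same $\mathbb{C}$-dimension as $\End_R(M)_I$. In particular, the finite-dimensionality hypothesis appearing in (b) is intrinsic to $(-)^*$.

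For $N_1 = \upnu_I M$, case (a) is trivial since $N_1 \cong M$. In case (b), $\upnu_I N_1 = \upnu_I \upnu_I M \cong M$, and hence $\upnu_I \upnu_I N_1 = \upnu_I M = N_1$; the finite-dimensionality of $\End_R(N_1)_I$ is provided by \cite[6.20]{IW4}, which yields $\End_R(\upnu_I M)_I \cong \End_R(M)_I$. Thus (b) holds for $N_1$.

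For $N_2 = M^*$ in case (a), \ref{cor to pd thm 1} gives the equivalence $\upmu_I M \cong M$, and so $\upnu_I(N_2) \cong (\upmu_I M)^* \cong M^* = N_2$, establishing (a). In case (b), the task reduces via duality to showing $\upmu_I \upmu_I M \cong M$, that is, to proving that right mutation is two-periodic on $M$. This is the technical heart of the proof, and it uses the self-dual nature of the tilting bimodule $T_I$ recorded in \ref{pd 1 for TI both sides}\eqref{pd 1 for TI both sides 2}: $T_I$ is simultaneously a tilting $\Lambda$-module and a tilting $\Gamma^{\op}$-module, with $\End_\Lambda(T_I) \cong \Gamma$ and $\End_{\Gamma^{\op}}(T_I) \cong \Lambda^{\op}$. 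Running the analysis behind \ref{Ext 3 thm mut section} on the $\Gamma^{\op}$-side, then dualizing back through \ref{dualityofapprox}\eqref{K1D}, shows that the iterated exchange sequence for $\upmu_I^2 M$ closes up to $M$ in precisely the same way as the one for $\upnu_I^2 M$. Finally, $N_3 = (\upnu_I M)^* = N_1^*$ is handled by applying the $N_2$ argument with $M$ replaced by $N_1$, whose conclusion was already established in the previous step.

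The main obstacle is the case (b) analysis for $N_2$: showing that $\upnu_I \upnu_I M \cong M$ together with $\dim_\mathbb{C} \End_R(M)_I < \infty$ implies $\upmu_I \upmu_I M \cong M$. Unlike case (a), where \ref{cor to pd thm 1} immediately gives the symmetric statement, here there is no a priori symmetry between left and right mutation, and the conclusion has to be extracted from the additional structure supplied by finite-dimensionality and by the tilting bimodule $T_I$ of \ref{pd 1 for TI both sides}\eqref{pd 1 for TI both sides 2}.
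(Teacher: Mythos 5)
Your reductions are sound: the identifications $(\upmu_IM)^*\cong\upnu_I(M^*)$ and $(\upnu_IM)^*\cong\upmu_I(M^*)$ do follow from \ref{dualityofapprox}, case (a), the $N_1$ step and the deduction of $N_3$ from $N_1$ and $N_2$ all match the paper, and reducing case (b) for $N_2$ to the statement $\upmu_I\upmu_IM\cong M$ is legitimate. The gap is that you never actually prove this last statement, which you yourself flag as the technical heart. Saying it follows by ``running the analysis behind \ref{Ext 3 thm mut section} on the $\Gamma^{\op}$-side'' is not an argument, and it points at the wrong mechanism: \ref{pd 1 for TI both sides}\eqref{pd 1 for TI both sides 2} only records a presentation of $T_I$ as a $\Gamma^{\op}$-module and carries no periodicity information, while \ref{Ext 3 thm mut section} is deduced from \ref{proj res thm}, whose proof uses the exact sequence \eqref{8O} --- and \eqref{8O} is established precisely inside the paper's proof of the present lemma, from hypothesis (b). So, as the material is organised, your appeal is circular, and in any case no chain of reasoning is supplied for the key step.

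What is needed is a short direct computation with approximation sequences. Write $\upnu_IM=M_{I^c}\oplus J_I^*$ and form the minimal right $\add M_{I^c}^*$-approximation $0\to\Ker\to W_I^*\to J_I$ used to compute $\upnu_I\upnu_IM$; the hypothesis $\upnu_I\upnu_IM\cong M$ together with Krull--Schmidt gives $\Ker\cong M_I^*$. Dualizing via \ref{dualityofapprox} yields an exact sequence $0\to J_I^*\to W_I\to M_I$ whose last map is a minimal right $\add M_{I^c}$-approximation, so uniqueness of minimal approximations compared against \eqref{K0} forces $W_I\cong V_I$ and $J_I^*\cong K_I$; in particular $\upmu_IM\cong\upnu_IM$ under (b). Then \eqref{K1D} and \eqref{K1Da} exhibit $U_I\to J_I^*$ as a minimal right $\add M_{I^c}$-approximation with kernel $M_I$, whence $\upmu_I\upmu_IM\cong\upmu_I(\upnu_IM)\cong M$, equivalently $\upnu_I\upnu_I(M^*)\cong M^*$. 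Note also that the crucial dualization is of this second exchange sequence, not of \eqref{K1D} as your sketch suggests; \eqref{K1D} only enters at the final step. Your remaining points --- finite-dimensionality via $(\Lambda_I)^{\op}\cong(\Lambda^{\op})_I$ and via $\Lambda_I\cong(\upnu_I\Lambda)_I$ from \cite[6.20]{IW4} --- are correct and agree with the paper.
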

\begin{proof}
Suppose that $M$ satisfies assumption (a).  The fact that $\upnu_IM$ also satisfies (a) is obvious.  The fact that $M^*$ does too is a consequence of \ref{dualityofapprox}, so since $(\upnu_IM)^*=M^*$ in this case, so too does $N_3$.

Hence we can assume that $M$ satisfies assumption (b).  We see that $\upnu_I\upnu_IN_1\cong N_1$ simply by applying $\upnu_I$ to both sides of the equation $\upnu_I\upnu_IM\cong M$.  Since $\Lambda_I\cong(\upnu_I\Lambda)_I$ by \cite[6.20]{IW}, the finite dimensionality is preserved too.  For the statement involving $N_2$, we need some notation.  Since $\upnu_IM:=M_{I^c}\oplus J_I^*$, we consider a minimal right $\add(\frac{\upnu_IM}{J_I^*})^*=\add M_{I^c}^*$ approximation of $(J_I^*)^*\cong J_I$ 
\[
0\to \Ker\to W^*_I\to J_I
\]
then since $\upnu_I\upnu_IM\cong M$, $\Ker^*\cong M_I$.  Thus dualizing the above, using \ref{dualityofapprox}, 
\begin{eqnarray}
0\to J_I^*\to W_I\to M_I\label{8O}
\end{eqnarray}
is exact, where the last map is a minimal $\add M_{I^c}$-approximation. By uniqueness of minimal approximations $W_I\cong V_I$, and $\upnu_I(M^*)=M_{I^c}^*\oplus J_I$.  Finally \eqref{K1D} and \eqref{K1Da} show that $\upnu_I\upnu_I(M^*)\cong M^*$.  The finite dimensionality part follows since $(\Lambda_I)^{\op}=(\Lambda^{\op})_I$.  The statement for $N_3$ follows by combining the statements for $N_1$ and $N_2$. .
\end{proof}

\begin{prop}\label{proj res thm}
In the setup of \ref{stab setup}, suppose further that $d\geq 3$, and that assumption \t{(b)} in \ref{iterate and dual} is satisfied.  Then
\begin{enumerate}
\item\label{proj res thm part 1}  Applying $\Hom_R(M,-)$ to the sequence \eqref{K1D} gives an exact sequence
\[
0\to \Hom_R(M,M_I)\to \Hom_R(M,U_I)\to \Hom_R(M,J_I^*)\to 0.
\]
\item\label{proj res thm part 2} $\upnu_{I}M\ncong M$. 
\item\label{proj res thm part 3}  The minimal projective resolution of $\Lambda_I$ as a $\Lambda$-module is
\[
0\to \Hom_R(M,M_I)\to \Hom_R(M,U_I)\to \Hom_R(M,V_I)\to \Hom_R(M,M_I)\to \Lambda_I\to 0.
\] 
\item\label{proj res thm part 4} $d=3$.
\end{enumerate}
\end{prop}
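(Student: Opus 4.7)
The plan is to construct an explicit projective resolution of $\Lambda_I$ of length $3$ by splicing the exchange sequences \eqref{K0} and \eqref{K1D}, from which all four statements will follow. Concretely, (3) will be the resolution itself, (1) will be one of the short-exact-sequence pieces used in the splice, (4) will come from Auslander--Buchsbaum applied to the resolution, and (2) will come from \ref{cor to pd thm 1}.

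First, under assumption (b), I would establish that $K_I \cong J_I^*$ (equivalently, $\upmu_I M \cong \upnu_I M$). Applying the mutation construction to $\upnu_I M$ at its $I$-summand $J_I^*$, the hypothesis $\upnu_I\upnu_I M \cong M$ says this second mutation returns the summand $M_I$, so the minimal right $\add M_{I^c}^*$-approximation $W^* \to J_I$ has kernel $M_I^*$. Dualising via \ref{dualityofapprox}\eqref{K1b}, together with an argument (in the spirit of \cite[6.25]{IW4}, adapted to our hypotheses) that $\upmu_I\upnu_I M \cong \upnu_I\upnu_I M$, identifies $W \cong V_I$ and $K_I \cong J_I^*$ by the uniqueness of minimal approximations.

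Next, I would promote the two exchange sequences to genuine short exact sequences $0 \to K_I \to V_I \to M_I \to 0$ and $0 \to M_I \to U_I \to J_I^* \to 0$. The respective cokernels $D, D'$ satisfy $\Hom_R(M_{I^c},-)=0$ by \ref{dualityofapprox}, and the finite-length hypothesis $\dim_\mathbb{C}\Lambda_I<\infty$, combined with $d \geq 3$ (so that the reflexive modules $M_I$ and $J_I^*$ have $R$-depth at least $2$), will force $D = D' = 0$. Applying $\Hom_R(M,-)$ then preserves exactness: the $\Hom_R(M_{I^c},-)$-part is immediate from \ref{dualityofapprox}, while the $\Hom_R(M_I,-)$-part uses the vanishing $\Ext^1_R(M,M)=0$, which holds for modifying modules when $d \geq 3$ (using $\End_R(M)\in\CM R$ and the standard depth-Ext comparison over a Gorenstein base). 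Splicing the two resulting four-term exact sequences at the common middle term $\Hom_R(M,K_I) = \Hom_R(M,J_I^*)$ produces the five-term projective resolution asserted in (3); isolating the relevant three-term piece yields (1).

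For (4), the resolution in (3) gives $\pd_\Lambda\Lambda_I \leq 3$. Since $\dim_\mathbb{C}\Lambda_I<\infty$ implies $\depth_R\Lambda_I = 0$, Auslander--Buchsbaum gives $\pd_\Lambda\Lambda_I = d$, and combined with $d \geq 3$ this pins $d = 3$. For (2), by \ref{cor to pd thm 1}, $\upnu_I M \cong M$ would imply $\pd_\Lambda\Lambda_I = 2$, contradicting $\pd_\Lambda\Lambda_I = 3$. The main obstacle is the middle paragraph: verifying genuine surjectivity of the approximation maps $V_I \to M_I$ and $U_I \to J_I^*$ together with preservation of exactness under $\Hom_R(M,-)$, without the convenience of $R \in \add M_{I^c}$. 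This is the technical heart of the result and is where the interplay between the finite-length property of $\Lambda_I$, the depth/reflexivity of $M$, and the Ext-vanishing for modifying modules in dimension $\geq 3$ must be carefully combined.
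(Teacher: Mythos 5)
Your overall architecture -- splice two four-term $\Hom$ sequences into the length-three resolution of (3), force $d=3$ by a depth count, and deduce (2) from \ref{cor to pd thm 1} -- matches the paper's proof of (3) and (4), and your identification $K_I\cong J_I^*$ is exactly \eqref{8O} from \ref{iterate and dual}; your route to (2) is a harmless variant of the paper's direct splitting argument. The gap is in your proof of the exactness statement (1), which you yourself flag as the technical heart. First, surjectivity of the approximation maps $V_I\to M_I$ and $U_I\to J_I^*$ is neither available nor needed: the setup \ref{stab setup} does not assume $R\in\add M_{I^c}$ (the paper notes in \ref{mutmutI general} that surjectivity of the exchange sequences comes precisely from $N_{I^c}$ being a generator, which is not part of \ref{stab setup}), and your claim that the cokernel $D$ of $V_I\to M_I$ satisfies $\Hom_R(M_{I^c},D)=0$ does not follow from \ref{dualityofapprox}: the approximation property only says that the induced map $\Hom_R(M_{I^c},M_I)\to\Hom_R(M_{I^c},D)$ is zero, and since $\Hom$ is not right exact this does not kill $\Hom_R(M_{I^c},D)$. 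Second, the vanishing $\Ext^1_R(M,M)=0$ you invoke is false for modifying modules in general; it holds when $R$ has isolated singularities (cf.\ \ref{Toda for AM}), but the proposition must apply to non-isolated $R$, and the paper explicitly works with $\Ext^1_R(N,N)\neq 0$, choosing $\Ext^1$-regular hyperplane sections in \ref{chamber 3fold=chamber surface prep}. Note also that your mechanism is internally inconsistent: if $V_I\to M_I$ were surjective and $\Ext^1_R(M,K_I)=0$, then $\Hom_R(M,V_I)\to\Hom_R(M,M_I)$ would be surjective, forcing the cokernel $\Lambda_I$ in your spliced resolution to vanish.

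What your sketch is missing is the mechanism the paper actually uses for (1): no exactness of the underlying $R$-module sequences is claimed at all. One applies $\Hom_R(M,-)$ to the left-exact sequence \eqref{K1D} and compares it, via reflexive equivalence, with $\Hom$ over $\End_R(M_{I^c})$ applied to the genuinely exact sequence \eqref{K1Da}. The cokernel $C$ of $\Hom_R(M,U_I)\to\Hom_R(M,J_I^*)$ is killed by $e_{I^c}$ because \eqref{K1Da} is exact, hence is a finitely generated $\Lambda_I$-module and so has finite length by hypothesis (b); on the other hand $C$ embeds into an $\Ext^1$ group over $\End_R(M_{I^c})$ which has positive depth because $\Hom_R(M,M_I)\in\CM R$. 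Hence $C=0$. This finite-length versus positive-depth contradiction is where hypothesis (b) genuinely enters, and without it (or an equivalent substitute) your construction of the resolution in (3) does not get off the ground. For the $V_I$ half of the splice no surjectivity is needed either: left exactness of $\Hom_R(M,-)$ applied to \eqref{8O}, together with the approximation property, identifies the cokernel of $\Hom_R(M,V_I)\to\Hom_R(M,M_I)$ with $\Lambda_I$ directly.
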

\begin{proof}
(1) This is the argument in \cite[(6.Q)]{IW4}.  Denote $\mathbb{G}:=\Hom_R(M_{I^c},-)$, then applying $\Hom_R(M,-)$ to \eqref{K1D} and applying $\Hom_\Lambda(\mathbb{G}M,-)$ to \eqref{K1Da} and comparing them, by reflexive equivalence
\[
\begin{tikzpicture}
\node (A1) at (0,0) {$0$}; 
\node (A2) at (2,0) {$\Hom_\Lambda(\mathbb{G}M,\mathbb{G}M_I)$};
\node (A3) at (5.25,0) {$\Hom_\Lambda(\mathbb{G}M,\mathbb{G}U_I)$};
\node (A4) at (8.5,0) {$\Hom_\Lambda(\mathbb{G}M,\mathbb{G}J_I^*)$};
\node (A5) at (11.75,0) {$\Ext^1_\Lambda(\mathbb{G}M,\mathbb{G}M_I)$};
\node (B1) at (0,-1.25) {$0$}; 
\node (B2) at (2,-1.25) {$\Hom_R(M,M_I)$}; 
\node (B3) at (5.25,-1.25) {$\Hom_R(M,U_I)$}; 
\node (B4) at (8.5,-1.25) {$\Hom_R(M,J_I^*)$}; 
\node (B5) at (11.5,-1.25) {$C$};
\node (B6) at (13,-1.25) {$0$};
\draw[->] (A1)--(A2);
\draw[->] (A2)--(A3);
\draw[->] (A3)--(A4);
\draw[->] (A4)--(A5);
\draw[->] (B1)--(B2);
\draw[->] (B2)--(B3);
\draw[->] (B3)--(B4);
\draw[->] (B4)--(B5);
\draw[->] (B5)--(B6);
\draw[->] (A2)--node[left]{$\scriptstyle\cong$}(B2);
\draw[->] (A3)--node[left]{$\scriptstyle\cong$}(B3);
\draw[->] (A4)--node[left]{$\scriptstyle\cong$}(B4);
\end{tikzpicture}
\]  
Hence $C$ is a submodule of $\Ext^1_\Lambda(\mathbb{G}M,\mathbb{G}M_I)$.  But since \eqref{K1Da} is exact, it follows that $e_IC=0$ and so $C$ is a finitely generated $\Lambda_I$-module.  Since $\Lambda_I$ has finite length, so too does $C$.  But $\depth_R\Ext^1_\Lambda(\mathbb{G}M,\mathbb{G}M_I)>0$ since $\Hom_\Lambda(\mathbb{G}M,\mathbb{G}M_I)\cong\Hom_R(M,M_I)\in\CM R$, hence $C=0$.\\
(2)  If $\upnu_{I}M\cong M$, then $J_I^*\cong M_I$.  Consequently, viewing the exact sequence in \eqref{proj res thm part 1} as $\End_R(M)$-modules, the last term is projective and so the sequence splits.  By reflexive equivalence this would imply that $M_I$ is a summand of $U_I\in\add M_{I^c}$, which is a contradiction.\\
(3) Since the last map in \eqref{8O} is a minimal right $\add M_{I^c}$-approximation, applying $\Hom_R(M,-)$ to \eqref{8O} gives an exact sequence
\[
0\to \Hom_R(M,J_I^*)\to \Hom_R(M,V_I)\to \Hom_R(M,M_I)\to \Lambda_{\con}^I\to 0.  
\]
Splicing this with the exact sequence in \eqref{proj res thm part 1} gives the result.\\
(4) If $d>3$, applying the depth lemma to the projective resolution in \eqref{proj res thm part 3} gives a contradiction.
\end{proof}

The above results give the following, stated in \S\ref{mut prelim}.

\begin{cor}\label{Ext 3 thm mut section}
Suppose that $d\geq 3$, $\upnu_I \upnu_IM\cong M$ and $\dim_{\mathbb{C}}\Lambda_I<\infty$.  As above, set $\Gamma:=\End_\Lambda(T_I)\cong\End_R(\upnu_IM)$.  Then 
\begin{enumerate}
\item\label{Ext 3 thm MS 1} $T_I\cong \Hom_R(M,\upnu_IM)$.
\item\label{Ext 3 thm MS 2} $\Omega_\Lambda^2\Lambda_I=T_I$, thus $\pd_\Lambda \Lambda_I=3$ and $\Ext^1_\Lambda(T_I,-)\cong\Ext^3_\Lambda(\Lambda_I,-)$.
\end{enumerate}
\end{cor}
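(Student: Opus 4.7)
My plan is to deduce both statements directly from Proposition \ref{proj res thm}, which has done most of the heavy lifting already. For part (1), I would unpack the definition $T_I = \Hom_R(M, M_{I^c}) \oplus C_I$, where $C_I$ is the cokernel of the map $\cdot b^* \colon \Hom_R(M, M_I) \to \Hom_R(M, U_I)$ defining $T_I$ in \S\ref{mut prelim}. Proposition \ref{proj res thm}\eqref{proj res thm part 1} precisely asserts that applying $\Hom_R(M, -)$ to the sequence \eqref{K1D} yields the short exact sequence
\[
0 \to \Hom_R(M, M_I) \xrightarrow{\cdot b^*} \Hom_R(M, U_I) \to \Hom_R(M, J_I^*) \to 0,
\]
so $C_I \cong \Hom_R(M, J_I^*)$. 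Since $\upnu_I M = M_{I^c} \oplus J_I^*$, this immediately gives $T_I \cong \Hom_R(M, M_{I^c}) \oplus \Hom_R(M, J_I^*) \cong \Hom_R(M, \upnu_I M)$.

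For part (2), the key input is the minimal projective resolution of $\Lambda_I$ recorded in \ref{proj res thm}\eqref{proj res thm part 3}, which has length $3$ and thereby immediately yields $\pd_\Lambda \Lambda_I = 3$. To extract $\Omega^2_\Lambda \Lambda_I$, I would mimic the trick used in the proof of \ref{Ext 2 thm mut section}\eqref{Ext 2 thm MS 1}: augment the minimal resolution by an identity complex on $\Hom_R(M, M_{I^c})$ in the top two positions so that the $0$-th term becomes $\Lambda$, producing a (non-minimal) length-$3$ resolution
\[
0 \to \Hom_R(M, M_I) \to \Hom_R(M, U_I) \to \Hom_R(M, V_I \oplus M_{I^c}) \to \Lambda \to \Lambda_I \to 0.
\]
A direct syzygy computation then identifies the second syzygy with $T_I$: the cokernel of $\cdot b^* \colon \Hom_R(M, M_I) \to \Hom_R(M, U_I)$ contributes the summand $C_I$, and the added identity complex contributes the summand $\Hom_R(M, M_{I^c})$, matching the decomposition $T_I = \Hom_R(M, M_{I^c}) \oplus C_I$ (consistent with part (1), where this extra summand corresponds to the $M_{I^c}$-piece of $\upnu_I M$). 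The claimed isomorphism $\Ext^1_\Lambda(T_I, -) \cong \Ext^3_\Lambda(\Lambda_I, -)$ then follows by standard dimension shifting, noting that the projective summand $\Hom_R(M, M_{I^c})$ of $T_I$ contributes nothing to $\Ext^1$.

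The corollary is essentially a direct reading of the resolution in \ref{proj res thm}, so no substantive new obstacle arises here; the main work of establishing the sharp projective dimension and the precise form of the resolution in terms of the exchange data has already been discharged there. The only mild care required is in the syzygy bookkeeping, where the projective summand $\Hom_R(M, M_{I^c})$ must be tracked through the augmentation in order to recognise $T_I$ on the nose rather than only up to a projective summand.
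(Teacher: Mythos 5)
Your argument follows the paper's own route: part (1) is exactly the identification $C_I\cong\Hom_R(M,J_I^*)$ via \ref{proj res thm}\eqref{proj res thm part 1}, and part (2) is read off from the resolution in \ref{proj res thm}\eqref{proj res thm part 3}, so in substance the proposal is correct.

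The one place where your bookkeeping slips is precisely the point you flag at the end: with the augmentation as you wrote it (identity complex added in homological degrees $1$ and $0$, so that the zeroth term becomes $\Lambda$), the second syzygy is $\ker\bigl(\Hom_R(M,V_I\oplus M_{I^c})\to\Lambda\bigr)\cong C_I\cong\Hom_R(M,J_I^*)$; the added identity summand does \emph{not} survive into $\Omega^2$, so you do not get $T_I$ on the nose this way. To realise $T_I$ literally as a second syzygy you should splice the identity complex one degree higher, in degrees $2$ and $1$, giving
\[
0\to \Hom_R(M,M_I)\to \Hom_R(M,U_I\oplus M_{I^c})\to \Hom_R(M,V_I\oplus M_{I^c})\to \Hom_R(M,M_I)\to\Lambda_I\to 0,
\]
whose second syzygy is $C_I\oplus\Hom_R(M,M_{I^c})=T_I$; alternatively, just observe that the identification holds up to projective summands, which is all that is needed for $\pd_\Lambda\Lambda_I=3$ and $\Ext^1_\Lambda(T_I,-)\cong\Ext^3_\Lambda(\Lambda_I,-)$, exactly as your final remark about the projective summand already indicates. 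This is a minor correction rather than a gap in the argument.
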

\begin{proof}
(1) This follows from the definition of $T_I$, together with \ref{proj res thm}\eqref{proj res thm part 1}.\\
(2) This is now immediate from \eqref{Ext 3 thm MS 1}, \ref{proj res thm}\eqref{proj res thm part 1} and \ref{proj res thm}\eqref{proj res thm part 3}. 
\end{proof}

\section{Conjectures}\label{gen appendix}
This appendix outlines conjectures and further directions.  First and foremost, we are hampered by the fact that the Bridgeland--Chen flop functor is only known to be an equivalence in the setting of Gorenstein terminal singularities.  This paper began by trying to lift the reconstruction algebra of \cite{WemGL2} to $3$-folds, and through the analysis of many non-Gorenstein examples.  There is evidence to suggest the following.

\begin{conj}\label{B1}
Suppose that $X\to X_{\con}$ is a flopping contraction of $3$-folds, where $X$ has at worst CM rational singularities.  Then the flop functor is an equivalence if and only if the universal sheaf of the noncommutative deformation functor associated to the curves is a perfect complex (equivalently, $\pd_\Lambda\Lambda_{\con}<\infty$).
\end{conj}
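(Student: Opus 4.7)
The plan is to mimic the strategy that culminated in Theorem~\ref{flop=mut general thm}, using mutation as the bridge between the noncommutative side (where $\pd_\Lambda \Lambda_{\con}$ lives) and the geometric side (where the Bridgeland--Chen functor lives). As in \S\ref{perverse and tilting}, to any flopping contraction $X \to X_{\con}$ with $X$ at worst CM rational one still associates a tilting bundle $\cV_X$ and an algebra $\Lambda := \End_X(\cV_X)$, together with a derived equivalence $\Uppsi_X \colon \Db(\coh X) \to \Db(\mod\Lambda)$. The noncommutative deformation functor of the contracted curves is then prorepresented by the contraction algebra $\Lambda_{\con}$ exactly as in \ref{contract on f}, and the universal sheaf is $\Uppsi_X^{-1}(\Lambda_{\con})$; this is perfect precisely when $\pd_\Lambda \Lambda_{\con} < \infty$, so the algebraic and geometric formulations of the hypothesis agree.

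For the direction $(\Leftarrow)$, I would try to build the flop equivalence via mutation. Assuming $\pd_\Lambda \Lambda_{\con} < \infty$, the first step is to verify that the analogues of \ref{Ext 3 thm} go through: namely, that $\Omega_\Lambda^2 \Lambda_{\con} \cong T$ for a tilting module $T$ of projective dimension one, and that $\upnu \Lambda := \End_\Lambda(T)$ is of the form $\End_R(\upnu N)$ with $\upnu N$ reflexive. The next step is to show, by the argument of \ref{key flops lemma}, that $\upnu N \cong H^0(\cV_{X^+})$ for the (geometrically constructed) flop $X^+ \to X_{\con}$, so that one obtains an equivalence $\Upphi \colon \Db(\mod \Lambda) \to \Db(\mod \upnu \Lambda)$ and thus a Fourier--Mukai equivalence $\Uppsi_{X^+}^{-1} \circ \Upphi \circ \Uppsi_X$ on the geometric side. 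The last step is to apply a version of \ref{Toda argument}, checking the three conditions on $\cO_X$, $\cO_{C_i}(-1)$ and compatibility with pushdown, to identify this equivalence with the Bridgeland--Chen functor; this in particular proves that the latter is an equivalence.

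For the converse $(\Rightarrow)$, assuming the Bridgeland--Chen functor is an equivalence, the natural approach is to transport geometric finiteness to algebraic finiteness. Since $\cO_{C_i}(-1)$ are perfect on $X$ and are sent to perfect objects under an equivalence, their images under $\Uppsi_X$, namely the simples $S_i$ with $\Uppsi_X^{-1}(\Lambda_{\con})$ filtered by them, should be perfect in $\Db(\mod\Lambda)$, forcing $\pd_\Lambda \Lambda_{\con}<\infty$. The point to nail down is that when the flop is an equivalence, the flop $X^+$ is again of the form covered by the general setup, so $\Lambda^+ := \End_{X^+}(\cV_{X^+})$ is well defined and derived equivalent to $\Lambda$; then standard perfection arguments, using that Serre duality on $X$ exchanges perfect and bounded coherent, transport finiteness across the equivalence.

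The principal obstacle, and the reason this is only a conjecture, is that without Gorenstein terminal singularities the existing MMA machinery (in particular \ref{Min model - MMA}, \ref{basic2}, and the clean dichotomy in \ref{dichotomy}) is no longer available: $\Lambda$ need not be a $3$-sCY algebra, $\Lambda_{\con}$ need not be Cohen--Macaulay, and $\upnu \upnu N \cong N$ can fail. The delicate point is thus step one of the $(\Leftarrow)$ direction: producing $\upnu N$ as a reflexive module with $\End_R(\upnu N)$ derived equivalent to $\Lambda$. A plausible route is to drop the requirement that $\upnu N$ be reflexive and instead work with a two-term tilting complex constructed directly from the hypothesis $\pd_\Lambda \Lambda_{\con} < \infty$, and then verify that the algebra of endomorphisms of this complex is $\End_{X^+}(\cV_{X^+})$ by a direct moduli-theoretic identification along the lines of \ref{mut moduli gives flop text}, where the moduli interpretation does not require the Gorenstein terminal hypothesis and might therefore serve as the common link between $\Lambda$ and the geometry of the flop.
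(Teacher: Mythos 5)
You should be aware that the statement you are treating is Conjecture~\ref{B1}: the paper offers no proof of it, and indeed only proves the corresponding results (\ref{flop=mut general thm}, \ref{key flops lemma}, \ref{mut thm 3 main text A}) under the Gorenstein terminal hypothesis of \ref{crepant setup}, remarking that the conjecture would recover Bridgeland and Chen precisely because in that setting perfection of the universal object is automatic. Measured against that, your proposal is a strategy outline rather than a proof, and your own final paragraph concedes the decisive gap without closing it. Concretely, in the direction $(\Leftarrow)$ the step ``verify that the analogues of \ref{Ext 3 thm} go through'' is not a step but a restatement of the open problem: \ref{Ext 3 thm} (via \ref{proj res thm}) is proved using $\upnu_I\upnu_I M\cong M$, the $3$-sCY property of $\Lambda=\End_R(N)$, Auslander--Buchsbaum over a Gorenstein base, and the dichotomy of \ref{basic2} through \ref{Min model - MMA}; when $R$ is only CM rational none of this machinery exists, $\Lambda$ need not be $3$-sCY, and there is no a priori candidate for a tilting module $T$ with $\End_\Lambda(T)$ identifiable with $\End_{X^+}(\cV_{X^+})$. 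Replacing $\upnu N$ by ``a two-term tilting complex constructed directly from $\pd_\Lambda\Lambda_{\con}<\infty$'' is plausible as a research direction, but you give no construction and no argument that its endomorphism algebra is derived equivalent to $X^+$, which is exactly where the difficulty lies.

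The direction $(\Rightarrow)$ as written is also problematic. You assume that the sheaves $\cO_{C_i}(-1)$ are perfect on $X$; for $X$ with only CM rational (non-Gorenstein, non-terminal) singularities this is not automatic, and since $\Uppsi_X^{-1}\Lambda_{\con}$ is filtered by exactly these objects (equivalently $\Lambda_{\con}$ by the simples $S_i$), assuming their perfection is essentially assuming the conclusion $\pd_\Lambda\Lambda_{\con}<\infty$ rather than deducing it from the flop functor being an equivalence. Likewise, transporting perfection across a Fourier--Mukai equivalence by the principle that ``Serre duality exchanges perfect and bounded coherent'' requires a Gorenstein or dualizing-complex hypothesis that the CM rational setting does not supply, so even the transport step needs an argument. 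In short, neither implication is established: what you have is a sensible plan that mirrors the architecture of \ref{key flops lemma} and \ref{Toda argument}, but the statement remains, as in the paper, a conjecture.
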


This would recover Bridgeland \cite{Bridgeland} and Chen \cite{Chen}, since when $X$ has only Gorenstein terminal singularities, the universal sheaf is  guaranteed to be perfect \cite[7.1]{DW1}.

Whilst mutation needs $R$ to be Gorenstein to ensure that it gives a derived equivalence, it can sometimes be an equivalence when $R$ is not Gorenstein.  The relationship between flops and mutation seems to be tight.
\begin{conj}\label{B2}
When $C_i$ is a crepant curve whose universal sheaf is perfect, and $X$ has at worst CM rational singularities, then Theorem~\ref{mut functor thm intro} remains true.
\end{conj}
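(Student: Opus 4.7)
The plan is to follow the structure of the proof of \ref{flop=mut general thm}, replacing the use of Gorenstein terminal singularities by the perfect universal sheaf hypothesis. The first step would be to invoke \ref{B1}, which under the perfect universal sheaf assumption gives that the Bridgeland--Chen flop functor is an equivalence. Without this, the very statement ``$\Upphi_i$ is functorially isomorphic to the inverse of the flop functor'' has no meaning, so B1 is a logical prerequisite for B2. Granted B1, the target statement decomposes into the three parts of \ref{mut functor thm intro}: the dichotomy between floppability and $\upnu_iN\neq N$, the identification $\Gamma\cong\upnu_i\Lambda$, and the natural isomorphism of functors.

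Next, I would reduce the statement to an analogue of \ref{key flops lemma} in the CM rational setting. The argument there has two ingredients that need adaptation. First, the proof makes essential use of $\upnu_I\upnu_IN\cong N$, which in \ref{mutmutI general} is deduced by reducing to $[2]=\Id$ on the singularity category of the hypersurface $X_{\con}$, using Orlov's completion theorem. For CM rational singularities this classification is unavailable, so I would instead aim to establish the involutive property directly from the perfectness hypothesis: Theorems~\ref{Ext 2 thm} and~\ref{Ext 3 thm} control $\upnu_I\upnu_IM$ purely through $\pd_\Lambda\Lambda_I$, and perfectness of the universal sheaf is precisely the statement $\pd_\Lambda\Lambda_I<\infty$. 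The content here is that one must verify the analogue of \ref{basic2} without the ambient Gorenstein terminal hypothesis, showing that the perfect contraction algebra forces $\upnu_iN\neq N$ when $\dim_\mathbb{C}\Lambda_i<\infty$ and conversely.

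The second ingredient is the Cartier-divisor construction in the first half of \ref{key flops lemma}, which builds $\cF_i=\Cok\updelta_i$ and shows it is a line bundle on $X^+$. The Auslander--Buchsbaum argument there used that the stalks of $X^+$ are regular; in the CM rational case one instead wants the perfectness of the noncommutative universal sheaf on $X^+$ to force $\pd_{\cO_{X^+,x}}\cE_x\leq 2$ locally, which then keeps $\cF_i$ locally free. Parallel to this, the projective-resolution input from \ref{proj res thm} would need to be re-derived under the perfect-complex hypothesis, but this is largely formal once one has $\upnu_I\upnu_IN\cong N$ and the appropriate finite projective dimension bound.

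The main obstacle will be symmetry under flop: to run the proof one needs the perfect universal sheaf hypothesis to transfer from $X$ to $X^+$. In the Gorenstein terminal case this was automatic via \cite[7.1]{DW1}, because both sides have terminal singularities and perfectness is then a consequence. Under CM rational singularities neither side is regular, so one must show directly that if the universal sheaf of the noncommutative deformation functor of $C_i$ on $X$ is perfect, then so is the analogous universal sheaf on $X^+$. Equivalently, one must show that $\pd_{\upnu_i\Lambda}(\upnu_i\Lambda)_i<\infty$ whenever $\pd_\Lambda\Lambda_i<\infty$. Given the isomorphism $\Lambda_i\cong(\upnu_i\Lambda)_i$ from \cite[6.20]{IW4}, this reduces to checking that finite projective dimension over $\Lambda$ translates to finite projective dimension over $\upnu_i\Lambda$; I expect this to follow by transporting a finite resolution across the tilting equivalence induced by $T_i$, provided one can arrange the mutated module $\upnu_iN$ to sit in $\refl R$ with $\End_R(\upnu_iN)\in\CM R$, which is exactly the CM rational analogue of the modifying condition. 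This last verification is, I believe, the essential new technical input; once it is in place the remainder of \S\ref{flops and twists section} adapts line-for-line.
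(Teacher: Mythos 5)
The statement you are trying to prove is Conjecture~\ref{B2}: the paper offers no proof of it, and indeed flags it (together with \ref{B1} and \ref{B3}) as an open problem in Appendix~\ref{gen appendix}, so there is no argument of the paper's to compare yours against. What you have written is a strategy outline rather than a proof, and the steps you defer are exactly the content of the conjecture. Concretely: (i) you begin by invoking Conjecture~\ref{B1}, which is itself unproven, so even the meaningfulness of the target statement is conditional; (ii) you assert that the analogue of \ref{basic2} ``must be verified'' from perfectness of the universal sheaf, but \ref{basic2} rests on the maximal modification/MMA machinery of \cite{IW4}, which is developed only for $R$ normal $d$-sCY, i.e.\ Gorenstein --- in the CM rational setting of \ref{B2} the base need not be Gorenstein, and then it is not even known that the mutation functor $\Upphi_i$ is a derived equivalence, nor that $T_I$ is a tilting module of projective dimension one, so \ref{Ext 2 thm}, \ref{Ext 3 thm} and \ref{pd 1 for TI both sides} are not available off the shelf; (iii) the replacement of \ref{mutmutI general} (which uses Orlov's theorem and the hypersurface property of $X_{\con}$ to get $[2]=\Id$ on $\uCM$) by ``perfectness forces involutivity'' is stated as a hope, not argued --- perfectness gives $\pd_\Lambda\Lambda_I<\infty$, but the implications of \ref{Ext 2 thm}/\ref{Ext 3 thm} run in the other direction, taking $\upnu_I\upnu_IM\cong M$ as a hypothesis; and (iv) the transfer of perfectness from $X$ to $X^+$, which you correctly identify as the essential new input, is again only sketched, and the proposed reduction via $\Lambda_i\cong(\upnu_i\Lambda)_i$ and the tilting equivalence presupposes precisely the mutation/tilting package whose validity in this generality is in question.

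So the proposal is not wrong as a plan --- it identifies the same obstructions the paper alludes to (cf.\ the remark after \ref{mut functor thm intro} and the discussion surrounding \ref{B1}--\ref{B3}) --- but it does not close any of them, and ``the remainder adapts line-for-line'' cannot be granted until the Gorenstein-dependent inputs to \ref{key flops lemma} (the approximation theory over a non-sCY base, the exchange-sequence duality \ref{dualityofapprox}, and the local freeness argument for $\cF_i$, which in the paper uses the specific finite resolution coming from \ref{proj res thm}) have been re-established. As it stands this is a research programme for the conjecture, not a proof of it.
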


\begin{conj}\label{B3}
The Homological MMP in Figure~\ref{Fig2} can be used to flop curves and jump between minimal models of non--Gorenstein singularities, again in the CM rational singularities setting, provided that we account for $\pd_\Lambda\Lambda_{\con}<\infty$.
\end{conj}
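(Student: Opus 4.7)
The strategy is to revisit each of the three logical stages of the Homological MMP in Figure~\ref{Fig2} (contraction detection, mutation-as-flop, and iteration) and to verify that, in the CM rational setting, each stage persists once the hypothesis $\pd_\Lambda \Lambda_{\con} < \infty$ is imposed. The basic object to construct is still the tilting bundle of Van den Bergh, and for CM rational singularities with one-dimensional fibres $\cV_X$ continues to exist and to induce a derived equivalence $\Uppsi_X$ as in \eqref{derived equivalence}. What is lost without the Gorenstein assumption is that $\End_X(\cV_X)\cong\End_R(N)$ and that $\Lambda$ is $3$-sCY; these must be replaced throughout by local statements made at the contracted locus.

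First I would extend \ref{contract on f} to this setting. The noncommutative deformation functor $\Def_\cS$ only uses the derived equivalence $\Uppsi_X$ and the simple modules $S_i$ corresponding to $\cO_{C_i}(-1)$, none of which require Gorenstein hypotheses, so the identification $\Def_\cS\cong\Hom_{\alg_{|I|}}(\Lambda_I,-)$ in \ref{contract on f}\eqref{contract on f 1} goes through unchanged. The comparison $\Lambda_I\cong \CA^I$ then follows from the uniqueness of the prorepresenting object, as before. Conjecture~\ref{B1} is precisely the input needed to conclude that the contractibility of $\bigcup_{i\in I} C_i$ without a divisor being contracted is equivalent to the finite-dimensionality of $\Lambda_I$, via the perfectness of the universal sheaf.

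Next, assuming Conjecture~\ref{B2}, I would establish the mutation-equals-flop correspondence in the form of \ref{flop=mut general thm}. The key technical ingredient is \ref{key flops lemma}, whose proof proceeds by a Toda-type recognition theorem \ref{Toda argument} together with the fact that the mutation module $\upnu_I N$ represents the algebra associated to the flop. Most of the argument in \ref{key flops lemma} is formal and survives in the CM rational setting, but two places are delicate. First, the construction of the tilting module $T_I$ (and the identification $T_I\cong \Hom_R(M,\upnu_I M)$ in \ref{Ext 3 thm}\eqref{Ext 3 thm 1}) uses $\pd_\Lambda\Lambda_I<\infty$ in an essential way; this is precisely the hypothesis supplied in the statement of Conjecture~\ref{B3}. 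Second, the use of Auslander--Buchsbaum to locate $\Omega^2_\Lambda\Lambda_I$ must be replaced by a direct argument exploiting $\pd_\Lambda\Lambda_I<\infty$ together with the local duality at the singular locus. The Bongartz completion trick used in the final step of the proof of \ref{flop=mut general thm} must also be reconsidered: in the non-Gorenstein setting one should instead use \ref{key flops lemma} directly applied to the contraction at hand, bypassing the detour through MMAs.

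Finally, to iterate one needs the mutated algebra $\upnu_I\Lambda$ to again satisfy the running hypothesis $\pd_{\upnu_I\Lambda}(\upnu_I\Lambda)_{\con}<\infty$, and for the dual graph to be readable off $\upnu_I\Lambda$. The dual graph statement follows from the unchanged \ref{reconstruction new}. The main obstacle, and what I expect to be the hardest part of the conjecture, is the preservation of the finite projective dimension condition under mutation: in the Gorenstein case this is a consequence of the $3$-sCY property (\ref{IG prep}), which fails here, so one must prove it by hand. A natural approach is to use the derived equivalence induced by $T_I$ to transport the contraction algebra identification $\Lambda_I\cong (\upnu_I\Lambda)_I$ of \cite[6.20]{IW4}, and then show that perfectness of $\Lambda_I$ over $\Lambda$ is equivalent to perfectness of $(\upnu_I\Lambda)_I$ over $\upnu_I\Lambda$; once this is in place, the algorithm of Figure~\ref{Fig2} runs as before, and the moduli-tracking results of \S\ref{stab and mut section} can be deployed to recover the GIT applications. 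The subsidiary difficulty is that in the non-Gorenstein case $\Lambda_I$ need not be Iwanaga--Gorenstein, which complicates the twist picture of \S\ref{NC twists section} in the non-flopping case; however, this is not needed for the flops themselves and so can be postponed.
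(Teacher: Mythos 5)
This statement is Conjecture~\ref{B3}: the paper offers no proof of it, and neither do you. What you have written is a research plan, not a proof, and it is conditional at every essential point. You explicitly invoke Conjectures~\ref{B1} and~\ref{B2}, which are themselves open, so even granting every step of your outline the conclusion would only be ``\ref{B1} and \ref{B2} imply \ref{B3}'' --- and even that implication is not established, because the steps where the Gorenstein hypothesis is genuinely used are precisely the ones you defer. Concretely: the entire mutation formalism of \S\ref{mut prelim} and Appendix~\ref{appendix mut} (the duality \ref{dualityofapprox}, the exchange sequences, the fact that $T_I$ is tilting and $\Upphi_I$ is a derived equivalence, and the key computations \ref{Ext 2 thm}, \ref{Ext 3 thm}) is built on $R$ being normal $d$-sCY; replacing ``Auslander--Buchsbaum plus local duality'' by ``a direct argument exploiting $\pd_\Lambda\Lambda_I<\infty$'' is asserted, not carried out. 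Likewise \ref{flop up=down}, i.e.\ $\End_X(\cV_X)\cong\End_R(N)$, fails outside the crepant setting (the paper notes this in \S\ref{perverse and tilting}), so the identification of the $\Lambda_I$ of \ref{LambdaI global} with the $\Lambda_I$ of \ref{setup}, on which the link between contraction algebras and mutation rests, is no longer available and you do not say what replaces it.

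The most serious gap is the one you yourself flag as ``the hardest part'': iteration requires that perfection of the contraction algebra is preserved by mutation, i.e.\ $\pd_{\upnu_I\Lambda}(\upnu_I\Lambda)_{I}<\infty$, and in the Gorenstein case this comes from the $3$-sCY structure (\ref{IG prep}, via \cite[6.19(4)]{IW4} and Ramras), which is exactly what is lost. Your proposed route --- transport $\Lambda_I\cong(\upnu_I\Lambda)_I$ across the equivalence and deduce perfection on one side from perfection on the other --- does not work as stated: an isomorphism of the factor algebras as rings says nothing about their projective dimensions over the two different ambient algebras, and perfect complexes are not generally sent to modules of finite projective dimension under the equivalence $\RHom_\Lambda(T_I,-)$ unless one controls the image of $\Lambda_I$, which is again the content of \ref{Ext 2 thm}/\ref{Ext 3 thm}. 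Similarly, ``bypassing the detour through MMAs'' in the final step of \ref{flop=mut general thm} is circular: \ref{key flops lemma} takes as input that $\upnu_IN\cong H^0(\cV_{X^+})$ for \emph{some} contraction $X^+\to\Spec R$ satisfying the setup, and producing such an $X^+$ is exactly what the Bongartz completion and Auslander--McKay argument (or, in the restricted case, Riedtmann--Schofield via the MMA property \ref{Min model - MMA}) supplies --- all of which is Gorenstein-specific. Until these points are proved rather than sketched, the conjecture remains open.
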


Even although $\pd_\Lambda\Lambda_{\con}<\infty$ seems necessary to relate mutation to flops, it does not seem so relevant for moduli tracking purposes.  The following is at least true in many examples, and may be true more generally.

\begin{conj}
The moduli tracking theorem \ref{main stab track} is true under the simplifying assumption $\upnu_I\upnu_IM\cong M$. 
\end{conj}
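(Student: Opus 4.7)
The only step in the proof of Theorem~\ref{main stab track} which actually uses the extra hypothesis $\dim_{\mathbb{C}}\Lambda_I<\infty$, as opposed to the bare assumption $\upnu_I\upnu_IM\cong M$, is the Ext and Tor vanishing of Corollary~\ref{new stab}, which in turn rests on Corollary~\ref{stab Ext van key} and its invocation of $\pd_\Lambda\Lambda_I<\infty$ via Lemma~\ref{technical comm Ext vanishing}. Once that vanishing is established, the remaining bookkeeping in parts \eqref{main stab track 1}--\eqref{main stab track 4} of Theorem~\ref{main stab track}, and the extraction of Corollary~\ref{stab change b and c} from it, carries over verbatim. So the plan is to prove the conclusion of Corollary~\ref{new stab} under the sole hypothesis $\upnu_I\upnu_IM\cong M$, and then quote the rest.

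The starting observations are that $T_I$ always has projective dimension at most one on both sides by Lemma~\ref{pd 1 for TI both sides}, so all higher Ext and Tor against $T_I$ vanish for free, and that by the calculation around \eqref{8O} the hypothesis $\upnu_I\upnu_IM\cong M$ already supplies the symmetric isomorphisms $K_I\cong J_I^*$ and $U_I\cong V_I$. I would compute $\Ext^1_\Lambda(T_I,x)$ directly from the two-term presentation
\[
0\to\Hom_R(M,M_I)\to\Hom_R(M,U_I)\to C_I\to 0
\]
and aim to show that the induced map $\Hom_\Lambda(\Hom_R(M,U_I),x)\to\Hom_\Lambda(\Hom_R(M,M_I),x)$ is surjective whenever $\Hom_\Lambda(x,S_i)=0$ for all $i\in I$. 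The cokernel is naturally a $\Lambda_I$-module, and the goal is to exhibit it (or the relevant subquotients) as being built from the simples $\{S_i\mid i\in I\}$, whence the hypothesis on $x$ forces the vanishing. The dual Tor statement would then follow by applying the same argument to $(\upnu_IM)^*$ via Lemma~\ref{iterate and dual} and the bimodule duality in Lemma~\ref{pd 1 for TI both sides}\eqref{pd 1 for TI both sides 2}, just as in the proof of Corollary~\ref{new stab}\eqref{new stab 2}.

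The main obstacle is precisely the failure of $\Lambda_I$ to be Cohen--Macaulay, or even perfect, once $\dim_{\mathbb{C}}\Lambda_I=\infty$; this removes the Auslander--Buchsbaum shift that powered Corollary~\ref{stab Ext van key}. In the surface case $d=2$ the argument of Proposition~\ref{surfaces hack} still applies, since there $\Lambda_I$ is automatically finite length. The genuinely new input is needed when $d=3$: one natural route is via the $3$-sCY duality $\Ext^1_\Lambda(T_I,x)\cong D\Ext^2_\Lambda(x,C_I)$ combined with the exact sequence $0\to C_I\to\Hom_R(M,K_I)\to F_I\to 0$, where now $F_I$ is a $\Lambda_I$-module but no longer finite length, so one must instead control its support or filtration structure directly. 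An alternative is a tilting-heart approach, showing that the semistability hypothesis forces $x$ into the torsion class $\mathcal{T}(T_I)\subseteq\mod\Lambda$ associated to the tilt $T_I$, and reading the Ext vanishing off from membership of this torsion class. Either route demands a genuinely new input to replace the finite-length crutch, and this is where I expect the technical content of the conjecture to lie.
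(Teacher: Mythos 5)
The statement you are addressing is recorded in the paper only as a conjecture, supported by examples; the paper gives no proof of it, so the only question is whether your argument actually closes it, and it does not. Your reduction is accurate as far as it goes: the hypotheses (a)/(b) of \ref{main stab track} enter the proof solely through the vanishing statements of \ref{new stab} (and, behind them, \ref{surfaces hack}, \ref{stab Ext van key}, \ref{technical comm Ext vanishing}, and the perfection of $\Lambda_I$ supplied by \ref{Ext 2 thm} or \ref{Ext 3 thm}), while the remaining bookkeeping in \ref{main stab track} and the passage to \ref{stab change b and c} use only \ref{pd 1 for TI both sides}, \ref{stab basic}, \ref{stab dim vec} and \ref{very easy}, none of which see $\dim_{\mathbb{C}}\Lambda_I$. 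It is also true that $K_I\cong J_I^*$ and $U_I\cong V_I$ follow from $\upnu_I\upnu_IM\cong M$ alone, via \eqref{8O} and \ref{dualityofapprox}, so the combinatorial rule $\upnu_{\boldb}$ is unambiguous in the conjectural setting.

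However, the step you defer, namely proving $\Ext^1_\Lambda(T_I,x)=0$ for $x\in\fl\Lambda$ with $\Hom_\Lambda(x,S_i)=0$ for all $i\in I$ (and the Tor analogue) without knowing $\pd_\Lambda\Lambda_I<\infty$ or $\dim_{\mathbb{C}}\Lambda_I<\infty$, is precisely the content of the conjecture, and neither of your suggested routes is carried out. The sCY duality route stalls exactly where you say it does: without perfection of $\Lambda_I$ there is no Auslander--Buchsbaum shift and \ref{technical comm Ext vanishing} cannot be invoked, and once $\dim_{\mathbb{C}}\Lambda_I=\infty$ the module $F_I$ in the analogue of the sequence $0\to C_I\to\Hom_R(M,K_I)\to F_I\to 0$ need not have finite length, so the filtration-by-simples argument of \ref{surfaces hack} breaks; you offer no replacement for either mechanism. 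The torsion-class route is circular as stated: showing that $\upvartheta$-semistability with $\upvartheta_i>0$ for $i\in I$ places $x$ in the torsion class determined by $T_I$ is essentially equivalent to the $\Ext^1$-vanishing you are trying to establish. So your write-up is a correct diagnosis of where the difficulty lies, consistent with the paper's decision to state this only as a conjecture, but it is not a proof.
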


Tracking moduli in the non-Gorenstein setting is substantially harder, since even reasonable algebras like NCCRs need not be closed under derived equivalence.  

In an algebraic direction, \ref{NCvsC min model} should extend to the situation when $R$ is not Gorenstein.  There is a version of the Auslander--McKay Correspondence in dimension two when $R$ is not Gorenstein \cite{Wunram, WemGL2}, obtained by replacing CM modules by Wunram's  notion of a special CM module \cite{Wunram}.  There should be a three dimensional analogue of this.
\begin{conj}
There is a notion of `special MM generator' such that the Auslander--McKay Correspondence \ref{NCvsC min model} holds for non--Gorenstein rational $3$-fold singularities whose minimal models have fibres that are at most one-dimensional.  
\end{conj}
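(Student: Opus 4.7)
The plan is to follow the blueprint of \ref{NCvsC min model} while replacing the role of $\CM R$ with an appropriately restricted class. In dimension two, Wunram's notion of speciality is $\Ext^1_R(M,R)=0$, and the conceptual reason is that $H^0(\cV_Y)$ of Van den Bergh's tilting bundle on any partial resolution $Y\to\Spec R$ automatically satisfies this, and further $\End_R(H^0(\cV_Y))\cong\End_Y(\cV_Y)$. Thus my first step would be to \emph{define} a modifying reflexive module $N$ to be a \emph{special MM generator} if $R\in\add N$, $\End_R(N)\in\CM R$, a derived equivalence $\End_R(N)\simeq\End_X(\cV_X)$ is induced by $\Rf_*\sHom_X(\cV_X,-)$ for some minimal model $X\to\Spec R$, and $N$ is maximal with these properties. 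Equivalently, one could impose a cohomological vanishing condition of the form $\Ext^{>0}_R(N,R)=0$ together with a noncommutative deformation-theoretic condition that detects the minimal model. The key point is that specialness should be exactly the condition needed to force $\End_R(N)\cong\End_X(\cV_X)$ (i.e.\ to recover \ref{flop up=down} in the non-Gorenstein setting).

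Having fixed the definition, the second step is to construct the correspondence. In one direction, given a minimal model $f\colon X\to\Spec R$ set $N:=H^0(\cV_X)$; Kawamata vanishing for $\RDerived f_*\cO_X=\cO_R$ in the rational singularity setting plus the construction of $\cV_X$ via the maximal extensions \eqref{max extension} should force $N$ to be special, and analogues of \ref{KIWY} and \ref{Per0 global general} will give well-definedness and injectivity. In the other direction, given a special MM generator $N$ set $X:=\cM_{\rk,\upvartheta}(\End_R(N))$ for $\upvartheta\in C_+$, using a suitable non-Gorenstein analogue of \cite[5.2.5]{Joe} (this is one of the technical points that will require real work) and show it is a minimal model of $\Spec R$. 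The properties (1), (2) concerning non-free summands and the quiver encoding the dual graph would then follow by reinterpreting \ref{reconstruction new}, whose proof does not use Gorenstein hypotheses.

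The third and hardest step is extending the mutation-theoretic results (3), (3)$'$, (4) of \ref{NCvsC min model full}. Mutation as developed in \S\ref{mut prelim} assumes $R$ is normal Gorenstein $d$-sCY, and the crucial results \ref{basic2}, \ref{Ext 2 thm} and \ref{Ext 3 thm} depend on this. I would propose tackling this by replacing the class of CM modules used in the approximations \eqref{K0} and \eqref{K1} with the class of special modules: define left and right mutation within the category of special modules using minimal approximations by summands of $M_{I^c}$ inside this category. One then needs to verify the analogues of \ref{basic2} and of the duality \ref{dualityofapprox}, using that specialness is preserved under the constructions $K_i$ and $J_i^*$. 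Granting this, the derived equivalences between endomorphism rings of special MM generators, together with a non-Gorenstein variant of \ref{key flops lemma}, would yield both the coincidence of simple mutation graphs with simple flops graphs and the functorial isomorphism between the groupoids.

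The main obstacle, by some margin, is step three: in the non-Gorenstein setting one loses the $d$-sCY duality that underpins the projective resolutions in \ref{Ext 2 thm} and \ref{Ext 3 thm}, and consequently the non-trivial dichotomy of \ref{basic2}\eqref{basic2 3}\eqref{basic2 4}. Without this dichotomy one cannot conclude that mutation at a special summand is either trivial or produces a new minimal model, and hence cannot iterate the Homological MMP. Conjecturally (cf.\ \ref{B1}, \ref{B2}), this should all work provided one imposes $\pd_\Lambda\Lambda_I<\infty$, which in turn should be the \emph{noncommutative} shadow of Reid's general elephant; thus the ultimate reduction of the conjecture is to prove that for rational (but not necessarily Gorenstein) threefold singularities with one-dimensional fibres, the contraction algebras $\Lambda_I$ associated to special MM generators are always perfect $\Lambda$-modules. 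Cutting by a generic hyperplane section as in \ref{chamber 3fold=chamber surface prep} and reducing to Wunram's correspondence for non-Gorenstein surfaces is, I suspect, the right way to establish this last technical point.
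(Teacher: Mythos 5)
This statement is posed in the paper as an open conjecture (Appendix~\ref{gen appendix}); the paper offers no proof of it, so your proposal cannot be measured against one, and as written it is a research outline rather than a proof. Two genuine gaps prevent it from being more than that. First, your working definition of a special MM generator is essentially circular: declaring $N$ special when $\End_R(N)$ is derived equivalent to $\End_X(\cV_X)$ for some minimal model $X$ makes one direction of the bijection in \ref{NCvsC min model} close to tautological, while evading the actual content of the conjecture, which is to find an \emph{intrinsic} condition on $N$ (the analogue of Wunram's $\Ext^1_R(M,R)=0$) from which existence, finiteness, and the identification $\End_R(N)\cong\End_X(\cV_X)$ (the non-Gorenstein replacement for \ref{flop up=down}) can be deduced rather than assumed. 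Your alternative suggestion ($\Ext^{>0}_R(N,R)=0$ plus a deformation-theoretic condition) is the right kind of statement, but nothing in the proposal verifies that $H^0(\cV_X)$ satisfies it, that maximality can be formulated so that Bongartz-type completion still works, or that such modules are detected by the moduli construction.

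Second, and more seriously, the step you ``grant'' is precisely where the theorem lives. The duality \ref{dualityofapprox}, the projective resolutions \ref{Ext 2 thm} and \ref{Ext 3 thm}, the dichotomy \ref{basic2}, the fact that mutation is a derived equivalence at all, and the finiteness and connectivity arguments in the proof of \ref{NCvsC min model} (via \ref{Min model - MMA}, Riedtmann--Schofield, and \cite[4.3]{IW6}) all use that $R$ is Gorenstein $d$-sCY; the paper itself notes in \S\ref{gen appendix} that mutation needs $R$ Gorenstein to give an equivalence, and that even the Bridgeland--Chen flop functor is only known to be an equivalence for Gorenstein terminal $X$ (this is \ref{B1}). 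So the derived flops groupoid and property (4) are not even well defined in your setting without first resolving \ref{B1} and \ref{B2}. Restricting approximations to a category of ``special'' modules does not by itself restore the duality: \ref{dualityofapprox} rests on $(-)^*$-duality for modifying modules over $d$-sCY rings, not on a choice of subcategory, and your proposed reduction to the hyperplane section via \ref{chamber 3fold=chamber surface prep} also uses $\End_R(N)\in\CM R$ and crepancy in a way that is not automatic for non-Gorenstein $R$. In short, the proposal correctly identifies the obstacles but defers exactly the points ($\pd_\Lambda\Lambda_I<\infty$, a workable non-Gorenstein mutation theory, and a non-Gorenstein analogue of \cite[5.2.5]{Joe}) whose resolution would constitute a proof; as the paper stands, these remain open.
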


Of course, the above conjecture must also account for $\pd_\Lambda\Lambda_{\con}<\infty$, but again this is guaranteed if we restrict to those $\Spec R$ admitting minimal models with only Gorenstein terminal singularities.

Since the noncommutative deformations in \S\ref{contractions and deformations section} detect contractions for both flips and flops, and has no restriction on singularities, it is reasonable to speculate about modifying the Homological MMP to cover flips.  Indeed, philosophically there we should \emph{not} be changing the GIT stability, since there is no derived equivalence so we do not expect to be able to track the moduli back.  Instead, we change the algebra, keeping the GIT fixed.

\begin{conj}
In the setting of CM rational singularities, given $N=H^0(\cV_X)$, there is some homological modification of mutation that produces $H^0(\cV_{X'})$ where $X'$ is the flip.   Consequently, the Homological MMP in Figure~\ref{Fig2} can be extended to cover both flips and flops. 
\end{conj}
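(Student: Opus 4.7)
The plan is to construct, for each subset $I$ of flippable curves, a ``flip mutation'' $\upnu^{\mathrm{flip}}_IN$ as a reflexive $R$-module such that $\End_R(\upnu^{\mathrm{flip}}_IN)$ is derived equivalent to $X^+$ via a tilting bundle whose global sections reproduce $\upnu^{\mathrm{flip}}_IN$. The first step is to observe that the contraction side of the theory is already in place: the Contraction Theorem (\ref{contract on f}) needs no Gorenstein hypothesis beyond what makes $\Lambda$ well-defined, so for $f\colon X\to\Spec R$ with $X$ having only CM rational singularities, the finiteness of $\dim_{\mathbb{C}}\Lambda_I$ detects whether $\bigcup_{i\in I}C_i$ contracts to a point without contracting a divisor, which covers the flipping case. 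Hence the flipping locus is algebraically visible on $\Lambda$, and the remaining task is to produce the algebra associated to $X^+$.

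Next I would analyse the failure of mutation in the flip case. The key invariant is $\pd_\Lambda\Lambda_I$: Theorem~\ref{Ext 3 thm} shows that in the Gorenstein terminal setting this projective dimension equals three, giving exactly the length-three exchange complex whose truncation produces $\upnu_IN$ and a derived equivalence. For a flip, $\pd_\Lambda\Lambda_I$ will be larger, reflecting the change in discrepancies at $X_{\con}$. The proposal is that the longer minimal projective resolution of $\Lambda_I$, truncated at a specific homological degree dictated by the discrepancy gap, furnishes the analogue of the tilting module $T_I$, and that its endomorphism ring is the desired $\End_R(\upnu^{\mathrm{flip}}_IN)$. Concretely, I would compute $\RHom_\Lambda(\Lambda_I,\Lambda)$ and extract the piece responsible for $X^+$; the noncommutative twist construction of \S\ref{NC twists section}, applied to the non-perfect universal sheaf $\cE_I$ of $\Def_{\cS}$, should provide the correct derived framework because the failure of the twist to be an autoequivalence measures exactly the semi-orthogonal ``defect'' separating $\Db(\coh X)$ from $\Db(\coh X^+)$.

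The universal characterisation of $\upnu^{\mathrm{flip}}_IN$ would then come from noncommutative deformations: $\cE_I$ is intrinsic to the geometry of the neighbourhood of the exceptional curves, so any construction producing a canonical reflexive $R$-module from $\cE_I$ descends to both sides of the flip. To close the circle one needs a Van~den~Bergh-type tilting result on $X^+$: a statement of the form ``for a flipping contraction $g^+\colon X^+\to X_{\con}$ in the CM rational setting, there is a tilting bundle $\cV_{X^+}$ on $X^+$ whose global sections are determined by $X_{\con}$ together with the deformation data $\cE_I$.'' Establishing such a tilting bundle, and then identifying its global sections with the module produced by the modified mutation, would complete the argument and verify the conjecture.

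The main obstacle, and the reason the conjecture remains open, is the absence of a derived equivalence between $X$ and $X^+$: one cannot simply transfer tilting bundles across the flip, so the characterisation of $\upnu^{\mathrm{flip}}_IN$ must be intrinsic to $\Lambda$ and $\cE_I$ alone, without reference to $X^+$. In particular, the Bongartz-completion argument used in the proof of Theorem~\ref{flop=mut general thm} fails, since the Auslander--McKay correspondence relies crucially on the derived equivalence supplied by Van~den~Bergh's construction. A secondary obstacle is that the ranks of the summands of $N$ no longer correspond cleanly to scheme-theoretic multiplicities once $R$ is not Gorenstein, so even extracting the correct numerical data to index the flip mutation is delicate. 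I expect progress to require a refined $\Per(X^+,X_{\con})$-type heart inside a suitable subcategory of $\Db(\mod\Lambda)$, cut out homologically by $\Lambda_I$, together with Conjectures~\ref{B1} and \ref{B2} handling the Gorenstein-but-non-terminal cases first as a stepping stone.
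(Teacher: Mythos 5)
The statement you are addressing is stated in the paper as a conjecture, and the paper offers no proof of it; your text is likewise not a proof but a research programme, and it contains genuine gaps that you yourself partially acknowledge. The central missing idea is the construction itself: you propose that the ``longer minimal projective resolution of $\Lambda_I$, truncated at a degree dictated by the discrepancy gap'' furnishes an analogue of $T_I$, but you give no construction, no reason such a truncation is a module with any approximation-theoretic or universal property, and no argument that its endomorphism ring has anything to do with $X^+$. The results you lean on for the flop case, \ref{Ext 2 thm} and \ref{Ext 3 thm}, are proved using the $d$-sCY property of $\Lambda=\End_R(N)$, which is lost once $R$ is not Gorenstein; without it there is no control on $\pd_\Lambda\Lambda_I$ (it may well be infinite), and the claim that it ``reflects the change in discrepancies'' is unsupported. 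Moreover, in the non-crepant setting the identification $\End_X(\cV_X)\cong\End_R(H^0(\cV_X))$ of \ref{flop up=down} fails in general (as the paper notes, citing \cite[\S2]{DW2}), so even the basic mechanism by which mutation of the $R$-module $N$ tracks the geometry of $X$ collapses: defining a ``flip mutation'' as an operation on reflexive $R$-modules via $H^0$ is already problematic before one reaches $X^+$.

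Your second paragraph correctly identifies the deeper obstruction --- there is no derived equivalence across a flip, so the tilting-module formalism, the Bongartz completion, and the Auslander--McKay correspondence used in the proof of \ref{flop=mut general thm} are all unavailable --- but you offer nothing concrete to replace them beyond the expectation of a ``refined $\Per$-type heart'' and an appeal to Conjectures \ref{B1} and \ref{B2}, which are themselves open. Invoking the noncommutative deformation data $\cE_I$ as an intrinsic object is a reasonable heuristic (and consistent with the philosophy of \S\ref{contractions and deformations section}), but ``any construction producing a canonical reflexive $R$-module from $\cE_I$ descends to both sides of the flip'' is an assertion, not an argument: no such construction is exhibited, and no Van den Bergh-type tilting bundle on $X^+$ determined by $X_{\con}$ and $\cE_I$ is established. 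In short, what you have written is a plausible strategy statement for an open conjecture, not a proof; every step that would need to be proved is left as a further conjecture.
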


\end{document}